\newcommand{\aravind}[1]{} %notes by Aravind
\newcommand{\tom}[1]{} %notes by Tom
\newcommand{\mike}[1]{} %notes by Mike
\newcommand{\NB}[1]{}
\newcommand{\TODO}[1]{}
\renewcommand{\todo}[1]{}
\newcommand{\aravind}[1]{\textcolor{red}{#1}} %notes by Aravind
\newcommand{\tom}[1]{\todo[color=blue!40]{#1}} %notes by Tom
\newcommand{\mike}[1]{\textcolor{green}{#1}} %notes by Mike
\newcommand{\NB}[1]{\todo[color=gray!40]{#1}}
\newcommand{\TODO}[1]{\todo[color=red]{#1}}
\newcommand{\tensor}{\otimes}
\newcommand{\colim}{\operatorname{colim}}
\newcommand{\Spec}{\operatorname{Spec}}
\newcommand{\weq}{\simeq}
\newcommand{\isomto}{{\stackrel{\sim}{\;\longrightarrow\;}}}
\newcommand{\sma}{{\scriptstyle{\wedge}\,}}
\newcommand{\coker}{\operatorname{coker}}
\newcommand{\longhookrightarrow}{\lhook\joinrel\longrightarrow}
\newcommand{\op}[1]{\operatorname{#1}}
\newcommand{\sslash}{/\mkern-6mu/}
\def\ph{\mathord-}
\newcommand{\lra}[1]{\langle #1 \rangle}
\renewcommand{\hom}{\operatorname{Hom}}
\newcommand{\Map}{\operatorname{Map}}
\newcommand{\iMap}{\underline{\Map}}
\newcommand{\id}{\mathrm{id}}
\newcommand{\real}{{\mathbb R}}
\newcommand{\PP}{\mathbb{P}}
\newcommand{\Q}{{\mathbb Q}}
\newcommand{\Z}{{\mathbb Z}}
\newcommand{\A}{{\mathbb A}}
\newcommand{\D}{{\mathrm D}}
\newcommand{\aone}{{\mathbb A}^1}
\newcommand{\pone}{{\mathbb P}^1}
\renewcommand{\1}{{\rm 1\hspace*{-0.4ex}%
\rule{0.1ex}{1.52ex}\hspace*{0.2ex}}}
\newcommand{\gm}[1]{{{\mathbb G}_{m}^{#1}}}
\newcommand{\Gm}{{\gm{}}}
\renewcommand{\L}{{\mathrm L}}
\newcommand{\et}{\text{\'et}}
\newcommand{\ret}{\text{r{\'e}t}}
\newcommand{\ho}[2][]{\Spc_{#1}({#2})}
\newcommand{\DM}{{\mathbf{DM}}}
\newcommand{\bpi}{\bm{\pi}}
\newcommand{\eff}{\operatorname{eff}}
\newcommand{\veff}{\operatorname{veff}}
\newcommand{\Nis}{{\operatorname{Nis}}}
\newcommand{\SH}{{\mathrm{SH}}}
\newcommand{\Shv}{{\mathrm{Shv}}}
\newcommand{\Sm}{\mathrm{Sm}}
\newcommand{\Sch}{\mathrm{Sch}}
\newcommand{\Cor}{\mathrm{Cor}}
\newcommand{\Spc}{\mathrm{Spc}}
\newcommand{\Mod}{\mathrm{Mod}}
\newcommand{\Ab}{\mathrm{Ab}}
\newcommand{\Grp}{\mathrm{Grp}}
\newcommand{\Set}{\mathrm{Set}}
\newcommand{\K}{{{\mathbf K}}}
\newcommand{\Th}{{\operatorname{Th}}}
\renewcommand{\H}{{{\mathbf H}}}
\newcommand{\Alg}{{\mathbf{Alg}}}
\newcommand{\Sym}{{\operatorname{Sym}}}
\newcommand{\Lmot}{{\operatorname{L}_{\mathrm{mot}}}}
\newcommand{\Lret}{{\operatorname{L}_{\ret}}}
\newcommand{\Lretna}{{\operatorname{L}_{\ret}^{naive}}}
\newcommand{\opcat}{\mathrm{op}}
\newcommand{\Singaone}{\operatorname{Sing}^{\aone}\!\!}
\renewcommand{\setminus}{\smallsetminus}
\def\adj{\leftrightarrows}
\def\Alg{\mathrm{Alg}}
\def\Mon{\mathrm{Mon}}
\def\CMon{\mathrm{CMon}}
\def\CAlg{\mathrm{CAlg}}
\newcommand{\SHS}{\SH^{S^1}\!}
\newcommand{\cof}{\mathrm{cof}}
\newcommand{\fib}{\mathrm{fib}}
\def\naive{\mathrm{naive}}
\newcommand{\Addresses}{{% additional braces for segregating \footnotesize
		\bigskip
		\footnotesize
		
		A.~Asok, Department of Mathematics, University of Southern California, 3620 S.~Vermont Ave., Los Angeles, CA 90089-2532, United States; E-mail address: asok@usc.edu
		\medskip
		
		T.~Bachmann, Department of Mathematics, JGU Mainz, Staudingerweg 9, 55128 Mainz, Germany; E-mail address: \url{tom.bachmann@zoho.com}
		\medskip
		
		M.J.~Hopkins, Department of Mathematics, Harvard University, One Oxford Street, Cambridge, MA 02138, United States \textit{E-mail address:} \url{mjh@math.harvard.edu}
}}
\newcounter{intro}
\theoremstyle{plain}
\newtheorem{theorem}{Theorem}[subsection]
\newtheorem{amplification}[theorem]{Amplification}
\newtheorem{lem}[theorem]{Lemma}
\newtheorem{cor}[theorem]{Corollary}
\newtheorem{proposition}[theorem]{Proposition}
\newtheorem*{claim*}{Claim} %Claim
\newtheorem*{thm*}{Theorem}
\newtheorem*{problem*}{Problem}
\newtheorem{thmintro}{Theorem}
\theoremstyle{definition}
\newtheorem{defn}[theorem]{Definition}
\newtheorem{construction}[theorem]{Construction}
\newtheorem{notation}[theorem]{Notation}
\theoremstyle{remark}
\newtheorem{rem}[theorem]{Remark}
\newtheorem{remintro}[thmintro]{Remark}
\newtheorem{ex}[theorem]{Example}
\newtheorem{entry}[theorem]{}
\newtheorem{discussion}[theorem]{Discussion}
\numberwithin{equation}{subsection}
\begin{document}
\pagestyle{fancy}
\renewcommand{\sectionmark}[1]{\markright{\thesection\ #1}}
\fancyhead{}
\fancyhead[LO,R]{\bfseries\footnotesize\thepage}
\fancyhead[LE]{\bfseries\footnotesize\rightmark}
\fancyhead[RO]{\bfseries\footnotesize\rightmark}
\chead[]{}
\cfoot[]{}
\setlength{\headheight}{1cm}

\author{Aravind Asok\thanks{Aravind Asok was partially supported by National Science Foundation Awards DMS-1802060 and DMS-2101898} \and Tom Bachmann \and Michael J. Hopkins}

\title{{\bf On ${\mathbb P}^1$-stabilization in unstable motivic homotopy theory}}
\date{}
\maketitle

\begin{abstract}
We analyze stabilization with respect to ${\mathbb P}^1$ in the Morel--Voevodsky unstable motivic homotopy theory.  We introduce a refined notion of cellularity (a.k.a., biconnectivity) in various motivic homotopy categories taking into account both the simplicial and Tate circles.  Under suitable cellularity hypotheses, we refine the Whitehead theorem by showing that a map of nilpotent motivic spaces can be seen to be an equivalence if it so after taking (Voevodsky) motives.  We then establish a version of the Freudenthal suspension theorem for ${\mathbb P}^1$-suspension, again under suitable cellularity hypotheses.  As applications, we resolve Murthy's conjecture on splitting of corank $1$ vector bundles on smooth affine algebras over algebraically closed fields having characteristic $0$ and compute new unstable motivic homotopy of motivic spheres.
\end{abstract}

\begin{footnotesize}
\tableofcontents
\end{footnotesize}

\iftoggle{final}{}{
\section*{Thoughts (by Tom)}
\subsection*{Truncation and cellularity}
It is not true without further assumptions that if $\mathscr X \in O(S^{d,d'})$ then $L^{p,q} \mathscr X \in O(S^{d,d'})$ as well.
For a stable example, take $\mathscr X = H\Z(n)[n]$, $d=d'=n$, $p=1$, $q=0$.
Then $L^{p,q} \mathscr X = K_n^M \ne \mathscr X$.
The point is (I think) that it is possible that $S^{d,d'} \not\in O(S^{p,q})$ and also $S^{p,q} \not\in O(S^{d,d'})$.

\subsection*{Strange fact}
Since $\Lmot: \Shv_\Nis(\Sm_k) \to \Shv_\Nis(\Sm_k)$ preserves finite products, it seems to me that one can learn that $\Omega^\infty: \Shv_\Nis(\Sm_k, \SH)_{>0} \to \Shv_\Nis(\Sm_k)$ commutes with $\Lmot$ (compare with commutative monoids).
I feel that at some point this seemed problematic to me...

\subsection*{Small object argument for non-connected spaces}
\begin{rem}
For general $\mathscr X \in \Spc(k)$ (neither pointed nor connected), we can still construct $\L_A \mathscr X$ by a small object argument.
Set $\mathscr X_0 = \mathscr X$ and iteratively define $\mathscr X_{i+1}$ by
\begin{equation*}
\begin{CD}
\coprod_{S^n \times A \times X \to \mathscr X_i} S^n \times A \times X @>>> \mathscr X_i \\
@VVV @VVV \\
\coprod_{S^n \times A \times X \to \mathscr X_i} S^n \times X @>>> \mathscr X_{i+1}. \\
\end{CD}
\end{equation*}
Set $\mathscr X_\infty = \colim_i \mathscr X_i$.
Then $\mathscr X \to \mathscr X_\infty$ is an $A$-localization.
We do not use this in the sequel.
\end{rem}

\begin{proof}
Let $X \in \Sm_k$.
We must show that $\mathscr X_\infty(X) \xrightarrow{p^*} \mathscr X_\infty^A(X)$ (induced by $p: A \to *$) is an equivalence.
The pointing $* \to A$ induces a retraction $\mathscr X_\infty^A(X) \to \mathscr X_\infty(X)$, so it is enough to show that $p^*$ induces a surjection on all homotopy groups.
An element of $\pi_0  \mathscr X_\infty^A(X)$ comes from a map $\alpha: A \times X \to \mathscr X_n$ for some $n$.
Then $\alpha \amalg \alpha: S^0 \times A \times X \to \mathscr X_n$ factors over $S^0 \times X \to \mathscr X_{n+1}$, and hence $\alpha$ factors over $X$ in $\mathscr X_{n+1}$.
This shows that $\pi_0(p^*)$ is surjective.
Now we show surjectivity on the higher homotopy groups.
Pick a base point $b: * \to \mathscr X_\infty(X)$, corresponding to a map $X \to \mathscr X_\infty$.
Comparing definitions, we see that a pointed map $s: S^n \to \mathscr X_\infty^A(X)$ corresponds to a map $S^n \times X \times A \to \mathscr{X}_\infty$ in the category $\Spc(k)_{X \times A/}$.
Consequently we must solve the following filling problem in $\Spc(k)_{X \times A/}$
%\begin{equation*}
%\begin{tikzcd}
%S^n \times X \times A \ar{r} \ar{d} & \mathscr{X}. \\
%S^n \times X \ar[ur, dashrightarrow]
%\end{tikzcd}
%\end{equation*}
\tom{for some reason this tikz doesn't render right anymore}
By compactness, both $b$ and $s$ lift to $\mathscr X_n$ for some $n$.
Since the forgetful functor $\Spc(k)_{X \times A/} \to \Spc(k)$ preserves pushouts, the desired filler exists in $\mathscr X_{n+1}$.
\end{proof}
\begin{discussion}
This took me embarrassingly long to work out, but I suppose it must be standard.
One difficulty is as follows.
Suppose given a map of connected spaces $X \to Y$, and suppose that every map $S^n \to Y$ lifts to $X$, \emph{in the category of unpointed spaces}.
This does \emph{not} seem to imply that $\pi_1(X) \to \pi_1(Y)$ is surjective, and I do not see an easy fix for this.
Hence why the discussion and translation of pointed maps is crucial in the above proof.
\end{discussion}

\begin{discussion}
Using Lemma \ref{lem:localizationmappingspaces}(5), one can show that if $\mathscr X$ is pointed and connected then the nullification can be obtained by attaching cells of the form $S^n \wedge A \wedge X_+$.\NB{This operation preserves connectivity.}
This is not true without the connectivity assumption.

I would like it if the construction for pointed connected spaces could be seen as an easy consequence of this.
If $\mathscr X$ is pointed connected, then locally on $X$ any map $A \times X \to X$ restricts to null along $X \to A \times X$ and hence factors over $*$, and hence at least locally \[ \mathscr X \amalg_{A \times X} X \weq \mathscr{X}/A \wedge X_+. \]
I don't know how to deal with $S^n \times A \times X \to S^n \times X$.
\end{discussion}
}

\section{Introduction}
\label{s:intro}
If $n$ is an integer $\geq 2$, and $X$ is an $(n-1)$-connected CW complex, then the classical Freudenthal suspension theorem asserts that the unit map $X \to \Omega \Sigma X$ has $(2n-2)$-connected fiber, i.e., induces an isomorphism on homotopy groups in degrees $\leq 2n-2$ and an epimorphism in degree $2n-1$.  The suspension theorem gives rise to the notion of ``stable'' phenomena in topology, which according to Adams occur in ``sufficiently large dimensions", in contrast to ``unstable'' phenomena, which occur in some definite dimensions.  Typically, stable phenomena are more amenable to calculation than unstable phenomena, while unstable phenomena are more closely related to geometry; this distinction perhaps led Eilenberg to quip ``we can distinguish two cases--the stable case and the interesting case'' \cite[p. 125]{Adams}.  Granted control of the stabilization process, one may often ``unwind'' an unstable problem into a sequence of stable problems whose solutions are more computationally accessible.  

In algebraic geometry, the influence of homotopy theory has followed a different course.  Stable structures, e.g., cohomology theories of various sorts like algebraic K-theory have been studied for a long time.  However, stable and unstable homotopy categories housing these structures have only comparatively recently aided in analysis of such theories.  For concreteness, fix a field $k$ and write $\Sm_k$ for the category of smooth $k$-varieties and $\ho{k}$ for the Morel--Voevodsky unstable motivic homotopy category \cite{MV}.  The stable motivic homotopy category, denoted $\SH(k)$, is built by stabilizing with respect to {\em two} circles: a ``simplicial'' circle $S^1 =: S^{1,0}$ and a ``Tate'' circle $\Gm =: S^{1,1}$; equivalently, one may stabilize with respect to $\pone \weq S^{2,1} = S^1 \wedge \gm{}$.  Stabilizing with respect to $\pone$ is a ``natural" choice because known cohomology theories, e.g., algebraic K-theory, are representable in the resulting setup \cite{VICM}.  The category $\SH(k)$ has been analyzed intensely since its construction, following analogies both structural and computational with classical stable homotopy theory.  

To date, the link between unstable and stable motivic homotopy theory has been harder to quantify.  Morel, in his foundational work \cite{MField} analyzed stabilization with respect to $S^1$.  His results imply that stabilization with respect to the simplicial circle behaves as one expects from the classical Freudenthal suspension theorem: if $\mathscr{X}$ is an $(n-1)$-connected motivic space, then the map $\mathscr{X} \to \Omega\Sigma \mathscr{X}$ has $(2n-2)$-connected fiber.  However, stabilization with respect to $S^1$ is not ``geometrically" well-behaved.  More importantly, it seems not as well connected with the ``stable" phenomena suggested above and is therefore not so helpful from a computational standpoint.   

All of the above points were understood shortly after Morel and Voevodsky introduced the motivic homotopy category.  Morel's early computations (eventually published in \cite{MField}) imply that a stabilization theorem for $\pone$-suspension of form similar to $S^1$-suspension cannot hold without additional hypotheses.  Moreover, Voevodsky wrote: ``[a]n analog of the Fre[u]denthal suspension theorem must exist, but we have no idea what it is" (\cite[p. 12]{Voevodskymyview}).  That additional hypotheses are necessary is perhaps not surprising given the form of the suspension theorem in equivariant homotopy theory \cite[Theorem II.2.10]{tomDieck}.

%This puts us in a situation perhaps like the one with which Eilenberg was confronted.  

Thus, to date, the situation can be described as follows: unstable motivic homotopy theory does control geometric phenomena (e.g., vector bundles on smooth affine varieties), and stable motivic homotopy theory is calculable, but only in very special cases has one been able to achieve sufficient control over the stabilization process to link unstable and stable phenomena.  Our goal in this paper is to bridge this gap by establishing a version of the Freudenthal suspension theorem for $\pone$-suspension in unstable motivic homotopy theory.  

In order to state the result, we introduce a notion of cellularity, extending that described by Dror \cite{Farjoun}.  Given a pointed (compact) motivic space $A$, we construct a class of motivic spaces we call ``weakly $A$-cellular''. It contains $A \wedge X_+$ for any smooth scheme $X$, is stable under (weak) equivalences, (homotopy) colimits and ``(homotopy) cofiber extensions"; we write $O(A)$ for the class of weakly $A$-cellular spaces.  We will be particularly interested in the class of weakly $S^{p,q}$-cellular motivic spaces, $p \geq q$. 
 
The class $O(S^{n+1,0})$ is the class of $n$-connected motivic spaces, while $O(S^{q,q})$ is an unstable analog of the notion of $q$-effective spectra studied by various authors.  For applications to be described momentarily, it will suffice to observe that the punctured affine space ${\mathbb A}^n \setminus 0 \in O(S^{4,2})$ if $n \geq 3$.  With that terminology in hand, we may state our $\pone$-stabilization theorem as a ``weak-cellular estimate'' on the fiber of the stabilization map.

\begin{thmintro}[See Theorem~\ref{thm:beyondthediagonal}]
	\label{thmintro:freudenthal}
	Assume $k$ is a field that has characteristic $0$.  If $\mathscr{X} \in \Spc(k)$ is a pointed motivic space and lies in $O(S^{p,q})$ with $p -q \ge 2, q \geq 2$, then the fiber of the stabilization map
	\[
	\mathscr{X} \longrightarrow \Omega^{2,1}\Sigma^{2,1} \mathscr{X}
	\]
	lies in $O(S^{a,2q})$ where $a = min(2p-1,p+2q-1)$.
\end{thmintro}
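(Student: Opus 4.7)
The proof will combine a motivic Blakers--Massey theorem---tracking both simplicial connectivity and Tate cellularity, presumably established earlier in the paper---with Morel's $S^1$-Freudenthal theorem, by factoring $\Omega^{2,1}\Sigma^{2,1}$ into more tractable pieces.

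Using $\Sigma^{2,1} \simeq \Sigma \circ \Sigma^{1,1}$, the unit map fits into a composite
\[
\mathscr{X} \xrightarrow{\eta_{S^1}} \Omega \Sigma \mathscr{X} \xrightarrow{\Omega(\eta_{\Gm})} \Omega \Omega^{1,1} \Sigma^{1,1} \Sigma \mathscr{X} \simeq \Omega^{2,1}\Sigma^{2,1}\mathscr{X},
\]
so it suffices to bound the cellularity of the fiber of each factor and reassemble via the long fiber sequence of the composite. For the first factor, Morel's $S^1$-Freudenthal theorem, enhanced to preserve Tate cellularity (since $\Sigma$ and $\Omega$ act trivially in the $\Gm$-direction when applied to suitably cellular spaces), places the fiber of $\eta_{S^1}$ in $O(S^{2p-1, 2q})$. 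This yields the $2p-1$ branch of the minimum.

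For the second factor, I would apply the motivic Blakers--Massey theorem to the homotopy pushout presentation $\pone \simeq \aone \cup_{\Gm} \aone$, smashed with $\mathscr{Y} := \Sigma\mathscr{X} \in O(S^{p+1,q})$; equivalently, to the pushout $\Sigma^{1,1}\mathscr{Y} \simeq * \cup_{\Gm \wedge \mathscr{Y}} *$. The Blakers--Massey output is that the fiber of the comparison map $\Gm \wedge \mathscr{Y} \to \Omega \Sigma^{2,1}\mathscr{X}$ lies in $O(S^{2(p+1), 2(q+1)})$, coming from the smash of the cellularity classes of the two legs of the pushout. Untangling this to a bound on the fiber of $\mathscr{Y} \to \Omega^{1,1}\Sigma^{1,1}\mathscr{Y}$ (undoing one $\Gm$-suspension) and then applying $\Omega$ (undoing one $S^1$-suspension), one obtains the fiber of the second factor in $O(S^{p+2q-1, 2q})$. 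Assembling the two bounds via the fiber sequence of the composite and taking the minimum yields $a = \min(2p-1, p+2q-1)$, and the Tate cellularity $2q$ reflects the doubling built into both Blakers--Massey branches.

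The hardest step will be the precise Tate-cellularity bookkeeping through $\Omega^{1,1}$: it is not automatic that $\Omega^{1,1}$ descends membership in $O(S^{a,b})$ to $O(S^{a-1,b-1})$, and this can fail outside a suitable stable range. The hypotheses $p \geq 4$ and $q \geq 2$ are what place $\mathscr{X}$ in this range, ensuring that desuspension along $\Gm$ behaves as expected on the relevant fiber. The characteristic zero assumption enters through motivic Hurewicz-type comparisons and structural results for effective motives that translate Tate cellularity into motivic cohomological connectivity amenable to Blakers--Massey. The essential technical input is the motivic Blakers--Massey theorem with refined cellularity tracking, a substantial result that must be established in earlier sections of the paper and which does most of the real work.
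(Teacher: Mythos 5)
There is a genuine gap, and it sits exactly where the theorem's difficulty lies. Your decomposition of the unit into $\mathscr{X}\to\Omega\Sigma\mathscr{X}\to\Omega^{2,1}\Sigma^{2,1}\mathscr{X}$ and the treatment of the first factor are fine: the paper proves precisely your first bound as Proposition~\ref{prop:refinedS1Freudenthal} (via the James model and Hilton--Milnor), and the assembly of the two factors via the fiber sequence of a composite is also formally available (Proposition~\ref{prop:pqconnectivityandfibersequencesI}, Corollary~\ref{cor:cellularityofcompositesoffibers}). But the second factor --- bounding the fiber of the $\Gm$-stabilization map $\mathscr{Y}\to\Omega^{1,1}\Sigma^{1,1}\mathscr{Y}$ in a weak cellular class of Tate weight $2q$ --- \emph{is} the theorem, and the tool you invoke for it does not exist. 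A Blakers--Massey theorem applied to the pushout $\ast\leftarrow\Gm\wedge\mathscr{Y}\to\ast$ can only see the simplicial connectivity of $\Gm\wedge\mathscr{Y}$ (which is governed by $p-q$, not by $p$ and $q$ separately); the generalized Blakers--Massey machinery for modalities lives in $\infty$-topoi, whereas $\Spc(k)$ is only a semi-topos, and the nullifications $\L^{p,q}$ are not left exact. The paper's Theorem~\ref{thm:cofiberpqconnectivity} is the only ``cellular Blakers--Massey'' available, and it goes in the opposite direction (estimating $\Sigma\mathscr{F}\to\cof(f)$ for a \emph{fiber} sequence via the Ganea formula); it cannot be fed a cofiber sequence to produce a Tate-weighted bound on the fiber of the gap map. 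Morel's computation that $\bpi_n(S^n)\to\bpi_n(\Omega^{1,1}S^{n+1,1})$ is the non-isomorphism $\Z\to\GW$ shows that no purely formal excision argument, however refined, can establish $\Gm$-stabilization: the hypotheses $q\geq 2$ do not place you in a ``stable range'' for a Blakers--Massey estimate, but rather enable the conservativity theorems for $\Gm$-stabilization of $2$-effective $S^1$-spectra (Theorem~\ref{thm:conservativityofgmstabilization}), which is a deep input of Bachmann--Yakerson, not a connectivity count.

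Concretely, the paper's proof of the $\Gm$-direction has no analogue in your proposal. It reduces, via the weakly cellular Whitehead tower with $\PP^1$-infinite-loop-space layers (Amplification~\ref{amplification:2-eff-whitehead}), to the case of $\Omega^\infty$ of a very effective spectrum; then fractures that case into (i) $H\Z$-module slices, handled by the \emph{geometric} identification of the cofiber of the assembly map as a quotiented Thom space over the Luna stratification of the reduced standard representation of $\Sigma_r$ (motivic Dold--Thom, Theorem~\ref{thm:cofiberofmotivicEMassemblycellularityestimate}); (ii) the rational plus part, killed by a Levine-style effectivity bound; (iii) the rational minus part, handled by real-\'etale descent, where $\Sigma^{2,1}$ collapses to $\Sigma$ and the $S^1$-Freudenthal theorem applies; and (iv) the remaining torsion, controlled by Levine's slice convergence. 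The characteristic-zero hypothesis enters through resolution of singularities (for cdh-descent of quotients of symmetric powers), not through a Hurewicz comparison. If you want to salvage your outline, you must replace the appeal to Blakers--Massey for the second factor with an argument of this kind; as written, the proposal assumes the theorem's hardest content as an unproved black box.
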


\begin{remintro}
	In the body of the text, we will also establish a version of Theorem~\ref{thmintro:freudenthal} when $k$ is a field having characteristic exponent $p > 0$.  Roughly speaking, the above statement holds after inverting $p$; see Theorem~\ref{thm:beyondthediagonal} for more details.  Recent unpublished results of F. Morel imply that it should suffice to assume only that $q \geq 1$ as well; see Remarks~\ref{rem:morelimprovement} and \ref{rem:freudenthalhypotheses} for further discussion. 
\end{remintro}

\subsection*{Applications} \tom{I have not revised this}
Theorem~\ref{thmintro:freudenthal} has a host of consequences, some of which will be developed in forthcoming work.  We content ourselves with listing here one concrete application to the theory of algebraic vector bundles and some computational consequences.  Building on a long line of classical questions in the theory of projective modules over commutative rings, it is natural to search for ``explicit" obstructions to splitting positive corank bundles on (smooth) affine varieties over fields (see \cite{Murthy} or the introduction to \cite{AFpi3a3minus0} for some discussion of the history of this problem).  For a precise problem in this direction, M.P. Murthy suggested that over an algebraically closed field $k$, the only obstruction to splitting off a trivial rank $1$ summand from a rank $d$ bundle $E$ on a smooth affine variety of dimension $d+1$, $d \geq 1$, was the vanishing of $c_d(E) \in CH^d(X)$ \cite[p. 173]{Murthy}.  That the preceding statement was true came to be known as Murthy's splitting conjecture \cite[Conjecture 1]{AFpi3a3minus0}. 

\begin{thmintro}[See Theorem~\ref{thm:murthy}]
	\label{thmintro:murthy}
	Assume $k$ is an algebraically closed field having characteristic $0$.  If $X$ is a smooth affine $k$-scheme of dimension $d+1$, $d \geq 1$, and $E$ is a rank $d$ vector bundle on $X$, then $E$ splits off a trivial rank $1$ summand if and only if $0 = c_{d}(E) \in CH^d(X)$.  
\end{thmintro}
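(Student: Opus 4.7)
The ``only if'' direction is immediate from multiplicativity of total Chern classes: if $E\iso F\oplus\mathcal O_X$ with $\op{rank}F=d-1$, then $c_d(E)=c_d(F)=0$ since $c_d$ vanishes on bundles of rank less than $d$. The main content is the converse, which I would attack as follows.

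Splitting off a trivial rank $1$ summand of $E$ is equivalent to lifting the classifying map $\phi_E\colon X\to BGL_d$, up to $\aone$-homotopy, through the inclusion $BGL_{d-1}\hookrightarrow BGL_d$, whose $\aone$-homotopy fiber is $GL_d/GL_{d-1}\iso\A^d\setminus 0\weq S^{2d-1,d}$. The existence of such a lift is governed by the obstructions coming from the Moore--Postnikov tower of this fibration, living in Nisnevich cohomology of $X$ with coefficients in the $\aone$-homotopy sheaves $\bpia_n(\A^d\setminus 0)$. The primary obstruction is Morel's Euler class $e(E)\in \widetilde{CH}^d(X,\det E^\vee)=H^d_{\Nis}(X,\KMW_d(\det E^\vee))$, whose image in $CH^d(X)$ under $\KMW_d\to\KM_d$ is precisely $c_d(E)$. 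Under the hypothesis $c_d(E)=0$, $e(E)$ is pulled back from $H^d(X,\mathbf I^{d+1}(\det E^\vee))$, and my first sub-goal would be to show this image is trivial over an algebraically closed field of characteristic zero: I would argue this using the Milnor-conjecture identifications $\mathbf I^n/\mathbf I^{n+1}\iso\KM_n/2$, a Lefschetz-principle reduction to $k=\cplx$, and vanishing of mod-$2$ \'etale cohomology of $X$ in the appropriate range.

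With the primary obstruction killed, Theorem~\ref{thmintro:freudenthal} handles the secondary and higher obstructions for $d\ge 3$. Since $\A^d\setminus 0\in O(S^{4,2})$ for $d\ge 3$, applying Freudenthal with $(p,q)=(4,2)$ places the fiber of the stabilization map $\A^d\setminus 0\to\Omega^{2,1}\Sigma^{2,1}(\A^d\setminus 0)$ in $O(S^{7,4})$. Since $X$ has dimension $d+1$, this cellularity is high enough to ensure that the contribution of the fiber sheaf to cohomology of $X$ vanishes in the degrees relevant to the obstruction theory, identifying the unstable secondary obstruction with its $\pone$-stable counterpart, namely the corresponding $\pone$-stable motivic homotopy sheaf of $\Sigma^\infty_{\pone}(\A^d\setminus 0)$. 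This stable object is accessible through motivic cohomology and, over an algebraically closed field of characteristic zero, is controlled by $c_d(E)$, whose vanishing then kills the obstruction. Iterating the same argument at successive Postnikov stages---absorbing each correction into a sheaf of cellularity too high to contribute to cohomology on $X$---completes the proof for $d\ge 3$. The cases $d=1,2$ are classical (Serre; Murthy).

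The main obstacle will be the last identification: showing that, after $\pone$-stabilization via Theorem~\ref{thmintro:freudenthal}, the secondary obstruction coincides (up to a unit) with $c_d(E)$ in an appropriate stable motivic cohomology group. This demands careful tracking of the weight shifts encoded in the bound $a=\min(2p-1,p+2q-1)$, together with a computation over algebraically closed fields of characteristic zero of the $\pone$-stable motivic homotopy sheaves of $\Sigma^\infty_{\pone}S^{2d-1,d}$ in the dimension just above the Euler class.
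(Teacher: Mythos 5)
Your setup agrees with the paper's: representability of vector bundles, the fiber sequence $S^{2d-1,d}\to BGL_{d-1}\to BGL_d$, Moore--Postnikov obstruction theory, and the identification of the primary obstruction with the Euler class, which over an algebraically closed field vanishes exactly when $c_d(E)$ does. The gap is in your treatment of the secondary obstruction, and it is twofold. First, your cellularity bookkeeping is too coarse: feeding only $\A^d\setminus 0\in O(S^{4,2})$ into the suspension theorem places $\fib(s)$ in $O(S^{7,4})$, which is merely simplicially $2$-connected and says nothing about $\bpi_d$ for $d\ge 4$; you must use the full cellularity $S^{2d-1,d}\in O(S^{2d-1,d})$, which puts the fiber in $O(S^{4d-3,2d})$ and hence yields surjectivity of $\bpi_{d+j,j}(S^{2d-1,d})\to\bpi_{d+j,j}(\Omega^{2,1}S^{2d+1,d+1})$ in the range needed. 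Second, and more seriously, the secondary obstruction is \emph{not} ``controlled by $c_d(E)$'': once the primary obstruction dies, $o_2(E)\in H^{d+1}(X,\bpi_{d,0}(S^{2d-1,d})(\det E))$ must be killed unconditionally, and this requires actually identifying the coefficient sheaf. The paper does this by splitting off the $\mathbf{GW}^{d}_{d+1}$-quotient via Suslin matrices (whose $H^{d+1}$ vanishes over algebraically closed fields by a separate theorem of Asok--Fasel) and then using the Freudenthal surjectivity to exhibit the kernel $\mathbf{A}(d)$ as a quotient of the sheaf $\mathbf{F}_5$ arising in the Asok--Fasel computation of $\bpi_3(\A^3\setminus 0)$; the relevant contraction $\mathbf{A}(d)_{-d-1}$ is then built from $\mathbf{I}^2$ and $\K^M_1/24$, both of which have trivial sections over the (algebraically closed) residue fields of points of codimension $d+1$, so the top Gersten term and hence $H^{d+1}$ vanish. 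Your proposal never brings in the $\bpi_3(\A^3\setminus 0)$ computation, which is the essential non-formal input that the suspension theorem propagates to all $d\ge 4$; without it there is no way to evaluate the obstruction group.

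A minor further point: since $\dim X=d+1$ and the fiber $\A^d\setminus 0$ is $(d-2)$-connected, only the primary and secondary obstructions can be nonzero; there is no need to ``iterate at successive Postnikov stages,'' and such an iteration would in any case founder on the same problem of identifying the higher homotopy sheaves.
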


The theorem is immediate from definitions in the case $d = 1$ and as such is only interesting when $d \geq 2$.  In that case, the connection between Theorem~\ref{thmintro:freudenthal} and Theorem~\ref{thmintro:murthy} was laid out in the introduction of \cite{AFpi3a3minus0}.  In brief, the question of splitting a trivial rank $1$ summand off a vector bundle can be phrased as an $\aone$-homotopical lifting problem by appeal to the representability theorem for vector bundles \cite[Theorem 1]{AHW}.  In turn, the relevant lifting problem can be analyzed by techniques of obstruction theory.  

In the case of Murthy's conjecture, the potentially non-vanishing obstructions rely on knowledge of the sheaves $\bpi_{d-1}({\mathbb A}^d \setminus 0)$ and $\bpi_d({\mathbb A}^d \setminus 0)$, $d \geq 2$.  The sheaves $\bpi_{d-1}({\mathbb A}^d \setminus 0)$ were computed by Morel and stabilize for $d \geq 2$.   In \cite{AFThreefolds} and \cite{AFpi3a3minus0}, the first author and J. Fasel computed the sheaves $\bpi_d({\mathbb A}^d \setminus 0)$ for $d = 2,3$, at least over fields having characteristic not equal to $2$.  

Building on some ideas of Morel, the first author and J. Fasel formulated a general conjecture for the form of $\bpi_d({\mathbb A}^d \setminus 0)$ for $d \geq 4$ in \cite[Conjecture 7]{AFOBW} from which Murthy's conjecture would follow.  In fact, to establish Murthy's conjecture one only needs the computation of $\bpi_3({\mathbb A}^3 \setminus 0)$ mentioned above and a surjectivity statement that follows from Theorem~\ref{thmintro:freudenthal}, i.e., less than the complete computation of $\bpi_d({\mathbb A}^d \setminus 0)$.  This is the path we follow to deduce Murthy's conjecture.  On the other hand, Theorem~\ref{thmintro:freudenthal} gives much more.  Indeed, the entire structure of $\bpi_d({\mathbb A}^d \setminus 0)$ for $d \geq 4$ may be deduced from a corresponding $\pone$-stable computation which appears in \cite{RSO}, at least over fields having characteristic $0$ (see Theorem~\ref{thm:pid}). 

\subsection*{Overview of the proof}
Freudenthal's original proof of the suspension theorem in the case of spheres was of a rather geometric nature \cite{Freudenthal}.  Blakers and Massey employed a more formal and conceptually clearer technique via homotopy excision \cite{BlakersMassey}, which is essentially the standard proof now.  Our proof, by necessity, does not follow any of the classical proofs so we give here a brief outline for the argument.  

First, there is the key geometric ingredient arising in an analysis of motivic Eilenberg--Mac Lane spaces made possible by Voevodsky's motivic Dold--Thom theorem, which gives a model of $H\Z \wedge \Sigma^{\infty}_{\pone} X_+$ for $X$ a smooth scheme \cite{VoeMEM} in terms of colimits of suitable symmetric powers.  By direct analysis of the geometry of symmetric powers, we can give a weak cellular estimate on the cofiber of the bonding maps in the spectrum above.  Indeed, the cofiber of the bonding maps can be described geometrically as a Thom complex of a suitable equivariant vector bundle and the cellular estimate in question can be obtained by analyzing the stabilizer type stratification of the symmetric groups; this kind of analysis goes back to the work of Nakaoka \cite{Nakaoka} and was codified in \cite{AroneDwyer}.   In a motivic context, this kind of analysis also appears in Voevodsky's work on the zero slice of the sphere spectrum in characteristic $0$ \cite{VZeroSlice}.  In the context of this paper, the required result in characteristic $0$ is Theorem~\ref{thm:cofiberofmotivicEMassemblycellularityestimate} and a modification in positive characteristic situations can be found in Theorem~\ref{thm:cofiberofmotivicEMassemblycellularityestimatecharp}.

Second, we reduce from $H\Z$-modules to $\pone$-spectra.  This uses a number of ingredients, but perhaps the most important is an adaptation of the main idea in Levine's argument for the convergence of Voeovodsky's slice tower \cite{LevineConvergence} as detailed in \cite{BEO}.  Levine's argument treats the case of fields of finite cohomological dimension, but in conjunction with some information about real \'etale homotopy theory, one can treat the case of arbitrary characteristic $0$-fields.  This reduction is handled in the proof of Theorem~\ref{thm:freudenthal}, but see Lemmas~\ref{lem:torsioncaseconnectivityestimate} and~\ref{lem:therholocalcase}.  This convergence result implicitly relies on the celebrated norm residue isomorphism theorem, a.k.a. the Bloch--Kato conjecture, established by Voevodsky--Rost (originally \cite{VMod2,VModl}, but see \cite{HWBK} and references therein for a consolidated treatment). 

Third, we reduce from $\pone$-spectra to spaces.  This reduction uses a refinement of the classical Whitehead tower of a space taking weak-cellular classes into account.  Building this refined tower and analyzing its properties is the content of Section~\ref{s:cellularWhiteheadtower}, but see Amplification~\ref{amplification:2-eff-whitehead} for a precise statement.  The key inputs to this statement are results of Bachmann--Yakerson \cite{BachmannYakerson} and Bachmann \cite{BachmannConservativity} on conservativity of stabilization with respect to $\Omega^{1,1}$ which we review here as Theorem~\ref{thm:conservativityofgmstabilization}.   Implicitly, this conservativity result relies on the motivic infinite loop space technology of \cite{EHKSYdeloop}.  

These tools are stitched together using the notion of weak cellular classes that we develop in Section~\ref{s:weakcellularityandnullity}.  This section contains the basic definitions and develops a calculus of cellular classes including relationship between cellularity and fiber and cofiber sequences, loop spaces of various sorts, etc.; these results rely on a host of tools, including Levine's analysis of the homotopy coniveau tower \cite{Levineconiveautower}.

\begin{remintro}
Given the bevy of tools mentioned above, it is natural to ask whether there might be a simpler proof of Theorem~\ref{thmintro:freudenthal}.  We make some elliptical comments in this direction.  Our proof of the motivic Freudenthal suspension theorem in essence uses the classical Freudenthal suspension theorem, so does not give a new proof of this result (though it does suggest a new way to organize the proof of equivariant Freudenthal suspension theorems).  Because motivic localization fails to be a left-exact localization, most of the tools of, e.g., \cite{ABFJ} analyzing very general Blakers--Massey style excision results in $\infty$-topoi fail to apply in this context.  Indeed, Theorem~\ref{thmintro:freudenthal} is known to be false if $q = 0$, e.g., for $S^{p,0}$ immediately from Morel's computations of homotopy sheaves.  It is possible to reorganize some parts of the argument using more homotopy-theoretic tools, but we save this for future work.  
\end{remintro}

%Discuss Marc's comment in greater detail: is there any precedent in classical homotopy theory blah blah.

\subsection*{Notation and conventions}
Much of the notation to be used in this paper will be introduced in Sections~\ref{s:preliminaries} and \ref{s:weakcellularityandnullity}; we try to present an overview here.  Throughout, assume $k$ is a field; further hypotheses will be imposed on $k$ as necessary in the body of the paper: frequently $k$ will be assumed perfect, and sometimes furthermore infinite.  Write $\Sm_k$ for the category of separated, smooth $k$-schemes and $\Sch_k$ for the category of finite-type separated $k$-schemes; we will also write $\Sch_k^{qp}$ for the category of quasi-projective $k$-schemes.

If $\mathrm{S}_k$ is one of the categories of schemes described above, then we write $\mathrm{P}(\mathrm{S}_k)$ for the $\infty$-topos of presheaves of spaces on $\Sm_k$.  If $\tau$ is a Grothendieck topology on $\mathrm{S}_k$, then we write  $\Shv_{\tau}(\mathrm{S}_k) \subset \mathrm{P}(\mathrm{S}_k)$ for full subcategory of $\tau$-sheaves of spaces on $\mathrm{S}_k$, which is once again an $\infty$-topos.  There are corresponding pointed versions of all these constructions: we write $\mathrm{P}(\mathrm{S}_k)_*$ for the $\infty$-topos of presheaves of pointed spaces and $\Shv_{\tau}(\mathrm{S}_k)_{*}$ for the subcategory of $\tau$-sheaves of pointed spaces.

We write $S^i$ for the simplicial $i$-circle, and we set $S^{p,q} := S^{p-q} \wedge \gm{\sma q}$.  We write $\Sigma^i$ for the operation of smashing with $S^i$, and $\Sigma^{p,q}$ for the operation of smashing with $S^{p,q}$.  We write $\iMap$ for the internal mapping space, i.e., given $\mathscr{X},\mathscr{Y} \in \Spc(k)$, $\iMap(\mathscr{X},\mathscr{Y})$ represents $U \mapsto \hom_{\Shv_{Nis}(\Sm_k)}(U \times \mathscr{X},\mathscr{Y})$.  Likewise, if $\mathscr{X}$ and $\mathscr{Y}$ are pointed spaces, then we write $\iMap_{*}(\mathscr{X},\mathscr{Y})$ for the pointed internal mapping space.  We write $\Omega^i$ for the $i$-fold loops functor adjoint to $\Sigma^i$ and, more generally, $\Omega^{p,q}\mathscr{X}$ for $\iMap_*(S^{p,q},\mathscr{X})$.

%Suppose $f: \mathscr{X} \to \mathscr{Y}$ is a morphism in an $\infty$-topos.  By convention, we will also say that {\em any} morphism is $-2$-connected.  For $n \geq -1$ we will say that a morphism $f$ is $n$-connected if it is an effective epimorphism and for every choice of local base-point the fibers of $f$ are $(n-1)$-connected.  In particular, a space $\mathscr{X}$ is $(n-1)$-connected if the map $\mathscr{X} \to \ast$ is $n$-connected.
%\tom{We do not use the notion of an $n$-connected map.}

%We will say that $f$ is $-1$-connected if it is an effective epimorphism \cite[\S 6.2.3]{HTT}.  In this context, $f$ is an effective epimorphism whenever $\pi_0(\mathscr{X}) \to \pi_0(\mathscr{Y})$ is an epimorphism in the usual categorical sense \cite[Proposition 7.2.1.14]{HTT}.  For any integer $n \geq 0$, we will say that $f$ is $n$-connected if it is an effective epimorphism and the diagonal morphism $\mathscr{X} \to \mathscr{X} \times_{\mathscr Y} \mathscr{X}$ is $(n-1)$-connected. See \cite[Definition 6.5.1.10, Proposition 6.5.1.18]{HTT}.  In contrast, 

Given a cofiber sequence
\[
\mathscr{X} \longrightarrow \mathscr{Y} \longrightarrow \mathscr{Z}
\]
we will refer to $\mathscr{Y}$ as a cofiber extension of $\mathscr{Z}$ by $\mathscr{X}$.  We write $\SHS(k)$ for the $S^1$-stable motivic homotopy category, and $\SH(k)$ for the $\pone$-stable motivic homotopy category; the first category is constructed from $\Spc(k)_*$ by inverting $S^{1}$, while the latter is constructed from $\Spc(k)_*$ by inverting $S^{2,1}$ or from $\SHS(k)$ by inverting $\Sigma^{\infty} S^{1,1}$.  We refer the reader to Section~\ref{ss:stablehomotopyandeffectivity}, for more precise descriptions of these constructions, but collect here notation for the associated loop-suspension adjunctions that arise (see Paragraph~\ref{par:s1vsp1} for more details):
\[ 
\begin{split}
\Sigma^\infty_{S^1}: \Spc(k)_* & \adj \SHS(k): \Omega^\infty_{S^1},  \\
\Sigma^\infty: \Spc(k)_* & \adj \SH(k): \Omega^\infty, \\
\sigma^\infty: \SHS(k) & \adj \SH(k): \omega^\infty .
\end{split}
\]
Further notation will be introduced in the main body of the text.

%\begin{remintro}
	% Not sure whether it is worth including this.
%Throughout the paper we have used the language and techniques of $\infty$-categories.  One reason for adopting this language is that some of the results that we use, e.g., the motivic infinite loop space theory of \cite{EHKSYdeloop} requires this technology.  Another reason is that this language is convenient for studying cellularity.  
%\end{remintro}

%\[ Q = \Omega^\infty \Sigma^\infty \]
%\[ \text{$n$-connected} = \text{fibers $(n-1)$-connected} \]

%Given a cofiber sequence $A \to B \to C$, we call $B$ a cofiber-extension of $A$ and $C$.
%Similarly given a fiber sequence $A \to B \to C$ we call $B$ a fiber-extension of $A$ and $C$.

%If $E \in \SH(k)$, we put $E_n = \Omega^\infty(\Sigma^{2n,n}E)$.

%We use the standard $t$-structures on $\SHS(k)$ and $\SH(k)^{\eff}$, with non-negative parts denoted $\SHS(k)_{\ge 0}$ and $\SH(k)^{\veff}$.

%Given a simply connected object $X$ in an $\infty$-topos, the Postnikov tower can be principalized to yield fibration sequences \cite{TODO} \[ X_{\le n+1} \to X_{\le n} \to B^{n+2} \pi_{n+1} X. \]
%Many of our proofs will seek to establish a certain property which is stable under limits; assuming that $X$ is Postnikov complete it thus suffices to prove the property for the $B^{i+1} \pi_{i} X$.  We summarize this as ``considering a Postnikov tower of principal fibrations''.

\subsubsection*{Acknowledgements}
The authors would like to thank M. Levine for questions about the relation between the structure of our proof of the Freudenthal suspension theorem and the classical proof.  We would also like to thank J. Fasel for discussion around Murthy's conjecture and J. Ayoub, M. Hoyois and J. Rognes for helpful comments.

\section{Preliminaries on unstable and stable motivic homotopy theory}
\label{s:preliminaries}
In this section, we recall the basic definitions and properties of unstable and stable motivic homotopy categories in various settings.  Notation in the literature can vary quite a bit, but in an effort to keep our presentation as self-contained as possible, we have attempted to summarize everything we need here.   

%\begin{entry}[Connectivity]\tom{This renders a bit funny for me. How about just subsubsection?}
%\end{entry}

\subsection{Motivic localization and the unstable connectivity theorem}
\label{ss:motiviclocalization}
In this section, we introduce the category of motivic spaces and a host of variants: the original story begins with smooth schemes and the Nisnevich topology, but we use variants with different categories of schemes and different Grothendieck topologies (see \ref{par:motivicspaces}-\ref{par:essentiallysmoothbasechange}).  We recall Morel's unstable connectivity theorem and foundational facts about $\aone$-invariant sheaves of groups of various sorts (see \ref{thm:unstableconnectivity}-\ref{cor:sifted-colim-strongly-A1-inv}).  We recall facts about (weakly) nilpotent motivic spaces, associated (functorial) Whitehead and Postnikov towers, and localization of such spaces of with respect to subrings of $\Q$ (see \ref{par:lowercentralseries}-\ref{thm:rlocalizationlocalizeshomotopysheaves}).  Finally, we make a careful study of motivic localization, giving better understanding of when the motivic localization functor preserves limits and colimits of various sorts, extending the results of \cite[\S 6]{MField} (see \ref{par:kanloopgroup}-\ref{prop:simplicialloopssiftedcolimits}).

\begin{entry}[Motivic spaces]
	\label{par:motivicspaces}
A presheaf $\mathscr{X} \in \mathrm{P}(\Sm_k)$ is $\aone$-invariant if for every $U \in \Sm_k$ the projection $U \times \aone \to U$ induces an equivalence $\mathscr{X}(U) \to \mathscr{X}(U \times \aone_k)$.  The $\infty$-category of motivic spaces $\ho{k}$ is the full subcategory of $\Shv_{\Nis}(\Sm_k)$ spanned by $\aone$-invariant Nisnevich sheaves of spaces.  Since homotopy invariance is defined by a small set of conditions, the inclusion $\ho{k} \subset \Shv_{\Nis}(\Sm_k)$ is an accessible localization \cite[\S 3.4]{HoyoisEquiv}.  The category $\ho{k}$ is a presentable $\infty$-category and the inclusion $\ho{k} \subset \mathrm{P}(\Sm_k)$ admits a left adjoint
\[
\mathrm{L}_{mot}: \mathrm{P}(\Sm_k) \longrightarrow \ho{k}
\]
that we call the motivic localization functor.  
\end{entry}

\begin{entry}[Variants]
	\label{par:motivicspacesvariants}
	There are two classes of variants of $\ho{k}$ that will appear in the body of the text: modifications of the category of schemes we consider (e.g., to permit singularities), and alternative Grothendieck topologies $\tau$ on such categories of schemes (typically finer than the Nisnevich topology).  In practice, we will consider Voevodsky's cdh-topology, Scheiderer's real-\'etale topology or Kelly's $\ell$dh topology.  The variants to be discussed will not reappear until Sections~\ref{ss:ret} and \ref{s:symmetricpowers}, and we will include more details and references there as necessary.
	
	Recall that we write $\Sch_k$ for the category of finite type, separated $k$-schemes.  By an admissible subcategory $\mathrm{S}_k$ of $\Sch_k$, we will mean a subcategory such that $X \in \mathrm{S}_k \Longrightarrow \aone_X \in \mathrm{S}_k$,  $X \in \mathrm{S}_k \Longrightarrow U \in \mathrm{S}_k$ for $f: U \to X$ any finite-type \'etale morphism, and $\mathrm{S}_k$ is closed under finite products and coproducts.  Examples include taking $\mathrm{S}_k$ to be the subcategory $\Sch_k^{qp}$ consisting of quasi-projective $k$-schemes or all of $\Sch_k$.  
	
	If $\tau$ is a Grothendieck topology on an admissible subcategory $\mathrm{S}_k$ of $\Sch_k$, then we write $\Spc_{\tau}(\mathrm{S}_k)$ for the full subcategory of the $\infty$-topos $\Shv_{\tau}(\mathrm{S}_k)$ spanned by $\aone$-invariant sheaves of spaces and $\Spc_{\tau}(k)$ for the full subcategory of the $\infty$-topos $\Shv_{\tau}(\Sm_k)$ spanned by $\aone$-invariant sheaves of spaces.  We reserve the notation $\mathrm{L}_{\tau}$ for the localization functor to $\tau$-sheaves of motivic spaces; see Paragraph~\ref{par:retmotivicspaces} for further discussion.
\end{entry}

\begin{entry}[Motivic localization]
	\label{par:motiviclocalization}
Write $\Delta^*_k$ for the cosimplicial affine space with 
\[
\Delta^n_k := \Spec k[x_0,\ldots,x_n]/ \langle \sum_{i=0}^n x_i = 1 \rangle.
\]  
One defines the singular functor by the formula:
\[
\Singaone \mathscr{X} := \colim_{n \in \Delta^{\opcat}} \mathscr{X}(\Delta^n_k \times -).
\]
This functor is left adjoint to the inclusion of $\aone$-invariant presheaves of spaces into presheaves of spaces.  Morel and Voevodsky construct $\mathrm{L}_{mot}$ explicitly as an infinite composition of $\Singaone$ and Nisnevich sheafification \cite[\S2 Lemma 3.20]{MV}.  
\end{entry}

\begin{entry}[Properties of $\mathrm{L}_{mot}$]
	\label{par:lmotproperties}
The simplex category is sifted \cite[Lemma 5.5.8.4]{HTT}, and one deduces that $\mathrm{L}_{mot}$ preserves finite products \cite[C.6]{Hoyois}.  Moreover, the functor $\mathrm{L}_{mot}$ is locally cartesian in the sense that for any diagram $\mathscr{X}_1 \to \mathscr{X}_0 \leftarrow \mathscr{X}_2 \in \mathrm{P}(\Sm_k)$ with $\mathscr X_1, \mathscr X_0 \in \Spc(k)$ the canonical map
\[
\mathrm{L}_{mot}(\mathscr{X}_1 \times_{{\mathscr{X}_0}} \mathscr{X}_2) \longrightarrow \mathscr{X}_1 \times_{{\mathscr{X}_0}} \mathrm{L}_{mot}(\mathscr{X}_2) 
\]
is an equivalence.  In particular, it follows that colimits in $\ho{k}$ are universal (i.e., colimits are stable by pullback) \cite[Proposition 3.15]{HoyoisEquiv}.  While the localization $\mathrm{L}_{mot}$ is known not to be left exact (i.e., $\ho{k}$ is not an $\infty$-topos), universality of colimits in $\ho{k}$ implies that $\ho{k}$ is a semi-topos in the sense of \cite[Definition 6.2.3.1]{HTT}.  Additionally, if $\mathscr{X}$ is a pointed motivic local space, then the connected component of the base-point is again a motivic local space. 
\end{entry}

\begin{entry}[Essentially smooth base change]
	\label{par:essentiallysmoothbasechange}
	We will call a morphism $f: X \to Y$ of schemes {\em essentially smooth} if it can be written as a cofiltered limit $\lim_{\alpha} X_{\alpha} \to Y$ of smooth, affine transition morphisms\NB{this is not quite the same as ``essentially smooth'' in Fabien's book...}.  If $k$ is a perfect field and $L/k$ is a field extension then $f: \Spec L \to \Spec k$ is essentially smooth (see, e.g., \cite[Lemma A.2]{HoyoisAlgCob}).  In that case, the functors $f^*: \mathrm{P}(\Sm_k) \to \mathrm{P}(\Sm_L)$ preserve $\aone$- and Nisnevich local objects \cite[Lemma A.4]{HoyoisAlgCob}.  In this context, suppose we are given a finite diagram $d: I \to \Sm_L$ which is a cofiltered limit of finite diagrams $d_{\alpha}: I \to \Sm_{L_{\alpha}}$.  In that case, the evident map
	\[
	\colim_{\alpha} \Map(\colim {d_{\alpha}},f_{\alpha}^*\mathscr{X}) \longrightarrow \Map(\colim {d},f^*\mathscr{X})
	\]
	is an equivalence by \cite[Lemma A.5]{HoyoisAlgCob}.  We will typically appeal to such results by stating that some construction that we want to perform is ``compatible with essentially smooth base change".  
\end{entry}

Morel's foundational work \cite{MField} studied the interaction between $\mathrm{L}_{mot}$ and connectivity. Unlike the results above which hold over rather general base schemes, the following result requires strong hypotheses on $k$ and is the main reason we assume $k$ is a field (frequently perfect).  

\begin{theorem}[Morel]
	\label{thm:unstableconnectivity}
	Suppose $n \geq -2$ is an integer and assume $k$ is a field and $\mathscr{X} \in \Spc(k)$. Assume either $n \le 0$ or $\mathscr X$ is pulled back from a perfect subfield of $k$.  If $\mathscr{X}$ is $n$-connected, then $\mathrm{L}_{mot}\mathscr{X}$ is also $n$-connected.
\end{theorem}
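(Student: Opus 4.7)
The plan is to leverage the explicit construction of $\mathrm{L}_{mot}$ recalled in \ref{par:motiviclocalization}: it is built as a filtered colimit of iterated applications of Nisnevich sheafification $L_{\Nis}$ composed with the singular functor $\Singaone$.  Since filtered colimits of spaces preserve $n$-connectivity, it suffices to show that each of $L_{\Nis}$ and $\Singaone$ individually preserves $n$-connectivity of a presheaf.

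Preservation under $L_{\Nis}$ is formal: $\infty$-sheafification is a left-exact left adjoint, and by induction on $n$ such functors preserve $n$-connectivity, since they preserve effective epimorphisms (being left adjoints) and preserve fibers (being left exact).  For $\Singaone$ the cases $n = -2, -1$ are likewise formal, and for $n = 0$ one notes that $\pi_0(\Singaone \mathscr X)$ is a quotient of $\pi_0(\mathscr X)$ obtained by identifying $\aone$-homotopic components, hence cannot be nontrivial if the source is.  The genuinely substantive content, and the reason the perfect-subfield hypothesis enters, is that $\Singaone$ preserves $n$-connectivity for $n \geq 1$.

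For this last step we reduce, via essentially smooth base change (\ref{par:essentiallysmoothbasechange}), to the case where $k$ itself is perfect and $\mathscr X$ is defined over $k$, and then induct on the Postnikov tower of $\mathscr X$.  The critical input, due to Morel, is that over a perfect field every $\aone$-invariant Nisnevich sheaf of groups is \emph{strongly} $\aone$-invariant: its Nisnevich cohomology with such coefficients is again $\aone$-invariant.  Granted this, each Postnikov stage $K(\bpia_i(\mathscr X), i)$ with $i \geq n+1$ is already motivically local, and consequently iterating $L_{\Nis} \circ \Singaone$ does not alter the lower homotopy sheaves.  The main obstacle is controlling the Postnikov attaching maps and the twists by the action of $\bpia_1$ as one ascends the tower; this is precisely Morel's inductive argument in \cite{MField}, which we invoke as a black box.
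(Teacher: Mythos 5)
Your overall architecture --- trivial for $n<0$, the $\pi_0$ statement from the Morel--Voevodsky construction for $n=0$, essentially smooth base change to reduce to a perfect field, and Morel's theorem for $n\geq 1$ --- matches the paper, which simply cites \cite[\S2 Corollary 3.22]{MV}, \cite[Theorem 6.38]{MField} (or \cite[Theorem 2.2.12]{AWW}) and Paragraph~\ref{par:essentiallysmoothbasechange}. Since you also invoke Morel ``as a black box'' for the hard case, the difference lies entirely in the scaffolding you build around that citation, and that scaffolding has genuine errors.

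First, the ``critical input'' you identify --- that over a perfect field every $\aone$-invariant Nisnevich sheaf of groups is \emph{strongly} $\aone$-invariant --- is not Morel's theorem and is not available. Morel proves that \emph{strongly} $\aone$-invariant sheaves of \emph{abelian} groups are \emph{strictly} $\aone$-invariant (\cite[Corollary 5.45]{MField}); the implication from plain $\aone$-invariance to strong $\aone$-invariance is exactly the delicate point, and the paper only has a conditional version (Lemma~\ref{lem:lmot-hspace-strongly-a1-inv}, via Choudhury, requiring a surjection from a strongly $\aone$-invariant sheaf). Second, your Postnikov induction is aimed at the wrong tower: the homotopy sheaves of the \emph{input} $\mathscr X$ --- an arbitrary $n$-connected Nisnevich sheaf --- have no reason to be $\aone$-invariant at all, so the stages $K(\bpi_i\mathscr X, i)$ are not motivically local; the characterization in Paragraph~\ref{par:aonelocalityofspaces} concerns spaces that are already $\aone$-local. (Even granting local layers, pushing $\Lmot$ through the Postnikov fiber sequences is Theorem~\ref{thm:lmotandpullbacks}, whose proof uses the connectivity theorem as input, so the induction risks circularity.) Third, the reduction to ``each stage $L_{\Nis}\circ\Singaone$ preserves $n$-connectivity'' conceals the real difficulty rather than resolving it: $n$-connectivity of $\mathscr X$ is a Nisnevich-local condition, while $\Singaone\mathscr X(U)=\colim_m \mathscr X(\Delta^m_k\times U)$ involves sections over the non-local schemes $\Delta^m_k\times U_x$ even at a Henselian point $x$, where higher Nisnevich cohomology of the homotopy sheaves of $\mathscr X$ can contribute to low-degree homotopy of the colimit. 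Controlling this is the actual content of Morel's argument (via the connectivity theorem in the $\aone$-derived category and Gabber's presentation lemma, which is where perfectness enters); it is not recovered by the formal observations that suffice for $n\leq 0$.
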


\begin{proof}
	The statement is trivial for $n < 0$, and for $n = 0$ follows essentially from the construction of the motivic localization functor \cite[\S2 Corollary 3.22]{MV}.  Assuming $k$ is perfect, for $n > 0$ it is \cite[Theorem 6.38]{MField} (though see \cite[Theorem 2.2.12]{AWW} for a detailed proof).  The assertion that the result holds if $\mathscr{X}$ is pulled back from a perfect subfield then follows by compatibility with essentially smooth base change (see Paragraph~\ref{par:essentiallysmoothbasechange}).
\end{proof}

Morel deduces the above results by analyzing the Postnikov tower of connected motivic local spaces.  Key to deducing the above results are the notion of strictly and strongly $\aone$-invariant sheaves of groups, properties of which we now review.

\begin{entry}[Strong and strict $\aone$-invariance]
	\label{par:strongstrict}
	A Nisnevich sheaf of groups $\mathbf{G}$ on $\Sm_k$ is {\em strongly $\aone$-invariant} if the cohomology presheaves $H^i(-,\mathbf{G})$ are $\aone$-invariant for $i = 0,1$ and {\em very strongly $\aone$-invariant} if any $\aone$-invariant quotient sheaf of groups is strongly $\aone$-invariant.  We write $\Grp^{\aone}_k$ for the category of strongly $\aone$-invariant sheaves of groups.  A Nisnevich sheaf of abelian groups is {\em strictly $\aone$-invariant} if the cohomology presheaves $H^i(-,\mathbf{A})$ are $\aone$-invariant for all $i \geq 0$; we write $\Ab^{\aone}_k$ for the category of strongly $\aone$-invariant sheaves of groups.  Over a perfect field $k$, any strongly $\aone$-invariant sheaf of  abelian groups is very strongly $\aone$-invariant \cite[Proposition 2.8]{ABHWhitehead}.  We will also write $\Set^{\aone}_k$ for the category of $\aone$-invariant sheaves of sets.  It will be useful to also consider the categories $\Set_k, \Grp_k, \Ab_k$ of Nisnevich sheaves of sets, groups or abelian groups, with the evident forgetful functors between these categories.
\end{entry}

\begin{rem}
	%A Nisnevich sheaf of groups $\mathbf{G}$ on $\Sm_k$ is strongly $\aone$-invariant if and only if the classifying space $B\mathbf{G}$ is motivic local (this follows by combining \cite{MV}).  
    Morel established that if $k$ is a perfect field, then strongly $\aone$-invariant sheaves of abelian groups are strictly $\aone$-invariant \cite[Corollary 5.45]{MField}, i.e., the evident inclusion
    \[
    \Ab^{\aone}_k \longrightarrow \Grp^{\aone}_k \cap \Ab_k,
    \]
    is an equivalence.  Thus, in practice we will only speak of strong $\aone$-invariance of Nisnevich sheaves of groups.  The category of strictly $\aone$-invariant sheaves $\Ab^{\aone}_k$ is abelian (see Paragraph~\ref{par:homotopytstructureS1} for further discussion of this point).
\end{rem}

\begin{entry}
	\label{par:aonelocalityofspaces}
A pointed connected space $\mathscr{X} \in \Shv_{Nis}(\Sm_k)$ is $\aone$-invariant if and only if the homotopy sheaves $\bpi_i(\mathscr{X})$ are strongly $\aone$-invariant for all $i \geq 1$ \cite[Lemma 2.2.11]{AWW}.  Taken together, the above results also imply that the layers of the Postnikov tower of any connected space $\mathscr{X}$ are themselves $\aone$-invariant. 
%\tom{mention here that the connected component of the base point is also $\A^1$-invariant?}
\end{entry}

\begin{entry}[Contraction]
	\label{par:contraction}
	Recall that if $\mathbf{G}$ is a pointed presheaf of groups on $\Sm_k$, then its contraction $\mathbf{G}_{-1}$ is the presheaf defined by
	\[
	1 \longrightarrow \mathbf{G}_{-1}(U) \longrightarrow \mathbf{G}(U \times \gm{}) \longrightarrow \mathbf{G}(U),
	\]
	where the second map is induced by $id \times 1: U \to U \times \gm{}$ \cite[p. 33]{MField}.  Morel showed that contraction is an endofunctor of the category of $\Grp^{\aone}_k$ \cite[Lemma 2.32]{MField} that preserves exact sequences \cite[Lemma 7.33]{MField}.  The key result about contractions that we need is: if $\mathscr{X}$ is a pointed connected motivic space, then 
	\[
	\bpi_i(\Omega^{1,1} \mathscr{X}) \weq \bpi_i(\mathscr{X})_{-1},
	\]
	which is established in \cite[Theorem 6.13]{MField}.  
\end{entry}
%
%\begin{entry}[Quotients of strongly $\A^1$-invariant sheaves]
%\end{entry}
\begin{lem} \label{lem:lmot-hspace-strongly-a1-inv}
Let $k$ be a perfect field and assume that $\mathscr X \in \Shv_\Nis(\Sm_k)$ is an $h$-space (grouplike monoid in the homotopy category). If the sheaf of groups $\bpi_0 \mathscr X$ receives a surjection from a very strongly $\A^1$-invariant sheaf, then $\bpi_0 \Lmot \mathscr X$ is very strongly $\A^1$-invariant.
\end{lem}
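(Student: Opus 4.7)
The plan is to reduce to the case of a sheaf of groups and then invoke Morel's connectivity theorem via the bar construction.

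Since $\mathscr X$ is a grouplike $H$-space in the homotopy category, $\mathbf{K} := \bpi_0 \mathscr X$ is a strict sheaf of groups. I would first argue that $\bpi_0 \Lmot \mathscr X \simeq \bpi_0 \Lmot \mathbf K$ as sheaves of groups: both functors $\mathscr X \mapsto \bpi_0 \Lmot \mathscr X$ and $\mathscr X \mapsto \bpi_0 \Lmot \bpi_0 \mathscr X$ are left adjoint to the inclusion of $\A^1$-invariant $0$-truncated sheaves into $\Shv_{\Nis}(\Sm_k)$ (the first directly, the second by factoring through the $0$-truncation adjunction on $\Shv_\Nis$ first); uniqueness of left adjoints then yields the identification. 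Hence it suffices to prove that $\bpi_0 \Lmot \mathbf K$ is strongly $\A^1$-invariant.

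Next, form the classifying space $B\mathbf K$ as the realization of the standard bar construction $B_\bullet \mathbf K$ with $B_n \mathbf K = \mathbf K^n$. Since $\Lmot$ is a left adjoint (hence preserves colimits) and preserves finite products (Paragraph~\ref{par:lmotproperties}), one obtains a natural identification $\Lmot B\mathbf K \simeq B(\Lmot \mathbf K)$, where the right-hand side is the bar construction of $\Lmot \mathbf K$ regarded as a group object in the $\infty$-topos $\Shv_{\Nis}(\Sm_k)$.

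Since $B\mathbf K$ is pointed and connected (tautologically for the classifying space of a group), Theorem~\ref{thm:unstableconnectivity} applied with $n = 0$ implies that $\Lmot B\mathbf K$ is pointed and connected as well. By Paragraph~\ref{par:aonelocalityofspaces}, each homotopy sheaf $\bpi_i(\Lmot B\mathbf K)$ for $i \geq 1$ is then strongly $\A^1$-invariant. The loop-space recognition principle ($\Omega B \simeq \id$ on group objects in the $\infty$-topos $\Shv_{\Nis}(\Sm_k)$) gives
\[
\bpi_0 \Lmot \mathbf K \simeq \bpi_0 \Omega B(\Lmot \mathbf K) \simeq \bpi_1 B(\Lmot \mathbf K) \simeq \bpi_1 \Lmot B \mathbf K,
\]
which is therefore strongly $\A^1$-invariant, as desired.

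The main technical subtlety is the commutation $\Lmot B\mathbf K \simeq B(\Lmot \mathbf K)$, which requires care with the $\infty$-categorical semantics of the bar construction in order to ensure coherent preservation of the group-object structure. The hypothesis that $\bpi_0 \mathscr X$ receives a surjection from a strongly $\A^1$-invariant sheaf does not seem to intervene explicitly in the sketch above; it is plausibly retained either to control an accessibility issue I have glossed over, or to support an alternative more concrete argument that exhibits $\bpi_0 \Lmot \mathscr X$ directly as a quotient of a strongly $\A^1$-invariant sheaf without invoking the $\infty$-topos recognition machinery.
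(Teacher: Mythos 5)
Your argument has a genuine circularity at its central step. The identification $\Omega\bigl(\Lmot B\mathbf K\bigr) \weq \Lmot \mathbf K$ --- equivalently, the statement that $\Lmot$ preserves the fiber sequence $\mathbf K \to \ast \to B\mathbf K$, or that loop--deloop recognition holds in $\Spc(k)$ rather than merely in the $\infty$-topos $\Shv_\Nis(\Sm_k)$ --- is exactly the assertion of Theorem~\ref{thm:lmotandpullbacks} and Lemma~\ref{lem:loopgroup-basics}(1), and both of those results carry the hypothesis that $\bpi_0(\Lmot \Omega(\ph))$ is strongly $\A^1$-invariant. That hypothesis is the conclusion you are trying to prove. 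What you can extract unconditionally is that $\bpi_1(\Lmot B\mathbf K)$ is strongly $\A^1$-invariant (by Theorem~\ref{thm:unstableconnectivity} and Paragraph~\ref{par:aonelocalityofspaces}), but without the commutation of $\Lmot$ with $\Omega$ there is no identification of $\bpi_1(\Lmot B\mathbf K)$ with $\bpi_0(\Lmot\mathbf K)$; in general these differ. A second, smaller gap: your reduction to $\mathbf K = \bpi_0\mathscr X$ via uniqueness of left adjoints presupposes that $\mathscr X \mapsto \bpi_0\Lmot\mathscr X$ actually lands in $\A^1$-invariant $0$-truncated sheaves, i.e.\ that $\bpi_0$ of a motivic space is $\A^1$-invariant --- this is Morel's $\pi_0$-conjecture and is not known in general, so neither of your two candidate functors is visibly the left adjoint you describe.

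The fact that you never use the surjection hypothesis is the tell. The paper's proof is shorter and uses it essentially: since $\Lmot$ preserves finite products (including the shearing map), $\Lmot\mathscr X$ is again a grouplike $H$-space, so $\bpi_0\Lmot\mathscr X$ is $\A^1$-invariant by Choudhury's theorem; the map $\bpi_0\mathscr X \to \bpi_0\Lmot\mathscr X$ is surjective by \cite[\S 2 Corollary 3.22]{MV}, so $\bpi_0\Lmot\mathscr X$ receives a surjection from a strongly $\A^1$-invariant sheaf; and an $\A^1$-invariant sheaf of groups with this property is strongly $\A^1$-invariant, again by Choudhury. The upgrade from $\A^1$-invariance to \emph{strong} $\A^1$-invariance (i.e.\ $\A^1$-invariance of $H^1$ as well) is precisely where the surjection hypothesis enters, and no bar-construction argument can manufacture it for free.
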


\begin{proof}
We know that $\Lmot \mathscr X$ is still an $h$-space (since $\Lmot$ preserves finite products, including the shearing equivalence $\mathscr X \times \mathscr X \to \mathscr X \times \mathscr X$, $(x,y) \mapsto (xy, y)$), and hence $\bpi_0 \Lmot \mathscr X$ is $\A^1$-invariant \cite[Theorem 4.18]{Choudhuryhspace}.
We also know that $\bpi_0 \mathscr X \to \bpi_0 \Lmot \mathscr X$ is surjective \cite[\S 2 Corollary 3.22]{MV}.  Since $\bpi_0 \mathscr X$ receives a surjection from a very strongly $\A^1$-invariant sheaf hence so does the $\A^1$-invariant sheaf $\bpi_0 \Lmot \mathscr X$, which is thus strongly $\A^1$-invariant by the definition of very strongly $\aone$-invariant sheaf of groups (Paragraph~\ref{par:strongstrict}).  In that case, we conclude that $\bpi_0 \Lmot \mathscr X$ is again very strongly $\aone$-invariant as well \cite[Lemma 2.9]{ABHWhitehead}.  
\end{proof}

\begin{lem} 
	\label{lem:sifted-colim-strongly-A1-inv}
Let $k$ be a perfect field.  The forgetful functor 
\[ 
\Grp^{\aone}_k \longrightarrow \Set^{\aone}_k \cap \Grp_k
\] 
preserves geometric realizations of simplicial diagrams $\mathbf G_\bullet$ with $\mathbf G_0$ very strongly $\A^1$-invariant.
\end{lem}

\begin{proof}
Let $|\mathbf G_\bullet|^\Nis$ denote the colimit of the diagram in the statement in the category of Nisnevich sheaves of spaces.  In that case, $\bpi_0 |\mathbf G_\bullet|^\Nis$ is a quotient of the very strongly $\A^1$-invariant sheaf $\mathbf G_0$. In that case, Lemma~\ref{lem:lmot-hspace-strongly-a1-inv} implies that $\mathbf C := \bpi_0 \Lmot |\mathbf G_\bullet|^\Nis$ is grouplike and very strongly $\A^1$-invariant.  We conclude that $\mathbf C$ is the common colimit of $\mathbf G_\bullet$ in the two categories.
\end{proof}

\begin{rem}
In \cite[Lemma 2.9(3)]{ABHWhitehead}, we established that the forgetful functor of Lemma~\ref{lem:sifted-colim-strongly-A1-inv} preserves filtered colimits as well.  There are, however, examples of categories that possess both filtered colimits and reflexive coequalizers (the $1$-categorical version of geometric realizations) yet fail to possess all sifted colimits \cite[Example 1.4]{ARV}.  If we knew that the category of very strongly $\aone$-invariant sheaves had all colimits, then we could appeal to \cite[Theorem 2.1]{ARV} or \cite[Corollary 5.5.8.17]{HTT} to conclude that the forgetful functor preserved sifted colimits. 
\end{rem}

\begin{cor} \label{cor:sifted-colim-strongly-A1-inv}\NB{could go even weaker than $\mathscr E_1$ here...}\NB{had to adapt this because of sifted stuff}
Let $k$ be a perfect field.  
%Assume $\mathbf{I}$ is a sifted category and $\mathscr{X}$ is an $\mathbf{I}$-diagram in $\Mon(\Spc(k))$.  If either
Let $\mathscr X_\bullet$ be a simplicial diagram in $\Mon(\Spc(k))$ such that $\bpi_0(\mathscr{X}_n)$ is a group and $\bpi_0(\mathscr{X}_0)$ is very strongly $\aone$-invariant.
Then
%\begin{enumerate}[noitemsep,topsep=1pt]
%	\item each $\bpi_0(\mathscr{X}_i), i \in \mathbf{I}$ is very strongly $\aone$-invariant, or
%	\item $\mathbf{I} = \Delta^{op}$ and $\bpi_0(\mathscr{X}_0)$ is very strongly $\aone$-invariant, then
%\end{enumerate} 
$\bpi_0 \colim \mathscr X$ is grouplike and very strongly $\A^1$-invariant.
\end{cor}

\begin{proof}
As in the proof of Lemma~\ref{lem:sifted-colim-strongly-A1-inv}, $\bpi_0 \colim \mathscr{X}$ is grouplike and $\A^1$-invariant by appeal to \cite[Theorem 4.18]{Choudhuryhspace}.  It follows that $\bpi_0 \colim \mathscr X = \colim \bpi_0 \mathscr X$ in the category of $\A^1$-invariant sheaves of groups.
This colimit is very strongly $\A^1$-invariant by Lemma \ref{lem:sifted-colim-strongly-A1-inv}.%  When $\mathbf{I} = \Delta$, we know that $\bpi_0 \mathscr X_0 \to \bpi_0 \colim \mathscr X_\bullet$ is surjective, and so again the target is very strongly $\A^1$-invariant by Lemma \ref{lem:lmot-hspace-strongly-a1-inv}.
\end{proof}

\subsubsection*{Motivic localization, colimits and limits}
\begin{entry}
	\label{par:kanloopgroup}
	Let $\mathscr X \in \Shv_\Nis(\Sm_k)_*$.
	Then $\Omega \mathscr X$ has the structure of a group in the sense of homotopy theory.
	In fact the induced functor from pointed connected Nisnevich sheaves to groups is an equivalence \cite[Lemma 7.2.2.11]{HTT} with inverse denoted $B$.
	Furthermore given $f: \mathscr E \to \mathscr B$, $\fib(f)$ carries a canonical action by $\Omega \mathscr B$.  Sending $f: \mathscr{E} \to \mathscr{B}$ to $\fib(f)$ with its $\Omega \mathscr{B}$-action defines a functor from the category of spaces over $\mathscr B$ to spaces with $\Omega \mathscr B$-action.  In fact, if $\mathscr B$ is connected then this functor is an equivalence, with inverse obtained by sending a space $\mathscr{F}$ with action of $\Omega \mathscr{B}$ to the map $\mathscr{F} \sslash \Omega \mathscr{B} \to \ast \sslash \Omega \mathscr{B}$ where $\mathscr{F} \sslash \Omega \mathscr{B}$ denotes the (homotopy) quotient, a.k.a. bar construction, i.e., the geometric realization of 
	\[ 
	\xymatrix{
		\mathscr{F} & \ar@<.05 em>[l]\ar@<-.25 em>[l] \mathscr{F} \times \Omega \mathscr{B} & \ar@<.25 em>[l]\ar@<-.05 em>[l]\ar@<-.35 em>[l] \cdots
	}
	\]
	These results carry over to the motivic situation, with one small modification.
\end{entry}

\begin{lem} 
	\label{lem:loopgroup-basics}
	Let $k$ be a perfect field.
	\begin{enumerate}[noitemsep,topsep=1pt]
		\item The functor $\Omega: \Spc(k)_* \to \Mon(\Spc(k))$ induces an equivalence between pointed connected motivic spaces and grouplike monoids in motivic spaces with the additional condition that $\bpi_0$ be strongly $\A^1$-invariant; its inverse is the classifying space functor $B$.
		\item Let $\mathscr B \in \Spc(k)_*$ be connected. There is an equivalence of categories between $\mathscr E \in \Spc(k)_{/\mathscr B}$ and the category of $\mathscr F \in \Spc(k)$ with an action by $\Omega \mathscr B$. It sends $\mathscr E$ to $\fib(\mathscr E \to \mathscr B)$ with the canonical action, and it sends $\mathscr F$ to $\mathscr F \sslash \Omega \mathscr B$.
	\end{enumerate}
\end{lem}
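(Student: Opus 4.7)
Both parts are motivic refinements of the classical facts recalled in Paragraph \ref{par:kanloopgroup}, which hold in the ambient $\infty$-topos $\Shv_\Nis(\Sm_k)$. The plan is to show that those equivalences restrict to the indicated subcategories of motivic spaces. The workhorse throughout will be the criterion in Paragraph \ref{par:aonelocalityofspaces}: a pointed connected Nisnevich sheaf of spaces is $\A^1$-invariant if and only if its homotopy sheaves are strongly $\A^1$-invariant.

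For (1), I would first note that $\Omega$ preserves $\A^1$-invariance since it is a limit, and that for $\mathscr X \in \Spc(k)_*$ connected one has $\bpi_0 \Omega \mathscr X = \bpi_1 \mathscr X$, which is strongly $\A^1$-invariant by Paragraph \ref{par:aonelocalityofspaces}; so $\Omega$ lands in the target category. Conversely, given a grouplike $G \in \Mon(\Spc(k))$ with $\bpi_0 G$ strongly $\A^1$-invariant, the plan is to verify that $BG$ (computed in $\Shv_\Nis$) is $\A^1$-invariant by identifying $\bpi_1 BG = \bpi_0 G$ and $\bpi_{i+1} BG = \bpi_i G$ for $i \geq 1$. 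The first is strongly $\A^1$-invariant by assumption, and for $i \geq 1$ the sheaves $\bpi_i G$ are strongly $\A^1$-invariant because the identity component of $G$ is a pointed connected motivic space, so Paragraph \ref{par:aonelocalityofspaces} applies, and the monoid structure transports $\bpi_i$ at any other basepoint to $\bpi_i$ at the identity. Re-applying that paragraph in the other direction yields $BG \in \Spc(k)_*^{\geq 1}$.

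For (2), I would again argue that the classical equivalence restricts. The fiber direction sends $\mathscr E \to \mathscr B$ to $\fib(\mathscr E \to \mathscr B)$, which is $\A^1$-invariant as a limit. For the Borel construction direction, I would invoke the fiber sequence $\mathscr F \to \mathscr F \sslash \Omega \mathscr B \to \mathscr B$ in $\Shv_\Nis$ (which exists because $\mathscr B$ is pointed connected) and its long exact sequence of homotopy sheaves: the higher homotopy sheaves of $\mathscr F \sslash \Omega \mathscr B$ sit in exact sequences between strongly $\A^1$-invariant sheaves coming from $\mathscr F$ and $\mathscr B$, and such extensions remain strongly $\A^1$-invariant.

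The most delicate point I expect is the $\bpi_0$-analysis for the Borel construction: one must see that $\bpi_0(\mathscr F \sslash \Omega \mathscr B)$, which arises as the sheafified orbit set of $\bpi_0 \mathscr F$ under the $\bpi_1 \mathscr B$-action, is an $\A^1$-invariant sheaf of sets, and that each connected component of $\mathscr F \sslash \Omega \mathscr B$ is itself an $\A^1$-invariant pointed connected motivic space so that the homotopy sheaf criterion applies component-wise. Here the connectedness of $\mathscr B$ is crucial: it ensures the Borel construction is a locally trivial $\mathscr F$-bundle over $\mathscr B$, so components of $\mathscr F \sslash \Omega \mathscr B$ correspond bijectively to $\bpi_1 \mathscr B$-orbits on $\bpi_0 \mathscr F$ and the component structure of $\mathscr F$ transports across.
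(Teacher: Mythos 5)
Your part (1) and the overall structure of part (2) match the paper's proof: both reduce to the corresponding equivalences in the ambient $\infty$-topos $\Shv_\Nis(\Sm_k)$ and then check that the functors preserve the motivic subcategories, with part (1) handled exactly as you do via the homotopy-sheaf characterization of Paragraph \ref{par:aonelocalityofspaces}. The real content of part (2) is the essential surjectivity step: showing that $\mathscr F \sslash \Omega \mathscr B$ is again $\A^1$-invariant when $\mathscr F$ and $\mathscr B$ are. The paper dispatches this by citing \cite[Lemma 2.2.10]{AWW}, which proves directly (without passing through homotopy sheaves) that the total space of a fiber sequence with connected $\A^1$-local base and $\A^1$-local fiber is $\A^1$-local.

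Your substitute for that citation has a genuine gap, precisely at the point you flag as delicate. First, the criterion of Paragraph \ref{par:aonelocalityofspaces} is only available for \emph{pointed connected} sheaves, so a homotopy-sheaf-by-homotopy-sheaf verification of $\A^1$-locality for the possibly disconnected, unpointed space $\mathscr F \sslash \Omega \mathscr B$ is not licensed by it; one would separately need $\bpi_0$ to be an $\A^1$-invariant sheaf of sets and would need to choose local basepoints component by component. Second, and more seriously, your resolution --- that components of $\mathscr F \sslash \Omega \mathscr B$ biject with $\bpi_1\mathscr B$-orbits on $\bpi_0 \mathscr F$ --- does not show that this orbit sheaf is $\A^1$-invariant: it is the Nisnevich sheafification of a presheaf quotient, and neither quotients nor sheafification preserve $\A^1$-invariance in general (this failure is exactly why $\Lmot$ is an infinite iteration and why the paper needs results like Lemma \ref{lem:lmot-hspace-strongly-a1-inv} in the group case). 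So the $\bpi_0$ step is not closed by your argument; you should either invoke \cite[Lemma 2.2.10]{AWW} as the paper does, or reprove it by comparing the fibrations $\mathscr E(U) \to \mathscr B(U)$ and $\mathscr E(U\times\A^1) \to \mathscr B(U\times\A^1)$ of sections directly, using connectedness of $\mathscr B$ to identify all fibers Nisnevich-locally with $\mathscr F$, rather than arguing one homotopy sheaf at a time.
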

\begin{proof}
	Point (1) is an immediate consequences of the analogous results in $\Shv_\Nis(\Sm_k)$, using the characterization of pointed, connected motivic local spaces in terms of homotopy sheaves (Paragraph \ref{par:aonelocalityofspaces}). For (2), it is clear that the functor is fully faithful. To prove essential surjectivity it suffices to show that if $\mathscr F \in \Shv_\Nis(\Sm_k)_*$ carries an action by $\Omega \mathscr B$ with $\mathscr F, \mathscr B$ $\A^1$-invariant, then also $\mathscr F \sslash \Omega \mathscr B$ is $\A^1$-invariant. This follows (via \cite[Lemma 2.2.10]{AWW}) by considering the fiber sequence 
	\[ 
	\mathscr F \longrightarrow \mathscr F \sslash \Omega \mathscr B \longrightarrow \mathscr B \in \Shv_\Nis(\Sm_k), 
	\] 
	in which by assumption $\mathscr B$ is connected and $\mathscr F, \mathscr B$ are both $\A^1$-invariant.
\end{proof}

\begin{theorem}
	\label{thm:lmotandpullbacks}
	Assume $k$ is a field and suppose we have a cartesian diagram of pointed objects in $\Shv_{\Nis}(\Sm_k)$:
	\[
	\xymatrix{
		\mathscr{X}_{00} \ar[r]^{g_0}\ar[d]^{f_0} & \mathscr{X}_{10} \ar[d]^{f_1} \\
		\mathscr{X}_{01} \ar[r]^{g_1} & \mathscr{X}_{11}
	}
	\]
	that is pulled back from a perfect subfield.  If $\mathscr{X}_{11}$ is connected and $\bpi_0(\Lmot \Omega \mathscr{X}_{11})$ is strongly $\aone$-invariant (in particular, if $\bpi_1(\mathscr{X}_{11})$ is strongly $\aone$-invariant, e.g., trivial), then the diagram
	\[
	\xymatrix{
		\mathrm{L}_{mot}\mathscr{X}_{00} \ar[r]^{g_0}\ar[d]^{f_0} & \mathrm{L}_{mot}\mathscr{X}_{10} \ar[d]^{f_1} \\
		\mathrm{L}_{mot}\mathscr{X}_{01} \ar[r]^{g_1} & \mathrm{L}_{mot}\mathscr{X}_{11}
	}
	\]
	is cartesian as well.
\end{theorem}

\begin{proof}
	We can view $\mathscr X_{00}, \mathscr X_{01}, \mathscr X_{10} \in \Shv_\Nis(\Sm_k)_{/\mathscr X_{11}}$.
	Put $\mathscr G = \Omega \mathscr X_{11}$, $\mathscr F := \fib(\mathscr X_{01} \to \mathscr X_{11})$, $\mathscr H := \fib(\mathscr X_{10} \to \mathscr X_{11})$.
	Under the equivalence of $\Shv_\Nis(\Sm_k)_{/\mathscr X_{11}}$ with spaces with an action by $\mathscr G$ (Lemma~\ref{lem:loopgroup-basics}), $\mathscr X_{01}, \mathscr X_{10}$ correspond respectively to $\mathscr F, \mathscr H$.
	Noting that $\mathscr X_{00}$ is the product of $\mathscr X_{01}, \mathscr X_{10} \in \Shv_\Nis(\Sm_k)_{/\mathscr X_{11}}$ and applying the inverse equivalence, we deduce that \[ \mathscr X_{00} \weq (\mathscr F \times \mathscr H) \sslash \mathscr G. \]
	The same logic applies to $\Lmot \mathscr X_{10} \times_{\Lmot \mathscr X_{11}} \Lmot \mathscr X_{01}$.
	It thus suffices to prove that $\Lmot$ preserves the fiber sequences $\mathscr G \to * \to \mathscr X_{11}$, $\mathscr F \to \mathscr X_{01} \to \mathscr X_{11}$ and $\mathscr H \to \mathscr X_{10} \to \mathscr X_{11}$.
	This is proved in \cite[Theorem 2.3.3]{AWW}.
\end{proof}

\begin{proposition}[Realization fibrations]
	\label{prop:realizationfibrations}
	Assume $k$ is a perfect field and $\mathbf{I}$ is a sifted category.
        Assume $\mathscr{X}^{rs}$, $r,s = 0,1$ are $\mathbf I$-diagrams in $\ho{k}$ fitting into a cartesian square of the form:
	\[
	\xymatrix{
		\mathscr{X}^{00} \ar[r]\ar[d] & \mathscr{X}^{01} \ar[d] \\
		\mathscr{X}^{10} \ar[r] & \mathscr{X}^{11}.
	}
	\]
	Assume that either (a) $\mathscr{X}^{11}$ is objectwise simply connected, or (b) $\mathscr{X}^{11}$ is objectwise connected, $\mathbf I = \Delta^{op}$ and $\bpi_0(\Omega \mathscr{X}^{11}_0)$ is very strongly $\aone$-invariant.
	Then, there is a cartesian square of motivic spaces of the form
	\[
	\xymatrix{
		|\mathscr{X}^{00}| \ar[r]\ar[d] & |\mathscr{X}^{01}| \ar[d] \\
		|\mathscr{X}^{10}| \ar[r] & |\mathscr{X}^{11}|,
	}
	\]
	where $|(\ph)|$ means $\colim_{\mathbf I}$ if $\mathbf I \ne \Delta^{op}$.
\end{proposition}

\begin{proof}
	Write $|-|^{\Nis}$ for the geometric realization computed in $\Shv_{\Nis}(\Sm_k)$.  In that case, observe that the hypothesis that $\mathscr{X}^{11}$ is objectwise connected implies that there is a cartesian square of the form
	\[
	\xymatrix{
		|\mathscr{X}^{00}|^{\Nis} \ar[r]\ar[d] & |\mathscr{X}^{01}|^{\Nis} \ar[d] \\
		|\mathscr{X}^{10}|^{\Nis} \ar[r] & |\mathscr{X}^{11}|^{\Nis};
	}
	\]
	this result was mentioned by Rezk \cite[Theorem 4.4]{Rezkpistar}, see also \cite[Lemma 5.5.6.17]{HA}.  In the special case where $\mathscr{X}^{01} = \mathscr{X}^{10} = \ast$ and $\mathscr{X}^{00} = \Omega \mathscr{X}^{11}$ the same argument yields an equivalence $\Omega |\mathscr{X}^{11}|^{\Nis} \weq |\Omega \mathscr{X}^{11}|^{\Nis}$.
	In case (a), $\bpi_0 \Lmot |\Omega \mathscr{X}^{11}|^{\Nis} = 0$ and so strongly $\A^1$-invariant, whereas in case (b), Corollary \ref{cor:sifted-colim-strongly-A1-inv} shows that $\bpi_0 \Lmot |\Omega \mathscr{X}^{11}|^{\Nis}$ is strongly $\A^1$-invariant.
	The result thus follows by appeal to Theorem~\ref{thm:lmotandpullbacks}.
\end{proof}

\begin{proposition}
	\label{prop:simplicialloopssiftedcolimits}
	If $k$ is a perfect field, then the functor $\Omega$ preserves sifted colimits of simply connected motivic spaces, and geometric realizations of simplicial diagrams $\mathscr X_\bullet$ of connected motivic spaces with $\bpi_1(\mathscr{X}_0)$ very strongly $\aone$-invariant. 
\end{proposition}

\begin{proof}
	%Assume $\mathbf{I}$ is a sifted category and suppose $\mathscr{X}: \mathbf{I} \to \ho{k}_*$ is a sifted diagram of motivic spaces.  In that case, 
        There is a cartesian square of the form
	\[
	\xymatrix{
		\Omega \mathscr{X} \ar[r] \ar[d] & \ast \ar[d] \\
		\ast \ar[r]& \mathscr{X}.
	}
	\]  
	The result then follows immediately from Proposition~\ref{prop:realizationfibrations}.% (using that sifted simplicial sets are weakly contractible \cite[Proposition 5.5.8.7]{HTT}).
\end{proof}

\begin{rem}
	Corresponding results about preservation of sifted colimits by $\Omega^{1,1}$ and $\Omega^{2,1}$ will be established in Proposition~\ref{prop:tateloopssiftedcolimits}, but the proofs of these results are more indirect.  
\end{rem}

\subsubsection*{Nilpotence and functorial towers}
\begin{entry}[Lower central series]
	\label{par:lowercentralseries}
	Assume $\bpi$ is a strongly $\aone$-invariant sheaf of groups acting on a very strongly $\aone$-invariant sheaf of groups $\mathbf{G}$.  In that case, there exists \cite[Construction 4.14]{ABHWhitehead} an initial strongly $\aone$-invariant quotient of $\mathbf{G}$ on which $\bpi$ acts trivially; we write $\Gamma^{2}_{\bpi}\mathbf{G} \subset \mathbf{G}$ for the (strongly $\aone$-invariant) kernel of the quotient map.  In the case where $\mathbf{G}$ acts on itself by conjugation, we write $[\mathbf{G},\mathbf{G}]_{\aone}$ for this subsheaf and refer to it as the $\aone$-commutator subsheaf ({a priori} it could differ from commutator subgroup sheaf).  Inductively, one then defines the {\em $\aone$-lower central series} $\Gamma^i_{\bpi}\mathbf{G}$ for the action of $\bpi$ on $\mathbf{G}$ in the usual way: $\Gamma^{1}_{\bpi}\mathbf{G} = \mathbf{G}$ and $\Gamma^{i+1}_{\bpi}\mathbf{G} = \Gamma^2_{\bpi}\Gamma^{i}_{\bpi}\mathbf{G}$; when $\bpi = \mathbf{G}$ acting by conjugation we simply drop $\bpi$ from the notation.  An action of $\bpi$ on $\mathbf{G}$ is called {\em $\aone$-nilpotent} if and only if $\Gamma^i_{\bpi}\mathbf{G} = 1$ after finitely many steps, i.e., the $\aone$-lower central series is finite.
\end{entry}

%\begin{entry}
%	\label{par:derivedseries}
%	 One can also define the {\em $\aone$-derived series} by iterating the $\aone$-commutator construction.  Given a strongly $\aone$-invariant sheaf of groups $\mathbf{G}$, one sets $\mathbf{G}^{(0)}_{\aone} := \mathbf{G}$, and defines inductively $\mathbf{G}^{(i+1)}_{\aone} = [\mathbf{G}^{(i)}_{\aone},\mathbf{G}^{(i)}_{\aone}]_{\aone}$.  We say that a strongly $\aone$-invariant sheaf of groups is $\aone$-solvable if $\mathbf{G}^{(i)}_{\aone}$ terminates after finitely many steps.  By induction it follows that an $\aone$-solvable group is a successive extension of strictly $\aone$-invariant sheaves. 
%\end{entry}

%\begin{entry}[Solvable motivic spaces]
%	\label{par:solvablemotivicspaces}
%	A pointed, connected motivic space $\mathscr{X}$ will be called {\em solvable} if $\bpi_1(\mathscr{X})$ is $\aone$-solvable.  
%\end{entry}

\begin{proposition}
	\label{prop:functorialwhiteheadtowers}
	If $\mathscr{X} \in \Spc(k)_{*}$ is connected and $\bpi_1(\mathscr{X})$ is nilpotent, then $\mathscr{X}$ admits a functorial Whitehead tower.  More precisely, there exist a weakly increasing sequence of integers $1 \leq n_1 \leq n_2 \leq \cdots$, strictly $\aone$-invariant sheaves $\mathbf{A}_i(\mathscr{X})$, and spaces $\mathscr{X} \langle i \rangle$, $i \geq 0$, fitting into fiber sequences of the form
	\[
	\mathscr{X}\langle i+1 \rangle \longrightarrow \mathscr{X} \langle i \rangle \longrightarrow K(\mathbf{A}_i(\mathscr{X}),n_i)
	\]
	such that: 
	\begin{enumerate}[noitemsep,topsep=1pt]
		\item $\mathscr{X}\langle 0 \rangle = \mathscr{X}$,
		\item $\mathscr{X}\langle i \rangle$ is $n_i$-connective, and 
		\item $n_i \to \infty$ as $i \to \infty$.
	\end{enumerate}
\end{proposition}

\begin{proof}
	Set $\mathscr{X}\langle 0 \rangle = \mathscr{X}$.  We define $\mathscr{X} \langle i \rangle$ inductively as follows.  Since $\mathscr{X} \langle i \rangle$ is $n_i$-connective, its first non-trivial homotopy sheaf appears in degree $n_i$.  If $n_i = 1$, then we define $\mathscr{X} \langle i+1 \rangle$ to be the fiber of the composite map:
	\[
	\mathscr{X} \langle i \rangle \longrightarrow B\bpi_1(\mathscr{X}\langle i \rangle) \longrightarrow B\bpi_1(\mathscr{X})^{ab}_{\A^1};
	\]
	Since $\bpi_1(\mathscr{X})$ is nilpotent, it is very strongly $\aone$-invariant by \cite[Proposition 2.11]{ABHWhitehead}.  In that case, the kernel of the map $\bpi_1(\mathscr{X}) \to \bpi_1(\mathscr{X})^{ab}_{\aone}$ is again nilpotent and thus itself very strongly $\aone$-invariant.  Moreover, the resulting $\aone$-derived series necessarily terminates after finitely many steps, again because of the definition of nilpotence.  If $n_i > 1$, then we define $\mathscr{X} \langle i+1 \rangle$ to be the fiber of the corresponding $k$-invariant in the Postnikov tower.  The assumption that $\mathscr{X}$ has $\aone$-nilpotent fundamental sheaf of groups guarantees that $n_i$ is eventually greater than $1$ and we conclude.
\end{proof}

\begin{entry}[Nilpotent motivic spaces]
	\label{par:simpleandnilpotent}
	A pointed, connected motivic space $\mathscr{X}$ is called {\em simple} if $\bpi_1(\mathscr{X})$ is abelian and the action of $\bpi_1(\mathscr{X})$ on higher homotopy sheaves is trivial.  Any motivic $h$-space is simple.  A pointed connected motivic space is called (weakly) {\em nilpotent} if $\bpi_1(\mathscr{X})$ is $\aone$-nilpotent and acts (locally) $\A^1$-nilpotently on higher homotopy sheaves \cite[Definition 3.3.1]{AFHLocalization}.  If $\mathscr{X}$ is a nilpotent motivic space, then the usual Postnikov tower of $\mathscr{X}$ admits a functorial principal refinement \cite[Theorem 5.1]{ABHWhitehead}.  For later use, and to make the presentation more self-contained we summarize this in the following statement.
\end{entry}

\begin{proposition}
	\label{prop:functorialprincipalrefinement}
	If $\mathscr{X}$ is a nilpotent motivic space, then the Postnikov tower admits a functorial principal refinement.  More precisely, there exist a weakly increasing sequence of integers $n_i \geq 2$, strictly $\aone$-invariant sheaves $\mathbf{A}_i(\mathscr{X})$, morphisms $\mathscr{X} \to \mathscr{X}_i$, and fiber sequences of the form
	\[
	\mathscr{X}_{i+1} \longrightarrow \mathscr{X}_i \longrightarrow K(\mathbf{A}_i(\mathscr{X}),n_i) 
	\]
	such that: 
	\begin{enumerate}[noitemsep,topsep=1pt]
		\item $\mathscr{X}_0 = \ast$
		\item the induced map $\mathscr{X} \to \lim_i \mathscr{X}_i$ is an equivalence
		\item $n_i \to \infty$ as $i \to \infty$.
	\end{enumerate}
\end{proposition}

\subsubsection*{Localization at primes}
Let $R \subset \mathbb{Q}$ be a subring. We will need to consider unstable $R$-localization of $\ho{k}$ as discussed in \cite[Definition 4.3.1]{AFHLocalization}; we write $\mathscr{X}_R$ or $\mathscr{X} \tensor R$ for what we called $\mathrm{L}_R\mathscr{X}$ in \cite{AFHLocalization}.  In particular, we recall the following fact. 

\begin{theorem}[{\cite[Theorem 4.3.9]{AFHLocalization}}]
	\label{thm:rlocalizationlocalizeshomotopysheaves}
	Assume $k$ is a field.  If $\mathscr{X} \in \ho{k}_*$ is a weakly nilpotent motivic space, then $\mathscr{X}_R$ is connected and weakly nilpotent and the map
	\[
	\bpi_i(\mathscr{X}) \tensor R \longrightarrow \bpi_i(\mathscr{X}_R)
	\]
	is an isomorphism for every $i \geq 1$ (where for $i = 1$, the tensor product is interpreted as in \textup{\cite[Theorem 4.3.7]{AFHLocalization}})
\end{theorem}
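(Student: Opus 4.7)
The plan is to reduce the theorem to a statement about motivic Eilenberg--Mac Lane spaces via the functorial principal refinement of the Postnikov tower, and then reassemble along the tower. The overall strategy parallels the classical Hilton--Mislin--Roitberg development of localization of nilpotent spaces, but with strongly $\A^1$-invariance as the extra structure that must be preserved at every step.

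First I would handle the Eilenberg--Mac Lane building blocks. For a strictly $\A^1$-invariant sheaf of abelian groups $\mathbf A$ and $n \geq 1$, I claim that $K(\mathbf A \otimes R, n)$ is $R$-local in $\ho{k}_*$ and that the natural map $K(\mathbf A, n) \to K(\mathbf A \otimes R, n)$ exhibits the target as the $R$-localization. The $R$-locality reduces to checking that $\mathbf A \otimes R$ is uniquely $\ell$-divisible for each prime $\ell$ inverted in $R$, which is immediate from flatness of $R$ over $\Z$; strict $\A^1$-invariance of $\mathbf A \otimes R$ follows from the same flatness, since cohomology commutes with filtered colimits of sheaves and $-\otimes R$ can be expressed as such a colimit. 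For the case $n=1$ with an $\A^1$-nilpotent strongly $\A^1$-invariant sheaf of groups $\mathbf G$, I would construct $\mathbf G \otimes R$ inductively along the $\A^1$-lower central series from Paragraph~\ref{par:lowercentralseries}, tensoring each abelian quotient with $R$ and rebuilding via iterated central extensions. At each extension step, one must verify strong $\A^1$-invariance of the intermediate layers, for which Lemma~\ref{lem:lmot-hspace-strongly-a1-inv} and Corollary~\ref{cor:sifted-colim-strongly-A1-inv} are tailor-made.

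Next I would carry out the induction along the principal refinement of Proposition~\ref{prop:functorialprincipalrefinement} (extended to the weakly nilpotent setting). Given $\mathscr X = \lim_i \mathscr X_i$ with fiber sequences $\mathscr X_{i+1} \to \mathscr X_i \to K(\mathbf A_i(\mathscr X), n_i)$, I would inductively construct $(\mathscr X_i)_R$ as the fiber of the $R$-localized $k$-invariant $(\mathscr X_i)_R \to K(\mathbf A_i(\mathscr X) \otimes R, n_i)$. The universal property of $R$-localization combined with the fact that fibers of maps between $R$-local objects are $R$-local gives the induction, and the long exact sequence of homotopy sheaves — together with exactness of $-\otimes R$ on the appropriate subcategory — verifies the claim on homotopy sheaves at each finite stage. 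Finally I would pass to the limit: because $n_i \to \infty$, the tower $(\mathscr X_i)_R$ has fibers of increasing connectivity, so by Theorem~\ref{thm:unstableconnectivity} the map $\mathscr X_R \to \lim_i (\mathscr X_i)_R$ is an equivalence and its homotopy sheaves may be computed levelwise, yielding the stated isomorphism $\bpi_i(\mathscr X) \otimes R \cong \bpi_i(\mathscr X_R)$.

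The principal obstacle is the $n_i = 1$ case. Defining a genuinely motivic tensor product $\mathbf G \otimes R$ for an $\A^1$-nilpotent strongly $\A^1$-invariant sheaf of groups $\mathbf G$ such that the construction preserves strong $\A^1$-invariance and compatibility with the motivic structure is not formal: naive quotient constructions need not remain strongly $\A^1$-invariant, and one must pay close attention to the interaction between $\Lmot$ and the successive central extensions. This is precisely the content of \cite[Theorem 4.3.7]{AFHLocalization}, which supplies the required tensor product and justifies the parenthetical remark in the statement concerning the interpretation of $\bpi_1(\mathscr X) \otimes R$. Once the $n_i = 1$ case is under control, the higher $n_i$ case and the passage to the limit are essentially formal consequences of the abelian-sheaf machinery and Morel's strict $\A^1$-invariance theorem.
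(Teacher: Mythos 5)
The paper does not prove this statement at all: it is imported verbatim as \cite[Theorem 4.3.9]{AFHLocalization}, so there is no internal proof to compare against. Your outline correctly reconstructs the strategy of the cited proof — localize the Eilenberg--Mac Lane layers, induct along a principal refinement of the Postnikov tower, and pass to the limit using that $n_i \to \infty$ — and you rightly isolate the $n_i = 1$ case (the motivic tensor product $\mathbf G \otimes R$ for $\A^1$-nilpotent sheaves of groups) as the genuinely nonformal input, which is exactly \cite[Theorem 4.3.7]{AFHLocalization}. Two minor points: your inductive step should read $(\mathscr X_{i+1})_R := \fib\bigl((\mathscr X_i)_R \to K(\mathbf A_i(\mathscr X) \otimes R, n_i)\bigr)$ rather than defining $(\mathscr X_i)_R$ as a fiber of a map out of itself, and the existence of a principal refinement for \emph{weakly} nilpotent (as opposed to nilpotent) spaces is itself a nontrivial input you are implicitly borrowing from the same source.
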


\subsection{Stable motivic homotopy theory and effectivity}
\label{ss:stablehomotopyandeffectivity}
In this section, we begin by quickly reviewing definitions of various motivic stable homotopy categories and adjunctions between them; we recall some facts about the relationship between $S^1$-spectra and $S^1$-infinite loop spaces, a.k.a. commutative monoids in spaces, and we include results here about when the right adjoints preserve colimits of various types (see \ref{par:presentablysymmetricmonoidalcategories}-\ref{rem:mappingspacesandsiftedcolimits}).  We then introduce notation about module spectra, slices and (very) effective spectra of various types in the associated categores (see \ref{par:modulespectra}-\ref{par:veryeffective}).  Then, we recall various facts about localization with respect to subrings of $\Q$ in motivic stable categories (see \ref{prop:dualizablesgenerate}-\ref{prop:pdivisibility}).  Next, we recall useful facts about generation of $t$-structures and use these facts to establish some strong generation results (see \ref{par:homotopytstructureS1}-\ref{prop:generationundercolimits}).  We recall results about rationalization of motivic stable homtopy categories and associated decompositions (see \ref{par:stablerationalization}-\ref{thm:rationalizedpluspart}).  Finally, this section closes with a discussion of conservativity of $\gm{}$-stabilization (see \ref{par:gmstabilization}-\ref{thm:conservativityofgmstabilization}).

\begin{entry}[Presentably symmetric monoidal categories]
	\label{par:presentablysymmetricmonoidalcategories}
	Write $\mathrm{Pr}^L$ for the $\infty$-category whose objects are presentable $\infty$-categories and morphisms are colimit preserving functors \cite[Definition 5.5.3.1]{HTT}.  The category $\mathrm{Pr}^L$ admits limits \cite[Proposition 5.5.3.13]{HTT} and colimits \cite[Theorem 5.5.3.18]{HTT}.  The category $\mathrm{Pr}^L$ can be equipped with a symmetric monoidal structure $\mathrm{Pr}^{L,\tensor}$ \cite[Proposition 4.8.1.15]{HA}.  By a {\em presentably symmetric monoidal category} we will mean a commutative algebra in $\mathrm{Pr}^{L,\tensor}$ and we write $\mathrm{CAlg}(\mathrm{Pr}^{L,\tensor})$ for the category of commutative algebra objects. Recall that a presentably symmetric monoidal $\infty$-category is the same thing as a symmetric monoidal $\infty$-category $\mathscr C$ such $\mathscr{C}$ is presentable and the tensor product $\mathscr C \times \mathscr C \to \mathscr C$ preserves colimits in each variable separately \cite[Notation 4.8.1.2(iv)]{HA}.
\end{entry}

\begin{entry}[Stable homotopy categories]
	\label{par:stablehomotopycategories}
	Given any presentably symmetric monoidal $\infty$-category $\mathcal{C}$ and a set of objects $X$ in $\mathcal{C}$, we can invert the tensor product with the objects $X$.  More precisely there exists a new presentably symmetric monoidal infinity category $\mathcal{C}[X^{-1}]$ with the following universal property in the infinity category $\mathrm{CAlg}(\mathrm{Pr}^{L,\tensor})$: given any functor $\mathcal{C} \to \mathcal{D}$ such that $f(x)$ is invertible for every $x \in X$, it factors uniquely though $\mathcal{C}[X^{-1}]$; we refer the reader to \cite[\S 2.1]{Robalo} for further details of this construction.
\end{entry}

\begin{entry}[$S^1$ and $\pone$-spectra]
	\label{par:s1vsp1}
	The category of pointed motivic spaces $\ho{k}_*$ is a presentably symmetric monoidal infinity category via the smash product of pointed spaces.  We define
	\[
	\begin{split}
	\SH^{S^1}(k) &:= \ho{k}_*[(S^1)^{-1}] \\
	\SH(k) &:= \ho{k}_*[({\pone})^{-1}].
	\end{split}
	\]
	These categories come equipped with canonical adjunctions
	\[
	\begin{split}
	&\xymatrix{
	\ho{k}_* \ar@<.5ex>[r]^{\Sigma^{\infty}_{S^1}} & \ar@<.5ex>[l]^{\Omega^{\infty}_{S^1}} \SH^{S^1}(k)
	} \\
	&\xymatrix{
	\ho{k}_* \ar@<.5ex>[r]^{\Sigma^{\infty}} & \ar@<.5ex>[l]^{\Omega^{\infty}} \SH(k)
	}
	\end{split}
	\]	
	where $\Sigma^{\infty}_{S^1}$ and $\Sigma^{\infty}$ are symmetric monoidal.  The categories $\SH^{S^1}(k)$ and $\SH(k)$ are stable $\infty$-categories and are generated under sifted colimits by $S^1$ or $\pone$-desuspensions of suspension spectra of smooth schemes.  We refer the reader to \cite[Proposition 6.4]{HoyoisEquiv} for further details.  For the moment, we observe only that   the passage from $S^1$-spectra to $\pone$-spectra also fits into an adjunction.
	\[
	\xymatrix{
		\SH^{S^1}(k) \ar@<.5ex>[r]^{\sigma^{\infty}} & \ar@<.5ex>[l]^{\omega^{\infty}} \SH(k)
	} 
	\]
	having properties analogous to those mentioned above. 
	(Indeed since $\mathbb P^1 \weq S^1 \wedge \Gm$ we have $\SH(k) \weq \SH^{S^1}(k)[\Gm^{-1}]$.)
\end{entry}

\begin{entry}
	\label{par:emsheaves}
	If $\mathbf{A}$ is a strictly $\aone$-invariant sheaf of groups, we will write $H\mathbf{A}$ for the Eilenberg--Mac Lane $S^1$-spectrum determined by $\mathbf{A}$.
\end{entry}

\subsubsection*{From $S^1$-spectra to commutative monoids and back}
Given an $S^1$-spectrum, we can consider the corresponding zero space, which is a connective commutative monoid.  When the source is motivic local, the associated commutative monoid has a strong $\aone$-invariance property.  The following result records the basic facts about the corresponding functor of taking $S^1$-infinite loops, especially as regards preservation of colimits.  

\begin{theorem} \label{thm:SHS-CMon-pres-colim}
	Let $k$ be a perfect field.  
	\begin{enumerate}[noitemsep,topsep=1pt]
		\item The functor
		\[ 
		\Omega^{\infty}_{S^1}: \SH^{S^1}(k)_{\ge 0} \longrightarrow \mathrm{CMon}(\Spc(k)) 
		\] 
		is fully-faithful, with image consisting of those grouplike commutative monoids such that $\bpi_0$ is strongly $\A^1$-invariant.
		\item The image subcategory  in the previous point is closed under formation of colimits in $\mathrm{CMon}(\Spc(k))$.  In particular $\SH^{S^1}(k)_{\ge 0} \to \mathrm{CMon}(\Spc(k))$ preserves colimits.
	\end{enumerate}
\end{theorem}

\begin{proof}
	The assertion that $\Omega^{\infty}_{S^1}$ is fully faithful as well as the characterization of its essential image is the content of \cite[Proposition 3.1.13 and Corollary 3.1.15]{EHKSYdeloop}.
	
	To establish the second point, it suffices to prove closure under finite coproducts, filtered colimits and geometric realizations \cite[Lemma 5.5.8.13]{HTT}\NB{better ref?}.  We must show that such colimits in $\CMon(\Spc(k))$ preserve the subcategory of commutative monoids with $\bpi_0$ grouplike and strongly $\A^1$-invariant.
	The case of finite coproducts (i.e., finite sums) is clear, and the same holds for filtered colimits.  For closure under geometric realizations, begin by noting that $\bpi_0$ is an abelian strongly $\aone$-invariant sheaf of groups and thus very strongly $\aone$-invariant by \cite[Proposition 2.8]{ABHWhitehead}.  In that case, we may appeal to Corollary \ref{cor:sifted-colim-strongly-A1-inv} to conclude.
\end{proof}

\begin{cor} \label{cor:CMon-pushout-mixed}
	Let $k$ be a perfect field.  By appeal to ~\textup{Corollary \ref{thm:SHS-CMon-pres-colim}} the functor $\Omega^{\infty}_{S^1}$ is part of an adjunction:
	\[ 
	\xymatrix{
		B^{\infty}_{mot}: \mathrm{CMon}(\Spc(k)) \ar@<.5ex>[r] & \ar@<.5ex>[l] \SH^{S^1}(k)_{\ge 0}: \Omega^{\infty}_{S^1}. 
	}
	\]
	Consider a pushout square in $\mathrm{CMon}(\Spc(k))$:
	\begin{equation*}
		\begin{CD}
			A @>>> B \\
			@VVV  @VVV \\
			C @>>> D.
		\end{CD}
	\end{equation*}
	If $\bpi_0(B), \bpi_0(C)$ are and strongly $\A^1$-invariant, then 
	\[ 
	D \weq \Omega^{\infty}_{S^1}(B^{\infty}_{mot}C \amalg_{B^{\infty}_{mot}A} B^{\infty}_{mot}B). \]
\end{cor}

\begin{proof}
	Since $\Omega^{\infty}_{S^1}$ preserves colimits by appeal to Corollary \ref{thm:SHS-CMon-pres-colim}, the assertion is just that $\bpi_0(D)$ is grouplike and strongly $\A^1$-invariant.
	Writing the pushout as a geometric realization, Corollary \ref{cor:sifted-colim-strongly-A1-inv} reduces this to showing that $\bpi_0(C \oplus D)$ is commutative and strongly $\A^1$-invariant, which is clear.
\end{proof}

\begin{proposition}
	\label{prop:tateloopssiftedcolimits}
	Assume $k$ is a field.
	\begin{enumerate}[noitemsep,topsep=1pt]
		\item The functor $\Omega^{\infty}_{S^1}: \SH^{S^1}(k)_{\geq 0} \longrightarrow \ho{k}_*$ preserves sifted colimits.
		\item If $k$ is furthermore perfect, then the functor $\Omega^{1,1}$ on $\ho{k}_*$ preserve sifted colimits of $1$-connected spaces.
		\item If $k$ is perfect, then the functor $\Omega^{2,1}$ preserves sifted colimits of $1$-connected spaces, while the functor $\Omega^{2,1}\Sigma^{2,1}$ preserves sifted colimits of connected spaces.
	\end{enumerate}
\end{proposition}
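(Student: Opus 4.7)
I would treat the three assertions as linked: (3) reduces to (2) plus the simplicial loop preservation already established (Proposition~\ref{prop:simplicialloopssiftedcolimits}), while (2) rests on (1) and an induction along the Postnikov tower. For (1), I would appeal to a motivic version of May's infinite-loop recognition theorem (cf.\ Robalo, Bachmann--Hoyois): the functor $\Omega^\infty_{S^1}$ identifies $\SHS(k)_{\geq 0}$ with the full subcategory of $\mathrm{CMon}^{gp}_{\mathbb E_\infty}(\Spc(k)_*)$ of grouplike $\mathbb E_\infty$-monoids with strongly $\aone$-invariant $\bpi_0$. Under this identification $\Omega^\infty_{S^1}$ becomes the forgetful functor, which preserves sifted colimits by Higher Algebra Proposition~3.2.3.1; Corollary~\ref{cor:sifted-colim-strongly-A1-inv} ensures that the strong $\aone$-invariance condition on $\bpi_0$ is preserved under sifted colimits.

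\textbf{Plan for (2).} I would argue by induction along the Postnikov tower of $\mathscr X$. The base case is a motivic Eilenberg--Mac Lane space $K(\mathbf A, n)$ with $\mathbf A$ strictly $\aone$-invariant: Morel's contraction identity (Paragraph~\ref{par:contraction}) gives $\Omega^{1,1} K(\mathbf A, n) \simeq K(\mathbf A_{-1}, n)$, and since contraction is a direct summand of restriction along $\Gm$, the assignment $\mathbf A \mapsto \mathbf A_{-1}$ preserves sifted colimits of strictly $\aone$-invariant sheaves; combined with (1) applied to the $S^1$-suspension spectra of these Eilenberg--Mac Lane spaces, this handles the base case. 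For the inductive step, the Postnikov fiber sequences $K(\bpi_n\mathscr X, n) \to \mathscr X_{\leq n} \to \mathscr X_{\leq n-1}$ (starting from $\mathscr X_{\leq 1} = *$) have 1-connected bases, so Proposition~\ref{prop:realizationfibrations} guarantees that sifted colimits preserve them; applying $\Omega^{1,1}$ yields another such fiber sequence at each stage whose outer terms satisfy the preservation property by the inductive hypothesis, whence so does the middle term. Finally, passing from $\mathscr X_{\leq n}$ to $\mathscr X = \lim_n \mathscr X_{\leq n}$ requires exchanging the Postnikov inverse limit with the sifted colimit; the needed uniform convergence is provided by Morel's identity, which forces $\Omega^{1,1}\mathscr X \to \Omega^{1,1}\mathscr X_{\leq n}$ to be $(n+1)$-connected.

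\textbf{Plan for (3) and main obstacle.} The first half of (3) is immediate from (2): writing $\Omega^{2,1} = \Omega \circ \Omega^{1,1}$, Morel's contraction identity shows $\Omega^{1,1}$ preserves 1-connectedness, and Proposition~\ref{prop:simplicialloopssiftedcolimits} handles the outer $\Omega$ on the resulting connected spaces. For the second half, $\Sigma^{2,1}\mathscr X = \pone \wedge \mathscr X$ is 2-connected whenever $\mathscr X$ is connected (since $\pone \simeq S^{2,1}$ is simplicially 1-connected and connectivity adds under motivic smash products, using Theorem~\ref{thm:unstableconnectivity}); as $\Sigma^{2,1}$ preserves all colimits being a left adjoint, composing with the first half gives preservation of sifted colimits by $\Omega^{2,1}\Sigma^{2,1}$ on connected spaces. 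The principal obstacle is the colimit--limit exchange at the end of (2); that it succeeds for 1-connected inputs but would likely fail for merely connected ones reflects the quantitative Postnikov convergence afforded by Morel's contraction identity and explains why the hypothesis in (2) takes the form it does.
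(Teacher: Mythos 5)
Your proposal is essentially correct and its architecture matches the paper's: (1) is handled by stable machinery, (2) by an induction along a tower with Eilenberg--Mac Lane layers whose base case is resolved by lifting to spectra and invoking (1), and (3) by writing $\Omega^{2,1} = \Omega \circ \Omega^{1,1}$ and using that $\Sigma^{2,1}$ is a colimit-preserving left adjoint landing in $1$-connected spaces. Two genuine (but equivalent) divergences are worth noting. For (1), the paper does not use the recognition principle at all: it observes that $\aone$-invariant sheaves of spectra are closed under colimits among Nisnevich sheaves of spectra, then checks the statement stalkwise, reducing to \cite[Proposition 1.4.3.9]{HA}; your route through grouplike monoids with strongly $\aone$-invariant $\bpi_0$ is the content of Corollary~\ref{cor:SHS-CMon-pres-colim} and also works, at the cost of heavier input. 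For (2), the paper runs the induction up the \emph{Whitehead} tower (connective covers $\mathscr X\langle i\rangle$) and closes the argument by showing the comparison fibers $\mathscr F\langle i\rangle \weq \mathscr F\langle i+1\rangle$ are $i$-connective for all $i$, hence contractible; you run it down the \emph{Postnikov} tower and must therefore exchange the inverse limit $\mathscr X = \lim_n \mathscr X_{\le n}$ with the sifted colimit. Your justification of that exchange via the connectivity of $\Omega^{1,1}\mathscr X \to \Omega^{1,1}\mathscr X_{\le n}$ (using that $\Omega^{1,1}$ preserves connectivity by Morel's contraction identity, and that sifted colimits preserve connectivity of maps) is sound; the paper's bootstrapping simply avoids the interchange altogether.

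Two points in your write-up should be tightened. First, the base case of (2): a sifted colimit of Eilenberg--Mac Lane spaces is not an Eilenberg--Mac Lane space, so the statement ``$\mathbf A \mapsto \mathbf A_{-1}$ preserves sifted colimits of strictly $\aone$-invariant sheaves'' does not by itself give $\Omega^{1,1}\colim_{\mathbf I} K(\bpi_n\mathscr X, n) \weq \colim_{\mathbf I}\Omega^{1,1}K(\bpi_n\mathscr X, n)$. The correct route (which the paper takes, and which you gesture at) is to lift the diagram to the Eilenberg--Mac Lane \emph{spectra} $\Sigma^n H\bpi_n\mathscr X$ --- not the suspension spectra of the spaces $K(\bpi_n\mathscr X,n)$ --- and then use (1) together with the facts that $\Omega^{1,1}\Omega^\infty_{S^1} \weq \Omega^\infty_{S^1}\Omega^{1,1}$ and that $\Omega^{1,1}$ commutes with colimits of $S^1$-spectra; that last fact is what replaces your claim about contraction. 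Second, in (3) you assert that $\Sigma^{2,1}\mathscr X$ is $2$-connected for $\mathscr X$ connected; it is only $1$-connected (the $\Gm$-smash does not raise simplicial connectivity), but $1$-connectedness is all the first half of (3) requires, so this is harmless.
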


\begin{proof}
	{\bf Point 1}.  Begin by observing that $\aone$-invariant Nisnevich sheaves of spectra are closed under colimits in the larger category of Nisnevich sheaves of spectra.  It therefore suffices to establish the result for Nisnevich sheaves of spectra. To establish the result for Nisnevich sheaves of spectra, it suffices to check stalkwise, in which case we may reduce to the case of ordinary spectra, which may be found in \cite[Proposition 1.4.3.9]{HA}.
	
	\noindent {\bf Point 2}. For the second statement, suppose $\mathscr{X}: \mathbf I \to \ho{k}_*$ is a sifted diagram of pointed $1$-connected motivic spaces.  Recall that $\Omega^{1,1}$ preserves connectivity (see Paragraph~\ref{par:contraction}).  In that case, we can consider the $j$-connective covers $\mathscr{X} \langle j \rangle$, which fit into principal fiber sequences of the form:
	\[
	\mathscr{X}\langle j+1 \rangle \longrightarrow \mathscr{X}\langle j \rangle \longrightarrow K(\bpi_j \mathscr{X},j)
	\]
	where $j \geq 2$ by assumption.  The above is a $\mathbf I$-diagram of fiber sequences with $1$-connected base, so it follows from Proposition~\ref{prop:realizationfibrations} (which uses the assumption $k$ is perfect because of the implicit appeal to the unstable connectivity theorem) that 
	\[
	|\mathscr{X} \langle i+1 \rangle| \longrightarrow |\mathscr{X} \langle i \rangle| \longrightarrow |K(\bpi_i\mathscr{X},i)|
	\]
	is a fiber sequence, where $|(\ph)|$ means $\colim_{\mathbf I}(\ph)$.  
	
	We first establish the result for $|K(\bpi_j\mathscr{X},j)|$ by comparison with the $S^1$-stable case.  Indeed, each $K(\bpi_j\mathscr{X},j)$ can be identified with $\Omega^{\infty}_{S^1} \Sigma^{j}H\bpi_j\mathscr{X}$ for $j \geq 2$, and these identifications give rise to a diagram of Eilenberg--MacLane spectra (e.g. use \cite[Proposition 1.4.3.9]{HA} again).  By the first point, the functor $\Omega^{\infty}_{S^1}$ preserves geometric realizations.  Granted that observation, we conclude that:
	\[
	\Omega^{1,1} |K(\bpi_j\mathscr{X},j)| \weq \Omega^{1,1} |\Omega^{\infty}_{S^1}  \Sigma^{j}H\bpi_j\mathscr{X}| \weq \Omega^{1,1}\Omega^{\infty}_{S^1} | \Sigma^{j}H\bpi_j\mathscr{X}| \weq \Omega^{\infty}_{S^1} \Omega^{1,1} |\Sigma^{j}H\bpi_j\mathscr{X}|,
	\]
	where the second equivalence follows from the fact established in the first point that $\Omega^{\infty}_{S^1}$ preserves sifted colimits.  Now, $\Omega^{1,1}$ commutes with colimits of spectra and therefore, we conclude that
	\[
	\Omega^{\infty}_{S^1} \Omega^{1,1} |\Sigma^{i}H\bpi_i\mathscr{X}| \weq \Omega^{\infty}_{S^1} |\Omega^{1,1}\Sigma^{i}H\bpi_i\mathscr{X}| \weq |\Omega^{1,1}K(\bpi_i\mathscr{X},i)|.
	\]
	where the last equivalence follows by repeating in the opposite order the equivalences described two displays above.
	
	Since applying $\Omega^{1,1}$ preserves fiber sequences and connectivity, another application of Proposition~\ref{prop:realizationfibrations} implies that there is a fiber sequence of the form
	\[
	|\Omega^{1,1} \mathscr{X} \langle i+1 \rangle| \longrightarrow |\Omega^{1,1} \mathscr{X} \langle i \rangle| \longrightarrow |\Omega^{1,1}K(\bpi_i\mathscr{X},i)|.
	\]
	In that case, there is a commutative diagram of fiber sequences of the form
	\[
	\xymatrix{
		|\Omega^{1,1} \mathscr{X} \langle i+1 \rangle| \ar[r]\ar[d] &  |\Omega^{1,1} \mathscr{X} \langle i+1 \rangle| \ar[r]\ar[d] &  |\Omega^{1,1}K(\bpi_i\mathscr{X},i)| \ar[d]  \\
			\Omega^{1,1}|\mathscr{X} \langle i+1 \rangle| \ar[r] &  \Omega^{1,1}|\mathscr{X} \langle i \rangle| \ar[r] &  \Omega^{1,1}|K(\bpi_i\mathscr{X},i)|
	}
	\]
	The right vertical map is an equivalence by the preceding discussion.  Taking fibers horizontally and appealing to \cite[Proposition 3.1]{ABHWhitehead}, we conclude that the map from the fiber $\mathscr{F}\langle i+1\rangle$ of the left vertical map to the fiber $\mathscr{F} \langle i \rangle$ of the middle vertical map is an equivalence.  Note that $\mathscr{F}\langle i\rangle$ is at least $i$-connective, so the equivalence above followed by induction shows that $\mathscr{F} \langle i \rangle$ is $\infty$-connective and thus contractible, which is what we wanted to show.
	
	\noindent {\bf Point 3}.  Writing $\Omega^{2,1} = \Omega\Omega^{1,1}$ and combining the conclusion of Point (2) with Proposition~\ref{prop:simplicialloopssiftedcolimits}, we see that $\Omega^{2,1}$ preserves sifted colimits of $1$-connected spaces.  Since $\Sigma^{2,1}$ preserves colimits, and since $\Sigma^{2,1}$ sends $0$-connected spaces to $1$-connected spaces by the unstable connectivity theorem \ref{thm:unstableconnectivity}, we also conclude that $\Omega^{2,1}\Sigma^{2,1}(-)$ also preserves sifted colimits of $0$-connected spaces.
\end{proof}

\begin{rem}
	\label{rem:mappingspacesandsiftedcolimits}
	Assume $k$ is a field and $Y$ is a pointed open subscheme of $\aone$.  The same style of proof can be used to establish that the internal mapping space $\iMap_*(Y,-)$ preserves sifted colimits.  Indeed, by \cite[Remark 2.33]{MField}, given a strongly $\aone$-invariant sheaf $\mathbf{G}$, we may define
	\[
	\mathbf{G}^{(Y)}(U) := \ker(\mathbf{G}(Y \times U) \to \mathbf{G}(U)).
	\]
	This formula defines a sheaf of groups, and the same argument as in \cite[Lemma 2.32]{MField} shows that $B\mathbf{G}^{(Y)}$ can be identified with the connected component of the identity map in $\iMap_*(Y,B\mathbf{G})$ and thus $\mathbf{G}^{(Y)}$ is again strongly $\aone$-invariant.  In that case, the evident analog of \cite[Lemma 6.14]{MField} holds using homotopy purity replacing $\gm{}$ by $Y$ and thus the proof of \cite[Theorem 6.13]{MField} goes through with $\gm{}$ replaced by $Y$.  In particular, $\iMap_*(Y,-)$ preserves connectivity so we can repeat the argument above.
\end{rem}

\subsubsection*{Slices and (very) effective motivic spectra}
\begin{entry}
	\label{par:modulespectra}
	If $E \in \mathrm{CAlg}(\SH(k))$, we write $\Mod_{E}(k)$ for the category of $E$-module spectra.  Of particular interest will be the case where $E = H\Z$, Voevodsky's motivic Eilenberg--Mac Lane spectrum or $H\Z[1/p]$ where $p$ is the characteristic exponent of $k$ (note: a minor conflict arises between this notation and that for the Eilenberg--Mac Lane  $S^1$-spectrum attached to a strictly $\aone$-invariant sheaf; i.e., Paragraph~\ref{par:emsheaves}---we trust that this will not lead to confusion).  By \cite[Theorem 1]{RondigsOestvaermodules}, we know that if $k$ has characteristic $0$, then $\Mod_{H\Z}(k)$ is equivalent to Voevodsky's (unbounded) derived category of motives $\DM(k)$.  This result was extended to an equivalence of $\Mod_{H\Z[1/p]}(k)$ and Voevodsky's (unbounded) derived category of motives with $\Z[1/p]$-coefficients $\DM(k,\Z[1/p])$ in \cite[Corollary 5.3]{ElmantoKolderup}.
\end{entry}

\begin{entry}[Essentially smooth base change revisited] 
	\label{par:essentiallysmoothbasechangeII}
	The results of Paragraph~\ref{par:essentiallysmoothbasechange} carry over to $\SH^{S^1}(k)$ or $\SH(k)$; see \cite[Lemma A.7]{HoyoisAlgCob} for details.  These results also carry over to $\Mod_{H\Z[1/p]}$ by the same result and the discussion of Paragraph~\ref{par:modulespectra}.
\end{entry}

\begin{entry}[Effective spectra]
	\label{par:effective}
	We write $\SH^{S^1}(k)(d)$ for the localizing subcategory generated by $\Sigma^{d,d} \Sigma^{\infty}_{S^1} X_+$ for $X \in \Sm_k$; we refer to this category as the subcategory of $d$-effective $S^1$-spectra.  We also write $\SH(k)^{\eff}$ for the localizing subcategory of $\SH(k)$ generated by spectra of the form $\Sigma^{\infty}_{\pone} X_+$ (i.e., no $\gm{}$-desuspensions) and we write $\SH(k)^{\eff}(n)$ for the category $\Sigma^{2n,n}\SH(k)^{\eff}$; this category can be identified with $\Sigma^{n,n}\SH(k)^{\eff}$ because it is triangulated.    
\end{entry}

\begin{entry}[Slices]
	\label{par:slices}
	The inclusion $i_n: \SH(k)^{\eff}(n) \hookrightarrow \SH(k)^{\eff}$ admits a right adjoint $r_n$ as a consequence of Neeman's Brown representability theorem \cite[Theorem 8.4.4]{Neeman}.  We set $f_n := i_n \circ r_n$.  There are natural transformations $f_n \to f_m$ whenever $m < n$ and a tower of the form
	\[
		\dots \longrightarrow f_n E \longrightarrow f_{n-1} E \longrightarrow \cdots \longrightarrow f_0 E \longrightarrow f_{-1} E \longrightarrow \dots \longrightarrow E.
	\]
	This tower is called the {\em slice tower}; the layers $f_nE$ are {\em effective covers} of $E$ and the {\em slices} $s_n E$ are defined as the cofibers of $f_{n+1} E \to f_n E$.  
	
	Voevodsky showed \cite[Theorem 6.6]{VZeroSlice} that $s_0(\1_k) \weq H\Z$ when $k$ is a field having characteristic $0$ and Levine generalized this result (by a completely different argument) to $k$ an arbitrary field \cite[Theorem 10.5.1]{Levineconiveautower}.   Furthermore, the slice filtration is multiplicative, so the slices $s_iE$ have the structure of a module over $s_0 \1$  \cite[Theorem 3.6.13(6)]{Pelaez}.  
	%Slices are necessarily very effective, and thus $s_iE \in \Mod^{\veff}_{H\Z[1/p]}(k)$.\tom{??? no?}
\end{entry}

\begin{entry}[Very effective spectra]
	\label{par:veryeffective}
	Following Spitzweck--{\O}stv{\ae}r \cite{SpitzweckOstvaer}, define $\SH(k)^{\veff}$ to be the smallest subcategory closed under colimits and extensions and containing objects of the form $\Sigma^{n,0} \Sigma^{\infty}_{\pone} X_+$, $n \geq 0$.  More generally, if $E \in \CAlg(\SH(k))$, we write $\Mod_E(k)^{\veff}$ for the subcategory of $\Mod_E(k)$ generated under colimits and extensions by $E \wedge \Sigma^{\infty}_{\pone}X_+$, $X \in \Sm_k$.
\end{entry}

\subsubsection*{Recollections on inverting primes, strong dualizability}
\begin{entry}
	\label{par:stablerationalization}
Given a subring $R \subset \Q$, we can consider variants of all the stable categories above where with $R$-coefficients, e.g., $\SH^{S^1}(k)_R$, $\SH(k)_R$, etc.  Of particular interest will be the cases $R = \Q$, $R = \Z[\frac{1}{2}]$ and $R = \Z[\frac{1}{p}]$ where $p$ is the characteristic exponent of $k$.
\end{entry}

\begin{proposition}
	\label{prop:dualizablesgenerate}
	Assume $k$ is a field with characteristic exponent $p$.
	\begin{enumerate}[noitemsep,topsep=1pt]
		\item For any $U \in \Sm_k$, $\Sigma^{\infty}U_+$ is strongly dualizable in $\SH(k)[\frac{1}{p}]$.
		\item The category $\SH(k)[\frac{1}{p}]$ is generated under colimits by $\Sigma^{\infty}X_+$ for $X \in \Sm_k$ projective or, more generally, by strongly dualizable objects.  
	\end{enumerate}
\end{proposition}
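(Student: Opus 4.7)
The approach is to reduce the statement to smooth projective varieties, where strong dualizability is a form of motivic Atiyah duality, and to effect this reduction after inverting $p$ via Gabber's refinement of de Jong's alteration theorem. I will prove (1) first and deduce (2) as a direct consequence of the reduction argument.

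For (1), I first consider $X \in \Sm_k$ projective. In that case $\Sigma^{\infty} X_+$ is strongly dualizable already in $\SH(k)$ (integrally), with dual the Thom spectrum $\Sigma^{\infty}\Th(-T_{X/k})$ of the negative tangent bundle; the coevaluation and evaluation arise from the diagonal $X \to X \times X$ together with a Pontryagin--Thom collapse induced by a closed embedding $X \hookrightarrow \PP^N$. This is the standard motivic Atiyah duality (Hu, Riou, Cisinski--D\'eglise). For general $U \in \Sm_k$, I choose a Nagata compactification $U \hookrightarrow \bar U$ and apply Gabber's alteration theorem to the pair $(\bar U, \bar U \setminus U)$: there is a proper surjective morphism $\pi\colon Y \to \bar U$ of generic degree $d$ coprime to $p$, with $Y$ smooth projective and $D := \pi^{-1}(\bar U \setminus U)_{\mathrm{red}}$ a strict normal crossings divisor on $Y$. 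Set $U' := Y \setminus D$.

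The reduction now proceeds in two sub-steps. First, $\Sigma^{\infty} U'_+$ is strongly dualizable in $\SH(k)$: filtering $U'$ by the open complements of successive unions of components of $D$ yields cofiber sequences whose cofibers, by motivic purity, are Thom spectra of normal bundles over smooth projective intersections of components of $D$. Since dualizable objects form a thick subcategory closed under cofibers and Thom twists by vector bundles over a dualizable base, $\Sigma^{\infty} U'_+$ is dualizable. Second, the proper map $\pi|_{U'}\colon U' \to U$ between smooth varieties of equal dimension admits a Gysin-type transfer $\tau\colon \Sigma^{\infty} U_+ \to \Sigma^{\infty} U'_+$ in $\SH(k)$, with $\pi_* \circ \tau = d \cdot \mathrm{id}$; since $d$ is invertible in $\SH(k)[\tfrac{1}{p}]$, $\Sigma^{\infty} U_+$ is a retract of $\Sigma^{\infty} U'_+$ there, and dualizability is closed under retracts.

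For (2), the second clause follows from the first: if smooth projective suspension spectra generate $\SH(k)[\tfrac{1}{p}]$ under colimits, then so do the strongly dualizable objects, since by (1) the former are contained in the latter. For the first clause, $\SH(k)[\tfrac{1}{p}]$ is generated under colimits by $\{\Sigma^{\infty} U_+ : U \in \Sm_k\}$ by construction, and the two sub-steps above exhibit each such $\Sigma^{\infty} U_+$ as a retract of a finite iterated extension of $\Sigma^{\infty} X_+$ and Thom spectra over smooth projective $X$; the Thom spectra in turn lie in the localizing subcategory generated by smooth projective suspension spectra (e.g.\ via a Jouanolou device or the projective bundle formula), so this subcategory exhausts $\SH(k)[\tfrac{1}{p}]$.

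The principal technical input I expect to be delicate is the Gysin transfer $\tau$ in the second sub-step together with the identity $\pi_* \circ \tau = d \cdot \mathrm{id}$: the morphism $\pi|_{U'}$ is only generically finite of degree $d$ (not literally finite), so one must work within the Voevodsky--Morel six-functor formalism (or equivalently framed transfers \`a la Bachmann--Hoyois) for proper morphisms between smooth schemes, and verify the degree identity carefully; the rest of the argument is formal given this input.
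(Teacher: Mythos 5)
Your overall strategy is the right one, and it is in fact the strategy of the references the paper cites for this statement (the paper's own ``proof'' is purely a pointer to \cite[Th\'eor\`emes 1.4 and 2.2]{Riou} in characteristic $0$, where resolution of singularities makes the transfer step unnecessary, to \cite[Appendix B.1--2]{LYZR} for perfect fields via Gabber, and to \cite[Theorem 3.2.1]{ElmantoKhan} in general): Atiyah duality for smooth projective $X$, the snc-complement filtration showing $\Sigma^{\infty}U'_+$ is dualizable integrally, and a transfer argument to descend from the alteration $U'$ to $U$ after inverting $p$. Your deduction of (2) from (1) and the thick/localizing-subcategory bookkeeping is fine.

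The gap is the identity $\pi_*\circ\tau = d\cdot\mathrm{id}$, which is where essentially all of the content of the cited references lives, and it fails as stated for two separate reasons. First, $\pi|_{U'}\colon U'\to U$ is only generically finite; the composite of a Gysin transfer for a proper generically finite map with $\pi_*$ agrees with multiplication by the degree only over the dense open locus where $\pi$ is finite flat, with correction terms supported on the complement. Accordingly the references do not produce a single global retraction: they shrink $U$ to an open $V$ over which the alteration is finite flat, apply the finite transfer there, and handle $U\setminus V$ by noetherian induction on dimension (using perfectness of $k$ to find smooth dense opens in the complement), so that $\Sigma^{\infty}U_+[1/p]$ is placed in the thick subcategory generated by dualizable objects rather than exhibited as a retract of $\Sigma^{\infty}U'_+$. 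Second, even where $\pi$ is finite flat of degree $d$, the composite $\pi_*\circ\mathrm{tr}$ is multiplication by a class in $GW$ of rank $d$, not by the integer $d$; such a class need not become a unit after inverting $d$ alone, since its image in the Witt ring is unconstrained (compare the hyperbolic form, a zero divisor). This is why the argument is run one prime $\ell\neq p$ at a time, using Gabber's theorem in the form ``for each $\ell\neq p$ there is an alteration of degree prime to $\ell$,'' together with control of the $GW$-valued degree (e.g.\ via the plus/minus decomposition, or the explicit unit $d_\epsilon$ for odd $d$), before assembling the integral statement. Supplying these two ingredients turns your outline into the proof of \cite[Appendix B]{LYZR} and \cite[Theorem 3.2.1]{ElmantoKhan}.
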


\begin{proof}
	The case where $k$ has characteristic $0$ is \cite[Th{\'e}or{\'e}me 1.4,2.2]{Riou} appealing to resolution of singularities.  By appeal to Gabber's refined alternations, Riou \cite[B.1-2]{LYZR} extended this argument to the case where $k$ was perfect.  The assumption $k$ perfect is removed in \cite[Theorem 3.2.1]{ElmantoKhan}.
\end{proof}

We will also need some comparisons between unstable and stable localizations.  By checking stalkwise, the following result follows immediately from the corresponding classical statement (which can be proved by, e.g., observing that $\mathscr X$ below has vanishing $p$-completion).

\begin{proposition} 
	\label{prop:pdivisibility-unstable-implies-stable}
	Assume $k$ is a field and $\mathscr{X}$ is a pointed $1$-connected motivic space.  If for all $i$, $\bpi_i(\mathscr{X})$ is uniquely $p$-divisible (for some $p \in \Z$), then $\Sigma^{\infty}_{S^1} \mathscr{X} \weq \Sigma^{\infty}_{S^1}\mathscr{X}[\frac{1}{p}]$. 
\end{proposition}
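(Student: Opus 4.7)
The plan is to show that the map $\Sigma^{\infty}_{S^1}\mathscr{X} \to \Sigma^{\infty}_{S^1}\mathscr{X}[\tfrac{1}{p}]$ is an equivalence in $\SH^{S^1}(k)$, i.e., that $\Sigma^{\infty}_{S^1}\mathscr{X}$ is $\Z[\tfrac{1}{p}]$-local, by reducing to a classical statement about suspension spectra of $1$-connected spaces. Since $\SH^{S^1}(k)$ is the full subcategory of $\A^1$-invariant objects inside Nisnevich sheaves of spectra on $\Sm_k$, and equivalences of Nisnevich sheaves of spectra are detected on Nisnevich stalks at (Henselian) local rings, it suffices to verify the conclusion stalkwise. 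Because $\mathscr{X}$ is $1$-connected and each $\bpi_i(\mathscr{X})$ is uniquely $p$-divisible as a Nisnevich sheaf of abelian groups, each stalk $\mathscr{X}_x$ is a pointed $1$-connected classical space whose ordinary homotopy groups $\pi_i(\mathscr{X}_x) = (\bpi_i\mathscr{X})_x$ are all uniquely $p$-divisible. Invoking Morel's $S^1$-stable connectivity (cf.\ Paragraph~\ref{par:aonelocalityofspaces}), the stalk of $\Sigma^{\infty}_{S^1}\mathscr{X}$ at $x$ is identified with the classical suspension spectrum $\Sigma^{\infty}(\mathscr{X}_x)$.

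We are thereby reduced to the classical statement: \emph{if $X$ is a pointed $1$-connected space whose homotopy groups are all uniquely $p$-divisible, then $\Sigma^{\infty}X$ is $\Z[\tfrac{1}{p}]$-local}. Following the hint, this is proved by noting that the Bousfield--Kan $p$-completion $X^{\wedge}_p$ is contractible. Equivalently, one iterates the Serre spectral sequence along the Postnikov tower of $X$, using the standard computation $\tilde{H}_*(K(A, n); \mathbb{F}_p) = 0$ for any uniquely $p$-divisible abelian group $A$ and $n \geq 1$, to conclude $\tilde{H}_*(X; \mathbb{F}_p) = 0$. Hence $\Sigma^{\infty}X \wedge H\mathbb{F}_p = 0$, and for a connective spectrum vanishing mod-$p$ homology is equivalent to $\Z[\tfrac{1}{p}]$-locality, so $\Sigma^{\infty}X \weq \Sigma^{\infty}X[\tfrac{1}{p}]$.

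The main technical obstacle is the stalkwise identification $(\Sigma^{\infty}_{S^1}\mathscr{X})_x \weq \Sigma^{\infty}(\mathscr{X}_x)$: one must check that the implicit $\A^1$-localization in the formation of $\Sigma^{\infty}_{S^1}\mathscr{X}$ does not disturb Nisnevich stalks for $1$-connected input. This is precisely the content of Morel's $S^1$-stable connectivity theorem, so while standard, it is the step where the perfectness/characteristic hypotheses on $k$ and the $1$-connectivity of $\mathscr{X}$ are essential; once this is in place the remainder of the argument is the classical computation sketched above.
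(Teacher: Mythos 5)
Your overall strategy is the same as the paper's: the proof there is a one-line reduction ``by checking stalkwise'' to the classical statement that a $1$-connected space with uniquely $p$-divisible homotopy groups has vanishing $p$-completion, which is exactly the classical computation you carry out via the Postnikov tower and $\tilde{H}_*(K(A,n);\mathbb{F}_p)=0$. That part of your argument is fine.

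The one step whose justification does not hold up is the claim that, by Morel's $S^1$-stable connectivity theorem, the Nisnevich stalk of $\Sigma^{\infty}_{S^1}\mathscr{X}$ is the classical suspension spectrum $\Sigma^{\infty}(\mathscr{X}_x)$. Morel's theorem controls connectivity of the $\A^1$-localization; it does not assert that localization preserves stalks, and in general it does not: the homotopy sheaves of $\Sigma^{\infty}_{S^1}\mathscr{X}$ are forced to be strictly $\A^1$-invariant (they acquire transfers, Milnor--Witt module structures, etc.) and so their stalks genuinely differ from those of the pre-localized suspension spectrum. Moreover, routing the argument through Morel's theorem would smuggle in a perfectness hypothesis on $k$ that the proposition does not assume. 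The repair is cheap and sidesteps the issue entirely: the assertion $\Sigma^{\infty}_{S^1}\mathscr{X}\weq \Sigma^{\infty}_{S^1}\mathscr{X}[\tfrac1p]$ is equivalent to the vanishing of the cofiber of multiplication by $p$, and since $\Sigma^{\infty}_{S^1} = \Lmot\circ(\text{Nisnevich suspension spectrum})$ with $\Lmot$ exact, it suffices to prove that the \emph{Nisnevich} suspension spectrum (before $\A^1$-localization) is killed by $S/p$. That object's stalks really are the classical suspension spectra of the stalks of $\mathscr{X}$, so your classical computation applies there, and $p$-invertibility then passes through $\Lmot$ for free. With that substitution your proof is complete and agrees with the paper's intended argument.
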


\begin{proposition}
	\label{prop:pdivisibility}
	Assume $k$ is a field and $\mathscr{X}$ is a pointed $1$-connected motivic space.  If $\Sigma^{\infty}_{S^1}\mathscr{X} \weq \Sigma^{\infty}_{S^1}\mathscr{X}[\frac{1}{p}]$, then $\bpi_i(\mathscr{X})$ is uniquely $p$-divisible for each $i \geq 1$.
\end{proposition}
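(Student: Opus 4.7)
The plan is to mirror the strategy of Proposition~\ref{prop:pdivisibility-unstable-implies-stable}: reduce to a classical statement by checking Nisnevich stalks. The first observation is that for $\mathscr X$ a pointed $\aone$-invariant Nisnevich sheaf of spaces, the sectionwise suspension spectrum $\Sigma^{\infty} \mathscr X \in \Shv_{\Nis}(\Sm_k, \Sp)$ is already $\aone$-invariant, since the condition $\mathscr X(U) \weq \mathscr X(U \times \aone)$ is preserved by the (sectionwise) functor $\Sigma^{\infty}$. Consequently $\Sigma^{\infty} \mathscr X$ coincides with $\Sigma^{\infty}_{S^1} \mathscr X$ at the level of Nisnevich sheaves of spectra. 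Since $\Sigma^{\infty}$ is a left adjoint, it commutes with colimits, and in particular with the formation of Nisnevich stalks. Combined with the fact that $\bpi_0(\mathscr X) = \bpi_1(\mathscr X) = \ast$, this identifies the stalk $(\Sigma^{\infty}_{S^1} \mathscr X)_s$ at any Nisnevich point $s$ with the classical suspension spectrum $\Sigma^{\infty} \mathscr X_s$ of a pointed, simply connected classical space $\mathscr X_s$.

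Both hypothesis and conclusion are stalkwise conditions: $\Sigma^{\infty}_{S^1} \mathscr X$ is $\Z[1/p]$-local iff each $\Sigma^{\infty} \mathscr X_s$ is $\Z[1/p]$-local, and $\bpi_i(\mathscr X)$ is uniquely $p$-divisible iff $\pi_i(\mathscr X_s)$ is uniquely $p$-divisible for every $s$. It therefore suffices to prove the classical analog: for $Y$ a pointed simply connected classical space with $\Sigma^{\infty} Y$ being $\Z[1/p]$-local, every $\pi_i(Y)$ is uniquely $p$-divisible. The cleanest classical argument runs through $p$-completion. A spectrum that is simultaneously $\Z[1/p]$-local and $p$-complete is zero, so $(\Sigma^{\infty} Y)^{\wedge}_p \simeq 0$. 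Since $Y$ is simply connected, $p$-completion commutes with the suspension spectrum functor, giving $\Sigma^{\infty}(Y^{\wedge}_p) \simeq 0$. The functor $\Sigma^{\infty}$ is conservative on simply connected spaces: if $Z$ is $(n-1)$-connected and $\Sigma^{\infty} Z = 0$, the stable Hurewicz theorem forces $\pi_n(Z) = 0$, so iteration shows $Z$ is $\infty$-connected. Hence $Y^{\wedge}_p \simeq \ast$, and for a simply connected space this is equivalent to unique $p$-divisibility of all $\pi_i(Y)$.

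The only real subtlety is the first step, namely identifying the Nisnevich stalks of $\Sigma^{\infty}_{S^1} \mathscr X$ with honest classical suspension spectra; once the automatic $\aone$-invariance of sectionwise $\Sigma^{\infty}$ is noted, the remaining reduction to the classical statement is straightforward, and the classical statement itself is standard.
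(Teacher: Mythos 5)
Your reduction to Nisnevich stalks breaks down at its very first step. It is true that the \emph{presheaf} of spectra $U \mapsto \Sigma^{\infty}(\mathscr X(U))$ satisfies $F(U) \weq F(U \times \aone)$, but this presheaf-level $\aone$-invariance is far weaker than being an $\aone$-local object of $\Shv_{\Nis}(\Sm_k,\Sp)$: after Nisnevich sheafification the sections are recomputed by descent, and $\aone$-invariance of sections is generally destroyed. Concretely, if the sheafified sectionwise suspension spectrum were $\aone$-local, its homotopy sheaves $a_{\Nis}\pi_*^s(\mathscr X(-))$ would have to be \emph{strictly} $\aone$-invariant (all Nisnevich cohomology groups $\aone$-invariant), which does not follow from $\aone$-invariance of the presheaves $\pi_*^s(\mathscr X(-))$ --- this is exactly the distinction that makes Morel's ``strongly implies strictly'' theorem hard, and exactly why $\Lmot$ must be built as an infinite alternation of $\Singaone$ and sheafification rather than a single sheafification of an $\aone$-invariant presheaf. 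The functor $\Sigma^{\infty}_{S^1}$ is the composite of the naive suspension spectrum with a genuine $\aone$-localization of spectra, and that localization does not act trivially on suspension spectra of $\aone$-local spaces; equivalently, $\bpi_i(\Sigma^{\infty}_{S^1}\mathscr X)$ is \emph{not} the sheafification of the sectionwise stable homotopy groups. Since both your hypothesis and your conclusion are transported through this unjustified identification, the entire stalkwise reduction, and with it the appeal to the classical $p$-completion argument, does not go through. (Your classical endgame is fine; the problem is getting there.)

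The paper's proof avoids stalks entirely. It forms the unstable localization $\mathscr X \to \mathscr X_{\Z[1/p]}$ of Theorem~\ref{thm:rlocalizationlocalizeshomotopysheaves}, whose effect on homotopy sheaves is $- \otimes \Z[1/p]$. The induced map $\Sigma^{\infty}_{S^1}\mathscr X \to \Sigma^{\infty}_{S^1}(\mathscr X_{\Z[1/p]})$ is a $\Z[1/p]$-equivalence between $p$-periodic spectra --- the source by hypothesis, the target by Proposition~\ref{prop:pdivisibility-unstable-implies-stable} --- hence an equivalence; the stable Whitehead theorem for nilpotent motivic spaces then forces $\mathscr X \weq \mathscr X_{\Z[1/p]}$, so each $\bpi_i(\mathscr X) \iso \bpi_i(\mathscr X)\otimes\Z[1/p]$ is uniquely $p$-divisible. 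If you want to salvage your approach, you would need to replace the stalkwise identification with an argument of this shape that stays inside $\SH^{S^1}(k)$.
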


\begin{proof}
	Consider the $\Z[\frac{1}{p}]$-localization described in Theorem~\ref{thm:rlocalizationlocalizeshomotopysheaves}, which has the property that $\bpi_i(\mathscr{X}_{\Z[1/p]})$ tensors $\bpi_i(\mathscr{X})$ with $\Z[1/p]$.  The map $\Sigma^{\infty}_{S^1} \mathscr{X} \to \Sigma^{\infty}_{S^1}(\mathscr{X}_{\Z[1/p]})$ is a $\Z[1/p]$-equivalence of $p$-periodic spectra (by assumption for the first and by the preceding proposition for the second statement), whence an equivalence.  The Whitehead theorem \cite[Theorem 5.2]{ABHWhitehead} implies that $\mathscr{X} \to \mathscr{X}_{\Z[1/p]}$ is an equivalence as well.
\end{proof}

\subsubsection*{Motivic spectra and $t$-structures}
We review here some facts about generation of $t$-structures, which seems to have been independently discovered by several authors including in unpublished work of Morel \cite[{\S 2.1.3} p. 280]{AyoubI} and \cite{AJS}.  Lurie has also given a treatment of these ideas in the context of stable presentable $\infty$-categories.  

\begin{entry}[Homotopy $t$-structure on $S^1$-spectra]
	\label{par:homotopytstructureS1}
	The category $\SH^{S^1}(k)$ can be equipped with the homotopy $t$-structure: $\SH^{S^1}(k)_{\leq 0}$ consists of those spectra that are coconnective (i.e. $\bpi_i = 0$ for $i>0$), and $\SH^{S^1}(k)_{\geq 0}$ consists of those spectra that are connective (i.e. $\bpi_i = 0$ for $i<0$) \cite[Lemma 6.2.11]{MStable}.  Equivalently, the category $\SH^{S^1}(k)_{\geq 0}$ is generated under colimits and extensions by $\Sigma^{\infty} X_+$, $X \in \Sm_k$.  The abelian category $\SH^{S^1}(k)^{\heartsuit}$ can be identified with the category of strictly $\aone$-invariant sheaves \cite[Lemma 6.2.13]{MStable}.
\end{entry}
	
\begin{entry}[Homotopy $t$-structure on $\SH(k)$ and homotopy modules]
	  \label{par:homotopytstructureP1}
	  The category $\SH(k)$ can be equipped with a homotopy $t$-structure \cite[Theorem 5.2.3]{MIntro}.  In this case, the non-negative part of the $t$-structure is generated under colimits and extensions by $\Sigma^{p,q}\Sigma^{\infty}_{\pone} X_+$ with $p-q \geq 0$ (see \cite[\S 2.1]{HoyoisAlgCob}).  The category $\SH(k)^{\heartsuit}$ can be described in terms of {\em homotopy modules} via \cite[Theorem 5.2.6]{MIntro}.  Recall that a homotopy module consists of a $\Z$-graded strictly $\aone$-invariant sheaf $\mathbf{M}_*$ together with isomorphisms $\mu_n: \mathbf{M}_n \weq (\mathbf{M}_{n+1})_{-1}$ (see \ref{par:contraction}).  
\end{entry}

\begin{entry}
	\label{par:milnorwittktheory}
	The homotopy module $\bpi_0(\1_k)$ has been determined by Morel \cite[Theorem 6.4.1]{MIntro} and \cite[Theorem 6.40]{MField}: it is the graded unramified Milnor--Witt K-theory sheaf $\K^{MW}_{\ast}$.  Our conventions for Milnor--Witt K-theory follow Morel's book \cite[3.1]{MField}.  For an extension $K/k$, the sections of $\bpi_0(\1_k)(K)$ are generated by the motivic Hopf map $\eta$ of degree $-1$, and symbols $[u]$, for $u \in K^{\times}$, satisfying relations.  For a unit $u$, we write $\langle u \rangle = 1 + \eta [u]$, and the map $u \to \langle u \rangle$ factors through an isomorphism $GW(K) \to \bpi_0(\1_k)(K)_0$.  We write $\rho$ for the symbol $[-1]$, $h$ for $1 + \langle -1 \rangle$ and $\varepsilon$ for $- \langle -1 \rangle$.
\end{entry}

\begin{entry}[Homotopy modules ctd.]
	\label{par:homotopymodulesbis}
	Every homotopy module has the structure of a module over $\bpi_0(\1_k)$.  The category of homotopy modules also admits several equivalent descriptions that are quite useful.  Using motivic infinite loop space recognition machinery \cite[Theorem 3.5.14]{EHKSYdeloop}, the degree $0$ part of a homotopy module can be described in terms of strictly $\aone$-invariant sheaves admitting ``framed transfers" \cite[Theorem 5.14]{BachmannYakerson}.  Alternatively, there is a ``field-theoretic" description a la Voevodsky's theory of sheaves with transfers, the so-called category of sheaves with Milnor--Witt transfers \cite[Chapter 2]{MWMotives}.  Finally there is an approach by Morel using so-called ``generalized transfers" \cite{FeldMWCycle}.  In all cases, the sheaves $\mathbf{M}_*$ are determined by their sections over finitely generated extensions of the base field together with suitable residue and transfer morphisms that are compatible with action of $\bpi_0(\1_k)$.  Moreover, all the descriptions above are equivalent \cite[Theorem 1.3]{AnanyevskiyNeshitov} or \cite[Corollary 6.2.4]{FeldHomotopyModules}.
\end{entry}

\begin{entry}
	\label{par:modules}
	If $E \in \mathrm{CAlg}(\SH(k))$, then $\Mod_E(k)$ admits a homotopy $t$-structure as well.  The heart of this $t$-structure has been described for various $E$.  Of particular interest to us is the case $E = H\Z[1/p]$, i.e., Voevodsky's motivic Eilenberg--Mac Lane spectrum, in which case $\Mod_{H\Z[1/p]}(k)^{\heartsuit}$ coincides with Voevodsky's category of homotopy modules of $\Z[1/p]$-modules with transfers.  Indeed, Voevodsky showed that the heart of a corresponding $t$-structure on $\DM(k)$ could be described in terms of strictly $\aone$-invariant Nisnevich sheaves with transfers \cite[\S 3.1 p. 205]{VoeTriCat} and then one may transfer this equivalence to module spectra in conjunction with the discussion of Paragraph~\ref{par:modulespectra}.
\end{entry}

\begin{entry}
	\label{par:effectivetstructures}
	The category $\SH(k)^{\eff}$ also has a $t$-structure.  Indeed, we define the non-negative part to be $\SH(k)^{\eff} \cap \SH(k)_{\geq 0}$. By \cite[Proposition 4]{Bachmanngenslices} it follows that this yields a $t$-structure on $\SH(k)^{\eff}$ that is called the effective homotopy $t$-structure; $E \in \SH(k)^{\eff}_{\le 0}$ if and only if $\bpi_i(E)_0 = 0$ for $i > 0$.  If follows from the discussion after \cite[Proposition 4]{Bachmanngenslices} that the non-negative part of the homotopy $t$-structure on $\SH(k)^{\eff}$ is generated under colimits and extensions by $\Sigma^{\infty}_{\pone} X_+$ and thus coincides with $\SH(k)^{\veff}$ (see Paragraph~\ref{par:veryeffective}); but we recapitulate the argument in the next result.
\end{entry}

\begin{proposition}
	\label{prop:generationundercolimits}
	The categories $\SH^{S^1}(k)_{\geq 0}$, $\SH(k)^{\veff}$ and $\Mod_{E}(k)^{\veff}$ for any $E \in \mathrm{CAlg}(\SH(k)^{\veff})$ are generated under sifted colimits by the image of $\Sm_k$.
\end{proposition}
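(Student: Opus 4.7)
Write $\mathcal{C}$ for any one of the three categories in the statement and $G_0 \subset \mathcal{C}$ for the image of $\Sm_k$ under the relevant functor ($\Sigma^{\infty}_{S^1}(-)_+$, $\Sigma^{\infty}(-)_+$, or $E \wedge \Sigma^{\infty}(-)_+$). Let $\mathcal{D}$ be the smallest full subcategory of $\mathcal{C}$ that contains $G_0$ and is closed under sifted colimits. The plan is to show $\mathcal{D} = \mathcal{C}$.

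The first observation is that $\Sm_k$ is closed under finite coproducts (disjoint unions), and in each of the three cases the relevant functor carries disjoint unions to finite direct sums. Hence $G_0$ is closed (up to equivalence) under finite direct sums, and since finite direct sums commute with sifted colimits separately in each variable, $\mathcal{D}$ is likewise closed under finite direct sums. Combining this with the standard fact that every small colimit decomposes as a sifted colimit of finite coproducts \cite[Proposition 5.5.8.15]{HTT}, it follows that $\mathcal{D}$ is closed under all small colimits.

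It therefore suffices to show that $\mathcal{C}$ coincides with the colimit closure of $G_0$. Equivalently, the ``and extensions'' clauses in the characterizations of these categories (Paragraphs~\ref{par:homotopytstructureS1}, \ref{par:veryeffective}, and~\ref{par:modules}) need to be shown to be subsumed by closure under colimits. For $E \in \mathcal{C}$, a standard cell-attachment argument relative to the homotopy $t$-structure produces a cellular approximation $E \simeq \colim_n E^{(n)}$, with $E^{(0)} = \ast$ and each $E^{(n)}$ obtained from $E^{(n-1)}$ by a pushout along a map of the form $\bigvee_i \Sigma^{n_i} g_i \to E^{(n-1)}$, with $g_i \in G_0$ and $n_i \geq 0$, the attaching maps being chosen to surject onto $\bpi_n$ and kill lower homotopy sheaves through a small-object argument (using that $G_0$ consists of compact objects). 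Each suspension $\Sigma^{n_i} g_i$ is itself a colimit of $G_0$ (via iterated pushouts of the form $\ast \leftarrow g_i \to \ast$); the wedges, pushouts, and sequential colimit that appear are all colimits. Hence $E$ lies in the colimit closure of $G_0$, and combined with the previous paragraph we conclude $\mathcal{D} = \mathcal{C}$.

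The main obstacle is the cell-attachment construction: one must set up the small-object argument so that the cells used come from $G_0$ (and their suspensions/wedges), with the successive approximations matching $E$ on homotopy sheaves through a uniformly increasing range. This is routine for presentable prestable $\infty$-categories whose connective part is generated by a set of compact objects closed under finite coproducts, which describes each of the three settings.
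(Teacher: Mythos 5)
Your proposal is correct and follows essentially the same route as the paper: first a cell-attachment argument relative to the homotopy $t$-structure shows generation under all colimits, and then closure of the generators under finite coproducts (disjoint unions of smooth schemes) reduces all colimits to sifted ones. The one point you assert as "routine" that the paper justifies explicitly is the convergence of the cellular tower, which requires that $\infty$-connective objects vanish (hypercompleteness of the Nisnevich $\infty$-topos), and for $\Mod_E(k)^{\veff}$ the paper obtains this by reducing to $\SH(k)^{\veff}$ via the exact conservative forgetful functor on hearts rather than by a blanket claim about compactly generated prestable categories.
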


\begin{proof}
	We first show that the categories are generated by the objects claimed under colimits.  The categories $\SH^{S^1}(k)_{\geq 0}$ and $\SH(k)^{\veff}$ are the non-negative parts of presentably stable $\infty$-categories with $t$-structure by the discussion of Paragraph~\ref{par:homotopytstructureS1} and \ref{par:homotopytstructureP1}. In these stable $\infty$-categories, the $\infty$-connective objects are zero (this follows from hypercompleteness of the Nisnevich $\infty$-topos).  Moreover, for every non-negative object, we can always find a family of maps from shifted suspension spectra of smooth schemes that induce an epimorphism after taking $\pi_0$ (see ``Details on killing cells" \cite[p. 1130]{Bachmanngenslices}); the result follows from the discussion there, though see \cite[Corollary C.2.1.7]{SAG} for a more abstract statement.  The case of $\Mod_E(k)^{\veff}$ is similar and reduces to the case of $\SH(k)^{\veff}$ since the forgetful functor $\Mod_{E}(k)^{\heartsuit} \to \SH(k)^{\heartsuit}$ is exact and conservative \cite[Lemma 29 and its proof]{bachmann-tambara}.

	Now we reduce to sifted colimits.\NB{This is a general fact that the closure under sifted colimits of a subcategory closed under sums is closed under all colimits. Should be able to find a reference...} We treat $\SH^{S^1}(k)_{\ge 0}$ as an example. Let $\mathcal C \subset \SH^{S^1}(k)_{\ge 0}$ denote the subcategory generated under sifted colimits by $\Sm_k$. We claim $\mathcal C$ is closed under binary sums. To see this, first let $X \in \Sm_k$ and write $\mathcal C_X \subset \mathcal C$ for the subcategory of those objects $Y$ such that $\Sigma^\infty_+ X \oplus Y \in \mathcal C$. Then $\mathcal C_X$ contains $\Sigma^\infty_+ \Sm_k$ and is closed under sifted colimits (use that sifted categories are weakly contractible \cite[Proposition 5.5.8.7]{HTT}), hence $\mathcal C_X = \mathcal C$. New let $X \in \mathcal C$ be arbitrary. Then $\mathcal C_X$ contains $\Sigma^\infty_+ \Sm_k$ by what we just did, and is closed under sifted colimits as before; hence $\mathcal C_X = \mathcal C$. We have thus proved that $\mathcal C$ is closed under finite sums and sifted colimits, whence all colimits \cite[Lemma 5.5.8.13]{HTT}.
\end{proof}

\subsubsection*{Decomposing $\SH(k)_{\Q}$}
\begin{entry}
	\label{par:varepsilonconventions}
There exists an endomorphism $\varepsilon$ of the motivic sphere spectrum $\1_k \in \SH(k)$ such that $\varepsilon^2 = id$: $\varepsilon$ corresponds to permutation of the two factors in $\gm{} \wedge \gm{}$.  Morel computed that $\varepsilon$ corresponds to the element $- \langle -1 \rangle \in GW(k)$ under the identification of $\bpi_0(\1)$ with $\K^{MW}_0(k)$ from Paragraph \ref{par:milnorwittktheory}.  With that notation, the class of the hyperbolic form $h$ is $1 - \varepsilon$, and the relation $\eta h = 0$ in Milnor--Witt K-theory implies that $\eta$ is killed by $1 - \varepsilon$.  After inverting $2$, we obtain a pair of orthogonal idempotents $e_{\pm} := \frac{1}{2}(1 \mp \varepsilon)$ that decompose $\SH(k)_{\Z[1/2]}$ as a sum of eigen-subcategories.  The image of $e_+$, i.e., the {\em $-1$-eigenspace} of $\varepsilon$ is defined to be $\SH(k)^+_{\Z[1/2]}$ , while the image of $e_-$ is defined to be $\SH(k)_{\Z[1/2]}^{-}$, and there is a decomposition: 
\[
\SH(k)_{\Z[1/2]} \weq \SH(k)_{\Z[1/2]}^+ \times \SH(k)_{\Z[1/2]}^{-};
\]
(this unfortunate terminological issue was caused by a change in conventions as regards $\varepsilon$.)  If $k$ has finite $2$-cohomological dimension, then the $-$-part vanishes \cite[Lemma 6.8]{LevineConvergence}.  After rationalizing, the $+$-part admits a description originally due to F. Morel.
\end{entry}

\begin{theorem}[Morel]
	\label{thm:rationalizedpluspart}
	If $k$ is a field, then $\SH(k)_{\Q}^{+} \weq \Mod_{H\Q}(k)$.
\end{theorem}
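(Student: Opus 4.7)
The plan is to construct the forgetful functor $\Mod_{H\Q}(k) \to \SH(k)^+_{\Q}$, show it is fully faithful, and reduce essential surjectivity to showing that the unit map $u \colon \1^+_{\Q} \to H\Q$ is an equivalence in $\SH(k)^+_{\Q}$.

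First I would verify that $H\Q$ lies in $\SH(k)^+_{\Q}$, so that the base change along the unit of $H\Q$ preserves the $+$-part. By Paragraph~\ref{par:milnorwittktheory}, $\varepsilon = -\langle -1\rangle \in GW(k) \cong \pi_{0,0}(\1_k)$, and the $GW$-action on $H\Z$ factors through the rank homomorphism $GW(k) \to \Z$ (since $H\Z$ is an orientable theory and $\eta$ acts trivially on it). Thus $\varepsilon$ acts as $-1$ on $H\Q$, placing $H\Q \in \SH(k)^+_{\Q}$. Conversely, any $H\Q$-module $M$ inherits $\varepsilon$-action by $-1$ via $M \simeq H\Q \wedge_{H\Q} M$, so $\Mod_{H\Q}(k)$ sits as a full subcategory of $\SH(k)^+_{\Q}$.

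Granting $u \colon \1^+_{\Q} \isomto H\Q$, the theorem follows quickly: in $\SH(k)^+_{\Q}$ every object is tautologically an $H\Q$-module (since $H\Q$ becomes the unit), so $\SH(k)^+_{\Q} \simeq \Mod_{H\Q}(\SH(k)^+_{\Q})$, and combining with the inclusion $\Mod_{H\Q}(k) \subset \SH(k)^+_{\Q}$ from the previous step, we conclude $\Mod_{H\Q}(\SH(k)^+_{\Q}) = \Mod_{H\Q}(k)$.

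The hard part will be proving $u$ is an equivalence. I would proceed via the slice filtration of Paragraph~\ref{par:slices}. Both $\1^+_{\Q}$ and $H\Q$ are connective in the homotopy $t$-structure, and the Voevodsky/Levine identification $s_0(\1_k) \simeq H\Z$ gives $s_0(\1^+_{\Q}) \simeq H\Q = s_0(H\Q)$, with $u$ inducing an equivalence on $s_0$. It remains to show the higher slices vanish: $s_q(\1^+_{\Q}) = 0$ for $q \geq 1$. Each $s_q(\1_k)$ is an $H\Z$-module by multiplicativity of the slice filtration, and known computations (R\"ondigs--Spitzweck--{\O}stv{\ae}r, Levine) describe $s_q(\1_k)_\Q$ in terms of rational classical stable stems smashed with appropriately-shifted motivic cohomology. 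Rationalization kills all torsion in those stems, and the remaining rational classes arise from $\eta$-multiples (e.g. the image of $J$). These vanish in the $+$-part: from $\eta h = 0$ with $h = 1-\varepsilon$ acting as $2$ on $\SH(k)^+$, we deduce $2\eta = 0$, hence $\eta = 0$ rationally in $\SH(k)^+_{\Q}$. Thus $s_q(\1^+_{\Q}) = 0$ for $q \geq 1$, and convergence of the slice tower (which in characteristic $0$ is part of Levine's convergence theorem mentioned in the introduction, and extends appropriately to arbitrary fields after rationalization) yields $\1^+_{\Q} \simeq s_0(\1^+_{\Q}) \simeq H\Q$.
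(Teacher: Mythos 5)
Your route is genuinely different from the paper's: the paper simply chains together Cisinski--D\'eglise's equivalences ($\SH(k)_{\Q}\weq \D^{\aone}(k)_{\Q}[\gm{-1}]$, plus part $=$ Beilinson motives $=$ $\DM(k,\Q)$), whereas you try to prove directly that the unit $\1^+_{\Q}\to H\Q$ is an equivalence via the slice tower. The reduction of the theorem to that single claim is sound (modulo a small presentational circularity: $\Mod_{H\Q}(k)$ being a \emph{full} subcategory of $\SH(k)^+_{\Q}$ is a consequence of $H\Q$ being idempotent, which you only get after the main claim --- but the free/forgetful adjunction argument you sketch afterwards makes this harmless), and the vanishing of $s_q(\1)_\Q$ for $q\ge 1$ is true, though it rests on heavyweight input (Levine's comparison of the slice tower with the Adams--Novikov $E_2$-page, or at least torsionness of the higher slices).

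The genuine gap is the sentence asserting that slice convergence ``extends appropriately to arbitrary fields after rationalization.'' This is precisely the hard point, not a routine extension. Vanishing of all higher slices does \emph{not} imply $\1^+_\Q \weq s_0(\1^+_\Q)$ without knowing $\holim_n f_n(\1^+_\Q)=0$, and the standard counterexample to slice convergence lives exactly in this decomposition: over $k=\real$ the spectrum $\1^-_\Q$ is nonzero, connective and effective, yet all of its slices vanish (it is $\eta$-periodic, and slices are $H\Z$-modules on which $\eta$ acts trivially). So you are implicitly claiming that the pathology is confined to the minus part for \emph{every} field, which is close to the content of the theorem itself. To close the gap you would need to argue convergence for the plus part honestly: e.g.\ reduce by continuity/essentially smooth base change (Paragraph~\ref{par:essentiallysmoothbasechange}) to finitely generated fields, note that in characteristic $0$ these have finite virtual $2$-cohomological dimension, and then invoke Levine-type convergence together with a $\rho$-inversion/real-\'etale analysis of the $2$-primary part (as the paper does elsewhere via \cite{BEO} and \cite{Bachmannret}); positive characteristic requires separate treatment. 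As written, the argument does not establish the statement for a general field, which is why the paper instead routes the proof through the Beilinson-motives comparison, where convergence of the slice tower never enters.
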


\begin{proof}
	This result is obtained by composing a number of equivalences.  First, the usual Dold--Kan adjunction defines an equivalence between $\SH^{S^1}(k)_{\Q}$ and $\D^{\aone}(k)_{\Q}$ where the latter is Morel's $\aone$-derived category \cite[\S 5.3.35]{CisinskiDeglise}.    After inverting the Tate object, there is an induced equivalence $\SH(k)_\Q \weq \D^{\aone}(k)_{\Q}[\gm{{-1}}]$.  By \cite[Theorem 16.2.13]{CisinskiDeglise} we know that $\D^{\aone}(k)_{\Q}[\gm{{-1}}]^{+}$ is the category of ``Beilinson" motives \cite[\S 14]{CisinskiDeglise}.  Finally, under the assumptions on $k$, we conclude that the category of ``Beilinson" motives coincides with Voevodsky's rationalized derived category $\DM(k,\Q)$ \cite[Theorem 16.1.14]{CisinskiDeglise}.  The equivalence in the statement then follows from the discussion of Paragraph~\ref{par:modulespectra}.
\end{proof}

\subsubsection*{Conservativity of $\gm{}$-stabilization}
\begin{entry}
	\label{par:gmstabilization}
Consider the $\gm{}$-stabilization functor 
\[
\sigma^{\infty}: \SH^{S^1}(k) \longrightarrow \SH(k)
\] 
(see Paragraph~\ref{par:s1vsp1}).  By construction, this functor sends the $S^1$-suspension spectrum of a smooth scheme $X$ to its corresponding $\pone$-suspension spectrum; it therefore sends $d$-effective $S^1$-spectra to $d$-effective $\pone$-spectra and also sends $\SH^{S^1}(k)_{\geq 0}$ to $\SH(k)^{\veff}$ (see \cite[\S 6.1]{BachmannYakerson}).  
\end{entry}

\begin{entry}
	\label{par:strictlyaoneinvarianttohomotopymodules}
There is an induced functor
\[
\sigma^{\infty}: \SH^{S^1}(k)^{\heartsuit} \longrightarrow \SH(k)^{\heartsuit}.
\]  
If a strictly $\aone$-invariant sheaf $\mathbf{A}$ supports the structure of a homotopy module (see Paragraphs~\ref{par:homotopytstructureP1} and \ref{par:homotopymodulesbis}), then we must be able to supply $\gm{}$-deloopings and a compatible structure of a module over $\bpi_0(\1_k)$.  More precisely, we must be able to find a sequence of strictly $\aone$-invariant sheaves $\mathbf{A}_i$, $i \geq 0$, such that $\mathbf{A}_0 = \mathbf{A}$ and $(\mathbf{A}_{i+1})_{-1} = \mathbf{A}_i$; in particular, the sheaf $\mathbf{A}$ must itself be a contraction.  Additionally, we need to provide the result with a $\K^{MW}_{-\ast}$-module structure compatible with these $\gm{}$-deloopings.    
\end{entry}

\begin{ex}
	\label{ex:nonhomotopymodule}
	The functor $\sigma^{\infty}$ is not an equivalence in general.   Indeed, if $C$ is a smooth projective curve of genus $g > 0$ over a field $k$, then the zeroth $\aone$-homology sheaf $\H_0^{\aone}(C)$ is known to be the free sheaf of abelian groups on $C$ (see \cite[Example 2.6]{AsokHaesemeyer}).  A modification of the argument in \cite[\S 2]{Levineslices} shows that $\H_0^{\aone}(C)$ does not underlie a homotopy module.  
\end{ex}

Morel asked whether any strictly $\aone$-invariant sheaf that is a $1$-fold contraction should underlie a homotopy module (see \cite[Remark 5.10]{MFM}).  Variants of this question were pursued by the second author, partly in collaboration with M. Yakerson.  The following result summarizes the results we will use: loosely speaking, $S^1$-spectra that are ``sufficiently effective" can be infinitely $\gm{}$-delooped.

\begin{theorem}
	\label{thm:conservativityofgmstabilization}
Assume $k$ is a perfect field of exponential characteristic $e$.  There exist minimal integers $\epsilon(k) \ge \epsilon'(k) \geq 0$ such that the following statements hold.
\begin{enumerate}[noitemsep,topsep=1pt]
	\item For $E \in \SH^{S^1}(k)_{\geq 0} \cap \SH^{S^1}(k)(\epsilon(k))$, the canonical map
\[
\bpi_0(E) \longrightarrow \bpi_0(\omega^{\infty}\sigma^{\infty}E)
\]
is an isomorphism; the same holds for $E \in \SH^{S^1}(k)_{\geq 0} \cap \SH^{S^1}(k)(\epsilon'(k))[1/e]$.
	\item The functor
	\[
	\omega^{\infty}: \SH(k)^{\eff}(\epsilon(k))^{\heartsuit} \longrightarrow \SH^{S^1}(k)(\epsilon(k))
	\]
	is an equivalence of categories; the functor $\omega^{\infty}$ induces an equivalence $\SH(k)^{\eff}(\epsilon'(k))^{\heartsuit}[1/e] \weq \SH^{S^1}(k)(\epsilon'(k))[1/e]$.
\end{enumerate}
In fact $\epsilon(k) \leq 2$ and, assuming an unpublished result of Morel, $\epsilon'(k) = 1$.
\end{theorem}	

\begin{proof}
We first prove the claims about $\epsilon(k)$.
For any strictly $\aone$-invariant sheaf, F. Morel constructed ``canonical" or ``Bass--Tate" transfers on $\mathbf{M}_{-1}$ (see \cite[p. 99]{MField} or \cite[\S 2.1.8]{BachmannConservativity}) and, consequently, on $\mathbf{M}_{-i}$ for any $i$.  If the ``canonical transfers" on $\mathbf{M}_{-i}$ give rise to framed transfers, then (1) is the conclusion of \cite[Corollary 4.9]{BachmannConservativity}.  If $k$ has characteristic $0$, the condition that Morel's canonical transfers on $2$-fold contractions give rise to framed transfers was established by \cite[Theorem 5.19]{BachmannYakerson}; if $k$ has positive characteristic, then the same statement follows from \cite[Theorem 6.1.5(iii,iv)]{FeldGenFramedTransfers}.  The second statement is simply \cite[Theorem 4.6]{BachmannConservativity} in light of the observations just made.

To prove the results about $\epsilon'(k)$, the only thing we need to prove is that if $\mathbf{M}$ is strictly $\A^1$-invariant and $e$-periodic, then the canonical transfers on $\mathbf{M}_{-1}$ extend to framed transfers.
By \cite[Theorem 6.1.5(i,iii)]{FeldGenFramedTransfers}, this follows from Corollary \ref{cor:morel-improvement} below.
\end{proof}

We are referring to the following unpublished result of Morel.
\begin{theorem}[Morel] \label{thm:morel-new}
Let $k$ be a field and $l/k$ a finite separable extension of $k$.
Then the canonical map \[ [\Sigma^\infty_{S^1} \mathbb{P}^1, \Sigma^\infty_{S^1} \mathbb{P}^1 \wedge \Spec(l)]_{\SH^{S^1}(k)} \longrightarrow [\Sigma^\infty \mathbb{P}^1, \Sigma^\infty \mathbb{P}^1 \wedge \Spec(l)]_{\SH(k)} \] is a bijection.
\end{theorem}

\begin{cor} \label{cor:morel-improvement}
Suppose $k$ is a perfect field of exponential characteristic $e$.  If $\mathbf{M}$ is a strictly $\A^1$-invariant sheaf over $k$ that is $e$-periodic, then the canonical transfers on ${\mathbf M}_{-1}$ are well-defined.
\end{cor}
\begin{proof}
Let us recall the issue.  Let $L/K$ be a finite extension of finitely generated fields over $k$.  Suppose $x \in L$ is a generator of $L/K$.  The composite 
\[ 
c_x: \Spec(\mathbb{P}^1) \wedge \Spec(L)_+ \longrightarrow \Spec(\PP^1_L)/\Spec(\PP^1_L) \setminus x \weq Th(\omega_{L/K}) \in \mathrm{Pro}(\SH^{S^1}(k)) 
\] 
induces a map 
\[ 
c_x^*: \mathbf{M}_{-1}(K) \longrightarrow \mathbf{M}_{-1}(L, \omega_{L/K}). 
\]
It follows from \cite[Lemma 4.34]{MField} that $c_x^*$ is $GW(K)$-linear and hence may be twisted by line bundles on $\Spec(K)$.  Abusing notation, we denote the twisted versions again by $c_x^*$.  Now let $L/K$ again be finite, and suppose that $(x_1, \dots, x_n) \in L$ is a collection of generators.  We obtain a sequence of intermediate field extensions $K_r := K(x_1, \dots, x_r)$ and transfer maps $c_{x_{r+1}}^*: \mathbf{M}_{-1}(K_{r+1}, \omega_{K_{r+1}/K_r}) \to \mathbf{M}_{-1}(K_r)$.
Composing them all, after appropriate twists, yields a map 
\[ 
c_{x_1, \dots, x_r}^*: \mathbf{M}_{-1}(L, \omega_{L/K}) \longrightarrow \mathbf{M}_{-1}(K). 
\]
We must thus prove that this map is independent of the choice of the sequence of generators $(x_1, \dots, x_r)$.

Let us begin with the simplest case, in which $L/K$ is finite separable, which is, of course, the only case when $k$ has characteristic $0$.  Since the only reason to assume $k$ perfect was to deal with inseparable extensions, we may then assume $k = K$.  We are thus given a finite separable extension $l/k$ and must show that the corresponding transfer on $\mathbf{M}_{-1}$ is independent of choices.  Note that for a given sequence of generators $(x_1, \dots, x_r)$, the co-transfer 
\[ 
c_{x_1, \dots, x_r}: \Sigma^\infty_{S^1} \PP^1 \longrightarrow \Sigma^\infty_{S^1} Th(\omega_{l/k}) \quad (\weq \PP^1 \wedge \Spec(l)_+) 
\] 
is an actual map in $\SH^{S^1}(k)$, and not of pro-objects (the last equivalence being non-canonical).  Moreover, by Morel's Theorem \ref{thm:morel-new}, the set of homotopy classes of maps with the same source and target is canonically isomorphic to $[\PP^1, Th(\omega_{l/k})]_{\SH(k)}$, which is a torsor under $GW(l)$.
In other words, the transfers corresponding to two different choices of generators coincide for all $\mathbf{M}_{-1}$ if and only if they coincide for $\mathbf{M}=\mathbf{K}_1^{MW}$ (so that $\mathbf{M}_{-1} = \mathbf{GW}$).
But this case is well-known (since $\mathbf{GW}$ is a double contraction).

It remains to treat the case where $L/K$ is an inseparable extension.  We employ the following two reductions.
\begin{enumerate}
\item If $L/K$ is a finite extension, $x \in L$, $K'/K$ a purely inseparable extension, and $L'/K'$ the composite extension generated by $x$, then there exists $n_{L/K} \in GW(k)$ such that the following diagram commutes
\begin{equation*}
\begin{CD}
M_{-1}(L, \omega_{L/K}) @>{c_x^*}>> M_{-1}(K) \\
@VVV                                   @VVV \\
M_{-1}(L', \omega_{L'/K'}) @>{c_{x'}^*}>> M_{-1}(K').
\end{CD}
\end{equation*}
  Here $x'$ denotes the image of $x$ in $L'$.
\item If $L/K$ is a finite, purely inseparable extension, then $\mathbf{M}_{-1}(K) \to \mathbf{M}_{-1}(L)$ is injective.
\end{enumerate}
Now let $L/K$ be a finite but possibly inseparable extension.  There exists a purely inseparable extension $K'/K$ such that $L'/K'$ is separable.
Using (1) and (2) repeatedly, we see that well-definedness of the transfer for $L/K$ reduces to the corresponding situation for $L'/K'$, which we have already treated.

It remains to establish (1) and (2).  The content of (1) can be found in \cite[Lemma 4.1.7]{FeldGenFramedTransfers}.
For (2), we argue as follows.  We may as well assume that $L/K$ is generated by a single element $x$ with $x^p \in K$.  Observe as well that the square
\begin{equation*}
\begin{CD}
\Spec(L) @>x>> \A^1_K \\
@VVV          @V{f}VV \\
\Spec(K) @>0>> \A^1_K
\end{CD}
\end{equation*}
is cartesian, where $f(T) = T^p - x^p$.  Hence $f: (\A^1_K, \Spec(L)) \to (\A^1_K, 0_K)$ is a morphism of smooth closed pairs.  Since the purity equivalence is functorial in morphisms of smooth closed pairs \cite[Theorem 3.23]{HoyoisEquiv}, the induced map 
\[ 
\PP^1 \wedge \Spec(L)_+ \weq \A^1_K/\A^1_K \setminus x \stackrel{f}{\longrightarrow} \A^1_K/\A^1_K \setminus 0 \weq \PP^1 \wedge \Spec(K)_+ 
\] 
is equivalent to the $\PP^1$-suspension of $\Spec(L)_+ \to \Spec(K)_+$.  This last observation is useful to us because the map $f:\A^1_K/\A^1_K \setminus x \to \A^1_K/\A^1 \setminus 0$ lifts to $\Spc(K)$ (along the pro-(left adjoint) to $\Spc(k) \to \Spc(K)$)\footnote{Note, however, that the equivalence $\A^1_K/\A^1_K \setminus x \weq \PP^1 \wedge \Spec(L)_+$ does not lift.}
We shall prove that the composition with $c_x: \PP^1 \to \PP^1/\PP^1 \setminus x \weq \A^1/\A^1 \setminus x$ yields a map $f \circ c_x \in [\PP^1, \PP^1]_{\SH^{S^1}(K)}$ which becomes an isomorphism after inverting $e$.
As established by Morel, we have $[\PP^1, \PP^1]_{\SH^{S^1}(K)} \weq [\PP^1, \PP^1]_{\SH(K)} \weq GW(K)$.
In particular to establish that $f \circ c_x[1/e]$ is an equivalence, we may work $\PP^1$-stably.  It is clear that $f \circ c_x$ corresponds to an element of $GW(K)$ of rank $p$.  Any such element defines a unit in $GW(K)[1/e]$, $K$ having positive characteristic, and thus determines an equivalence as required.
\end{proof}

\begin{rem}
	\label{rem:morelimprovement}
Due to the currently unpublished nature of Theorem \ref{thm:morel-new}, in the body of the text we largely do not utilize it.
In other words, we write as if $\epsilon(k) = 2 = \epsilon'(k)$ is the best known bound.
\end{rem}
	
\subsection{Real-\'etale motivic homotopy theory}
\label{ss:ret}
In this section, we establish some unstable analogs of results from \cite{Bachmannret}; much stronger results will be established in forthcoming work.  We begin by recalling some facts about the real-\'etale topology and associated categories of motivic spaces (see \ref{par:realetaletopology}-\ref{prop:unstableretloc-basics}).  If $\rho: S^0 \to \Gm$ is the map sending the non-basepoint to $-1$, we recall some links between real-\'etale localization and $\rho$-localization (see \ref{par:rho}-\ref{prop:unstable-rholoc}).  We close with some complements about the other part of the rationalized stable homotopy category and real realizations (see \ref{thm:rationalminuspartviaret}-\ref{rem:MVrealrealization}).

\subsubsection*{Unstable real-\'etale localization}
\begin{entry}
	\label{par:realetaletopology}
	We write $\ret$ for the real-\'etale topology of Scheiderer \cite[Definition 1.2.1]{Scheiderer}.  This topology is finer than the Nisnevich topology and stalks \cite[\S 3.7]{Scheiderer} are the {\em real Henselian local schemes}, i.e., Henselian local schemes with real closed residue fields.  Note that the Artin--Schreier theorem \cite[Theorem 1.6.1]{KnebuschScheiderer} implies that if $k$ is a real closed field, then $vcd_2(k) = 0$. 
\end{entry}

\begin{entry}
	\label{par:retmotivicspaces}
	The $\infty$-topos $\Shv_{\ret}(\Sm_k)$ is hypercomplete \cite[Theorem B.13]{ElmantoShah}.  We write $\L_\ret^\naive: \Shv_\Nis(\Sm_k) \to \Shv_\Nis(\Sm_k)$ for the real étale sheafification functor (recall the conventions from  Paragraph~\ref{par:motivicspacesvariants}).  We write $\Spc_{\ret}(k)$ for the $\infty$-category of real étale motivic spaces.  The $\infty$-category $\Spc_{\ret}(k)$ is the intersection $\Spc(k) \cap \Shv_{\ret}(\Sm_k)$ in $\mathrm{P}(\Sm_k)$.  %The categories $\Spc(k)$ and $\Shv_{\ret}(\Sm_k)$ are closed under filtered colimits in $\mathrm{P}(\Sm_k)$\tom{is this clear?}.  We write $\mathrm{L}_{\ret}$ for the corresponding localization functor to distinguish it from $\Lmot$.  We use the fact that the functor $\L_\ret$ can be obtained as a countably infinite composition of $\Lmot$ and $\L_\ret^\naive$; this follows from the closure of motivic spaces and real-\'etale-sheaves of spaces under filtered colimits.  
\end{entry}

\begin{proposition} 
	\label{prop:unstableretloc-basics}
Let $k$ be a perfect field and $\mathscr X \in \Shv_\Nis(\Sm_k)$ be connected.  
\begin{enumerate}[noitemsep,topsep=1pt]
\item If for $i \le n$, $\bpi_i \mathscr X$ is a strictly $\A^1$-invariant (in particular abelian) real étale sheaf, then for $i \le n$, $\bpi_i \L_\ret \mathscr X \weq \bpi_i \mathscr X$.
\item If $\mathscr X$ is connected and $\bpi_1 \mathscr X$ is abelian, then $\mathscr X$ is a real étale sheaf if and only if all its (Nisnevich) homotopy sheaves are real étale sheaves.
\item If $\mathscr X$ is $n$-connective, then so is $\L_\ret \mathscr X$.
\end{enumerate}
\end{proposition}

\begin{proof}
We begin with some preliminary observations.
(a) The functor $\Lret$ can be written as a transfinite composite of the functors $\Lretna$ and $\Lmot$.
(This is the case because $\Shv_{\ret}(\Sm_k) \to \Shv_\Nis(\Sm_k)$ and $\Spc(k) \to \Shv_\Nis(\Sm_k)$ preserve $\kappa$-filtered colimits for some $\kappa$, the localizations being accessible.
In fact $\kappa = \omega$ works, but we do not need this stronger assertion here.)
(b) For a real étale sheaf $A$ of abelian groups, we have $K_\ret(A, n) \weq K_\Nis(A, n)$ \cite[Proposition 19.2.1]{Scheiderer}.
(c) For $\mathscr X \in \Shv_\Nis(\Sm_k)$ $n$-connective ($n \ge 1$, with $\bpi_1$ abelian if $n=1$) also $\Lretna \mathscr X \in \Shv_\Nis(\Sm_k)$ is $n$-connective.
Indeed we can check this on stalks, and hence using (b) this follows from the fact that any affine $U \in \Sm_k$ has finite real étale homotopy dimension \cite[Proof of Theorem B.13]{ElmantoShah}.

(2) If all the homotopy sheaves are real étale sheaves, then it follows from (b) by working up the Postnikov tower that $\mathscr X$ is a real étale sheaf.
Conversely, if $\mathscr X$ is a real étale sheaf, then it follows from (b) that the real étale postnikov tower of $\mathscr X$ coincides with the Nisnevich postnikov tower, which implies the claim.

(1) Using (a) it suffices to prove the following: if $\mathscr Y \in \Shv_\Nis(\Sm_k)$ is connected and $\bpi_i \mathscr Y$ is both strongly $\A^1$-invariant and a real étale sheaf for $i \le n$, then the maps (1a) $\mathscr Y \to \L_\ret^\naive \mathscr Y$ and (1b) $\mathscr Y \to \Lmot \mathscr Y$ both induce isomorphisms on $\bpi_i$ for $i \le n$.
Consider the fiber sequence $\mathscr Y_{>n} \to \mathscr Y \to \mathscr Y_{\le n}$. The functor $\L_{\ret}^{\naive}$ preserves this fiber sequence.
Since $\Lretna \mathscr Y_{\le n} \weq \mathscr Y_{\le n}$ by (2) and $\Lretna \mathscr Y_{>n}$ is $n$-connected by (c), we conclude that (1a) holds.
The argument for (1b) is the same, using that the fiber sequence is preserved by $\Lmot$ by the unstable connectivity theorem \ref{thm:unstableconnectivity}.
\end{proof}

\begin{entry}
	\label{par:rho}
	If $k$ is a field, we write $\rho: S^0 \to \Gm$ for the map sending the non base-point to $-1$.  Abusing notation, if $\mathscr{X}$ is a pointed motivic space, we also write $\rho$ for the induced map $\mathscr{X} \to \mathscr{X} \wedge \Gm$.  Stabilizing, the map $\rho$ defines an element of $\K^{MW}_1(k)$ which was mentioned in Paragraph~\ref{par:milnorwittktheory}.  Note that in \cite{Bachmannret} the stable map corresponding to $\rho$ is denoted $\rho'$ and it is frequently useful stably to consider $-\rho$.
\end{entry}

\begin{lem}
	\label{lem:rhoperiodization}
	If $k$ is a perfect field and $\mathscr{X} \in \Spc(k)_*$ is simply connected, then consider the $\rho$-periodization 
	\[ 
	\mathscr X[\rho^{-1}] := \colim(\mathscr X \xrightarrow{\rho} \mathscr X \wedge \Gm \xrightarrow{\rho} \mathscr X \wedge \Gm^{\wedge 2} \xrightarrow{\rho} \dots ). 
	\]
	The following statements hold. 
	\begin{enumerate}[noitemsep,topsep=1pt]
	\item The space $\mathscr{X}[\rho^{-1}]$ is again simply connected,  
	\item the maps
	\[
	\Lret \mathscr{X} \longrightarrow (\Lret \mathscr{X})[\rho^{-1}] \to \Lret(\mathscr X[\rho^{-1}])
	\]
	are equivalences, and 
	\item the map 
	\[
	\rho: \mathscr{X}[\rho^{-1}] \longrightarrow \mathscr{X}[\rho^{-1}] \sma \Gm
	\]  
	is an equivalence.
	\end{enumerate}
\end{lem}

\begin{proof}
(1) is clear.

(2, beginning)
We now prove that $f: \Lret \mathscr{X} \to \Lret(\mathscr X[\rho^{-1}])$ is an equivalence.
Proposition~\ref{prop:unstableretloc-basics}(3) implies that $\Lret \mathscr{X}$ and $\Lret(\mathscr X[\rho^{-1}])$ are again simply connected.
Thus, to establish the claim, it suffices to check after mapping into a simply connected $\ret$-local space.  Proposition~\ref{prop:unstableretloc-basics}(2) implies that any simply connected $\ret$-local space admits a Postnikov tower with layers of the form $K(\mathbf{A},i)$ with $\mathbf{A}$ a real \'etale sheaf. Then the Eilenberg--Mac Lane spectrum $H\mathbf A \in \SH^{S^1}(k)$ is real étale local (e.g. by  Proposition~\ref{prop:unstableretloc-basics}(2)). Thus it suffices to show that $f$ becomes an equivalence in $\Lret \SH^{S^1}(k)$. For this it is enough to show that $\rho$ becomes an equivalence in the latter category, which is the content of \cite[Theorem 4.2]{bachmann-SHet2}.

(2, conclusion) We must show that $\Lret \mathscr{X} \longrightarrow (\Lret \mathscr{X})[\rho^{-1}]$ is an equivalence.
Since this is a map between simply connected spaces, by the Whitehead theorem \cite[Theorem 5.2]{ABHWhitehead}, for this we may check $S^1$-stably.
Note that $\Sigma^\infty_{S^1} (\Lret \mathscr{X})[\rho^{-1}]$ is arbitrarily highly effective, and hence so is its retract $\Sigma^\infty_{S^1} \Lret \mathscr{X}$.
Thus in conjunction with Theorem~\ref{thm:conservativityofgmstabilization} it suffices to check that our map is an equivalence $\mathbb P^1$-stably.
But now $\Sigma^\infty \Lret \mathscr{X}$ is a retract of the $\rho$-periodic object $\Sigma^\infty (\Lret \mathscr{X})[\rho^{-1}]$, whence $\rho$-periodic, concluding the proof.

(3) The argument is similar but easier.
The map is between simply connected, arbitrarily effective objects, so as before we may check it is an equivalence $\mathbb P^1$-stably.
This is immediate.
\end{proof}

\begin{proposition} 
	\label{prop:unstable-rholoc}
Let $k$ be a perfect field. For $\mathscr X \in \Spc(k)_*$ simply connected, the following are equivalent:
\begin{enumerate}[noitemsep,topsep=1pt]
	\item the space $\mathscr X$ satisfies real étale descent;
	\item the map $\rho: \mathscr X \to \mathscr X \wedge \Gm$ is an equivalence;
\end{enumerate}
Moreover, if either of the above equivalent statements is true, then $\rho^*: \Omega_\Gm \mathscr X \to \mathscr X$ is an equivalence.
\end{proposition}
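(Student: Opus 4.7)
The strategy is to reduce both (1) and (2) to a single criterion on the homotopy sheaves of $\mathscr X$, via the Postnikov tower (which admits a functorial principal refinement by Proposition~\ref{prop:functorialprincipalrefinement}, since $\mathscr X$ is simply connected and hence nilpotent). The sheaves $\bpi_i \mathscr X$ for $i \geq 2$ are strictly $\A^1$-invariant (Paragraph~\ref{par:aonelocalityofspaces}), and Paragraph~\ref{par:contraction} gives a natural identification $\bpi_i(\Omega_\Gm \mathscr X) \cong (\bpi_i \mathscr X)_{-1}$.

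First I would show that (1) is equivalent to each $\bpi_i \mathscr X$ being a real étale sheaf. The ``only if'' direction is Proposition~\ref{prop:unstableretloc-basics}. For the converse, I would inductively lift real étale descent along the principal refinement, using the observation (already invoked in the proof of Proposition~\ref{prop:unstableretloc-basics} via \cite[Proposition 19.2.1]{Scheiderer}) that if $\mathbf A$ is a strictly $\A^1$-invariant real étale sheaf then $K(\mathbf A, n)$ satisfies real étale descent, since $H^*_\ret(\ph, \mathbf A) \cong H^*_\Nis(\ph, \mathbf A)$.

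Next I would establish the Moreover statement by showing that (1) is equivalent to ``$\rho^*: \mathscr X \to \Omega_\Gm \mathscr X$ is an equivalence''. By the motivic Whitehead theorem for simply connected spaces \cite[Theorem 5.3]{ABHWhitehead} together with the identification in Paragraph~\ref{par:contraction}, $\rho^*$ is an equivalence if and only if multiplication by $\rho$ induces an isomorphism $\bpi_i \mathscr X \to (\bpi_i \mathscr X)_{-1}$ for every $i \geq 2$. The crucial input is a heart-level form of Bachmann's theorem $\SH(k)_\ret \simeq \SH(k)[\rho^{-1}]$ from \cite{Bachmannret}: for a strictly $\A^1$-invariant sheaf $\mathbf A$, being a real étale sheaf is equivalent to invertibility of the multiplication map $\rho: \mathbf A \to \mathbf A_{-1}$. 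Combined with the first step, this produces the Moreover.

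Finally, for (1) $\Leftrightarrow$ (2): For (1) $\Rightarrow$ (2), I would argue that $\rho: S^0 \to \Gm$ becomes an equivalence in $\Spc_\ret(k)_*$ (on a real closed field $R$, $R^\times = R_{>0} \sqcup R_{<0}$ is the disjoint union of two semi-algebraically contractible components, with $\rho$ realizing the non-identity component), so smashing with a real étale local $\mathscr X$ produces the desired equivalence. For (2) $\Rightarrow$ (1), if $\rho: \mathscr X \to \mathscr X \wedge \Gm$ is an equivalence then its inverse $\phi: \mathscr X \wedge \Gm \to \mathscr X$ yields by adjunction a map $\tilde\phi: \mathscr X \to \Omega_\Gm \mathscr X$; a diagram chase identifies $\tilde\phi$ with $\rho^*$, so $\rho^*$ is an equivalence and (1) follows from the previous step. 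The main obstacle is the heart-level Bachmann statement: strictly $\A^1$-invariant sheaves individually need not carry a full homotopy module structure (cf.~Example~\ref{ex:nonhomotopymodule}), so Bachmann's stable argument must be adapted sheaf-by-sheaf via the unramified structure and sections over real closed versus formally real fields.
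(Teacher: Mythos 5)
Your Step A (condition (1) is equivalent to all homotopy sheaves of $\mathscr X$ being real étale sheaves) is fine and is essentially contained in Proposition~\ref{prop:unstableretloc-basics}. But there are two genuine gaps, and they sit exactly where the paper's proof does its real work. First, your argument for (1)~$\Rightarrow$~(2) only produces a \emph{real étale local} equivalence: even granting that $S^0 \to \Gm$ becomes an equivalence after $\Lret$, smashing with $\mathscr X$ yields $\mathscr X \weq \Lret(\mathscr X \wedge \Gm)$, not $\mathscr X \weq \mathscr X \wedge \Gm$. Smash products do not preserve $\ret$-local objects, so there is no reason for $\mathscr X \wedge \Gm$ to be local and you cannot drop the localization. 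This is precisely the difficulty the paper's proof is organized around: it introduces $\mathscr X[\rho^{-1}] = \colim(\mathscr X \to \mathscr X \wedge \Gm \to \cdots)$, shows separately that $\Lret \mathscr X \weq \Lret(\mathscr X[\rho^{-1}])$ and that $\rho$ is an honest equivalence on $\mathscr X[\rho^{-1}]$ (the latter by reducing to $\SH(k)$ via Theorem~\ref{thm:pqequivsof1connectedspacesaredetectedstably}), and then recovers $\mathscr X$ as an interleaved colimit of copies of $\mathscr X$ and $\mathscr X[\rho^{-1}]$.

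Second, your ``heart-level Bachmann statement'' --- that a strictly $\A^1$-invariant sheaf $\mathbf A$ is a real étale sheaf if and only if $\rho\colon \mathbf A \to \mathbf A_{-1}$ is invertible --- carries essentially all of the content of (2)~$\Rightarrow$~(1) and of the Moreover, and you do not prove it. The forward direction does follow from \cite[Theorem 4.2]{bachmann-SHet2}, but the converse is not in the cited literature for sheaves that are not homotopy modules; your closing sentence names the obstacle without resolving it. (It should be provable: if $\rho$ induces $\mathbf A \cong \mathbf A_{-1} \cong \mathbf A_{-2}$ then $\mathbf A$ is a twofold contraction, hence underlies a homotopy module by Theorem~\ref{thm:conservativityofgmstabilization}, after which Bachmann's results on $\rho$-periodic homotopy modules apply --- but that is a genuine argument, not a remark.) Note that the paper never needs this statement for an arbitrary $\bpi_i \mathscr X$: for (2)~$\Rightarrow$~(1) it truncates, passes to the cofiber $\mathscr C$ of $\mathscr X \to \mathscr X_{\le n}$ (which inherits (2)), identifies the first new homotopy sheaf with a \emph{stable} one in the Hurewicz range via \cite[Corollary 4.9]{BachmannConservativity}, and only then invokes the $\pone$-stable result \cite[Proposition 41]{Bachmannret}. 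Unless you import the paper's colimit and cofiber devices, or actually carry out the heart-level argument sketched above, your route does not close.
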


\begin{proof} 
That (1) implies (2) follows from Lemma \ref{lem:rhoperiodization}.
We prove the converse, so suppose that $\mathscr X$ satisfies (2).  Suppose that $\bpi_i \mathscr X$ is a real étale sheaf, for $i \le n$, e.g., $n=1$ since $\mathscr{X}$ is simply connected.  Write $\mathscr F$ and $\mathscr C$ for the fiber and cofiber of $\mathscr X \to \mathscr X_{\le n}$.  It follows from Proposition \ref{prop:unstableretloc-basics}(2) that $\mathscr X_{\le n}$ satisfies real étale descent, and hence satisfies (2) by what we have already done.  Consequently $\rho_{\mathscr C}: \mathscr C \to \mathscr{C} \sma \Gm$ is an equivalence as well (being the cofiber of the map of equivalences $\rho_{\mathscr X} \to \rho_{\mathscr X_{\le n}}$).  Now, by the homotopy excision theorem \cite[Proposition 3.5(2)]{ABHWhitehead} the canonical map $\Sigma \mathscr{F} \to \mathscr{C}$ induces an isomorphism $\bpi_{n+2} \Sigma \mathscr F \weq \bpi_{n+2} \mathscr C$, 
while the simplicial suspension theorem \cite[Theorem 3.7]{ABHWhitehead} yields an isomorphism $\bpi_{n+1} \mathscr{F} \cong \bpi_{n+2}\Sigma \mathscr{F}$.  On the other hand $\bpi_{n+2} \mathscr C \weq \bpi_{n+2} \Sigma^\infty \mathscr C$ by \cite[Corollary 4.9]{BachmannConservativity}, which is a real étale sheaf by \cite[Theorem 25]{Bachmannret}.  Repeating the argument, it follows that all homotopy sheaves of $\mathscr X$ are real étale sheaves, and so $\mathscr X \weq \L_\ret \mathscr X$ by Proposition \ref{prop:unstableretloc-basics}.

To prove the claim about $\rho^*$, we may work our way up the Postnikov tower.
As in the proof of Lemma \ref{lem:rhoperiodization}, the layers come from $\L_\ret \SH^{S^1}(k)$, and in the latter category $\rho$ is an equivalence.
\end{proof}

\subsubsection*{The rationalized $-$ part}
Morel initially proposed a description of the rationalized $-$-part complementing the description of Theorem~\ref{thm:rationalizedpluspart} in terms of modules over the Witt sheaf.  Ananyevskiy--Levine--Panin established Morel's description \cite[Theorem 4.2]{ALP}.  We will need the following alternative description of the rationalized $-$-part due to the second author.  The key fact that will be used later is that objects in the rationalized $-$-part have real-\'etale descent.

\begin{theorem}[{\cite[Proposition 41]{Bachmannret}}]
	\label{thm:rationalminuspartviaret}
	There is an equivalence 
	\[
	\SH(k)_{\Q}^{-} \weq D((\Spec k)_{\ret},\Q),
	\] 
	in particular, the minus part of a rationalized motivic spectrum has real-\'etale descent.
\end{theorem}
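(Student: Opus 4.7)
The plan is to factor the equivalence through $\rho$-inversion: I would first establish $\SH(k)_\Q^- \weq \SH(k)_\Q[\rho^{-1}]$, then invoke the known identification of $\SH(k)[\rho^{-1}]$ with real étale stable homotopy theory from \cite{bachmann-SHet2} (the stable analog of the input to Proposition~\ref{prop:unstable-rholoc}), and finally observe that rationalized stable homotopy theory over the real étale site of a point reduces via Dold--Kan to the derived category of $\Q$-sheaves.

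For the identification $\SH(k)_\Q^- \weq \SH(k)_\Q[\rho^{-1}]$, I would use two computations in $\KMW_0$ together with Ananyevskiy--Levine--Panin's structure theorem for the minus part. On the minus part one has $\varepsilon = 1$, so the identity $\varepsilon = -\langle -1 \rangle = -(1 + \eta\rho)$ yields $\eta\rho = -2$; combined with the identification of $\SH(k)_\Q^-$ with modules over a rationalized Witt-theoretic Eilenberg--Mac Lane spectrum (Paragraph~\ref{par:varepsilonconventions}) on which $\eta$ acts invertibly, this forces $\rho$ to act invertibly on all of $\SH(k)_\Q^-$. On the plus part one has $\varepsilon = -1$, and Theorem~\ref{thm:rationalizedpluspart} gives $\SH(k)_\Q^+ \weq \Mod_{H\Q}(k)$, where $\rho$ is represented by the Milnor symbol $[-1] \in \KM_1(k)_\Q$; since $[-1]$ is $2$-torsion (because $(-1)^2 = 1$), it vanishes rationally, so $\rho$-inversion annihilates the plus part. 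Combining these gives $\SH(k)_\Q[\rho^{-1}] \weq \SH(k)_\Q^-$.

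Granted this identification, the result of \cite{bachmann-SHet2} provides an equivalence $\SH(k)[\rho^{-1}] \weq \SH((\Spec k)_\ret)$, so after rationalization $\SH(k)_\Q^- \weq \SH((\Spec k)_\ret)_\Q$. The final identification $\SH((\Spec k)_\ret)_\Q \weq D((\Spec k)_\ret, \Q)$ is a general fact: the real étale topos of $\Spec k$ is equivalent to the small site of the topological space $\Sper k$ by Scheiderer \cite{Scheiderer}, and rational sheaves of spectra on a classical topological space are equivalent to rational chain complexes of sheaves via the Eilenberg--Mac Lane/Dold--Kan comparison (since all torsion phenomena disappear rationally).

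The main obstacle I anticipate is verifying that $\rho$ acts invertibly on the \emph{entire} minus part. The bare computation $\eta\rho = -2$ is an identity of endomorphisms of $\1_k$ in $\KMW_0(k)$, which only directly implies invertibility on the sphere; propagating the statement to arbitrary objects of $\SH(k)_\Q^-$ requires a structural input such as the Ananyevskiy--Levine--Panin module description, whose proof is itself non-trivial. Once this structural input is in hand, all remaining comparisons are essentially formal.
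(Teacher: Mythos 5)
The paper does not prove this statement at all: Theorem~\ref{thm:rationalminuspartviaret} is imported as a black box, cited verbatim as \cite[Proposition 41]{Bachmannret}. So there is no internal proof to compare against; what you have done is reconstruct the argument of the cited reference, and your reconstruction is essentially the original one. The factorization $\SH(k)_\Q^- \weq \SH(k)_\Q[\rho^{-1}] \weq \SH((\Spec k)_\ret)_\Q \weq D((\Spec k)_\ret,\Q)$ is exactly how the result is obtained in \cite{Bachmannret}, and each of your three steps is sound: the computation $\eta\rho = \langle -1\rangle - 1 = -2$ on the minus part (where $\varepsilon = +1$ in the paper's conventions) and the rational vanishing of $[-1]$ on the plus part together identify $\rho$-inversion with projection to the minus factor; the identification of $\rho$-inverted motivic spectra with real étale sheaves of spectra is the main theorem of \cite{Bachmannret} (your pointer to \cite{bachmann-SHet2} is to a slightly different statement, but the content you need is there); and the passage from rational sheaves of spectra on the boolean space $\Sper k$ to the derived category of $\Q$-sheaves is the standard Dold--Kan/Eilenberg--Mac~Lane comparison, unproblematic here since the site has no higher cohomological complexity rationally.

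One correction to your own assessment: the step you single out as the main obstacle --- propagating invertibility of $\rho$ from the sphere to every object of $\SH(k)_\Q^-$ --- is not an obstacle and does not require the Ananyevskiy--Levine--Panin structure theorem. The minus part is a symmetric monoidal localization of $\SH(k)_\Q$ with unit $e_-\1_\Q$, and every object is canonically a module over the unit; once $\rho\colon \1 \to \Gm$ becomes invertible on the unit (which your identity $\eta\rho = -2$ shows, exhibiting $-\eta/2$ as a two-sided inverse), the map $E \weq E \wedge \1^-_\Q \to E \wedge \Gm$ is an equivalence for every $E$ in the minus part by tensoring. Invoking ALP here is harmless but imports a substantially harder theorem than the formal argument requires.
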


\subsubsection*{Real realizations}
\begin{entry}
	\label{par:realclosures}
If $k$ is a field and $\sigma$ is an ordering of $k$, then there exists a real closed field $k_{\sigma}$ together with an order preserving inclusion $k \hookrightarrow k_{\sigma}$ of $k$; this field is unique up to order-preserving isomorphism \cite[Corollary 1.11.3]{KnebuschScheiderer}.  By \cite[Theorem 35]{Bachmannret}, there is an equivalence $\SH(k_{\sigma})[\rho^{-1}] \weq \SH$, where $\SH$ is the classical stable homotopy category.  
\end{entry}

\begin{defn} \label{def:rsigma}
If $k$ is a field and $\sigma$ is an ordering of $k$, then the composite functor 
\[ 
r_\sigma: \SH(k) \longrightarrow \SH(k_\sigma)[\rho^{-1}] \weq \SH 
\] 
will be called the {\em real realization associated with $\sigma$}.
\end{defn}

\begin{rem}
	\label{rem:MVrealrealization}
When $k = \real$, this realization functor is canonically isomorphic to the intuitive functor obtained by Kan extending the functor sending a smooth variety $X$ to $X(\real)$ equipped with its classical topology (see \cite[\S 10]{Bachmannret} for further discussion of this point).
\end{rem}

\subsubsection*{Conservativity results}
\begin{entry}
We recall here some extensions of reslts from \cite{bachmann-hurewicz}.  Indeed in \cite{bachmann-hurewicz} it is shown that if $k$ is a field of finite virtual \'etale-$2$-cohomological dimension and if $E \in \SH(k)$ is connective and slice connective, then if $E \wedge H\Z$ is trivial, then triviality of $E$ can be guaranteed by imposing conditions on the real realizations attached to all orderings of $k$.  The following is a generalization of \cite[Theorem 16, Lemma 19, Theorem 33]{bachmann-hurewicz}: roughly the vanishing condition on real realizations alluded to above can be replaced by a connectivity condition when appropriately weakening the conclusion.
\end{entry}

\begin{proposition} \label{prop:hurewicz-improved}
	Let $k$ be a perfect field of exponential characteristic $e$, and $E \in \SH(k)^{\veff}[1/e]$.  Suppose that $E[1/\rho] \in \SH(k)_{\ge p}$ and $E \wedge H\Z \in \SH(k)_{\ge p}$.  If $k$ has finite virtual $2$-étale cohomological dimension, then $E \in \SH(k)_{\ge p}$.
\end{proposition}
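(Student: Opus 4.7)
The plan is to use the $2$-local orthogonal decomposition from Paragraph~\ref{par:varepsilonconventions} to split $E$ into pieces that can be controlled by the two hypotheses separately, and to handle the $2$-complete piece by invoking the previously established Hurewicz theorems of \cite{bachmann-hurewicz} under the finite virtual $2$-étale cohomological dimension assumption.

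First, I would work after inverting $2$, where one has the orthogonal decomposition $E[1/2] \weq E^+[1/2] \times E^-[1/2]$ (both summands are again very effective, since the idempotents $e_\pm$ are endomorphisms of the sphere and very effectiveness is closed under retracts). On the minus part, the relation $\rho\eta = \langle -1 \rangle - 1$ in $\bpi_0 \1_k$ (Paragraph~\ref{par:milnorwittktheory}), together with $\langle -1 \rangle = -1$ on $\SH(k)[1/2]^-$ (where $\varepsilon$ acts as $+1$), gives $\rho\eta = -2$, so $\rho$ admits $-\eta/2$ as an inverse on $\SH(k)[1/2]^-$. Therefore the canonical map $E^-[1/2] \to E[1/\rho]^-[1/2]$ is an equivalence and the first hypothesis implies $E^-[1/2] \in \SH(k)_{\ge p}$. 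On the plus part, $\eta$ is killed after inverting $2$ (since $\eta h = 0$ and $h = 2$ there), and I would invoke the plus-part motivic Hurewicz theorem \cite[Theorem 16]{bachmann-hurewicz} to deduce that the $p$-connectivity of $E^+[1/2] \wedge H\Z$ --- a summand of the $p$-connective $E \wedge H\Z$ --- forces that of $E^+[1/2]$.

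For the $2$-complete part of $E$, I would apply \cite[Theorem 33]{bachmann-hurewicz} directly. Its hypotheses should be verified as follows: $E \wedge H\Z/2$ is $p$-connective since $E \wedge H\Z$ is; the real realization $r_\sigma(E)$ at each ordering $\sigma$ of $k$ factors through $E[1/\rho]$ via the identification $\SH(k_\sigma)[1/\rho] \weq \SH$ (Definition~\ref{def:rsigma}), hence is $p$-connective by the first hypothesis; and the finite virtual $2$-étale cohomological dimension is precisely the assumption the cited theorem requires. Combining the $2$-inverted and $2$-complete conclusions via the arithmetic fracture square then yields $E \in \SH(k)_{\ge p}$.

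The main obstacle is to verify that the hypotheses as stated --- on $E \wedge H\Z$ and $E[1/\rho]$ --- genuinely suffice as inputs to the cited Hurewicz theorems, whose original formulations in \cite{bachmann-hurewicz} may involve the cofibers rather than the fibers of the relevant maps. In particular, one must check that the plus-part Hurewicz theorem applies to the summand $E^+[1/2]$ of a very effective spectrum without additional connectivity hypotheses, and that the $2$-complete Hurewicz theorem's real-realization input is indeed implied by $p$-connectivity of $E[1/\rho]$ (for a very effective $E$), rather than by a stronger condition on a specific cofiber involving $\rho$.
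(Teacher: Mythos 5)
Your architecture (split $E[1/2]$ into $\pm$ parts, handle the minus part via invertibility of $\rho$, the plus part via an $H\Z$-Hurewicz statement, the $2$-complete part via the earlier Hurewicz theorems, then fracture) is genuinely different from what the paper does, and parts of it are sound: the observation that $\eta\rho = \langle -1\rangle - 1$ forces $\rho$ to be invertible on $\SH(k)^-_{\Z[1/2]}$, so that $E^-[1/2]$ is a retract of $E[1/\rho][1/2]$ and inherits $p$-connectivity, is correct and is exactly the right way to use the first hypothesis on the minus part. But the proof as written has a genuine gap at its center, and you have in fact put your finger on it yourself: the entire $2$-primary content is delegated to \cite[Theorem 16, Lemma 19, Theorem 33]{bachmann-hurewicz}, and the paper explicitly introduces this proposition as a \emph{generalization} of precisely those results. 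So you cannot treat them as black boxes here — the whole point is that their hypotheses (on the characteristic of $k$, on the class of spectra allowed, and on the exact form of the real-realization/$\rho$-inversion input) do not cover the present situation, and verifying or repairing that mismatch \emph{is} the proof. A second, more minor soft spot is the final fracture-square step: knowing $E[1/2]$ and $E^{\wedge}_2$ are $p$-connective does not formally give $p$-connectivity of $E$, because $\pi_p(E^{\wedge}_2) \to \pi_p(E^{\wedge}_2)[1/2]$ need not be surjective; one has to argue instead through $E/2$ (using that $E$ is connective, $\pi_n(E)/2 = 0$ and $\pi_n(E)[2] = 0$ in the relevant range force $\pi_n(E)_{(2)} = 0$, which together with $\pi_n(E)[1/2]=0$ gives vanishing). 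This is repairable but is not automatic as stated.

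For contrast, the paper's proof avoids all of this machinery and is an elementary element chase on $\bpi_0(E)_*$ after reducing to $p=1$. The two hypotheses translate into: every section $x$ of $\bpi_0(E)_*$ is killed by a power of $\rho$ (from $E[1/\rho]\in\SH(k)_{\ge 1}$) and is in the image of every power of $\eta$ (from $\bpi_0(E)_*/\eta \cong \bpi_0(E\wedge H\Z)_* = 0$). Writing $x = \eta^n y$ with $\rho^n x = 0$ and using $\eta^2\rho = \eta(h-2) = -2\eta$ yields $2^n x = 0$, so $\bpi_0(E)_*$ is $2$-primary torsion. The finite virtual $2$-cohomological dimension enters only once, through \cite[Lemma 5.23]{BEO} applied to $\cof(\rho)$ on the $2$-primary torsion homotopy module $\pi_0^{\eff}(E)$, to produce a bound $N(K)$ beyond which $\rho$ acts invertibly on sections over $K$; this makes the torsion exponent uniform in each Milnor--Witt degree and simultaneously shows every element is divisible by $2$, forcing $\bpi_0(E)_* = 0$. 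If you want to salvage your approach, the honest version would be to reprove (rather than cite) the $2$-complete Hurewicz input in the required generality — at which point you would likely find yourself running essentially the paper's argument.
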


\begin{proof}
	Suppose $E$ is a spectrum satisfying the hypotheses.  By suspending or desuspending $E$ as necessary, we may assume that $p = 1$.  In that case, by assumption $\bpi_0(E)_*[1/\rho] = 0$ and the assumption that $E \wedge H\Z \in \SH(k)_{\geq 1}$ implies that $\bpi_0(E)_*/\eta = 0$ (indeed $H\Z \in \SH(k)_{\ge 0}$ and $\bpi_0(H\Z)_* \weq \bpi_0(\1)_*/\eta$).  We want to show that $\bpi_0(E)_* = 0$.  Since the sheaves $\bpi_0(E)_*$ are unramified, it suffices to show that their sections over finitely generated extensions of the base-field are trivial.  
	
	Thus, let $x \in \bpi_0(E)_*(K)$ for some field $K$ finitely generated over $k$.  We have $\rho^n x = 0$ for some $n$, and we can write $x = \eta^n y$.  Recalling the conventions from Paragraph \ref{par:varepsilonconventions}, the following identities hold:
	\[ 
	0 = \eta^{2n} \rho^n x = (\eta^2 \rho)^n y = (\eta(h-2))^n y = (-2\eta)^n y = \pm 2^n x. 
	\]
	In other words, we conclude that every element of $\bpi_0(E)_*$ is annihilated by a power of $2$.  
	
  In fact, we claim that there exists $N = N(K)$ such that for $d>N$, $\rho: \bpi_0(E)_d(K) \to \bpi_0(E)_{d+1}(K)$ is an isomorphism; we establish this at the end of the argument.  Assuming this, we deduce that if $x \in \bpi_0(E)_i(K)$ then $\rho^s x = 0$, where $s = \max(N-i,0)$.
	Hence by the argument from before we see that $2^s x = 0$.
	Since $s$ only depends on $i$ (and $K$), it will be enough to show that every element of $\bpi_0(E)_i$ is divisible by $2$.  If $i > N$ this is trivial.  If $i \le N$ the composite 
	\[ 
	\bpi_0(E)_{N+1} \stackrel{\rho}{\longrightarrow} \bpi_0(E)_{N+2} \xrightarrow{\eta^{2 + (N-i)}} \bpi_0(E)_i 
	\] 
	is surjective. Again as before $\eta^{2 + (N-i)}\rho$ is divisible by $2$.
	
	It remains to establish the claim.  Let $M = \pi_0^{\eff}(E) \in \SH(k)^{\eff\heartsuit}$.  Then $\bpi_0(E)_* \weq \bpi_0(M)_*$, by \cite[Proposition 5(1)]{Bachmanngenslices}.
	It will suffice to show that for $i \in \{0,1\}$ and $d > N = N(K)$ we have $\bpi_i(\cof(M \xrightarrow{\rho} M))_d(K) = 0$.
	Since $\bpi_0(\ph)_0$ is an exact conservative functor on $\SH(k)^{\eff\heartsuit}$ \cite[Proposition 5(3)]{Bachmanngenslices}, $M \in \SH(k)^{\eff\heartsuit}$ is $2$-primary torsion (i.e., $M \weq \colim_n M[2^n]$) and hence so are $\bpi_*(M)_*$.  It follows that we may apply \cite[Lemma 5.23]{BEO} to $\cof(M \xrightarrow{\rho} M)$ to obtain the required isomorphism.  
\end{proof}

\section{Weak cellularity and nullity}
\label{s:weakcellularityandnullity}
The classical Postnikov tower (say in the category of simplicial sets) can be effected by localization with respect to the maps $S^n \to \ast$ (or $\ast \to S^n$), i.e., nullification \cite[Example A.6]{Farjoun}.  More generally, one can build the Postnikov tower in any $\infty$-topos by means of nullifications \cite[Lemma 3.3.3]{ABFJ}.

In motivic homotopy theory, for $p \geq q$ there is a bi-graded family of spheres $S^{p,q} := S^{p-q} \wedge \gm{\sma q}$.  Analogous to the classical situation we will consider nullifications of these spheres.  While all maps $S^i \to S^j$ are null-homotopic if $i < j$, this is no longer the case for $\gm{\sma i} \to \gm{\sma j}$.  As such, one cannot expect that the nullifications with respect to bi-graded spheres will behave in exactly the same way as the classical Postnikov truncations.  We will use the terminology {\em weakly $S^{p,q}$-cellular} for spaces that are killed by nullification.  

Here, we analyze these nullification functors and associated ``weakly cellular" objects in both stable and unstable settings and establish various properties.  Section~\ref{ss:cellularitybasics} gives the basic definitions and properties of these nullification functors, but then focuses on the unstable setting.  Section~\ref{ss:spqcellularityviatstructures} shows that in the stable setting the nullification functors can be described in terms of truncation with respect to a $t$-structure to amplify various properties.  Section~\ref{ss:cellularexamples} contains a number of examples illustrating the geometry of ``weakly cellular" spaces.

% e.g., classically, the homotopy fiber of the map to the $n$-th Postnikov section is at least $n$-connected and it is not a priori clear that a corresponding property holds for maps to a nullification at $S^{p,q}$ (this is one of the reasons why we eschew ``connectivity" terminology); however see Theorem \ref{thm:fiber-to-trunc-is-cell}.

%The relevant nullification functors may be defined stably and unstably.  

%If $\mathscr{X}$ is weakly $S^{p,q}$-cellular, then we establish a refinement of the Whitehead and Postnikov towers for $\mathscr{X}$ in which all the layers are suitably cellular.  We then use this to establish cellularity estimates for $\Omega^{1,0}\mathscr{X}$ and $\Omega^{1,1}\mathscr{X}$.  To do this, requires comparison of stable and unstable nullifications and relies on results of Levine and the conservativity of $\gm{}$-stabilization results from  Paragraphs~\ref{par:gmstabilization}--\ref{thm:conservativityofgmstabilization}.  In this section, we 

\subsection{Weak cellularity and nullifications: basic properties}
\label{ss:cellularitybasics}
In this section, we study nullifications at a {\em pointed, compact, motivic space $A$}.  We then introduce the class $O(A)$ of weakly $A$-cellular spaces, which are spaces that admit no non-trivial maps to $A$-null spaces (see \ref{par:Anullification}-\ref{ex:classicalzerotruncation}).  We then analyze the behavior of weakly $A$-cellular spaces with respect to colimits and $A$-null spaces with respect to limits, and the interaction of the nullification functors with monoidal structures and base change (see \ref{lem:leftbousfieldcolimits}-\ref{rem:A-null-expl}).  Next, upon restricting to pointed connected spaces, we give an explcit construction of $A$-nullification and deduce a host of consequences, e.g., behavior of weak cellularity with respect to smash products and connectivity (see \ref{const:smallobject}-\ref{thm:cellularityofsmashproducts}).  

Of greatest interest to us is the analysis of weakly $S^{p,q}$-cellular and $S^{p,q}$-null spaces; we give some useful characterizations of such spaces (see \ref{cor:truncatednessviahomotopysheaves}-\ref{cor:cellularityofcompositesoffibers}).  Afterwards, we study the interaction of loop spaces and $S^{p,q}$-nullification and use this to analyze the relationship between truncations and weak cellularity, and the behavior of weak cellularity in fiber sequences.  We conclude that nilpotent motivic spaces are preserved by $S^{p,q}$-nullification (see \ref{lem:Lpq-loop-B}-\ref{cor:pqtruncationpreservesnilpotence}).  

\subsubsection*{Basic definitions and constructions}
\label{ss:gmconnectivitybasics}  
\begin{entry}[$A$-equivalences]
	\label{par:Anullification}
	Assume $A$ is a pointed, compact, motivic space.  Consider the set of morphisms $\{A \times U \to U\}$, $U \in \Sm_k$; we will call this set of morphisms in $\ho{k}$ the {\em generating} $A$-equivalences.  The left Bousfield localization at (the smallest strongly saturated class of $\ho{k}$ generated by) the set of generating $A$-equivalences exists and is again a presentable $\infty$-category \cite[Proposition 5.5.4.15]{HTT}.  We write 
\[
\mathrm{L}_{A}: \ho{k} \longrightarrow \ho{k}
\] 
for the corresponding localization endofunctor. 
\end{entry}

\begin{entry}
	\label{par:localizingundercategories}
If $\mathcal{C}$ is a presentable $\infty$-category equipped with a functor $\Sm_k \to \mathcal{C}$, we use similar notation for the localization of $\mathscr{C}$ generated by the image of the above maps.  We have in mind $\mathcal{C} = \ho{k}_*$, $\SH^{S^1}(k)$, $\SH^{S^1}(k)_{\geq 0}$, $\SH(k)$, $\SH(k)^{\veff}$. 
\todo{Discuss singular varieties.}
\end{entry}

As usual, the localization functors identify the left Bousfield localization of $\ho{k}$ with the subcategory consisting of local objects.  The $A$-local spaces may be characterized in terms of the internal mapping objects: a space $\mathscr{X}$ is $A$-local if and only if the map $\mathscr{X} \weq \iMap(\ast,\mathscr{X}) \to \iMap(A,\mathscr{X})$ is an equivalence.  Likewise, $A$-equivalences can be detected by mapping into $A$-local spaces, i.e., a map $f: \mathscr{X} \to \mathscr{Y}$ is an $A$-equivalence if and only if for every $A$-local space $\mathscr{W}$, the map $\iMap(\mathscr{Y},\mathscr{W}) \to \iMap(\mathscr{X},\mathscr{W})$ is an equivalence.  In the case of interest, we make the following definition.

\begin{defn}
	Assume $A$ is a pointed, compact motivic space.  Suppose $\mathscr{X},\mathscr{Y}$ are motivic spaces and $f: \mathscr{X} \to \mathscr{Y}$ is a map.   
	\begin{enumerate}[noitemsep,topsep=1pt]
		\item The space $\mathscr{X}$ is {\em $A$-null} if $\mathscr{X} \to \mathrm{L}_A\mathscr{X}$ is an equivalence (i.e., $\mathscr{X}$ is $A$-local).
		\item The space $\mathscr X$ is {\em weakly $A$-cellular} and we write $\mathscr{X} \in O(A)$, if $\mathrm{L}_A \mathscr{X} = \ast$; equivalently the map $\mathscr{W} \to \iMap(\mathscr{X},\mathscr{W})$ is an equivalence for every $A$-null space $\mathscr{W}$.  
		\item The map $f$ is a {\em $A$-equivalence} if $\mathrm{L}_A(f)$ is an equivalence.
%		\item {\em weakly $A$-cellular} if $\fib(f)$ is weakly $$-cellular. 
	\end{enumerate}
\end{defn}

\begin{notation}
	\label{rem:terminologicalremarkGm}
	We will sometimes write $\stackrel{A}{\weq}$ for $A$-equivalences.  Of particular interest to us will be the case $A = S^{p,q}$, in which case we will write $\mathrm{L}^{p,q}$ for $\mathrm{L}_A$.  We refer the reader to Section~\ref{ss:cellularexamples} for a number of examples that help to give a sense of how we will use these notions.
\end{notation}

\begin{ex}
	\label{ex:classicalzerotruncation}
	In the case $A = S^n_k$, the generating $A$-equivalences are the maps $S^n_k \times X \to X$.  In any $\infty$-topos, the corresponding localization is $n-1$-truncation \cite[Lemma 3.3.3]{ABFJ}.  In the case where $n = 0$, any $-1$-truncated sheaf is $\aone$-invariant, it follows that $\L^{0,0} = \mathrm{L}_A = \tau_{\leq -1}^{\Nis}$.  %More generally, if $A = S^n_k$, a similar argument shows that $\L^{n,0}$ is the usual Postnikov truncation $\tau_{\leq n-1}^{\Nis}$.
\end{ex}

We now establish basic properties of these localization functors; our treatment follows that of \cite{Farjoun}.  

\subsubsection*{Weak cellularity, first properties}
We record the compatibility of localization with colimits in the following result.

\begin{lem}
	\label{lem:leftbousfieldcolimits}
	The localizations $\mathrm{L}_A$, viewed as functors from motivic spaces to $A$-null motivic spaces preserve colimits.  In particular, given a cofiber sequence, 
	\[
	\mathscr{X} \stackrel{f}{\longrightarrow} \mathscr{Y} \stackrel{g}{\longrightarrow} \mathscr{Z},
	\]
	the following statements hold.
	\begin{enumerate}[noitemsep,topsep=1pt]
	\item If $\mathscr{X}$ and $\mathscr{Z}$ are in $O(A)$, then so is $\mathscr{Y}$.
	\item If $\mathscr{X} \in O(A)$, then $g$ is an $A$-equivalence, and 
	\item If $f$ is a $A$-equivalence, then $\mathscr{Z} \in O(A)$.
	\end{enumerate}
\end{lem}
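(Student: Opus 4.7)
The plan is to treat the first assertion as essentially formal and to deduce (1)--(3) by applying colimit preservation to the pushout square expressing a cofiber sequence.

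First I would observe that, by the construction recalled in Paragraph~\ref{ss:gmconnectivitybasics}, $\mathrm{L}_A$ is by definition the left adjoint to the inclusion of $A$-null motivic spaces into $\Spc(k)$. Any left adjoint preserves colimits, so the first sentence of the lemma is immediate: colimits of $A$-null spaces are computed by taking the colimit in $\Spc(k)$ and applying $\mathrm{L}_A$.

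To obtain the three corollaries, I would rewrite the cofiber sequence $\mathscr{X} \to \mathscr{Y} \to \mathscr{Z}$ as the pushout $\mathscr{Z} \simeq \mathscr{Y} \sqcup_{\mathscr{X}} \ast$. Since the terminal object $\ast$ is trivially $A$-null (as $\iMap(A,\ast) \simeq \ast$), we have $\mathrm{L}_A(\ast) \simeq \ast$, and colimit preservation gives
\[
\mathrm{L}_A \mathscr{Z} \simeq \mathrm{L}_A\mathscr{Y} \sqcup_{\mathrm{L}_A\mathscr{X}} \ast,
\]
i.e., $\mathrm{L}_A\mathscr{X} \to \mathrm{L}_A\mathscr{Y} \to \mathrm{L}_A\mathscr{Z}$ is itself a cofiber sequence in $A$-null spaces.

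From this single diagram each of (1)--(3) drops out. For (1), if $\mathrm{L}_A\mathscr{X}$ and $\mathrm{L}_A\mathscr{Z}$ are both $\ast$, then the pushout $\mathrm{L}_A\mathscr{Y} \sqcup_{\ast} \ast \simeq \mathrm{L}_A\mathscr{Y}$ must equal $\mathrm{L}_A\mathscr{Z} \simeq \ast$, so $\mathscr{Y} \in O(A)$. For (2), if $\mathrm{L}_A\mathscr{X} \simeq \ast$, pushing out along that equivalence yields $\mathrm{L}_A\mathscr{Y} \stackrel{\simeq}{\to} \mathrm{L}_A\mathscr{Z}$, so $g$ is an $A$-equivalence. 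For (3), if $\mathrm{L}_A(f)$ is an equivalence, pushing out along an equivalence again yields $\mathrm{L}_A\mathscr{Z} \simeq \ast$, so $\mathscr{Z} \in O(A)$. There is no genuine obstacle: once the adjoint description of $\mathrm{L}_A$ and the identification of cofiber sequences with pushouts against $\ast$ are in hand, the entire lemma is a formality.
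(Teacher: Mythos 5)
Your proposal is correct and is exactly the argument the paper intends: its entire proof is the one-line observation that $\mathrm{L}_A$ is a (homotopical) left adjoint, and your elaboration via the pushout square $\mathscr{Z} \simeq \mathscr{Y}\sqcup_{\mathscr{X}}\ast$ is the standard way to spell out how (1)--(3) follow.
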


\begin{proof}
	All these statements follow from the fact that $\mathrm{L}_A$ is a homotopical left adjoint.
\end{proof}

\begin{amplification}
	\label{amplification:leftbousfieldcolimitsstablecases}
	If $\mathcal{C}$ is a presentable $\infty$-category under $\Sm_k$ (see \textup{Paragraph~\ref{par:localizingundercategories}}), then the conclusions of \textup{Lemma~\ref{lem:leftbousfieldcolimits}} hold in $\mathcal{C}$.
\end{amplification}

\begin{proof}
	The left Bousfield localization $\mathrm{L}_{A}$ of $\mathcal{C}$ is still a homotopical left adjoint.
\end{proof}

\begin{lem} \label{lemm:LA-products}
The functor $\L_A$ preserves finite products.
\end{lem}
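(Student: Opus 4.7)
The plan is to factor the canonical comparison
\[
\mathscr X \times \mathscr Y \longrightarrow \L_A \mathscr X \times \mathscr Y \longrightarrow \L_A \mathscr X \times \L_A \mathscr Y
\]
as a composition of $A$-equivalences with $A$-local target. The case of the empty product ($\L_A(\ast) \weq \ast$) is immediate from $\iMap(A,\ast) \weq \ast$, so I focus on the binary case. The target being $A$-local reduces to the statement that a product of $A$-local spaces is $A$-local, which is formal: the functor $\iMap(A,-)$ commutes with products, so if $\iMap(A,\mathscr W_i) \weq \mathscr W_i$ for $i = 1,2$ then the same holds for $\mathscr W_1 \times \mathscr W_2$.

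The substantive content is that for any fixed $\mathscr Z \in \Spc(k)$, the endofunctor $(-) \times \mathscr Z$ preserves $A$-equivalences. Let $S$ be the class of morphisms $f$ such that $f \times \id_{\mathscr Z}$ is an $A$-equivalence for every $\mathscr Z$. Since colimits in $\Spc(k)$ are universal (Paragraph~\ref{par:lmotproperties}), the functor $(-) \times \mathscr Z$ preserves colimits, so $S$, being an intersection over $\mathscr Z$ of preimages of the strongly saturated class of $A$-equivalences under colimit-preserving endofunctors, is itself strongly saturated. It thus suffices to verify that each generating $A$-equivalence $A \times U \to U$ ($U \in \Sm_k$) lies in $S$. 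Fix such a generator and let $T$ denote the class of those $\mathscr Z$ for which $A \times U \times \mathscr Z \to U \times \mathscr Z$ is an $A$-equivalence. By the same universality, $T$ is closed under colimits in $\Spc(k)$; moreover, for $\mathscr Z = V \in \Sm_k$ the map is $A \times (U \times V) \to U \times V$, which is itself a generating $A$-equivalence since $U \times V \in \Sm_k$. As $\Spc(k)$ is generated under colimits by smooth schemes, $T = \Spc(k)$, which is what was needed.

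The only nontrivial ingredient is the universality of colimits in $\Spc(k)$, without which neither the strongly saturated closure of $S$ nor the reduction to smooth $\mathscr Z$ would go through; everything else is a formal consequence of working with a left Bousfield localization generated by maps of the form $A \times U \to U$ with $U \in \Sm_k$. For this reason I do not anticipate a substantive obstacle.
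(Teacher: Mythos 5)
Your proof is correct and is essentially the paper's argument: the paper simply cites a general compatibility criterion (\cite[Proposition 2.2.1.9]{HTT}) that reduces the claim to checking that a generating $A$-equivalence $A\times U\to U$ times $\id_V$ for $V\in\Sm_k$ is again an $A$-equivalence, which holds because $A\times(U\times V)\to U\times V$ is itself a generating equivalence. Your explicit unpacking via strong saturation, universality of colimits, and generation of $\Spc(k)$ by smooth schemes is exactly the content of that citation, so the two proofs coincide in substance.
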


\begin{proof}
We need only check that if $f$ is a generating $A$-equivalence and $X \in \Sm_k$ then $f \times \id_X$ is an $A$-equivalence \cite[Proposition 2.2.1.9]{HTT}, which is clear.

\end{proof}

We amplify this point.
Write $\mathrm{CMon}(\ho{k})$ for the category of commutative monoid objects on $\ho{k}$.  This category admits forgetful functors to $\ho{k}_*$ and $\ho{k}$.
Applying \ref{par:localizingundercategories} to the left adjoints, we transport $\L_A$ to these categories as well.

\begin{amplification}
	\label{amplification:localizationpreservesmonoids}
	The forgetful functors 
	\[
	\mathrm{CMon}(\ho{k}) \longrightarrow \Mon(\Spc(k)) \longrightarrow \ho{k}_* \longrightarrow \ho{k}
	\]
	commute with $\mathrm{L}_A$.
	(In fact this holds for the forgetful functor from algebras in $\Spc(k)$ under any $\infty$-operad.)
\end{amplification}
\begin{proof}
	By Lemma~\ref{lemm:LA-products} we know that $\mathrm{L}_A$ is compatible with the symmetric monoidal structure on $\ho{k}$.  By definition the commutative monoid objects in $\ho{k}$ are algebras over an $\infty$-operad, and the same holds for the pointed objects. Hence it suffices to prove the parenthetical statement.

Now let $\mathcal O$ be any $\infty$-operad.
By \cite[Proposition 2.2.1.9]{HA}, $\L_A: \Spc(k) \to \L_A\Spc(k)$ upgrades to an $\mathcal O$-monoidal functor, with the inclusion a lax $\mathcal O$-monoidal right adjoint.
Then \cite[Remark 7.3.2.13]{HA} shows that we obtain an adjunction 
\[ 
\xymatrix{
L': \Alg_{\mathcal O}(\Spc(k))  \ar@<.5ex>[r] & \ar@<.5ex>[l] \Alg_{\mathcal O}(\L_A \Spc(k)): \iota 
}
\] compatible with forgetting the algebra structures.
The functor $L'$ is a Bousfield localization and has the same local objects as $\L_A$; consequently the two must coincide.
Since $L'$ commutes with the forgetful functor, thus so must $\L_A$.
\end{proof}
\begin{rem}
Here is a slightly more down to earth proof of Amplification \ref{amplification:localizationpreservesmonoids}.
Since the forgetful functors preserve $A$-null objects and sifted colimits, it suffices to show that the ``free object'' functors send $A$-equivalences to $A$-equivalences \cite[Lemma 2.10]{bachmann-norms}. Using the explicit form of the free functors (namely $\mathscr X \mapsto \mathscr X_+$ respectively $\mathscr X \mapsto \amalg_i \mathscr X^i_{h\Sigma_i}$), this is clear.
\end{rem}

Before deducing some further consequences about $A$-null spaces from the above result, we amplify some of the consequences about $A$-localizations preserving symmetric monoidal structures.

\begin{amplification}
	\label{amplification:symmetricmonoidal}
Suppose $\mathcal{C}$ is a presentably symmetric monoidal $\infty$-category under $\Sm_k$ such that 
\begin{enumerate}[noitemsep,topsep=1pt]
\item the functor $\Sm_k \to \mathcal{C}$ is symmetric monoidal, and 
\item the functor $\Sm_k \to \mathcal{C}$ has dense image, i.e., the category $\mathcal{C}$ is generated under colimits by the image of the source.
\end{enumerate}
The localization functor $\mathrm{L}_A$ on $\mathcal{C}$ is monoidal.  
%If $\mathcal{C}'$ is another such closed presentably symmetric monoidal $\infty$-category under $\Sm_k$ satisfying the two points above, and such that there is a symmetric monoidal functor $\mathcal{C} \to \mathcal{C}'$ under $\Sm_k$, then there is an induced 
\end{amplification}
\begin{proof}
The condition that the functor $\Sm_k \to \mathcal{C}$ is symmetric monoidal guarantees that if $\alpha$ is a generating equivalence, then $\alpha \tensor \id_{X}$ is as well for any $X \in \Sm_k$.
Combined with the density condition, this implies that $\L_A: \mathscr C \to \L_A \mathscr C$ is symmetric monoidal \cite[Proposition 2.2.1.9]{HTT}.
\end{proof}

\begin{cor}
	\label{cor:stablelocalizationsymmetricmonoidal}
	The conclusion of \textup{Amplification~\ref{amplification:symmetricmonoidal}} holds for $\mathcal{C} = \ho{k}_*$, $\SH^{S^1}(k)_{\geq 0}$, $\SH(k)^{\veff}$ or $\Mod_{H\Z}^{\veff}$.
\end{cor}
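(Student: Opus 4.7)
The plan is to verify directly that each of the four categories $\mathscr{C}$ satisfies both hypotheses of Amplification~\ref{amplification:symmetricmonoidal}, namely that (1) the structure functor $\Sm_k \to \mathscr{C}$ is symmetric monoidal and (2) its image generates $\mathscr{C}$ under colimits. Once these are in hand, together with presentable symmetric monoidality of $\mathscr{C}$ itself, the amplification immediately yields monoidality of $\L_A$.

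For (1), the basic functor $\Sm_k \to \ho{k}_*$, $X \mapsto X_+$, is symmetric monoidal via the canonical identification $(X \times Y)_+ \iso X_+ \wedge Y_+$. Composing with the symmetric monoidal functors $\Sigma^\infty_{S^1}$ and $\Sigma^\infty$ (see Paragraph~\ref{par:s1vsp1}) gives the structure functor for $\SH^{S^1}(k)_{\geq 0}$ and $\SH(k)^{\veff}$ respectively, both of which land in the appropriate subcategory since suspension spectra of smooth schemes are connective and very effective. Finally, smashing with the unit $H\Z \in \CAlg(\SH(k))$ gives the symmetric monoidal functor to $\Mod_{H\Z}^{\veff}$. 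For (2), the case $\mathscr{C} = \ho{k}_*$ is standard, since the functor $(-)_+: \ho{k} \to \ho{k}_*$ is a left adjoint and $\ho{k}$ is by definition generated under colimits by $\Sm_k$. The three stable cases are covered by Proposition~\ref{prop:generationundercolimits}, which asserts generation under sifted colimits, hence in particular under colimits.

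None of these checks presents a serious obstacle; the only point worth mentioning is the verification that each $\mathscr{C}$ is presentably symmetric monoidal. This is formal: $\SH^{S^1}(k)_{\geq 0}$, $\SH(k)^{\veff}$ and $\Mod_{H\Z}^{\veff}$ are full subcategories of presentably symmetric monoidal $\infty$-categories, each closed under colimits and closed under the ambient tensor product (connective $\wedge$ connective is connective; the very effective subcategory is by construction closed under smash product since it is generated as a subcategory closed under colimits and extensions by objects whose smash products remain very effective, and the same holds after passing to $H\Z$-modules using the relative tensor product). Hence the induced tensor product preserves colimits in each variable separately, verifying the presentably symmetric monoidal structure.
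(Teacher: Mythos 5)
Your proposal is correct and follows the same route as the paper, which simply combines Amplification~\ref{amplification:symmetricmonoidal} with Proposition~\ref{prop:generationundercolimits}; you have merely spelled out the verifications of the two hypotheses (symmetric monoidality of $\Sm_k \to \mathcal{C}$ and density of the image) that the paper leaves implicit. No gaps.
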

\begin{proof}
Immediate by combining Amplification \ref{amplification:symmetricmonoidal} with Proposition \ref{prop:generationundercolimits}.
\end{proof}

\begin{ex}
	\label{ex:basicexample}
	If $\mathscr{X} \in \ho{k}$, then $A \sma \mathscr{X}_+$ is weakly $A$-cellular.
	Indeed by Corollary \ref{cor:stablelocalizationsymmetricmonoidal}, it suffices to prove that $A$ is $A$-cellular as a pointed space; this follows from Amplification \ref{amplification:localizationpreservesmonoids}.
\end{ex}

Let us also mention the following.
\begin{lem} \label{lem:cellularity-ess-sm-bc}
The functor $\L_A$ commutes with essentially smooth change of base field.
In particular, if $f: \Spec(k') \to \Spec(k)$ is an essentially smooth field extension, then $f^*$ sends $A$-equivalences to $f^*A$-equivalences, $O(A)$ to $O(f^*A)$ and $A$-null spaces to $f^*A$-null spaces.
\end{lem}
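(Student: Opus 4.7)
The plan is to establish the stronger statement that $f^* \circ \L_A \weq \L_{f^*A} \circ f^*$, from which all three ``in particular'' assertions follow immediately. By the universal property of Bousfield localization, it will suffice to verify two facts: \emph{(i)} $f^*$ carries $A$-equivalences to $f^*A$-equivalences, and \emph{(ii)} $f^*$ carries $A$-null spaces to $f^*A$-null spaces. Granting both, applying $f^*$ to the unit $\mathscr X \to \L_A \mathscr X$ produces an $f^*A$-equivalence with $f^*A$-null target, which must exhibit $f^* \L_A \mathscr X$ as $\L_{f^*A}(f^* \mathscr X)$.

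For \emph{(i)}, I would observe that $f^*$ is a left adjoint (with right adjoint $f_*$), is symmetric monoidal, and sends $\Sm_k$ to $\Sm_{k'}$; hence it carries each generating $A$-equivalence $A \times U \to U$ to the generating $f^*A$-equivalence $f^*A \times f^*U \to f^*U$. Since $A$-equivalences form the smallest strongly saturated class containing the generating ones, any colimit-preserving functor with this property carries all $A$-equivalences to $f^*A$-equivalences.

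The main obstacle is \emph{(ii)}. A pointed space $\mathscr Y$ is $A$-null iff the basepoint-induced map $\iMap(A, \mathscr Y) \to \mathscr Y$ is an equivalence, so I would like to deduce $f^*A$-nullness of $f^* \mathscr Y$ by applying $f^*$ to this equivalence. The key ingredient is the interchange
\[ f^* \iMap(A, \mathscr Y) \weq \iMap(f^*A, f^* \mathscr Y). \]
This is precisely where the essentially smooth hypothesis enters: writing $f$ as a cofiltered limit of smooth affine transition morphisms, and using that $A$ is compact (hence a retract of a finite colimit of objects of $\Sm_k$), the interchange follows from the continuity of motivic mapping spaces recorded in Paragraph~\ref{par:essentiallysmoothbasechange} via \cite[Lemma A.5]{HoyoisAlgCob}. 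With the commutation $f^* \L_A \weq \L_{f^*A} f^*$ in hand, the listed assertions are immediate: preservation of $A$-equivalences is \emph{(i)}, preservation of $A$-null spaces is \emph{(ii)}, and if $\mathscr X \in O(A)$ then $\L_{f^*A}(f^* \mathscr X) \weq f^* \L_A \mathscr X \weq *$, so that $f^* \mathscr X \in O(f^*A)$.
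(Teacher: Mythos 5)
Your proposal is correct and follows essentially the same route as the paper: show that $f^*$ sends generating $A$-equivalences to generating $f^*A$-equivalences (using that $f^*$ preserves colimits and products), and show that $f^*$ preserves null objects via compactness of $A$ together with the essentially smooth base change of Paragraph~\ref{par:essentiallysmoothbasechange}. Your version merely spells out the mapping-space interchange $f^*\iMap(A,\mathscr Y) \weq \iMap(f^*A, f^*\mathscr Y)$ that the paper leaves implicit.
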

\begin{proof}
Since $f^*$ preserves colimits and finite products, it sends $A$-equivalences to $f^*A$-equivalences.
It hence remains to prove that it sends $A$-null spaces to $f^*A$-null spaces.
Since $A$ is compact, this follows immediately from essentially smooth base change (see Paragraph \ref{par:essentiallysmoothbasechange}).
\end{proof}

\subsubsection*{Null spaces and limits}
Just as weakly cellular objects and weakly cellular maps interact well with colimits, local objects interact well with limits.  We record the necessary facts in the following result (these facts are standard and amount to notational modifications of \cite[1.A.8]{Farjoun})

\begin{lem}
	\label{lem:localizationmappingspaces}
	The following statements hold.
	\begin{enumerate}[noitemsep,topsep=1pt]
	\item If $\mathscr{X}$ is $A$-null, then for any space $\mathscr{Y}$, the space $\iMap(\mathscr{Y},\mathscr{X})$ is again $A$-null.
	\item Any limit of $A$-null spaces is again $A$-null.  
	\item If $\mathscr{F} \to \mathscr{E} \to \mathscr{B}$ is a fiber sequence with $\mathscr{F}$ and $\mathscr{B}$ $A$-null spaces, and $\mathscr{B}$ connected, then $\mathscr{E}$ is $A$-null as well. 
	\item If $\mathscr{X}$ is a pointed connected space, then $\mathscr{X}$ is $A$-null if and only if $\iMap_*(A,\mathscr{X}) = \ast$.
	\item If $\mathscr{X}$ is a pointed, simply connected space, then $\mathscr{X}$ is $\Sigma A$-null if and only if $\Omega \mathscr{X}$ is $A$-null.
	\end{enumerate}
\end{lem}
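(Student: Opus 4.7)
The plan is to reduce each of the five assertions to the characterization that $\mathscr{X}$ is $A$-null if and only if the evaluation map $\iMap(A,\mathscr{X}) \to \mathscr{X}$ (induced by $\ast \to A$) is an equivalence of Nisnevich sheaves. This reformulation itself follows because, by the tensor-hom adjunction, the condition that $\iMap(U \times A,\mathscr{X}) \to \iMap(U,\mathscr{X})$ be an equivalence for all $U \in \Sm_k$ is the same as the condition that $\iMap(U,\iMap(A,\mathscr{X})) \to \iMap(U,\mathscr{X})$ be an equivalence for all such $U$, i.e., that $\iMap(A,\mathscr{X}) \to \mathscr{X}$ be an equivalence in $\Shv_\Nis(\Sm_k)$.

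For Part (1), I would use this reformulation together with the adjunction $\iMap(\mathscr{Y} \times -,-) \weq \iMap(-,\iMap(\mathscr{Y},-))$ and Lemma~\ref{lemm:LA-products}; the latter ensures that if $f$ is an $A$-equivalence then $f \times \id_{\mathscr{Y}}$ is as well, from which the $A$-locality of $\iMap(\mathscr{Y},\mathscr{X})$ follows directly. Part (2) is immediate from the fact that $\iMap(A,-)$ commutes with limits, so the evaluation map $\iMap(A,\lim_i \mathscr{X}_i) \to \lim_i \mathscr{X}_i$ is a limit of equivalences.

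For Part (3), I would compare the two fiber sequences
\[
\iMap(A,\mathscr{F}) \to \iMap(A,\mathscr{E}) \to \iMap(A,\mathscr{B}) \quad\text{and}\quad \mathscr{F} \to \mathscr{E} \to \mathscr{B}
\]
via the evident map induced by $\ast \to A$. The outer vertical maps are equivalences by hypothesis; since $\mathscr{B}$ is connected and the map on fibers over the basepoint is an equivalence, the middle map is an equivalence by the long exact sequence of homotopy sheaves. The key subtlety here is the connectedness of $\mathscr{B}$, which is needed precisely to extend a fiberwise equivalence over a single basepoint to a global equivalence.

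For Part (4), applying $\iMap(-,\mathscr{X})$ to the cofiber sequence $\ast \to A \to A$ (interpreted via the obvious splitting $A \weq A_+ / \ast_+$) yields the fiber sequence
\[
\iMap_*(A,\mathscr{X}) \to \iMap(A,\mathscr{X}) \to \mathscr{X}.
\]
If $\mathscr{X}$ is $A$-null the third map is an equivalence so the fiber is contractible. Conversely, if $\iMap_*(A,\mathscr{X}) = \ast$, the connectivity hypothesis on $\mathscr{X}$ guarantees that all fibers of $\iMap(A,\mathscr{X}) \to \mathscr{X}$ are locally equivalent to $\iMap_*(A,\mathscr{X})$, forcing the map to be an equivalence. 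Finally, Part (5) is a formal consequence: if $\mathscr{X}$ is simply connected then $\Omega \mathscr{X}$ is connected, so both sides of the stated equivalence can be rephrased via Part (4) and the adjunction $\iMap_*(A,\Omega \mathscr{X}) \weq \iMap_*(\Sigma A, \mathscr{X})$. The main potential obstacle throughout is book-keeping around basepoints and the connectivity assumption in Parts (3)–(5); these are precisely what force the hypotheses stated.
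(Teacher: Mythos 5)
Your proposal is correct and follows essentially the same route as the paper: the exponential law for (1), commutation of mapping spaces with limits for (2), comparison of fiber sequences over a connected base for (3), the fiber sequence $\iMap_*(A,\mathscr{X}) \to \iMap(A,\mathscr{X}) \to \mathscr{X}$ together with the effective epimorphism $\ast \to \mathscr{X}$ for (4), and the loops--suspension adjunction for (5). The only cosmetic difference is that the paper invokes a descent lemma from HTT where you cite the long exact sequence of homotopy sheaves and ``local'' triviality of the fibers, but the underlying mechanism (connectedness of the base reducing everything to the fiber over the basepoint) is identical.
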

\begin{proof}
	\noindent {\em Point (1)}.  To say that $\mathscr{X}$ is $A$-null is equivalent to saying that the induced map
	\[
	\mathscr{X} = \iMap(\ast,\mathscr{X}) \to \iMap(A,\mathscr{X})
	\]
	is an equivalence.  Mapping $\mathscr{Y}$ into this equivalence and using the exponential law shows that 
	\[
	\iMap(\mathscr{Y},\mathscr{X}) \to \iMap(\mathscr{Y},\iMap(A,\mathscr{X})) \weq \iMap(A,\iMap(\mathscr{Y},\mathscr{X}))
	\]
	is an equivalence as well, which is what we wanted to show.\newline
	
	\noindent {\em Point (2)}. Let $\mathbf{I}$ be a small diagram and let $\mathscr{X}: \mathbf{I} \to \ho{k}$, be an $\mathbf{I}$-diagram of motivic spaces.  There is an equivalence 
	\[
	\mathrm{Map}(-,\lim_{\mathbf{I}} \mathscr{X}) \weq \lim_{\mathbf{I}} \mathrm{Map}(-,\mathscr{X})
	\]
	by the definition of mapping spaces.  Now, for any $i \in \mathbf{I}$, the map $\mathscr{X} \to \iMap(A,\mathscr{X})$ is an equivalence by assumption and we conclude that the induced map 
	\[
	\lim_{\mathbf{I}} \mathscr{X} \longrightarrow \lim_{\mathbf{I}} \iMap(A,\mathscr{X})
	\]
	is an equivalence as well.  \newline

	\noindent {\em Point (3)}.  Suppose $\mathscr{F} \to \mathscr{E} \to \mathscr{B}$ is a fiber sequence with $\mathscr{B}$ connected.  In that case, we have the following morphism of fiber sequences
	\[
	\xymatrix{
	\mathscr{F}\ar[d] \ar[r] & \mathscr{E} \ar[d]\ar[r] & \mathscr{B} \ar[d] \\
	\iMap(A,\mathscr{F}) \ar[r] & 	\iMap(A,\mathscr{E}) \ar[r] & 	\iMap(A,\mathscr{B})	
	}
	\]
	where the outer two maps are equivalences and $\mathscr B$ is connected. It follows that the middle map is an equivalence \cite[Lemma 6.2.3.16]{HTT}.\newline
%To check that the middle map is an equivalence, it suffices to show it is an isomorphism on $\bpi_0$ and on $\bpi_i$ for any choice of base-point.   Since $\mathscr{B}$ is connected, the fact that the right hand map is an equivalence means that $\iMap(A,\mathscr{B})$ is connected as well.  Therefore, it suffices to check over base-points lying in $\mathscr{F}$.  The statement on higher homotopy sheaves is clear by the five lemma, and the potentially problematic case of $\bpi_0$ is treated by observing that $\bpi_0(\mathscr{E})$ is the orbit of $\bpi_0(\mathscr{F})$ under $\bpi_1(\mathscr{B})$.\newline  
	
	\noindent {\em Point (4)}. We know that $A \to \ast$ is an $A$-equivalence by Example~\ref{ex:basicexample}.  If $\mathscr{X}$ is pointed and connected and $A$-null, it follows immediately that $\iMap_*(A,\mathscr{X})$ is $\ast$.  Conversely, suppose $\iMap_*(A,\mathscr{X}) = \ast$ and $\mathscr{X}$ is connected.  There is a fiber square of the form
	\[
	\xymatrix{
		\iMap_*(A,\mathscr{X}) \ar[r]\ar[d] & \iMap(A,\mathscr{X}) \ar[d]^{ev} \\
		\ast \ar[r] & \mathscr{X} 
	}
	\]  
	linking pointed and free mapping spaces.  Since $\mathscr{X}$ is assumed connected, $\ast \to \mathscr{X}$ is an effective epimorphism.  Since $\ho{k} \subset \Shv_\Nis(\Sm_k)$ is closed under limits, \cite[Lemma 6.2.3.16]{HTT} implies that the evaluation map $ev$ in the above diagram is an equivalence. By $2$-out-of-$3$, it follows that also $\iMap(A, \mathscr X) \to \iMap(*, \mathscr X)$ is an equivalence, as needed.\newline
	
	\noindent {Point (5).}  This statement follows by adjunction of loops and suspension from the preceding point:  $\mathscr{X}$ is $\Sigma A$-null if and only if $\ast = \iMap_*(\Sigma A,\mathscr{X}) \weq \iMap_*(A,\Omega \mathscr{X})$.
\end{proof}

\begin{rem} \label{rem:A-null-expl}
Let $X \in \Sm_k$ and $\mathscr X \in \Spc(k)_*$.
Note that $\pi_0 \iMap_*(A, \mathscr X)(X) \weq [A \wedge X_+, \mathscr X]_*$ and $\pi_i(\iMap_*(A, \mathscr X)(X), 0) \weq [\Sigma^i A \wedge X_+, \mathscr X]_*$.
In particular, by Lemma \ref{lem:localizationmappingspaces}(4), if $\mathscr X$ is connected then $\mathscr X$ is $A$-null if and only if \[ [\Sigma^i A \wedge X_+, \mathscr X]_* = 0 \] for all $i \ge 0$ and $X \in \Sm_k$.
\end{rem}

\subsection*{Construction of the cellularization}
One may construct $\mathrm{L}_A$ explicitly by means of a small object argument; we summarize this observation in the next construction. 

\begin{construction}
	\label{const:smallobject}
	Let $\mathscr X \in \Spc(k)_*$ be connected. Define spaces $\mathscr{X}_n$ inductively as follows.  Set $\mathscr{X}_0 = \mathscr{X}$.  Assuming $\mathscr{X}_n$ is defined, we define $\mathscr{X}_{n+1}$ as the pushout of the diagram
	\[
	\ast \longleftarrow \bigvee_{\{f: \Sigma^i A \wedge Y_+ \to \mathscr{X}_n\}}  \Sigma^i A \wedge Y_+ \longrightarrow \mathscr{X}_n,
	\]
	where $Y \in \Sm_k$ and $i$ is an integer $\geq 0$.  We then set $\mathscr{X}_{\infty} = \colim_n \mathscr{X}_n$.  The space $\mathscr{X}_{\infty}$ is a model for $\mathrm{L}_A\mathscr{X}$.
\end{construction}
\begin{proof}
First note that each $\mathscr X_i$ is connected (see the proof of Lemma \ref{lem:X-oo-conn}, which contains a more general assertion), and hence so is $\mathscr X_\infty$.  Since $\Sigma^i A \wedge Y_+ \stackrel{A}{\weq} *$ we know that $\mathscr X \to \mathscr X_\infty$ is an $A$-equivalence; hence it remains to verify that $\mathscr X_\infty$ is $A$-null.  By Remark \ref{rem:A-null-expl}, for this we need to check that for every $Y \in \Sm_k$ and $i \ge 0$, every map $\Sigma^i A \wedge Y_+ \to \mathscr X_\infty$ is null homotopic.  Since the source is compact in $\Spc(k)$\NB{ref?}, this holds by construction.
\end{proof}

\begin{lem}
	\label{lem:X-oo-conn}
Assume $n$ is an integer $\geq 0$ and $k$ is a field, assumed to be perfect if $n > 0$ and $\mathscr{X}$ is an $n$-connected pointed motivic space.  If $A$ is an $n$-connective pointed, compact, motivic space, then $\L_A \mathscr{X}$ is again $n$-connected. 
\end{lem}
	
\begin{proof}
Notation as in Construction~\ref{const:smallobject}, since $n$-connective morphisms are stable under pushouts \cite[Corollary 6.5.1.17]{HTT}, one deduces from the unstable connectivity theorem~\ref{thm:unstableconnectivity} that $\mathscr{X}_{\infty}$ and thus $\L_A \mathscr{X}$ is again $n$-connected (for $n > 0$, the reference requires $k$ perfect).
\end{proof}

%Suppose that $A$ is $n$-connective and $\mathscr X$ is $n$-connected. One deduces from  (stability of $n$-connective morphisms under pushouts) and Theorem \ref{thm:unstableconnectivity} (unstable connectivity) that $\mathscr X_\infty$ is also $n$-connected (for $n > 0$, the reference requires $k$ perfect).

The next result is a consequence of Lemma~\ref{lem:X-oo-conn} in the case where $A = S^{p,q}$.

\begin{lem} \label{lem:Lpq-map-conn}
If $\mathscr X$ is a pointed connected motivic space, then $\mathscr X_{\le p-q-1} \weq (\L^{p,q} \mathscr X)_{\le p-q-1}$.  If additionally $\mathscr X \in O(S^{p,q})$, then $\mathscr X$ is $(p-q-1)$-connected.
\end{lem}

\begin{proof}
The space $S^{p,q}$ is $(p-q)-1$-connected.
It follows that $\mathscr X_n \to \mathscr X_{n+1}$ induces an equivalence on $(p-q-1)$-truncations.
The same is true for the colimit $\mathscr X \to \mathscr X_\infty \weq \L^{p,q} \mathscr X$.
%In the sequel, for presentational clarity, we have opted to keep track of ``weak cellular class" as opposed to potentially confusing connectivity estimates.
\end{proof}

We can deduce the following.
\begin{theorem}\NB{can the connectivity assumptions be avoided?}
	\label{thm:cellularityofsmashproducts}
	If $\mathscr{X}$ is pointed, connected and weakly $A$-cellular, and $\mathscr{Y}$ is pointed, connected and weakly-$B$-cellular, then $\mathscr{X} \wedge \mathscr{Y}$ is weakly $A \wedge B$-cellular.
\end{theorem}

\begin{proof}
We continue to follow the notation of Construction~\ref{const:smallobject} and first show that $\mathscr X \wedge B \in O(A \wedge B)$.
To see this, note that $\mathscr X_n \wedge B \to \mathscr X_{n+1} \wedge B$ is the pushout of a map of the form $(\ph) \wedge A \wedge B \to *$, and hence an $A \wedge B$-equivalence.
Thus 
\[ 
\mathscr X \wedge B \stackrel{A \wedge B}{\weq} \mathscr X_\infty \wedge B = *, \] 
as claimed.

Next note that $\mathscr X \wedge \mathscr Y_n \to \mathscr X \wedge \mathscr Y_{n+1}$ is the pushout of a map $(\ph) \wedge \mathscr X \wedge B \to *$, and hence, repeating what we just observed, also an $A \wedge B$-equivalence.
Consequently \[ \mathscr X \wedge \mathscr Y \stackrel{A \wedge B}{\weq} \mathscr X \wedge \mathscr Y_\infty = *, \] as required.
\end{proof}

\subsubsection*{Characterizing $S^{p,q}$-null and weakly $S^{p,q}$-cellular spaces}
The characterization of $S^{p,q}$-null spaces from Lemma~\ref{lem:localizationmappingspaces}(4) admits an interpretation in terms of homotopy sheaves using the notion of contraction (see Paragraph \ref{par:contraction}).  

\begin{cor}
	\label{cor:truncatednessviahomotopysheaves}
	If $\mathscr{X}$ is a pointed connected space, then $\mathscr{X}$ is $S^{p,q}$-null if and only if $\bpi_i(\mathscr{X})_{-q} = 0$ for all $i \ge p-q$.  
\end{cor}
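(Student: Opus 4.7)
The plan is to reduce $S^{p,q}$-nullity to the contractibility of an iterated loop space via Lemma~\ref{lem:localizationmappingspaces}(4), and then apply the contraction formula of Paragraph~\ref{par:contraction} to translate this into a condition on the homotopy sheaves of $\mathscr{X}$ itself.

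Since $\mathscr{X}$ is pointed and connected, Lemma~\ref{lem:localizationmappingspaces}(4) tells us that $\mathscr{X}$ is $S^{p,q}$-null if and only if $\iMap_*(S^{p,q}, \mathscr{X}) \weq \ast$. Writing $S^{p,q} = S^{p-q} \wedge \gm{\wedge q}$ and applying the usual adjunctions, this mapping space is $\Omega^{p-q}\Omega^{q,q}\mathscr{X}$, and its contractibility, after absorbing the shift coming from the $p-q$ simplicial loops, is equivalent to the vanishing $\bpi_j(\Omega^{q,q}\mathscr{X}) = 0$ for every $j \geq p-q$.

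The next step is to iterate Morel's formula $\bpi_j(\Omega^{1,1}\mathscr{Y}) \weq \bpi_j(\mathscr{Y})_{-1}$ (valid for pointed connected $\mathscr{Y}$ and $j \geq 1$) a total of $q$ times to deduce $\bpi_j(\Omega^{q,q}\mathscr{X}) \weq \bpi_j(\mathscr{X})_{-q}$ for $j \geq 1$. Combined with the previous step, this establishes the equivalence of the corollary in the range $j \geq \max(p-q,1)$. The iteration requires a modest bookkeeping argument: one can either restrict attention to pointed connective covers at each stage, or invoke the fact that the category of strongly $\aone$-invariant sheaves of groups is stable under contraction so that the Morel formula applies after each application of $\Omega^{1,1}$.

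The remaining case is $j = 0$, which only matters when $p = q$. Here $\bpi_0(\mathscr{X})_{-q}$ is automatically trivial because $\mathscr{X}$ is pointed connected (so $\bpi_0(\mathscr{X}) = \ast$ and contraction of a trivial sheaf is trivial), and the required vanishing of $\bpi_0(\Omega^{q,q}\mathscr{X})$ is implied by the already-established vanishing of $\bpi_1(\mathscr{X})_{-q}$ via the long exact sequence for the evaluation fibration $\Omega^{q,q}\mathscr{X} \to \iMap(\gm{\wedge q}, \mathscr{X}) \to \mathscr{X}$. The main technical obstacle I anticipate is keeping the iteration of the contraction formula honest across all intermediate loop spaces; this is a standard but slightly fiddly consequence of Morel's theory.
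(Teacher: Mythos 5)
Your argument is correct and is essentially the paper's own proof: reduce $S^{p,q}$-nullity to contractibility of $\Omega^{p,q}\mathscr{X} \weq \Omega^{p-q}\Omega^{q,q}\mathscr{X}$ via Lemma~\ref{lem:localizationmappingspaces}(4), then apply Morel's contraction formula from Paragraph~\ref{par:contraction} iterated $q$ times. The only divergence is your treatment of the $j=0$ edge case, which the paper disposes of more directly because $\Omega^{1,1}$ preserves connectivity of pointed connected spaces (part of the cited discussion of \cite[Theorem 6.13]{MField}), rather than via the evaluation-fibration argument you sketch.
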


\begin{proof}
	Lemma~\ref{lem:localizationmappingspaces}(4) gives a necessary and sufficient condition for $S^{p,q}$-nullity of a pointed connected space in terms of contractibility of the $(p,q)$-fold loop space of $\mathscr{X}$.  We may write 
	\[
	\Omega^{p,q}\mathscr{X} \weq \Omega^{p-q}_{S^1}\Omega^{q,q}\mathscr{X},
	\]
	and then the statement follows by appeal to the discussion of Paragraph~\ref{par:contraction} (in particular, \cite[Theorem 6.13]{MField}).
\end{proof}

\begin{lem}
	\label{lem:truncationandpostnikovlayers}
	Suppose $\mathscr{W}$ is a pointed connected space.  The following statements are equivalent.
	\begin{enumerate}[noitemsep,topsep=1pt] 
		\item The space $\mathscr{W}$ is $S^{p,q}$-null.
		\item For every integer $i \geq 1$, the spaces  $K(\bpi_i(\mathscr{W}),i)$ are $S^{p,q}$-null.   
		\item For every integer $i$, the Postnikov layers $\tau_{\leq i} \mathscr{W}$ are $S^{p,q}$-null.
	\end{enumerate}
\end{lem}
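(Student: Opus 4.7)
The plan is to observe that all three conditions are equivalent reformulations of the single criterion provided by Corollary~\ref{cor:truncatednessviahomotopysheaves}: a pointed connected motivic space $\mathscr{X}$ is $S^{p,q}$-null if and only if $\bpi_j(\mathscr{X})_{-q} = 0$ for every $j \geq p-q$. It therefore suffices to check that (1)--(3) each translate to the same condition on the contracted homotopy sheaves $\bpi_j(\mathscr{W})_{-q}$, namely that they vanish in all degrees $j \geq p-q$.

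For the equivalence (1)$\Leftrightarrow$(2): since $\mathscr{W}$ is a pointed connected motivic space, Paragraph~\ref{par:aonelocalityofspaces} guarantees that each $\bpi_i(\mathscr{W})$ is strongly $\aone$-invariant, so the Eilenberg--Mac Lane space $K(\bpi_i(\mathscr{W}),i)$ itself lies in $\Spc(k)$. Its unique non-trivial homotopy sheaf sits in degree $i$ and equals $\bpi_i(\mathscr{W})$; the criterion therefore reads: $K(\bpi_i(\mathscr{W}),i)$ is $S^{p,q}$-null iff either $i < p-q$ (so the condition is vacuous) or $\bpi_i(\mathscr{W})_{-q} = 0$. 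Requiring this for every $i \geq 1$ is exactly (1).

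For the equivalence (1)$\Leftrightarrow$(3): by the same criterion from Paragraph~\ref{par:aonelocalityofspaces} the Nisnevich Postnikov sections $\tau_{\leq i}\mathscr{W}$ remain motivic, and by construction $\bpi_j(\tau_{\leq i}\mathscr{W}) \cong \bpi_j(\mathscr{W})$ for $j \leq i$ and vanishes otherwise. Applying Corollary~\ref{cor:truncatednessviahomotopysheaves} again, $\tau_{\leq i}\mathscr{W}$ is $S^{p,q}$-null iff $\bpi_j(\mathscr{W})_{-q} = 0$ for every $j$ satisfying $p-q \leq j \leq i$; letting $i$ range over all integers recovers condition (1). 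Alternatively, (3)$\Rightarrow$(1) could be obtained more abstractly by noting that $\mathscr{W} \weq \lim_i \tau_{\leq i}\mathscr{W}$ together with the closure of the $S^{p,q}$-null class under limits, Lemma~\ref{lem:localizationmappingspaces}(2), while (1)$\Rightarrow$(3) amounts to the trivial observation that contraction of a subquotient of homotopy sheaves inherits vanishing.

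The proof has no real obstacle: the only mild technical point is confirming that the auxiliary spaces appearing in (2) and (3) --- the Eilenberg--Mac Lane spaces and the Nisnevich Postnikov sections --- remain in $\Spc(k)$, which is automatic from the strong $\aone$-invariance of the homotopy sheaves of $\mathscr{W}$. Once this is noted, the three conditions are a matter of unwinding Corollary~\ref{cor:truncatednessviahomotopysheaves}.
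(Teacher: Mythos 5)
Your proof is correct. It shares the paper's key input, Corollary~\ref{cor:truncatednessviahomotopysheaves}, which the paper also uses for the implication (1)~$\Rightarrow$~(2); but you then push that single homotopy-sheaf criterion through all three conditions, whereas the paper completes the cycle structurally: (2)~$\Rightarrow$~(3) by induction on the Postnikov tower using the fiber sequences $K(\bpi_i(\mathscr{W}),i) \to \tau_{\leq i}\mathscr{W} \to \tau_{\leq i-1}\mathscr{W}$ together with the closure of $S^{p,q}$-null spaces under fiber sequences with connected base (Lemma~\ref{lem:localizationmappingspaces}(3)), and (3)~$\Rightarrow$~(1) via $\mathscr{W} \weq \lim_i \tau_{\leq i}\mathscr{W}$ and closure of null spaces under limits (Lemma~\ref{lem:localizationmappingspaces}(2)) --- the latter being the alternative you mention in passing. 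Your route is more computational and arguably shorter, since once one knows that the truncations $\tau_{\leq i}\mathscr{W}$ and the spaces $K(\bpi_i(\mathscr{W}),i)$ remain motivic (which you correctly justify via Paragraph~\ref{par:aonelocalityofspaces}) and that contraction simply reads off the homotopy sheaves degreewise, all three conditions visibly reduce to the vanishing of $\bpi_j(\mathscr{W})_{-q}$ for $j \geq p-q$; the paper's route, by contrast, leans on general closure properties of the class of $A$-null spaces and so would generalize beyond the case where a clean homotopy-sheaf criterion is available. Both arguments are complete as written.
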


\begin{proof}
	The implication (1) $\Longrightarrow$ (2) is an immediate consequence of Corollary~\ref{cor:truncatednessviahomotopysheaves}.  The implication (2) $\Longrightarrow$ (3) follows by induction using the fiber sequences $K(\bpi_i(\mathscr{X}),i) \to \tau_{\leq i}\mathscr{W} \to \tau_{\leq i-1}\mathscr{W}$
	and Lemma~\ref{lem:localizationmappingspaces}(3).  The implication (3) $\Longrightarrow$ (1) follows from Lemma~\ref{lem:localizationmappingspaces}(2) and the fact that $\mathscr{W} = \lim_i \tau_{\leq i}\mathscr{W}$.
\end{proof}

\begin{proposition}
	\label{prop:pqconnectivityandfibersequencesI}
	Assume $\mathscr{F} \to \mathscr{E} \stackrel{f}{\to} \mathscr{B}$ is a fiber sequence in $\ho{k}$ with $\mathscr{B}$ connected.  
	\begin{enumerate}[noitemsep,topsep=1pt]
		\item The morphism $f$ is a universal $A$-equivalence (that is, any base change of $f$ in $\ho{k}$ is an $A$-equivalence) if and only if $\mathscr{F} \in O(A)$.
		\item If $\mathscr{F}$ and $\mathscr{B}$ lie in $O(A)$, then $\mathscr{E} \in O(A)$ as well.
	\end{enumerate}
\end{proposition}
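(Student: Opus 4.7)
The plan is to treat the two parts in order, with part (2) dropping out quickly from part (1). For the ``$\Rightarrow$'' direction of (1), I would simply apply the universal $A$-equivalence property of $f$ to the basepoint inclusion $* \to \mathscr{B}$: the base change is $\mathscr{F} \to *$, whose being an $A$-equivalence is exactly the statement $\mathscr{F} \in O(A)$.

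For the substantive ``$\Leftarrow$'' direction of (1), the approach is to present both $\mathscr{E}$ and $\mathscr{B}$ via bar constructions and exhibit $f$ as a levelwise projection killing $\mathscr{F}$. Concretely, by Lemma~\ref{lem:loopgroup-basics}(2) I would identify $\mathscr{E} \weq \mathscr{F} \sslash \Omega \mathscr{B}$ and $\mathscr{B} \weq * \sslash \Omega \mathscr{B}$, both expressed as geometric realizations of simplicial objects whose $n$-th levels are $\mathscr{F} \times (\Omega \mathscr{B})^n$ and $(\Omega \mathscr{B})^n$ respectively, with $f$ induced levelwise by the projection killing the $\mathscr{F}$-factor. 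Since $\L_A$ preserves finite products (Lemma~\ref{lemm:LA-products}) and $\mathscr{F} \in O(A)$, each such projection is an $A$-equivalence, and since $\L_A$ preserves colimits (Lemma~\ref{lem:leftbousfieldcolimits}) so is $f$. For universality, the key observation is that the structural map $\mathscr{F} \times (\Omega \mathscr{B})^n \to \mathscr{B}$ factors through the basepoint, because $* \sslash \Omega \mathscr{B}$ arises as the \v{C}ech nerve of $* \to \mathscr{B}$ and its augmentation at every level goes through $*$. Hence given any $g: \mathscr{Y} \to \mathscr{B}$, the pulled-back $n$-th level becomes a product $\fib(g) \times \mathscr{F} \times (\Omega \mathscr{B})^n$ mapping via projection-killing-$\mathscr{F}$ to $\fib(g) \times (\Omega \mathscr{B})^n$, still an $A$-equivalence. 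Since colimits in $\ho{k}$ are universal (Paragraph~\ref{par:lmotproperties}), $g^*f$ is the realization of these $A$-equivalences, hence itself an $A$-equivalence.

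Part (2) then drops out: the hypotheses together with part (1) make $f$ an $A$-equivalence, so $\L_A \mathscr{E} \weq \L_A \mathscr{B} = *$, giving $\mathscr{E} \in O(A)$. I expect the main technical hurdle to be the universality clause of (1) — specifically, checking cleanly that pullback along an arbitrary $g$ preserves the projection-of-$\mathscr{F}$ structure at every simplicial level, which ultimately rests on the easily overlooked fact that the maps to the base in the \v{C}ech nerve of $* \to \mathscr{B}$ factor through the basepoint.
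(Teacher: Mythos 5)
Your proposal is correct and follows essentially the same route as the paper: present $\mathscr{E}\to\mathscr{B}$ via the bar construction for the $\Omega\mathscr{B}$-action, compute the base change levelwise using universality of colimits, and conclude from the facts that $\L_A$ preserves finite products and colimits. The paper phrases the levelwise computation slightly more abstractly through the equivalence of Lemma~\ref{lem:loopgroup-basics}(2) (identifying the pullback as $(\mathscr{S}\times\mathscr{F})\sslash\Omega\mathscr{B}\to\mathscr{S}\sslash\Omega\mathscr{B}$), but the substance is identical.
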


\begin{proof}
	The second statement is a special case of the first, so it remains to establish the first statement.  If $f$ is a universal $A$-equivalence, then $\mathscr{F} \to \ast$ is as well by universality, so $\mathscr{F} \in O(A)$ by definition.  Conversely, suppose that $\mathscr F \in O(A)$ and $\mathscr{B}' \to \mathscr{B}$ is a morphism, with fiber denoted $\mathscr S$.
Since $\mathscr B$ is connected, $* \to \mathscr B$ is an effective epimorphism and $\mathscr B \weq * \sslash \Omega \mathscr B$.
Similarly $\mathscr X \weq \mathscr F \sslash \Omega \mathscr B$ and $\mathscr B' \weq \mathscr S \sslash \Omega \mathscr B$ (see Paragraph \ref{par:kanloopgroup} for the notation).
By universality of colimits, the base change of $f$ along $\mathscr B' \to \mathscr B$ is given by \[ (\mathscr S \times \mathscr F) \sslash \Omega \mathscr B \to \mathscr S \sslash \Omega \mathscr B. \]
Since $\L_A$ preserves finite products (Lemma~\ref{lemm:LA-products}), the map displayed above is a colimit of $A$-equivalences, and hence an $A$-equivalence itself.
%We may write $\mathscr{B}'$ as a colimit of representables $U$.  In that case, universality of colimits allows us to conclude that the pullback $\mathscr{E}_{\mathscr{B}'}$ can be identified with $\colim \mathscr{E}_U$.  It therefore suffices to show that $\mathscr{E}_U \to U$ is an $A$-equivalence.
%Since $\mathscr{B}$ is connected, we may find a covering $V \to U$ such that $V \to U \to \mathscr{B}$ is null.  In that case, we may realize $U$ as the colimit of the Cech nerve of $V \to U$ in $\Shv_{Nis}(\Sm_k)$ and again using universality of colimits we may assume that $U \to \mathscr{B}$ is null.  In that case, $\mathscr{E}_U \cong \mathscr{F} \times U \to U$ is a $A$-equivalence since $\mathrm{L}_A$ preserves finite products, i.e., Lemma~\ref{lemm:LA-products}. 
\end{proof}

\begin{cor}
	\label{cor:cellularityofcompositesoffibers}
	If $f: \mathscr{X} \to \mathscr{Y}$ and $g: \mathscr{Y} \to \mathscr{Z}$ are maps such that $\fib(f), \fib(g) \in O(A)$ and $\fib(g)$ is connected, then $\fib(g \circ f)$ lies in $O(A)$ as well.
\end{cor}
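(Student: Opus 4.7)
The plan is to construct a fiber sequence relating $\fib(f)$, $\fib(g \circ f)$, and $\fib(g)$, and then apply Proposition~\ref{prop:pqconnectivityandfibersequencesI}(2) directly.

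The key observation is that there is a standard fiber sequence
\[ \fib(f) \longrightarrow \fib(g \circ f) \longrightarrow \fib(g). \]
To see this, I would form the pullback $\mathscr{P} := \mathscr{X} \times_{\mathscr{Y}} \fib(g)$ along the map $\fib(g) \to \mathscr{Y}$. By the pasting lemma for pullbacks applied to the diagram
\[ \mathscr{P} \to \mathscr{X} \to \mathscr{Y} \to \mathscr{Z} \]
(with the fiber of $\mathscr{Y} \to \mathscr{Z}$ being $\fib(g)$), one identifies $\mathscr{P} \weq \fib(g \circ f)$. Simultaneously, since $\mathscr{P} \to \fib(g)$ is the base change of $f$ along $\fib(g) \to \mathscr{Y}$, its fiber is $\fib(f)$. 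This yields the desired fiber sequence.

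Given the fiber sequence with $\fib(g)$ connected by hypothesis, and both $\fib(f), \fib(g) \in O(A)$ by hypothesis, an immediate invocation of Proposition~\ref{prop:pqconnectivityandfibersequencesI}(2) concludes that $\fib(g \circ f) \in O(A)$.

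The only subtlety is ensuring the fiber sequence is constructed correctly in the $\infty$-topos $\Shv_\Nis(\Sm_k)$, but this is a completely formal consequence of the pasting lemma for pullback squares and requires no additional hypotheses. The connectivity of the base $\fib(g)$ is precisely what is needed to apply the earlier proposition, and that is the given assumption, so there is no real obstacle beyond assembling the standard pieces.
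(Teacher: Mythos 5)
Your proof is correct and follows the same route as the paper: the paper also uses the fiber sequence $\fib(f) \to \fib(g\circ f) \to \fib(g)$ (citing a reference rather than deriving it from the pasting lemma as you do) and then applies Proposition~\ref{prop:pqconnectivityandfibersequencesI}(2).
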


\begin{proof}
	By \cite[Proposition 3.1]{ABHWhitehead}, there is a fiber sequence of the form
	\[
	\fib(f) \longrightarrow \fib(g \circ f) \longrightarrow \fib(g).
	\]
	We deduce that $\fib(g \circ f) \in O(A)$ by appeal to Proposition~\ref{prop:pqconnectivityandfibersequencesI}(2).
\end{proof}

\subsubsection*{Truncation and cellularity}
\begin{proposition}
	\label{prop:pqtruncationpreservesconnectivity-v2} % I changed the label to force re-checking of references
	Suppose $\mathscr{X}$ is a pointed connected motivic space.
Let $q \ge 0$ and $p \le d$ such that $\Omega^{q,q} \mathscr X$ is $d$-connective.
Then $\L^{p+q,q} \mathscr X \weq \L^{d+q,q} \mathscr X$.
\end{proposition}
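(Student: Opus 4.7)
The plan is to reduce the claim to showing that $\L^{d+q,q}\mathscr X$ is itself $S^{p+q,q}$-null. Once that is in hand, the result follows formally: since $\Omega^{d+q,q}(-) \weq \Omega^{d-p}\Omega^{p+q,q}(-)$, any $S^{p+q,q}$-null space is automatically $S^{d+q,q}$-null, so $\mathscr X \to \L^{d+q,q}\mathscr X$ is also an $S^{p+q,q}$-equivalence. Combined with $S^{p+q,q}$-nullity of the target, the universal property of $\L^{p+q,q}$ will identify $\L^{d+q,q}\mathscr X$ with $\L^{p+q,q}\mathscr X$.

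Next I would dispose of the trivial case $d = 0$ (which forces $p = 0$) and for $d \ge 1$ use Remark \ref{rmk:X-oo-conn} to note that $S^{d+q,q}$ is at least $0$-connective and $\mathscr X$ is connected, so $\L^{d+q,q}\mathscr X$ remains pointed and connected. Thus Corollary \ref{cor:truncatednessviahomotopysheaves} applies, and $S^{p+q,q}$-nullity of $\L^{d+q,q}\mathscr X$ reduces to showing that $\bpi_i(\L^{d+q,q}\mathscr X)_{-q} = 0$ for every $i \ge p$.

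The range $i \ge d$ is immediate from the same corollary applied to the already known $S^{d+q,q}$-null space $\L^{d+q,q}\mathscr X$. For the remaining window $p \le i \le d-1$, the key step is to apply Lemma \ref{lem:Lpq-map-conn} to the sphere $S^{d+q,q}$: this yields $\mathscr X_{\le d-1} \weq (\L^{d+q,q}\mathscr X)_{\le d-1}$, and in particular an isomorphism $\bpi_i(\mathscr X) \iso \bpi_i(\L^{d+q,q}\mathscr X)$ for all $i \le d-1$. Applying the $q$-fold contraction (functorially, via Paragraph \ref{par:contraction}) and invoking the hypothesis that $\bpi_i(\mathscr X)_{-q} = \bpi_i(\Omega^{q,q}\mathscr X) = 0$ for $i < d$ gives the required vanishing.

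The only real content is Lemma \ref{lem:Lpq-map-conn}; the rest of the argument is essentially formal repackaging through the contraction–loop-space identification $\bpi_i(\Omega^{q,q}\mathscr Y) \weq \bpi_i(\mathscr Y)_{-q}$. I do not expect any substantive obstacle, only the minor book-keeping around pointedness/connectedness and the edge case $d = 0$ mentioned above.
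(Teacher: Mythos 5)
Your proposal is correct and follows essentially the same route as the paper: reduce to showing $\L^{d+q,q}\mathscr X$ is $S^{p+q,q}$-null, use Remark~\ref{rmk:X-oo-conn} and Corollary~\ref{cor:truncatednessviahomotopysheaves} to translate this into vanishing of $\bpi_i(\L^{d+q,q}\mathscr X)_{-q}$ for $i\ge p$, handle $i\ge d$ by construction, and handle $p\le i<d$ via Lemma~\ref{lem:Lpq-map-conn} together with the hypothesis on $\Omega^{q,q}\mathscr X$. The extra remarks on the edge case $d=0$ and on why an $S^{d+q,q}$-equivalence is an $S^{p+q,q}$-equivalence are harmless elaborations of what the paper leaves implicit.
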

\begin{proof}
Since the map $\mathscr X \to \L^{d+q,q} \mathscr X$ is an $S^{p+q,q}$-equivalence (as $d \ge p$), it suffices to prove that $\L^{d+q,q} \mathscr X$ is $S^{p+q,q}$-null.
Since $\L^{d+q,q}\mathscr X$ is connected (Lemma \ref{lem:X-oo-conn}), for this it is enough to show that $\bpi_n(\L^{d+q,q}\mathscr X)_{-q} = 0$ for all $n \ge p$ (Corollary \ref{cor:truncatednessviahomotopysheaves}).
This is true for $n \ge d$ by construction.
By Lemma \ref{lem:Lpq-map-conn} we have $\bpi_n(\L^{d+q,q}\mathscr X) \weq \bpi_n(\mathscr X)$ for $n < d$, whence in this case the vanishing holds by assumption.
This concludes the proof.
\end{proof}

\begin{cor}\label{cor:pqtruncationpreservesconnectivity}
Suppose $\mathscr X$ is a pointed $d$-connected motivic space, $d \ge 0$.
Then $\L^{p+q,q} \mathscr X$ is $d$-connected.
\end{cor}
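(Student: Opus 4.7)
My plan is to split into cases according to whether $p \ge d$ or $p < d$, handling the first directly via the small-object construction and reducing the second to the first by means of Proposition~\ref{prop:pqtruncationpreservesconnectivity-v2}.

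First I would dispose of the case $p \ge d$. Here, by Lemma~\ref{lem:Lpq-map-conn}, the attaching sphere $S^{p+q,q}$ is $(p-1)$-connected, hence $p$-connective and in particular $d$-connective. Combined with the hypothesis that $\mathscr X$ is $d$-connected, Remark~\ref{rmk:X-oo-conn} applied to Construction~\ref{const:smallobject} with $n = d$ then immediately gives that $\L^{p+q,q}\mathscr X \weq \mathscr X_\infty$ is $d$-connected.

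For the remaining case $p < d$, I would first establish that $\Omega^{q,q}\mathscr X$ is $d$-connective, so as to apply Proposition~\ref{prop:pqtruncationpreservesconnectivity-v2}. Iterating Morel's identification $\bpi_i(\Omega^{1,1}\mathscr Y) \weq \bpi_i(\mathscr Y)_{-1}$ from Paragraph~\ref{par:contraction}, and using that the contraction functor carries the zero sheaf to itself (and preserves connectivity, so that the intermediate spaces remain connected), the $d$-connectedness of $\mathscr X$ translates into $\bpi_i(\Omega^{q,q}\mathscr X) \weq \bpi_i(\mathscr X)_{-q} = 0$ for all $i \le d$, which is stronger than $d$-connectivity. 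The hypotheses of Proposition~\ref{prop:pqtruncationpreservesconnectivity-v2} then hold for the pair $(p,d)$, yielding
\[ \L^{p+q,q}\mathscr X \weq \L^{d+q,q}\mathscr X, \]
and invoking the first case (with $p$ replaced by $d$) finishes the argument.

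The only substantive input is Proposition~\ref{prop:pqtruncationpreservesconnectivity-v2}, and I do not anticipate a real obstacle beyond it. The one point requiring a moment of care is the distinction between ``$n$-connective'' ($\bpi_i = 0$ for $i < n$) and ``$n$-connected'' ($\bpi_i = 0$ for $i \le n$), since this governs the precise boundary $p \ge d$ versus $p < d$ of the case split and the correct connectivity index at which to apply Remark~\ref{rmk:X-oo-conn}.
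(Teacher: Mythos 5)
Your proof is correct and follows essentially the same two-case strategy as the paper's: handle $p \ge d$ directly, and reduce $p < d$ to that case via Proposition~\ref{prop:pqtruncationpreservesconnectivity-v2} after checking that $\Omega^{q,q}\mathscr X$ is $d$-connective. The only (harmless) divergence is that in the base case you invoke Remark~\ref{rmk:X-oo-conn} on the small-object construction where the paper cites Lemma~\ref{lem:Lpq-map-conn}; your variant is if anything slightly cleaner at the boundary $p=d$, where the truncation comparison of that lemma only directly controls $\tau_{\le p-1}$.
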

\begin{proof}
If $p \ge d$ then $(\L^{p+q,q} \mathscr X)_{< d} \weq \mathscr X_{< d} = *$, by Lemma \ref{lem:Lpq-map-conn}.
Otherwise $\L^{p+q,q} \mathscr X \weq \L^{d+q,q} \mathscr X$ by Proposition \ref{prop:pqtruncationpreservesconnectivity-v2}, and now the same argument works.
\end{proof}

\begin{cor}
	\label{cor:separatingcellularity}
Let $\mathscr X \in O(S^{q,q})$ be pointed and connected.
The following are equivalent:
\begin{enumerate}[noitemsep,topsep=1pt]
	\item $\mathscr X \in O(S^{d,0})$,
	\item $\Omega^{q,q} \mathscr X \in O(S^{d,0})$,
	\item $\mathscr X \in O(S^{d+q,q})$.
\end{enumerate}
\end{cor}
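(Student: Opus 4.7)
My plan is to prove the three equivalences cyclically, $(1) \Rightarrow (2) \Rightarrow (3) \Rightarrow (1)$. I would dispatch $(3) \Rightarrow (1)$ first, as it follows immediately from Lemma~\ref{lem:Lpq-map-conn}: if $\mathscr X \in O(S^{d+q,q})$ is pointed and connected, then $\mathscr X$ is $(d-1)$-connected; since nullification at the simplicial sphere $S^{d,0}$ is Postnikov truncation at level $d-1$ (and Postnikov layers of $\A^1$-local spaces remain $\A^1$-local by Paragraph~\ref{par:aonelocalityofspaces}), this forces $\L^{d,0}\mathscr X = *$, i.e., $\mathscr X \in O(S^{d,0})$.

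For $(1) \Rightarrow (2)$, I would restate (1) as $(d-1)$-connectivity of $\mathscr X$ and then iterate the contraction formula $\bpi_i\Omega^{1,1}\mathscr Y \weq (\bpi_i\mathscr Y)_{-1}$ of Paragraph~\ref{par:contraction}. The induction on $q$ requires $\Omega^{q,q}\mathscr X$ to remain pointed and connected at each stage; I would justify this either by applying the same formula at $i = 0$ (the contraction of the trivial sheaf is trivial) or, more concretely, by observing that the cofiber sequence $U_+ \to (U \times \gm{})_+ \to U_+ \wedge \gm{}$ is split by the projection $U \times \gm{} \to U$, so that $\bpi_0 \Omega^{1,1}\mathscr X(U)$ is identified with the kernel of $\bpi_0 \mathscr X(U \times \gm{}) \to \bpi_0 \mathscr X(U)$, which vanishes when $\mathscr X$ is connected. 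The upshot is $\bpi_i\Omega^{q,q}\mathscr X \weq (\bpi_i\mathscr X)_{-q} = 0$ for $i \le d-1$, which is (2).

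Finally, $(2) \Rightarrow (3)$ is where Proposition~\ref{prop:pqtruncationpreservesconnectivity-v2} does the real work. Hypothesis (2) is precisely the statement that $\Omega^{q,q}\mathscr X$ is $d$-connective; taking $p = 0$ (which satisfies $p \le d$ since $d \ge 0$) in that proposition will yield $\L^{q,q}\mathscr X \weq \L^{d+q,q}\mathscr X$, and the standing hypothesis $\mathscr X \in O(S^{q,q})$ forces both sides to be a point, giving $\mathscr X \in O(S^{d+q,q})$. The most delicate point in the whole argument is the $\bpi_0$ control required in $(1) \Rightarrow (2)$, but that is already handled by the split cofiber sequence above; everything else is formal bookkeeping with the machinery already in place.
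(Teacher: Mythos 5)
Your proof is correct and follows exactly the same route as the paper, which disposes of the three implications in one line by citing the contraction formula of \S\ref{par:contraction} for $(1)\Rightarrow(2)$, Proposition~\ref{prop:pqtruncationpreservesconnectivity-v2} (with $p=0$) for $(2)\Rightarrow(3)$, and Lemma~\ref{lem:Lpq-map-conn} for $(3)\Rightarrow(1)$. The extra details you supply (identifying $O(S^{d,0})$ with $(d-1)$-connected spaces and checking that $\Omega^{1,1}$ preserves connectedness, a fact the paper also uses and attributes to \S\ref{par:contraction}) are correct elaborations of the same argument.
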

\begin{proof}
It is clear that (1) implies (2) (see Paragraph \ref{par:contraction}), and (2) implies (3) by Proposition \ref{prop:pqtruncationpreservesconnectivity-v2}.
Finally (3) implies (1) by Lemma \ref{lem:Lpq-map-conn}.
\end{proof}

The following is occasionally useful.
\begin{cor} \label{cor:limit-of-Spq-equiv}
Let $f_i: \mathscr X_i \to \mathscr Y_i \in \Spc(k)_*$ be a cofiltered family of $S^{p,q}$-equivalences.
Let $f: \lim_i \mathscr X_i \to \lim_i \mathscr Y_i$ be the induced map.
Assume that the connectivities of $\mathscr X \to \mathscr X_i$ and $\mathscr Y \to \mathscr Y_i$ tend to infinity with $i$, and $\mathscr X, \mathscr Y$ connected.
Then $f$ is an $S^{p,q}$-equivalence.
\end{cor}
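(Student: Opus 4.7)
The plan is to verify the defining property of an $S^{p,q}$-equivalence: for every pointed $S^{p,q}$-null space $\mathscr W$, I want to show that the induced map $\iMap_*(\mathscr Y, \mathscr W) \to \iMap_*(\mathscr X, \mathscr W)$ is an equivalence. Since $\mathscr X$ and $\mathscr Y$ are pointed and connected, any such map factors through the connected component of the basepoint of $\mathscr W$, which is itself $S^{p,q}$-null, so I may assume $\mathscr W$ is pointed and connected too.

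First I would reduce the target to an Eilenberg--MacLane space. Writing $\mathscr W \weq \lim_n \tau_{\leq n}\mathscr W$, Lemma \ref{lem:truncationandpostnikovlayers} ensures that each Postnikov section $\tau_{\leq n}\mathscr W$ and each layer $K(\bpi_n \mathscr W, n)$ of its Postnikov tower is again $S^{p,q}$-null. Since $\iMap_*$ sends limits and fiber sequences in the second argument to limits and fiber sequences of spaces, a standard induction along this tower reduces the claim to the case $\mathscr W = K(\mathbf A, n)$ with $\mathbf A$ a strictly $\A^1$-invariant sheaf and $K(\mathbf A, n)$ itself $S^{p,q}$-null (equivalently, either $n < p-q$ or $\mathbf A_{-q} = 0$).

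Next I would exploit that $K(\mathbf A, n)$ is $n$-truncated, so that mapping into it factors through the $n$-truncation of the source: $\iMap_*(\mathscr Z, K(\mathbf A, n)) \weq \iMap_*(\tau_{\leq n}\mathscr Z, K(\mathbf A, n))$. Since the connectivities of $\mathscr X \to \mathscr X_i$ and $\mathscr Y \to \mathscr Y_i$ tend to infinity, I can pick an index $i$ for which both maps are at least $(n+1)$-connected; then $\tau_{\leq n}\mathscr X \weq \tau_{\leq n}\mathscr X_i$ and $\tau_{\leq n}\mathscr Y \weq \tau_{\leq n}\mathscr Y_i$, so
\[
\iMap_*(\mathscr X, K(\mathbf A, n)) \weq \iMap_*(\mathscr X_i, K(\mathbf A, n)), \qquad \iMap_*(\mathscr Y, K(\mathbf A, n)) \weq \iMap_*(\mathscr Y_i, K(\mathbf A, n)).
\]
Since $f_i$ is an $S^{p,q}$-equivalence and $K(\mathbf A, n)$ is $S^{p,q}$-null, the map $\iMap_*(\mathscr Y_i, K(\mathbf A, n)) \to \iMap_*(\mathscr X_i, K(\mathbf A, n))$ is also an equivalence, and chaining these equivalences concludes the proof.

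The hard part is really just the Postnikov reduction of the first step, whose soundness depends on the inheritance of $S^{p,q}$-nullity by Postnikov layers furnished by Lemma \ref{lem:truncationandpostnikovlayers}; everything else is elementary bookkeeping with truncations and connectivities.
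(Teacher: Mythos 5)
Your proof is correct and follows essentially the same route as the paper's: reduce to testing against truncated $S^{p,q}$-null targets via the Postnikov tower and Lemma \ref{lem:truncationandpostnikovlayers}, use the divergence of the connectivities to replace $\mathscr X,\mathscr Y$ by $\mathscr X_i,\mathscr Y_i$ for $i$ large, and then invoke the $S^{p,q}$-equivalence $f_i$. The only cosmetic differences are that the paper stops at $r$-truncated null targets rather than descending all the way to Eilenberg--MacLane spaces, and it handles the pointed/unpointed bookkeeping by noting that the nullifications of $\mathscr X$ and $\mathscr Y$ remain connected.
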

\begin{proof}
Both $\mathscr X$ and $\mathscr Y$ being connected, the same is true for their $S^{p,q}$-nullifications (Corollary \ref{cor:pqtruncationpreservesconnectivity}).
Hence by Yoneda and Lemma \ref{lem:truncationandpostnikovlayers}, it will suffice to show the following: if $\mathscr W$ is connected, $r$-truncated (for some $r$) and $S^{p,q}$-null, then $\Map(\mathscr X, \mathscr W) \weq \Map(\mathscr Y, \mathscr W)$.
By assumption for $i \gg 0$, both $\mathscr X \to \mathscr X_i$ and $\mathscr Y \to \mathscr Y_i$ are $r$-equivalences.
Hence \[ \Map(\mathscr X, \mathscr W) \weq \Map(\mathscr X_i, \mathscr W) \weq \Map(\L^{p,q} \mathscr X_i, \mathscr W) \weq \Map(\L^{p,q} \mathscr Y_i, \mathscr W) \weq \Map(\mathscr Y_i, \mathscr W) \weq \Map(\mathscr Y, \mathscr W). \]
This concludes the proof.
\end{proof}

\subsubsection*{Truncation, loop spaces and fiber sequences}
\begin{lem}\label{lem:Lpq-loop-B}
Let $k$ be a field.
\begin{enumerate}[noitemsep,topsep=1pt]
  \item Let $\mathscr G \in \Spc(k)$ be a connected $\mathscr E_1$-group. Then $B \L^{p,q} \mathscr G \weq \L^{p+1,q} B\mathscr G$.
  \item Let $\mathscr X \in \Spc(k)_*$ be simply connected. Then $\Omega \L^{p+1,q} \mathscr X \weq \L^{p,q} \Omega \mathscr X$.
\end{enumerate}
\end{lem}
\begin{proof}
Denote by $\mathcal C$ the category of simply-connected, pointed motivic spaces, and by $\mathcal C_0 \subset \mathcal C$ the full subcategory of $S^{p+1,q}$-null spaces.
By Corollary \ref{cor:pqtruncationpreservesconnectivity} and Amplification \ref{amplification:localizationpreservesmonoids}, the inclusion $\mathcal C_0 \to \mathcal C$ has a left adjoint, namely, $\L^{p+1,q}$.
Similarly denote by $\mathcal D$ the category of connected monoids in motivic spaces, and by $\mathcal D_0 \subset \mathcal D$ the full subcategory of $S^{p,q}$-null spaces.
As before, we see that $\L^{p,q}$ defines a left adjoint of $\mathcal D_0 \to \mathcal D$.
Lemma \ref{lem:loopgroup-basics} shows that 
\[ 
\xymatrix{
\mathcal D \ar@<.5ex>[r]^B & \ar@<.5ex>[l]^{\Omega}\mathcal C
}
\] 
is an adjoint equivalence, and Lemma~\ref{lem:localizationmappingspaces}(5) shows that the equivalence identifies the full subcategories $\mathcal C_0, \mathcal D_0$.
It follows that it must identify the left adjoints, as desired.
\end{proof}

Suppose $\mathscr{F} \to \mathscr{E} \to \mathscr{B}$ is a fiber sequence of pointed motivic spaces.  Lemma~\ref{lem:localizationmappingspaces}(2) implies that $\fib(\mathrm{L}^{p,q} \mathscr{E} \to \mathrm{L}^{p,q} \mathscr{B})$ is $S^{p,q}$-null and therefore there is an induced map $\mathrm{L}^{p,q}\mathscr{F} \to \fib(\mathrm{L}^{p,q} \mathscr{E} \to \mathrm{L}^{p,q} \mathscr{B})$.  The following result gives some conditions under which this map is an equivalence.
These conditions are not optimal, but will be sufficient for our future needs.

\begin{proposition}
	\label{prop:pqtruncationandfibersequences}
	Assume $k$ is a field and suppose $\mathscr{B}$ is a $\max\{p,1\}$-connected motivic space, $p \ge 0$.  If $\mathscr{F} \to \mathscr{E} \to \mathscr{B}$ is a fiber sequence, then
	\[
	\mathrm{L}^{p+q,q} \mathscr{F} \longrightarrow \mathrm{L}^{p+q,q} \mathscr{E} \longrightarrow \mathrm{L}^{p+q,q} \mathscr{B}
	\]
	is also a fiber sequence.
\end{proposition}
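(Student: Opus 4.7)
The plan is to model the fiber sequence using the bar construction and push it through $\L^{p+q,q}$, exploiting that this localization commutes with colimits and finite products, and then absorb a small index shift using the connectivity hypothesis.

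Since $\mathscr B$ is $\max\{p,1\}$-connected, in particular simply connected, Lemma~\ref{lem:loopgroup-basics}(2) identifies the fiber sequence $\mathscr F \to \mathscr E \to \mathscr B$ with the bar sequence
\[
\mathscr F \longrightarrow \mathscr F \sslash \Omega \mathscr B \longrightarrow B\Omega \mathscr B,
\]
where $\mathscr F \sslash \Omega \mathscr B$ is the geometric realization of the simplicial diagram $n \mapsto \mathscr F \times (\Omega\mathscr B)^{\times n}$. Since $\L^{p+q,q}$ preserves colimits (as a left adjoint) and finite products (Lemma~\ref{lemm:LA-products}), it commutes with this realization, yielding
\[
\L^{p+q,q}\mathscr E \weq \L^{p+q,q}\mathscr F \sslash \L^{p+q,q}\Omega \mathscr B.
\]
By monoidality of $\L^{p+q,q}$ (Amplification~\ref{amplification:symmetricmonoidal}), the space $\L^{p+q,q}\Omega\mathscr B$ inherits a group structure and acts on $\L^{p+q,q}\mathscr F$; it is connected by Corollary~\ref{cor:pqtruncationpreservesconnectivity}, since $\Omega\mathscr B$ is. A second invocation of Lemma~\ref{lem:loopgroup-basics}(2) then yields a fiber sequence
\[
\L^{p+q,q}\mathscr F \longrightarrow \L^{p+q,q}\mathscr E \longrightarrow B\,\L^{p+q,q}\Omega \mathscr B.
\]

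To finish we must identify the base with $\L^{p+q,q}\mathscr B$ compatibly with the functorially induced map. Lemma~\ref{lem:Lpq-loop-B}(1) applied to the connected group $\Omega\mathscr B$ gives $B\,\L^{p+q,q}\Omega\mathscr B \weq \L^{p+q+1,q}\mathscr B$, so it suffices to show that the natural map $\L^{p+q,q}\mathscr B \to \L^{p+q+1,q}\mathscr B$ is an equivalence. This follows from Proposition~\ref{prop:pqtruncationpreservesconnectivity-v2} taken with $d = p+1$: the required hypothesis is that $\Omega^{q,q}\mathscr B$ be $(p+1)$-connective, i.e., $\bpi_i(\mathscr B)_{-q} = 0$ for $i \le p$, and this holds because $\mathscr B$ is $p$-connected (the case $p = 0$ being covered by $\mathscr B$ being connected) and contraction is an exact functor on strongly $\A^1$-invariant sheaves.

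The main obstacle is this last step: $\L^{p+q,q}$ does not commute with $B$ on the nose, and Lemma~\ref{lem:Lpq-loop-B}(1) introduces a unit shift in the first index. Absorbing this shift is precisely what forces the hypothesis that $\mathscr B$ be $p$-connected rather than merely simply connected, and accounts for the $\max\{p,1\}$ in the proposition. Compatibility of all the identifications with the functorial map $\L^{p+q,q}\mathscr E \to \L^{p+q,q}\mathscr B$ follows by naturality of the bar construction and of the localization functors involved.
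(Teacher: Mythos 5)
Your argument follows the same route as the paper's: present $\mathscr E$ as $\mathscr F \sslash \Omega\mathscr B$, push the bar construction through $\L^{p+q,q}$ using preservation of colimits and finite products, and identify the base $B\,\L^{p+q,q}\Omega\mathscr B$ with $\L^{p+q,q}\mathscr B$ via Lemma~\ref{lem:Lpq-loop-B} and Proposition~\ref{prop:pqtruncationpreservesconnectivity-v2}. The identification of the base is carried out correctly and is exactly where the $\max\{p,1\}$-connectivity hypothesis enters.

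There is, however, one genuine gap: the assertion that $\L^{p+q,q}\mathscr E \weq \L^{p+q,q}\mathscr F \sslash \L^{p+q,q}\Omega\mathscr B$ because ``$\L^{p+q,q}$ preserves colimits as a left adjoint.'' The localization preserves colimits only as a functor into the subcategory of $S^{p+q,q}$-null spaces, where colimits are computed by localizing the colimit formed in $\Spc(k)$. The bar construction $\mathscr E' := \L^{p+q,q}\mathscr F \sslash \L^{p+q,q}\Omega\mathscr B$ on the right-hand side is a geometric realization in $\Spc(k)$, so what colimit- and product-preservation actually give you is only that $\mathscr E \to \mathscr E'$ is an $S^{p+q,q}$-equivalence; to conclude $\mathscr E' \weq \L^{p+q,q}\mathscr E$ you must additionally check that $\mathscr E'$ is itself $S^{p+q,q}$-null. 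This is not automatic --- it is exactly the point at which unstable localization can fail to preserve fiber sequences (compare the remark following the proposition about $K(\mathbf A,p-1)\to *\to K(\mathbf A,p)$). Fortunately the check is immediate from material you already have: $\mathscr E'$ sits in the fiber sequence $\L^{p+q,q}\mathscr F \to \mathscr E' \to B\,\L^{p+q,q}\Omega\mathscr B$, whose fiber is $S^{p+q,q}$-null by construction and whose base, once identified with $\L^{p+q,q}\mathscr B$, is $S^{p+q,q}$-null and connected; Lemma~\ref{lem:localizationmappingspaces}(3) then yields nullity of $\mathscr E'$. This is precisely how the paper closes the argument, and with that sentence added your proof is complete.
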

\begin{proof}
We shall freely use the fact that $\L^{p,q}$ preserves connectivity (Corollary \ref{cor:pqtruncationpreservesconnectivity}).  Since $\mathscr B$ is connected, $\mathscr E \weq \mathscr F \sslash \Omega \mathscr B$ (again, see Paragraph \ref{par:kanloopgroup}).  Set $\mathscr E' = \L^{p+q,q} \mathscr F \sslash \L^{p+q,q} \Omega \mathscr B$.
Via Lemma \ref{lem:loopgroup-basics} we obtain a fiber sequence
\[ 
\L^{p+q,q} \mathscr F \longrightarrow \mathscr E' \longrightarrow B \L^{p+q,q} \Omega \mathscr B. 
\]
Since $\mathscr B$ is $(p+1)$-connective, using Lemma \ref{lem:Lpq-loop-B} and Proposition \ref{prop:pqtruncationpreservesconnectivity-v2} we identify the base as \[ B \L^{p+q,q} \Omega \mathscr B \weq B \Omega \L^{p+q+1,q} \mathscr B \weq \L^{p+q+1,q} \mathscr B \weq \L^{p+q,q} \mathscr B. \]
It remains to show that $\mathscr E' \weq \L^{p+q,q} \mathscr E$.
By construction we have an $S^{p+q,q}$-equivalence $\mathscr E \to \mathscr E'$, so it suffices to show that $\mathscr E'$ is $S^{p+q,q}$-null.
This holds by Lemma \ref{lem:localizationmappingspaces}(3).
\end{proof}

\begin{rem}
The functor $\L^{p,q}$ cannot preserve fiber sequences in general.  Indeed, classical Postnikov truncations do not preserve fiber sequences in general.  For exapmle, if $\mathbf{A}$ is a strictly $\aone$-invariant sheaf of groups, then the fiber sequence 
\[ 
K(\mathbf{A}, p-1) \longrightarrow * \longrightarrow K(\mathbf{A}, p) 
\] 
is not preserved by $\L^{p,0} = \tau_{< p}$.
\end{rem}

\begin{cor}
	\label{cor:pqtruncationpreservesnilpotence}
	If $\mathscr{X}$ is a nilpotent motivic space, then $\mathrm{L}^{p+q,q}\mathscr{X}$ is again a nilpotent motivic space.  
	More precisely, suppose given a principal refinement of $\mathscr X \to \mathscr X_{<p}$, consisting of fiber sequences 
	\[ 
	\mathscr{X}_{i+1} \longrightarrow \mathscr{X}_i \longrightarrow \mathscr{B}_i \] 
	with $\mathscr X_0 = \mathscr X_{<p}$, $\mathscr B_i = K(\mathbf{A}_i, n_i)$ with $n_i \ge p+1$, and $\lim_i \mathscr X_i \weq \mathscr X$.
	Then $\L^{p+q,q} \mathscr X \weq \lim_i \L^{p+q,q} \mathscr X_i$, the induced sequences 
	\[ 
	\L^{p+q,q}\mathscr{X}_{i+1} \longrightarrow \L^{p+q,q} \mathscr{X}_i \longrightarrow \L^{p+q,q} \mathscr{B}_i 
	\] 
	are fiber sequences, and $\L^{p+q,q} \mathscr{B}_i$ is an $n_i$-connective, grouplike commutative monoid.
\end{cor}

\begin{proof}
	The fiber sequences are preserved by $\L^{p+q,q}$ by Proposition \ref{prop:pqtruncationandfibersequences}, since $\mathscr B_i$ is $p$-connected.
	Since $\L^{p+q,q}$ preserves finite products it preserves grouplike commutative monoids, whence $\L^{p+q,q} \mathscr B_i$ is one.
	In particular it is nilpotent, and $n_i$-connective by Corollary \ref{cor:pqtruncationpreservesconnectivity}.
	Also $\L^{p+q,q} \mathscr{X}_0 \weq \mathscr X_0$ (via Corollary \ref{cor:truncatednessviahomotopysheaves}: $\mathscr X_0$ is $p$-connected) is nilpotent, and hence by induction $\L^{p+q,q}\mathscr{X}_{i+1}$ is nilpotent.
	If we set $\mathscr X' = \lim_i \L^{p+q,q} \mathscr X_i$, then $\mathscr{X}'$ is nilpotent, since the connectivity of the $\L^{p+q,q} \mathscr B_i$ tends to infinity.
	It remains to prove that $\mathscr X' \weq \L^{p+q,q} \mathscr X$.  Since $\mathscr X'$ is $S^{p+q,q}$-null, for this it suffices to show that $\mathscr X \to \mathscr X'$ is an $S^{p+q,q}$-equivalence.
	This follows from Corollary \ref{cor:limit-of-Spq-equiv} (since the $\mathscr B_i$ have connectivity tending to $\infty$, and the same is true for the $\L^{p,q} \mathscr B_i$ by Corollary \ref{cor:pqtruncationpreservesconnectivity}, and hence so do $\mathscr X \to \mathscr X_i$ and $\mathscr X' \to \L^{p,q} \mathscr X_i$).
\end{proof}

\subsection{Stable weak $S^{p,q}$-cellularity via $t$-structures}
\label{ss:spqcellularityviatstructures}
We now analyze weak $S^{p,q}$-cellularity in the context of stable homotopy categories (see Paragraph~\ref{par:localizingundercategories}).  In the stable setting, $S^{p,q}$-truncation can be identified with truncation with respect to a $t$-structure (see \ref{par:conditionsonC}-\ref{lem:gmconnectivitystablehomotopysheaves}).  Note: our $t$-structures are homologically indexed, following topological conventions.  These observations allow us to deduce that various localizations preserve weak cellularity, and analyze how taking loop spaces changes weak cellular class (see \ref{cor:stablecellularityispreservedbyRlocalization}-\ref{prop:stablecellularityandlooping}); a key result here is that taking infinite loop spaces of various sorts preserves weak cellular classes (see \ref{prop:deloopingpreservesconnectivity}).  Finally the section closes with a refinement of the Whitehead theorem (see \ref{thm:pqequivsof1connectedspacesaredetectedstably}) which relies on a refinement of the stable Hurewicz theorem for motivic spectra (see \ref{prop:hurewicz-improved}).

%Combining these observations with the discussion of conservativity of $\gm{}$-stabilization (see \ref{par:gmstabilization}--\ref{thm:conservativityofgmstabilization}), we will be able to deduce statements about weak $S^{p,q}$-cellularity {\em unstably}.  Stably, the situation regarding weak $S^{p,q}$-cellularity is more well-behaved because the relevant localization functors can be realized using truncation with respect to suitable $t$-structures. 

\begin{entry}
	\label{par:conditionsonC}
Set $\mathcal{C} = \SH^{S^1}(k)$ or $\SH(k)^{\eff}$. The homotopy $t$-structures on each of these categories were described in Paragraphs~\ref{par:homotopytstructureS1} and ~\ref{par:effectivetstructures}.  We write $\mathcal{C}_{\geq 0}$ for the non-negative part of the relevant $t$-structure.  Given $X \in \Sm_k$, we abuse notation and write $X_+$ for the image of $X \in \Sm_k$ under the functor $\Sm_k \to \mathcal{C}$.  We now introduce some new $t$-structures on each of these categories which we will then link to weak $S^{p,q}$-cellularity.
\end{entry}

\begin{lem}
	\label{lem:stablepqtstructure}
	The subcategory of $\mathcal{C}$ generated under colimits and extensions by objects of the form $\Sigma^{p,q}X_+$ is the non-negative part of an accessible $t$-structure on $\mathcal{C}$.
\end{lem}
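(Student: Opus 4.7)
The plan is to reduce the lemma to a standard general fact about $t$-structures on presentable stable $\infty$-categories. The key tool I would cite is \cite[Proposition 1.4.4.11]{HA}: given a presentable stable $\infty$-category $\mathcal D$ and any essentially small collection $S$ of objects, the smallest full subcategory of $\mathcal D$ containing $S$ and closed under extensions and small colimits is the non-negative part of an accessible $t$-structure on $\mathcal D$. (The negative part is then forced by right orthogonality, and existence of truncation triangles is produced by the small object argument, which is where smallness of $S$ gets used and where accessibility comes from.) Given this black box, the content of the lemma is just the verification of hypotheses.

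First, I would confirm that $\mathcal C$ is a presentable stable $\infty$-category in both cases of interest. For $\mathcal C = \SH^{S^1}(k)$ this is built into the construction recalled in Paragraph~\ref{par:s1vsp1} as an inversion inside $\mathrm{Pr}^L$; in particular it is presentable stable. For $\mathcal C = \SH(k)^{\eff}$ it is a localizing subcategory of $\SH(k)$ generated by a small set of compact objects (Paragraph~\ref{par:effective}), hence is itself a presentable stable $\infty$-category. Second, I would verify the smallness hypothesis on the proposed generators: since $\Sm_k$ is essentially small and $(p,q)$ is fixed, the class $\{\Sigma^{p,q} X_+ : X \in \Sm_k\}$ is essentially small in $\mathcal C$. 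The cited proposition therefore applies and identifies the subcategory in the statement as the non-negative part of a canonical accessible $t$-structure.

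I do not anticipate any genuine obstacle; the lemma is formal and is being recorded here solely so that the notation $\mathcal C_{\geq 0}^{(p,q)}$ (or whatever name is used subsequently) is unambiguous and so that one may freely speak of its truncation functors. The only thing to be slightly careful about is matching wording: ``closed under colimits and extensions'' in the lemma is the same closure condition as Lurie's, and accessibility of the resulting $t$-structure comes for free from the fact that the chosen class of generators is essentially small.
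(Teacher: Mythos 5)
Your proposal is correct and is exactly the paper's argument: the proof in the text consists of a single citation to \cite[Proposition 1.4.4.11]{HA}, which is the same general result you invoke. The additional verifications you spell out (presentability and stability of $\mathcal{C}$, essential smallness of the generating class) are the implicit hypotheses the paper leaves to the reader.
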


\begin{proof}
	This is an immediate consequence of \cite[Proposition 1.4.4.11]{HA}.
\end{proof}

\begin{defn}
	\label{def:pqtstructure}
	Write $\mathcal{C}_{\geq (p,q)}$ for the subcategory of $\mathcal{C}$ generated under colimits and extensions by objects of the form $\Sigma^{p,q} X_+$ (the non-negative part of an accessible $t$-structure by Lemma~\ref{lem:stablepqtstructure});  $\mathcal{C}_{\leq (p,q)}$ for the corresponding non-positive part and $\tau_{\leq (p,q)}$ or $\tau_{\geq (p,q)}$ for the corresponding truncation functors.  
\end{defn}

By Corollary~\ref{cor:stablelocalizationsymmetricmonoidal}, we know that the localization functor $\mathrm{L}^{p,q}$ on $\mathcal{C}_{\ge 0}$ is symmetric monoidal.  The next result identifies $\mathrm{L}^{p,q}$ with truncation for the $t$-structures of Definition~\ref{def:pqtstructure} and weakly $S^{p,q}$-cellular objects with the non-negative part of the $t$-structure.

\begin{proposition}
	\label{prop:pqlocalizationvstruncation}
Assume $k$ is a field and $\mathcal{C}$ is as in \textup{Paragraph~\ref{par:conditionsonC}}.
\begin{enumerate}[noitemsep,topsep=1pt]
	\item There is an identification $\mathcal{C}_{\geq (p,q)} = O(S^{p,q})$.
 	\item There is a canonical isomorphism of functors $\mathrm{L}^{p,q} \weq \tau_{< (p,q)}$ on $\mathcal{C}$. 
 \end{enumerate}
\end{proposition}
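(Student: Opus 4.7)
The plan is to reduce everything to a single observation: the full subcategory of $S^{p,q}$-null objects in $\mathcal{C}$ coincides with $\mathcal{C}_{<(p,q)}$. Once this identification is established, both parts of the proposition follow formally.

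To make the identification, I would first exploit the pointing of $S^{p,q}$ to produce the canonical splitting $(S^{p,q}\times U)_+ \simeq \Sigma^{p,q}U_+ \vee U_+$ in $\mathcal{C}$ (it holds in $\Spc(k)_*$ and is preserved by the essentially unique colimit- and basepoint-preserving extension $\Sm_k \to \mathcal{C}$), under which the generating map $(S^{p,q}\times U)_+ \to U_+$ becomes the projection onto the second summand. Hence $Z \in \mathcal{C}$ is $S^{p,q}$-null exactly when the mapping anima $\Map_{\mathcal{C}}(\Sigma^{p,q}U_+, Z)$ is contractible for every $U \in \Sm_k$. Because $\Map_{\mathcal{C}}$ in a stable $\infty$-category is the infinite loop space of the mapping spectrum, this contractibility translates into the vanishing $[\Sigma^{p+n,q}U_+, Z] = 0$ for all $n \geq 0$ and $U \in \Sm_k$, which is exactly the description of $\mathcal{C}_{<(p,q)}$ as the right orthogonal to the generators of $\mathcal{C}_{\geq(p,q)}$ in the $t$-structure of Definition~\ref{def:pqtstructure}.

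Granted this identification, part (2) is immediate: both $\mathrm{L}^{p,q}$ and $\tau_{<(p,q)}$ are left adjoints to the inclusion of the same full subcategory $\mathcal{C}_{<(p,q)} \hookrightarrow \mathcal{C}$, and so are canonically equivalent. For part (1), note that $X \in O(S^{p,q})$ means $\mathrm{L}^{p,q}X = 0$, which by the universal property of the adjunction is equivalent to $\Map_{\mathcal{C}}(X,Z) \simeq *$ for every $S^{p,q}$-null $Z$; on the other hand, by $t$-structure orthogonality $X \in \mathcal{C}_{\geq(p,q)}$ iff the same mapping space vanishes for every $Z \in \mathcal{C}_{<(p,q)}$. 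Since the two subcategories of $Z$'s coincide, the two conditions on $X$ coincide.

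The main conceptual point, and the only place real care is required, is the distinction between mapping spaces and mapping spectra in the stable setting: the Bousfield localization used here inverts the generating maps at the level of mapping anima, not of mapping spectra, and this is precisely what makes $\mathrm{L}^{p,q}$ correspond to the \emph{asymmetric} truncation $\tau_{<(p,q)}$ rather than to the (tautologically trivial) operation of killing $\Sigma^{p,q}U_+$ together with all of its integer shifts.
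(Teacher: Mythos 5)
Your proposal is correct and follows essentially the same route as the paper: the decisive step in both is the splitting of the generating maps $(S^{p,q}\times U)_+\to U_+$ into $\Sigma^{p,q}U_+\vee U_+\to U_+$, identifying the $S^{p,q}$-null objects with $\mathcal{C}_{<(p,q)}$. The paper organizes the remaining deductions slightly differently (proving $\mathcal{C}_{\geq(p,q)}\subseteq O(S^{p,q})$ via monoidality of $\mathrm{L}^{p,q}$ and then running the cofiber sequence $\tau_{\geq(p,q)}E\to E\to\tau_{<(p,q)}E$), whereas you get both parts formally from the shared left-adjoint/orthogonality characterization, but the substance is the same.
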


\begin{proof}
	That $\mathcal{C}_{\geq (p,q)}$ consists of weakly $S^{p,q}$-cellular objects holds because $\mathcal{C}_{\geq (p,q)}$ is generated under colimits and cofiber extensions by $\Sigma^{p,q} X_+$ and the latter is weakly $S^{p,q}$-cellular by Example~\ref{ex:basicexample}, the fact that $\mathrm{L}^{p,q}$ is symmetric monoidal (Corollary~\ref{cor:stablelocalizationsymmetricmonoidal}) and preserves colimits (Amplification~\ref{amplification:leftbousfieldcolimitsstablecases}).
	
	To conclude, it suffices to show that $\mathcal{C}_{< (p,q)}$ consists of $S^{p,q}$-null objects.  Indeed, assuming that statement is true, suppose $E$ is an arbitrary object of $\mathcal{C}$.  There is a cofiber sequence of the form
	\[
	\tau_{\geq (p,q)}E \longrightarrow E \longrightarrow \tau_{< (p,q)} E.
	\]
	Since $\tau_{\geq (p,q)}E$ is weakly $S^{p,q}$-cellular, then since $S^{p,q}$-localization preserves colimits, Amplification~\ref{amplification:leftbousfieldcolimitsstablecases} implies that $E \to \tau_{<(p,q)}E$ is an $S^{p,q}$-equivalence, i.e., that the map 
	\[
	\mathrm{L}^{p,q} E \longrightarrow \mathrm{L}^{p,q} \tau_{< (p,q)} E 
	\]
	is an equivalence.  On the other hand, since $\tau_{<(p,q)} E$ is $S^{p,q}$-null by assumption, we conclude that the natural map $\tau_{<(p,q) E} \to \mathrm{L}^{p,q}\tau_{\leq (p,q)}E$ is an equivalence as well.  These two observations yield the natural isomorphism we seek.
	
    To see that $\mathcal{C}_{< (p,q)}$ consists of $S^{p,q}$-null objects, we proceed as follows.
Generating $S^{p,q}$-equivalences are of the form $S^{p,q}_+ \wedge X_+ \to X_+$, and hence split as sums of maps of the form $S^{p,q} \wedge X_+ \to 0$ and $X_+ \xrightarrow{\id} X_+$.
Since any object is local with respect to the second class of maps, $E \in \mathcal C_{\ge 0}$ being $S^{p,q}$-local is equivalent to requiring that $\Map(S^{p,q} \wedge X_+, E) = 0$ for all $X \in \Sm_k$.
This is precisely the definition of $\mathcal{C}_{< (p,q)}$.
\end{proof}	

\begin{rem}
	Proposition~\ref{prop:pqlocalizationvstruncation} allows us to build a functorial weak $S^{p,q}$-cellularization of any spectrum.  Moreover, it highlights the fundamental difference between $S^{p,q}$-localization in the unstable and stable situations.  If $E$ is any object, we have the cofiber sequence
	\[
	\tau_{ \geq (p,q)} E \longrightarrow E \longrightarrow \tau_{< (p,q)} E,
	\]
	which is also a fiber sequence.  Thus $\fib(E \to \mathrm{L}^{p,q} E)$ is itself weakly $S^{p,q}$-cellular.  In contrast, working unstably, it is unclear that the homotopy fiber of the map $\mathscr{X} \to \mathrm{L}^{p,q}\mathscr{X}$ is itself weakly $S^{p,q}$-cellular.  We now use the description of localization in terms of truncation with respect to a $t$-structure to establish a number of properties. 
\end{rem}

\begin{entry}
	Up to shifts, the $t$-structures introduced above have been studied in several places including \cite[\S 4]{Bachmanngenslices} and \cite[\S 6]{BachmannYakerson}.  We may identify the positive and negative parts in terms of homotopy sheaves.  We summarize these observations in the following result.
\end{entry}

\begin{lem}
	\label{lem:gmconnectivitystablehomotopysheaves}
	The following statements hold for $\mathcal{C}$ as in \textup{Paragraph~\ref{par:conditionsonC}}. 
	\begin{enumerate}[noitemsep,topsep=1pt]
	\item The equality $O(S^{p,q}) = \mathcal{C}(q) \cap \mathcal{C}_{\geq p-q}$ holds, i.e., the weakly $S^{p,q}$-cellular objects of $\mathcal{C}$ are the $q$-effective and $(p-q)$-{\em connective} objects.
	\item If $E \in \mathcal{C}_{\geq 0}$, then the following statements are equivalent 
	\begin{enumerate}[noitemsep,topsep=1pt]
		\item $E \in \mathcal{C}_{< (p,q)}$, i.e., $E$ is $S^{p,q}$-null,
		\item $\bpi_i(E)_{-q} = 0$ for $i \geq p$.
	\end{enumerate}
	\item If $\alpha: E \to E' \in \mathcal{C}_{\geq 0}$ is such that $\fib(\alpha) \in O(S^{p,q})$ and $E'$ is connected, then $\alpha$ is a $S^{p,q}$-equivalence. 
	\item If $E \in O(S^{p,q})$ and $E' \in O(S^{p',q'})$, then $E \wedge E' \in O(S^{p+p',q+q'})$.  
	\end{enumerate}
\end{lem}

\begin{proof}
	The first and the second statements follow from \cite[Lemma 6.2(3)]{BachmannYakerson} and \cite[Proposition 4]{Bachmanngenslices} after shifting.  For the third statement, since $E'$ is connected we conclude that there is a cofiber sequence in $\mathcal{C}_{\geq 0}$ of the form
	\[
	\fib(\alpha) \longrightarrow E \longrightarrow E',
	\]
	and the result follows from Amplification~\ref{amplification:leftbousfieldcolimitsstablecases}.  The final statement follows from the first two points.
\end{proof}

\begin{rem}
	\label{rem:slicesvstruncation}
Note that $\tau_{\ge(n,n)}: \mathcal{C}_{\ge 0} \to \mathcal{C}_{\ge 0}$ coincides with the functor usually denoted $f_n$, i.e., the $n$-th slice cover (this follows from \cite[Lemma 6.2(2)]{BachmannYakerson}).
\end{rem}

One immediate consequence of the description of stable weak $S^{p,q}$-cellularity and $S^{p,q}$-nullity in terms of homotopy sheaves is the fact that cellularity is preserved by $R$-localization for $R \subset \Q$ a subring.  We record this consequence for ease of reference.  

\begin{cor}
	\label{cor:stablecellularityispreservedbyRlocalization}
	Suppose $R \subset \Q$ is a subring, and $E \in \mathcal{C}$ as in \textup{Paragraph~\ref{par:conditionsonC}}.  If $E$ is $S^{p,q}$-null or weakly $S^{p,q}$-cellular, then so is $E_{R}$.
\end{cor}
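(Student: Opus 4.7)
The plan is to reduce both assertions to properties of the $t$-structure from Definition~\ref{def:pqtstructure}. By Proposition~\ref{prop:pqlocalizationvstruncation}, $E$ is weakly $S^{p,q}$-cellular precisely when $E \in \mathcal{C}_{\geq (p,q)}$, and $S^{p,q}$-null precisely when $\tau_{\geq (p,q)} E = 0$. I will verify that each condition is preserved by passage to $E_R$.

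The key step is to exhibit $E_R$ as a filtered colimit of copies of $E$ in $\mathcal{C}$. Writing $R = \Z[S^{-1}]$ for some set of primes $S$, one has $R = \colim_F \Z[F^{-1}]$ as $F$ ranges over finite subsets of $S$; for each finite $F$ the localization $E_{\Z[F^{-1}]}$ is obtained as a sequential colimit of copies of $E$ along the multiplication-by-$n$ maps, one for each prime $n \in F$. Combining these filtered colimits expresses $E_R$ as a filtered colimit of $E$'s in $\mathcal{C}$.

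Given this presentation, both assertions become formal. For the cellular case, $\mathcal{C}_{\geq (p,q)}$ is closed under all colimits (being the non-negative part of the accessible $t$-structure from Lemma~\ref{lem:stablepqtstructure}), so $E_R \in \mathcal{C}_{\geq (p,q)}$; one could equivalently cite Amplification~\ref{amplification:leftbousfieldcolimitsstablecases}. For the null case, $\tau_{\geq (p,q)}$ is left adjoint to the inclusion $\mathcal{C}_{\geq (p,q)} \hookrightarrow \mathcal{C}$, hence commutes with all colimits; this yields $\tau_{\geq (p,q)} E_R \weq \colim \tau_{\geq (p,q)} E \weq 0$, so $E_R$ is $S^{p,q}$-null. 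The only point requiring care is the presentation of $E_R$ as a filtered colimit in $\mathcal{C}$, but this is the standard construction of $R$-localization via $R$-Moore objects in a presentable stable symmetric monoidal category.
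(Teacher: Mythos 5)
Your overall strategy is sound and close to what the paper intends (the paper records this corollary without proof, as an immediate consequence of the homotopy-sheaf descriptions in Lemma~\ref{lem:gmconnectivitystablehomotopysheaves}): present $E_R$ as a filtered colimit of copies of $E$ along multiplication maps, then check that both conditions pass to that colimit. The presentation of $E_R$ as a filtered colimit is fine, and the cellular case is handled correctly, since $\mathcal{C}_{\geq(p,q)}$ is closed under all colimits by construction.

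There is, however, one incorrect justification in the null case: $\tau_{\geq(p,q)}$ is the \emph{right} adjoint to the inclusion $\mathcal{C}_{\geq(p,q)} \hookrightarrow \mathcal{C}$, not the left adjoint (for a $t$-structure it is $\tau_{\leq(p,q)}$ that is a left adjoint, to the inclusion of the coconnective part). A right adjoint does not automatically commute with colimits, so as written this step does not go through. The conclusion you want is nevertheless true and easy to repair: the generators $\Sigma^{p,q}X_+$ of $\mathcal{C}_{\geq(p,q)}$ are compact in $\SHS(k)$ and $\SH(k)^{\eff}$, so $\mathcal{C}_{<(p,q)}$ (characterized by the vanishing of mapping spectra out of these generators) is closed under filtered colimits; combined with the closure of $\mathcal{C}_{\geq(p,q)}$ under colimits and the fiber sequence $\tau_{\geq(p,q)}E \to E \to \tau_{<(p,q)}E$, both truncation functors commute with filtered colimits, and your computation $\tau_{\geq(p,q)}E_R \weq \colim \tau_{\geq(p,q)}E = 0$ is restored. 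Alternatively, for $E \in \mathcal{C}_{\geq 0}$ you can bypass the colimit argument entirely and use Lemma~\ref{lem:gmconnectivitystablehomotopysheaves}(2): $R$-localization replaces $\bpi_i(E)$ by $\bpi_i(E)\otimes R$, and contraction commutes with $\otimes R$, so the vanishing $\bpi_i(E)_{-q}=0$ for $i \geq p$ is preserved. This is the route the paper's surrounding discussion suggests.
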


\subsubsection*{Detecting weak-cellularity stably}
We may also refine the Whitehead theorem for nilpotent motivic spaces \cite[Theorem 5.2]{ABHWhitehead} to a statement about stable detection of $S^{p,q}$-equivalences.  In particular, the result below implies that for a given nilpotent space, weak cellularity can be detected stably. To state the result, recall the real realization functor $r_\sigma$ attached to an ordering $\sigma$ of a field $k$ from Definition~\ref{def:rsigma}.

\begin{theorem}[Weakly cellular Whitehead theorem]
	\label{thm:pqequivsof1connectedspacesaredetectedstably}
	Let $k$ be a perfect field of exponential characteristic $e$ and $f: \mathscr{X} \to \mathscr{Y}$ be a map of nilpotent motivic spaces.
	Suppose one of the following conditions holds.
	\begin{enumerate}[noitemsep,topsep=1pt]
		\item $\Sigma_{S^1}^{\infty}f$ has fiber in $O(S^{p+q,q})$
		\item $\Sigma^\infty_{S^1} f$ has cofiber in $O(S^{2,2})$ and $\Sigma^\infty f$ has fiber in $O(S^{p+q,q})$
		\item $\Sigma^\infty_{S^1} f$ has cofiber in $O(S^{2,2})$, $\Sigma^\infty f$ has fiber in $O(S^{p,0})$ and $\Sigma^\infty f \wedge H\Z$ has cofiber in $O(S^{q,q})$
		\item $\Sigma^\infty_{S^1} f$ has cofiber in $O(S^{2,2})[1/e]$, $k$ has finite virtual $2$-étale cohomological dimension, $\Sigma^\infty f \wedge H\Z$ has fiber in $O(S^{p+q,q})$, and for every ordering $\sigma$ of $k$, $r_\sigma(f) \wedge H\Z$ has fiber in $O(S^p)$
	\end{enumerate}
	Then $f$ is an $S^{p+q,q}$-equivalence.
\end{theorem}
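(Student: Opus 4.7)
My plan is to reduce each case to showing that $F := \fib(f)$ lies in $O(S^{p+q,q})$: by Proposition~\ref{prop:pqconnectivityandfibersequencesI}(1), with $\mathscr Y$ automatically connected since it is nilpotent, this will yield that $f$ is a universal $S^{p+q,q}$-equivalence, as desired. Since $F$ is itself nilpotent (as the fiber of a map between pointed nilpotent spaces), Proposition~\ref{prop:functorialprincipalrefinement} supplies a principal Postnikov refinement $F \simeq \lim_i F_i$ with layers $K(\mathbf{A}_i, n_i)$ of connectivity $n_i \to \infty$. Applying $\mathrm{L}^{p+q,q}$ via Corollary~\ref{cor:pqtruncationpreservesnilpotence}, one has $\mathrm{L}^{p+q,q} F \simeq \lim_i \mathrm{L}^{p+q,q} F_i$, and the vanishing $\mathrm{L}^{p+q,q} F = *$ is implied by $\mathrm{L}^{p+q,q} K(\mathbf{A}_i, n_i) = *$ for each $i$; hence the task becomes showing $K(\mathbf{A}_i, n_i) \in O(S^{p+q,q})$ for each Postnikov layer.

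For case (1), the hypothesis $\fib(\Sigma^\infty_{S^1} f) \in O(S^{p+q,q})$ in $\SHS(k)$ is rephrased as stable weak $S^{p+q,q}$-cellularity of $\Sigma^\infty_{S^1} F$. Via the identification of $O(S^{p+q,q}) \subset \SHS(k)$ with the positive part of a $t$-structure (Proposition~\ref{prop:pqlocalizationvstruncation}), weak $S^{p+q,q}$-cellularity is closed under extensions and filtered colimits and can be read layer-by-layer from the stable Postnikov filtration; each stable layer $\Sigma^{n_i} H\mathbf{A}_i$ must then lie in $O(S^{p+q,q})$ (decomposed into its $q$-effective and $(p+q)$-connective components via Lemma~\ref{lem:gmconnectivitystablehomotopysheaves}). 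Applying $\Omega^\infty_{S^1}$, which preserves weak $S^{p+q,q}$-cellularity by Proposition~\ref{prop:deloopingpreservesconnectivity}, returns the unstable Eilenberg--Mac Lane layer to $O(S^{p+q,q})$ via $\Omega^\infty_{S^1} \Sigma^{n_i} H\mathbf{A}_i \simeq K(\mathbf{A}_i, n_i)$.

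Cases (2) and (3) proceed in $\SH(k)$ rather than $\SHS(k)$: the $O(S^{2,2})$ cofiber-control on $\Sigma^\infty_{S^1} f$ activates Theorem~\ref{thm:conservativityofgmstabilization}, which bridges $\SHS(k)$ and $\SH(k)$ using conservativity of $\gm{}$-stabilization in the $2$-effective range, and $\omega^\infty$ replaces $\Omega^\infty_{S^1}$ (again invoking Proposition~\ref{prop:deloopingpreservesconnectivity}). For case (3), the $O(S^p)$ fiber bound on $\Sigma^\infty f$ is combined with the $q$-effective $H\Z$-cofiber control to reconstruct the $O(S^{p+q,q})$ fiber bound on $\Sigma^\infty f$, using the stable decomposition $O(S^{p+q,q}) = \SH(k)^{\eff}(q) \cap \SH(k)_{\geq p+q}$ of Lemma~\ref{lem:gmconnectivitystablehomotopysheaves}; this reduces case (3) to case (2).

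The principal obstacle is case (4). After inverting the exponential characteristic $e$, one splits via the $\varepsilon$-idempotents of Paragraph~\ref{par:varepsilonconventions} into $+$- and $-$-parts. The $+$-part, rationally an $H\Q$-module by Theorem~\ref{thm:rationalizedpluspart}, is controlled by the $H\Z[1/e]$-hypothesis via the argument of case (3); the residual mod-$e$ piece is handled by the $O(S^{2,2})$ cofiber-control, again via Theorem~\ref{thm:conservativityofgmstabilization}. The $-$-part satisfies real étale descent (Theorem~\ref{thm:rationalminuspartviaret}), and the finite virtual $2$-étale cohomological dimension assumption ensures it is detected by the finitely many real realizations $r_\sigma$ (Definition~\ref{def:rsigma}); the hypothesis that each $r_\sigma(f) \wedge H\Z$ has fiber in $O(S^p)$, combined with the classical topological Whitehead theorem transported along $\SH(k_\sigma)[\rho^{-1}] \simeq \SH$, yields $S^p$-cellularity of the $-$-part which propagates to $S^{p+q,q}$-cellularity by $\rho$-invertibility. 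Reassembling the $+$- and $-$-parts closes case (4). The subtlest verification is that the layer-by-layer stable analysis of Postnikov towers of nilpotent spaces is compatible with the $\varepsilon$-decomposition and real-étale localization simultaneously.
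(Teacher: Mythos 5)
There is a genuine gap at the core of your strategy. You propose to prove that $F=\fib(f)$ lies in $O(S^{p+q,q})$ by running the principal Postnikov refinement of $F$ and showing that each layer $K(\mathbf A_i,n_i)$ lies in $O(S^{p+q,q})$. But weak cellularity is \emph{not} inherited by Postnikov layers: this is exactly the phenomenon of Example~\ref{ex:postnikovlayersarenotcellular}, where $K(\Z(n),2n)\in O(S^{2n,n})$ yet its first nontrivial Postnikov layer $K(\K^M_n,n)$ admits a nonzero map to an $S^{p+n,n}$-null space and hence is not weakly $S^{p+n,n}$-cellular for any $p$. The same failure occurs stably: for $q>0$ the ordinary Postnikov truncations of a $q$-effective spectrum need not be $q$-effective (e.g.\ $\tau_{\le 0}$ of $H\Z(n)[n]$ is $K^M_n$, which is not $n$-effective), so your claim in case (1) that ``each stable layer $\Sigma^{n_i}H\mathbf A_i$ must lie in $O(S^{p+q,q})$'' does not follow from $\Sigma^\infty_{S^1}F\in O(S^{p+q,q})$. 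This is precisely why Section~\ref{s:cellularWhiteheadtower} replaces the naive layers by $\tau_{\ge(n_i+q,q)}K(\mathbf A_i,n_i)$. A secondary problem is that even granting control of the layers, deducing \emph{unstable} cellularity of $\fib(f)$ from the \emph{stable} hypotheses of (1)--(4) is itself essentially the hard content of the theorem (cf.\ Theorem~\ref{thm:cellularityoffibers} and Corollary~\ref{cor:unstablecellularityoffibersvscofibers}), so your reduction begs the question.

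The fix is to dualize: instead of filtering the fiber, filter the \emph{target}. By Yoneda it suffices to show $\Map(\mathscr Y,\mathscr T)\weq\Map(\mathscr X,\mathscr T)$ for every nilpotent $S^{p+q,q}$-null space $\mathscr T$, and one runs the principal refinement of $\mathscr T\to\mathscr T_{<p}$. Nullity, unlike cellularity, \emph{is} detected on Postnikov layers (Lemma~\ref{lem:truncationandpostnikovlayers}), so each $K(\mathbf A_i,n_i)$ appearing there satisfies $(\mathbf A_i)_{-q}=0$ and mapping into it is controlled by the stable hypothesis via $\Sigma^{n_i}H\mathbf A_i$ being $S^{p+q,q}$-null; the bottom stage $\mathscr T_{<p}$ is handled by the $p$-connectivity of $f$. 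Cases (2)--(4) then reduce to purely stable implications between the various cellularity hypotheses (via Theorem~\ref{thm:conservativityofgmstabilization}, an effectivity criterion for $H\Z$-modules, and a Hurewicz-type statement for the $\rho$-inverted and $H\Z$-linearized pieces), rather than to an $\varepsilon$-decomposition of an unstable Postnikov tower as you sketch for case (4).
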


\begin{proof}
	(1) We want to show that $\mathrm{L}^{p+q,q}f$ is an equivalence.  The spaces $\mathrm{L}^{p+q,q}\mathscr{X}$ and $\mathrm{L}^{p+q,q} \mathscr{Y}$ are again nilpotent by appeal to Corollary~\ref{cor:pqtruncationpreservesnilpotence} (or simply Corollary~\ref{cor:pqtruncationpreservesconnectivity} if $p \geq 1$).  By the Yoneda lemma, it suffices to show that for any nilpotent $S^{p+q,q}$-null space $\mathscr{T}$, the map $\Map(\mathscr{Y},\mathscr{T}) \to \Map(\mathscr{X},\mathscr{T})$ is an equivalence.  \todo{move the following somewhere else?}Since $\mathscr{T}$ is nilpotent the map $\mathscr T \to \mathscr T_{<p}$ admits a principal refinement \cite[Theorem 5.1]{ABHWhitehead}.  In particular, there are a sequence of $S^{p+q,q}$-null spaces $\mathscr{T}_i$ and fiber sequences of the form
	\[
	\mathscr{T}_{i+1} \longrightarrow \mathscr{T}_i \longrightarrow K(\mathbf{A}_i,n_i)
	\]
	where $n_i \geq p+1$, $\mathscr{T} = \lim_i \mathscr{T}_i$, $\mathscr T_0 = \mathscr T_{<p}$ and $\mathbf{A}_i$ is a subquotient of $\bpi_j(\mathscr T)$ for some $j \ge p$\NB{exact ref?}. Since contraction is exact and $\bpi_j(\mathscr T)_{-q} = 0$ also $(\mathbf{A}_i)_{-q} = 0$. It now suffices to show that $\Map(\mathscr{Y},\mathscr W) \to \Map(\mathscr{X},\mathscr W)$ is an isomorphism, for  $\mathscr W = K(\mathbf{A}_i,n_i)$ and $\mathscr W = \mathscr T_0$. In the former case we use that since $(\mathbf{A}_i)_{-q} = 0$, $\Sigma^{n_i}H\mathbf{A}_i \in \SH^{S^1}(k)_{\geq 0}$ is $S^{p+q,q}$-null.  Since $\Sigma_{S^1}^{\infty}f$ is an $S^{p+q,q}$-equivalence, we conclude that $\Map(\mathscr{Y},K(\mathbf{A}_i,n_i)) \to \Map(\mathscr{X},K(\mathbf{A}_i,n_i))$ is an isomorphism for each $i \geq 0$ as required.  In the latter case we use that $\Sigma^\infty_{S^1} f$ has fiber in $O(S^{p+q,q})$, which is $(p-1)$-connected, and hence $f$ also has $(p-1)$-connected fiber \cite[Theorem 5.2]{ABHWhitehead}. Thus $\mathscr X_{<p} \weq \mathscr Y_{<p}$ and consequently \[ \Map(\mathscr X, \mathscr T_{<p}) \weq \Map(\mathscr X_{<p}, \mathscr T_{<p}) \weq \Map(\mathscr Y_{<p}, \mathscr T_{<p}) \weq \Map(\mathscr Y, \mathscr T_{<p}). \]
	
	(2) It will be enough to show: if $E \in \SH^{S^1}(k)(2)_{\ge 0}$ and $\sigma^\infty E \in \SH(k)(q)_{\ge p}$ then $E \in \SH^{S^1}(k)(q)_{\ge p}$ (apply this to $\fib(\Sigma^{\infty}_{S^1} f)$ and then use (1)). We may assume that $q \ge 2$ (by Lemma \ref{lem:gmconnectivitystablehomotopysheaves}(1)).
	Since $\SH^{S^1}(k)(q)^\heartsuit \weq \SH(k)(q)^\heartsuit$ (Theorem \ref{thm:conservativityofgmstabilization}), it suffices to prove that $E \in \SH^{S^1}(k)(q)$ as soon as $\sigma^\infty E \in \SH(k)(q)$ (and $E \in \SH^{S^1}(k)(2)_{\ge 0}$).
	Replacing $E$ by $E/f_qE$, we may assume that $\Omega^{q,q} E = 0$.
	Suppose that in addition $E \in \SH^{S^1}(k)_{\ge n}$ and consider the map $\iota: E \to \omega^\infty \sigma^\infty E$.
	Then $\bpi_n(\iota)$ is an isomorphism (Theorem \ref{thm:conservativityofgmstabilization}).
	It follows that $\bpi_n(\omega^\infty \sigma^\infty E)_{-q} = 0$, whence $\bpi_n(\omega^\infty \sigma^\infty E) = 0$ by \cite[Lemma 6.2(iii)]{BachmannYakerson}.
	Thus $E \in \SH^{S^1}(k)_{\ge n+1}$.
	Since $n$ was arbitrary, $E=0$ as needed.
	
	(3) By Lemma \ref{lem:gmconnectivitystablehomotopysheaves} it is enough to show for $E \in \SH(k)^{\veff}$ that if $E \wedge H\Z \in \SH(k)^{\eff}(q)$ then $E \in \SH(k)^{\eff}(q)$.
	For this see \cite[Theorem 3.1.1(II)]{bondarko-effectivity}.
	
	(4) Using (3) and the interaction between inverting $\rho$ and real realization established in \cite{Bachmannret}, this immediate from Proposition \ref{prop:hurewicz-improved}.
\end{proof}

\subsubsection*{Stable cellularity and looping}
In the stable setting, the characterizations of cellularity above allow us to establish weak cellular estimates for loop spaces.  The fact that $\Omega$ changes weak cellular class in the expected way is straightforward from the $t$-structure description.  On the other hand, the weak cellular estimate for $\Omega^{1,1}$ relies on Levine's analysis of the homotopy coniveau tower and the slice filtration \cite{Levineconiveautower} or another non-trivial ingredient like motivic infinite loop space theory (see Remark~\ref{rem:deloopingpreservescellularity}).  

\begin{proposition}
	\label{prop:stablecellularityandlooping}
	Assume $\mathcal{C}$ is as in \textup{Paragraph~\ref{par:conditionsonC}.  }Suppose $E \in \mathcal{C}_{\geq 0}$ and $p,q \geq 0$ are integers. 
	For (2) and (3) below, assume that either $k$ is infinite or $\mathcal C = \SH(k)^{\eff}$.\todo{Write this to explain why $2$-effective suffices?}
	\begin{enumerate}[noitemsep,topsep=1pt]
		\item If $E \in O(S^{p,q})$, then $\Omega E \in O(S^{p-1,q})$.
		\item If $E \in O(S^{p,q})$, then $\Omega^{1,1}E \in O(S^{p-1,q-1})$.
		\item If $E \in O(S^{p,q})$, then $\Omega^{2,1}E \in O(S^{p-2,q-1})$.
	\end{enumerate}
\end{proposition}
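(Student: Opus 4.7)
The plan is to use the decomposition $O(S^{p,q}) = \mathcal C(q) \cap \mathcal C_{\ge p}$ provided by Lemma~\ref{lem:gmconnectivitystablehomotopysheaves}(1) and to analyze how each loop functor affects effectivity and connectivity separately. Part (1) is then essentially formal: in the stable setting $\Omega$ is the shift $[-1]$, so the $t$-structure axioms give $\Omega \colon \mathcal C_{\ge p} \to \mathcal C_{\ge p-1}$, while $\mathcal C(q)$ is a localizing subcategory and hence closed under shifts. Combining, $\Omega$ carries $\mathcal C(q) \cap \mathcal C_{\ge p}$ into $\mathcal C(q) \cap \mathcal C_{\ge p-1}$, which gives the claim.

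For (2) I would verify the two conditions individually. The connectivity assertion $\Omega^{1,1}E \in \mathcal C_{\ge p-1}$ follows (in fact with one degree to spare) from Morel's contraction formula $\bpi_i(\Omega^{1,1}E) \cong \bpi_i(E)_{-1}$ recalled in Paragraph~\ref{par:contraction}, together with exactness of contraction on strictly $\A^1$-invariant sheaves: vanishing of $\bpi_i(E)$ for $i<p$ forces the same vanishing for $\bpi_i(\Omega^{1,1}E)$. The real content is the effectivity claim $\Omega^{1,1}E \in \mathcal C(q-1)$. For $\mathcal C = \SH(k)^{\eff}$ this is an instance of the compatibility of contraction with the slice filtration from Levine \cite{Levineconiveautower}: the natural map $f_{q-1}\Omega^{1,1}E \to \Omega^{1,1}f_qE$ is an equivalence, so if $E \in \mathcal C(q)$, hence $f_qE \weq E$, then $\Omega^{1,1}E \weq f_{q-1}\Omega^{1,1}E \in \mathcal C(q-1)$. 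For $\mathcal C = \SH^{S^1}(k)$ with $k$ infinite I would reduce to the previous case via $\sigma^\infty$, which preserves $q$-effectivity by Paragraph~\ref{par:gmstabilization} and is an equivalence on sufficiently effective parts by Theorem~\ref{thm:conservativityofgmstabilization}; the assumption that $k$ is infinite is what licenses the framed-transfer input underlying that theorem.

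Part (3) is then an immediate composition: writing $\Omega^{2,1} = \Omega \circ \Omega^{1,1}$, apply (2) to $E \in O(S^{p,q})$ to obtain $\Omega^{1,1}E \in O(S^{p-1,q-1})$, and then (1) to reach $\Omega^{2,1}E = \Omega\Omega^{1,1}E \in O(S^{p-2,q-1})$. The main obstacle is the effectivity half of (2); everything else reduces to formal manipulations with the $t$-structure, whereas the assertion that $\Omega^{1,1}$ shifts effectivity down by one genuinely requires the slice-filtration and $\gm{}$-stabilization conservativity input reviewed in Paragraphs~\ref{par:gmstabilization}--\ref{thm:conservativityofgmstabilization}.
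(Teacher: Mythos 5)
Your parts (1) and (3), and the $\mathcal C = \SH(k)^{\eff}$ half of (2), are correct and essentially agree with the paper: part (1) is the same formal argument from $O(S^{p,q}) = \mathcal C(q)\cap\mathcal C_{\ge p}$, and for the effective case the paper argues even more directly, noting that $\Omega^{1,1}\Sigma^{p,q}X_+ \weq \Sigma^{p-1,q-1}X_+$ on generators and that $\Omega^{1,1}$ preserves colimits; your identity $f_{q-1}\Omega^{1,1}E \weq \Omega^{1,1}f_qE$ in $\SH(k)$ amounts to the same thing.

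The gap is in the $\mathcal C = \SH^{S^1}(k)$ half of (2). The reduction ``via $\sigma^\infty$'' does not go through. First, Theorem~\ref{thm:conservativityofgmstabilization} only gives an equivalence on the \emph{hearts} of the sufficiently effective subcategories (together with an isomorphism on $\bpi_0$); it does not assert that $\sigma^\infty$ is an equivalence $\SH^{S^1}(k)(2)\simeq\SH(k)^{\eff}(2)$, and indeed it is not. Second, even granting some equivalence, $\sigma^\infty$ is a left adjoint and does not commute with the right adjoint $\Omega^{1,1}$ on $S^1$-spectra, so knowing that $\Omega^{1,1}\sigma^\infty E$ is $(q-1)$-effective in $\SH(k)$ gives no direct control over $\Omega^{1,1}E$ in $\SH^{S^1}(k)$. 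The statement you actually need --- that $\Omega^{1,1}$ carries $\SH^{S^1}(k)(q)$ into $\SH^{S^1}(k)(q-1)$ --- is a nontrivial theorem of Levine, \cite[Theorem 7.4.2]{Levineconiveautower}, and this is precisely what the paper cites; it is also the source of the ``$k$ infinite'' hypothesis (Levine's moving-lemma techniques), not the framed-transfer input to the conservativity theorem as you suggest. If you wish to avoid Levine's theorem you need a genuinely different argument (cf.\ the alternative in \cite{bachmann-slice} footnoted elsewhere in the paper); the $\sigma^\infty$ reduction as written is not one.
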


\begin{proof}
	For the first statement, we want to show that $E \in \Sigma \mathcal{C}_{\geq(p-1,q)}$, but this follows from Lemma~\ref{lem:gmconnectivitystablehomotopysheaves}(1).  For the second statement we first treat $\mathcal{C} = \SH(k)^{\eff}$.  In that case, since $\Omega^{1,1} \Sigma^{p,q} X_+ = \Sigma^{p-1,q-1}X_+$, it follows that $\Omega^{1,1}$ lowers the weight of the generators.  Since $\Omega^{1,1}$ preserves colimits, the result then follows by Proposition~\ref{prop:generationundercolimits}.  When $\mathcal{C} = \SH^{S^1}(k)_{\geq 0}$, it suffices to observe that\cite[Theorem 7.4.2]{Levineconiveautower} implies that $\Omega^{1,1}$ sends $\SH^{S^1}(k)(q+1)$ into $\SH^{S^1}(k)(q)$. The third statement follows by combining the first two.
\end{proof}

%\begin{cor}
%	If $E \to E' \to C$ is a cofiber sequence of objects in $\mathcal{C}_{\geq 0}$ with $C \in O(S^{p,q})$ with $p-q \geq 1$, then $E \to E'$ is an $S^{p,q}$-equivalence.
%\end{cor}

%\begin{proof}
%	The fiber $\fib(E \to E') = \Sigma^{-1}C \in O(S^{p,q})$ by Lemma~\ref{prop:connectivityofloopspaces}(2) and Amplification~\ref{amplification:leftbousfieldcolimitsstablecases}.
%\end{proof}

While the functors $\sigma^{\infty}$ and $\Sigma^{\infty}_{S^1}$ preserve weakly cellular objects since they preserve colimits and generators, the corresponding statements for the adjoint functors are not immediate.  The following result is key to our comparison of stable and unstable cellularity in the sequel. 

\begin{proposition}
	\label{prop:deloopingpreservesconnectivity}
	Let $k$ be a perfect field, and $p \in \Z, q \ge 0$.
The functors 
\[ 
\omega^{\infty}: \SH(k)^{\veff} \longrightarrow \SH^{S^1}(k)_{\geq 0}, \text{ and }
\] 
\[ 
\Omega^{\infty}_{S^1}: \SH^{S^1}(k)_{\geq 0} \longrightarrow \ho{k} 
\] 
preserve weakly $S^{p,q}$-cellular objects, $S^{p,q}$-equivalences and $S^{p,q}$-null objects. (For the second functor we assume in addition that $p-q \ge 0$.)  In particular, if $p-q \geq 0$, then the functor
\[
\Omega^{\infty}: \SH(k)^{\veff} \longrightarrow \Spc(k)
\]
preserves weakly $S^{p,q}$-cellular objects, $S^{p,q}$-equivalences and $S^{p,q}$-null objects.
\end{proposition}

\begin{proof}
It is clear that both functors preserve $S^{p,q}$-null objects.
To conclude, it will be enough to show that the functors send $S^{p,q}$-nullifications to $S^{p,q}$-equivalences.

For the first functor, let $E \in \SH(k)^{\veff}$.
We have the cofiber sequence $\tau_{\ge (p,q)} E \to E \to \tau_{< (p,q)} E \weq \L^{p,q} E$ where the last equivalence is a consequence of Proposition~\ref{prop:pqlocalizationvstruncation}.  The second map is thus an $S^{p,q}$-nullification, and so it will be enough to show that $\omega^\infty$ preserves weakly $S^{p,q}$-cellular objects.
Therefore, it remains to show that $\omega^{\infty}$ sends $\SH(k)^{\eff}(n)$ to $\SH^{S^1}(k)(n)$; this assertion is a consequence of \cite[Theorem 9.0.3]{Levineconiveautower}.  Note: the aforementioned theorem implicitly assumes $k$ is infinite and perfect because of its appeal to the techniques of proof used in that of \cite[Theorem 7.1.1.]{Levineconiveautower}, however as mentioned in \cite[Remark 9.0.4]{Levineconiveautower}, the assumption of infinitude of $k$ is not necessary for spectra satisfying Axiom A3 \cite[p. 233]{Levineconiveautower}.  That Axiom A3 always holds for spectra in the image of $\omega^{\infty}$ is a consequence of \cite[Corollary 9.4.3]{LevineChowMoving}; see Remark~\ref{rem:deloopingpreservescellularity} for additional clarification regarding these hypotheses. 

For the second functor, let $E \in \SH^{S^1}(k)_{\ge 0}$.
A variant of Construction \ref{const:smallobject} yields a (transfinite) sequence of pushout squares in $\SH^{S^1}(k)_{\ge 0}$
\begin{equation*}
\begin{CD}
\Sigma^\infty_{S^1} \Sigma^i S^{p,q} \wedge X_+ @>>> E_n \\
@VVV                        @VVV \\
0 @>>> E_{n+1}
\end{CD}
\end{equation*}
with $i \ge 0$ and $\colim_n E_n \weq \L^{p,q} E$.  

As in Corollary~\ref{cor:CMon-pushout-mixed}, consider the adjoint pair of functors 
\[
\xymatrix{
B^{\infty}_{mot}: \mathrm{CMon}(\Spc(k)) \ar@<.5ex>[r] & \ar@<.5ex>[l] \SH^{S^1}(k)_{\ge 0}: \Omega^{\infty}_{S^1}.
}
\]  
and $\Sigma^{\infty}_{S^1}: \Spc(k)_* \to \mathrm{CMon}(\Spc(k))$.  The functor $\Omega^{\infty}_{S^1}$ is fully faithful by appeal to Corollary \ref{thm:SHS-CMon-pres-colim}, so the above pushout square is obtained by applying $B^{\infty}_{mot}$ to a pushout square in $\mathrm{CMon}(\Spc(k))$
\begin{equation*}
\begin{CD}
\Omega^{\infty}\Sigma^{\infty}_{S^1} \Sigma^i S^{p,q} \wedge X_+ @>>> \Omega^{\infty}_{S^1} E_n \\
@VVV                        @VVV \\
0 @>>> E'_{n+1}.
\end{CD}
\end{equation*}
Corollary~\ref{cor:CMon-pushout-mixed} tells us that $E'_{n+1} \weq \Omega^{\infty}_{S^1} E_{n+1}$, and so $\Omega^{\infty}_{S^1} E_n \to \Omega^{\infty}_{S^1} E_{n+1}$ is an $S^{p,q}$-equivalence; hence so is $\Omega^\infty_{S^1} E_n \to \Omega^\infty_{S^1} E_{n+1}$ (see Amplification \ref{amplification:localizationpreservesmonoids}).
After all is done we conclude that $\Omega^\infty_{S^1} E \to \Omega^\infty_{S^1} \L^{p,q} E$ is an $S^{p,q}$-equivalence, as required.  The last statement follows by combining the preceding results.
\end{proof}

\begin{rem}
	\label{rem:deloopingpreservescellularity}
	Tracking the hypotheses in \cite{LevineChowMoving} and \cite{Levineconiveautower} used above; there is an implicit assumption that $k$ additionally has characteristic not equal to $2$.  This assumption seems to stem from appeal to a preliminary form of Morel's computations regarding the zeroth stable homotopy of the motivic sphere; this assumption is lifted in the published version.  Nevertheless, an alternative proof of the results of Levine used above can also be found in \cite[Theorem 5.1]{bachmann-slice}.
\end{rem}

\subsection{Examples}
\label{ss:cellularexamples}
In this section, we catalog weak cellular estimates of various standard constructions; many of these facts will be used frequently in the sequel.

\begin{ex}
	\label{ex:smashproducts}
	If $\mathscr{X} \in O(A)$ is a pointed space, then for any integer $n > 1$, $\mathscr{X}^{\times n} \in O(A)$ and $\mathscr{X}^{\sma n} \in O(A)$ as well.  Indeed, the first statement is an immediate consequence of Lemma~\ref{lem:localizationmappingspaces}.  The second statement follows by induction from the first and the observation that objects in $O(A)$ are stable under colimits and extensions, i.e., Lemma~\ref{lem:leftbousfieldcolimits}.  In more detail, repeated application of this observation shows that if $\mathscr{X},\mathscr{Y} \in O(A)$, then so are $\mathscr{X} \vee \mathscr{Y}$ and there is a cofiber sequence of the form
	\[
	\mathscr{X} \vee \mathscr{Y} \longrightarrow \mathscr{X} \times \mathscr{Y} \longrightarrow \mathscr{X} \wedge \mathscr{Y},
	\]
	which shows that $\mathscr{X} \wedge \mathscr{Y} \in O(A)$ as well.
\end{ex}
	
\begin{ex}	
	\label{ex:Jamesmodels}
	It follows that if $\mathscr{X} \in O(A)$ is in addition connected, then $\Omega \Sigma \mathscr{X} \in O(A)$ as well.  Indeed, by \cite[Theorems 2.1.1 and 2.4.2]{AWW} we have the James model $J(\mathscr{X})$ of $\Omega \Sigma \mathscr{X}$, which is a filtered colimit of spaces $J_n(\mathscr{X})$, where $J_0(\mathscr{X}) = \ast$ and for $i \geq 0$ there are cofiber sequences of the form
	\[
	J_{i}(\mathscr{X}) \longrightarrow J_{i+1}(\mathscr{X}) \longrightarrow \mathscr{X}^{\sma i+1}.
	\]
	Induction using the fact that $\mathscr{X}^{\sma i+1} \in O(A)$ shows that $J_i(\mathscr{X}) \in O(A)$ for all $i$, and thus $J(\mathscr{X}) = \colim_i J_i(\mathscr{A}) \in O(A)$ as well.
\end{ex}

%\begin{ex}[Relative James constructions]
%	\NB{We don't use this, but it'll probably be useful in the study of Hopf invariants later.}
%	If $f: A \to X$ is a morphism of pointed spaces, then there is a James-style description of of $\fib(X \to \cof(f))$, due initially to B. Gray.  Indeed, one modifies $J(\mathscr{X})$ to $J(f)$ which is the sub-simplicial presheaf of $J(X)$ consisting of words such that every letter beyond the first lies in $A$.  In this case, the successive layers are of the $X \sma A^{\sma n-1}$.  In particular, it follows that if $A$ and $X$ are weakly $B$-cellular, then $\fib(X \to \cof(f))$ is also weakly $B$-cellular.
%\end{ex}

\begin{ex}
	\label{ex:thomspaces}
	If $X$ is a smooth $k$-scheme and $\pi: \mathscr{E} \to X$ is a rank $r$ vector bundle on $X$, then $\Th(\pi) \in O(S^{2r,r})$.  This statement is immediate if $\pi$ is a trivial bundle by appeal to Example~\ref{ex:basicexample}.  In general, if we choose an open cover $u: U \to X$ along which $\pi$ trivializes, then there is an equivalence $\colim_{n \in \Delta^{\opcat}} C(u) \to X$.  It follows that $\Th(\pi)$ is realized as a colimit of Thom spaces of trivial bundles, and one concludes by appeal to Lemma~\ref{lem:leftbousfieldcolimits}.  
\end{ex}

\begin{ex}
	\label{ex:purityandconnectedness}
	If $X$ is an irreducible smooth $k$-scheme and $Z \subset X$ is a smooth closed subscheme everywhere of codimension $c$, then homotopy purity yields a cofiber sequence of the form:
	\[
	X \setminus Z \longrightarrow X \longrightarrow \Th(\nu_{Z/X}),
	\] 
	where $\nu_{Z/X}$ is the normal bundle of $Z$ in $X$.  If particular, if $X \setminus Z \in O(S^{p,q})$ for $p \geq 2c, q \geq c$, then $X \in O(S^{2c,c})$ as well.  
\end{ex}

\begin{ex}[Projective spaces]
	\label{ex:projectivespaces}
	The space ${\mathbb P}^n \in O(S^{2,1})$.  Indeed, there are cofiber sequences of the form ${\mathbb P}^{n-1} \to {\mathbb P}^n \to {\pone}^{\sma n}$.  Example~\ref{ex:smashproducts} shows that ${\pone}^{\sma n} \in O(S^{2,1})$ and we conclude by appeal to Lemma~\ref{lem:leftbousfieldcolimits}.  It follows also that ${\mathbb P}^{\infty} \weq K(\Z(1),2) \in O(S^{2,1})$.
	
	Likewise, we may consider the Panin-Walter quaternionic projective spaces $\mathrm{HP}^n$ \cite[\S 3]{PaninWalter}.  For every $n \geq 1$ there are cofiber sequences of the form
	\[
	\mathrm{HP}^n \longrightarrow \mathrm{HP}^{n+1} \longrightarrow \Th(\nu_n),
	\]
	where $\nu_n$ is a rank $2$ bundle.  Example~\ref{ex:thomspaces} shows that $\Th(\nu_n) \in O(S^{4,2})$ and $\mathrm{HP}^1 \weq S^{4,2}$ by \cite[Theorem 2]{ADF}.  By induction and appeal to Lemma~\ref{lem:leftbousfieldcolimits} we conclude that $\mathrm{HP}^{n} \in O(S^{4,2})$ and thus that $\mathrm{HP}^{\infty} \weq B\op{SL}_2 \in O(S^{4,2})$ as well.
\end{ex}

\begin{ex}
	\label{ex:bialynickibirula}
	Suppose $X$ is a connected smooth projective variety equipped with a $\gm{}$-action with isolated fixed points, then $X \in O(S^{2,1})$.  In this case, the variety $X$ admits a Bialynicki-Birula paving by affine spaces \cite{BialynickiBirula} and can be written as an increasing union of smooth open subschemes with smooth closed complements at each stage.  In this case, the result follows by induction appealing to Example~\ref{ex:purityandconnectedness} upon noticing that $X$ has an open dense subscheme isomorphic to ${\mathbb A}^n$.  It follows that if $\op{G}$ is a split, simply connected, semi-simple $k$-group, then $\op{G}/\op{P} \in O(S^{2,1})$ for any parabolic subgroup $\op{P} \subset \op{G}$.  In particular, when $\op{G} = \op{GL}_n$ we deduce that Grassmannians and generalized flag varieties lie in $O(S^{2,1})$.  Since $\op{BGL}_n \weq \colim_n Gr_{r,N}$, we see $\op{BGL}_n \in O(S^{2,1})$ as well. 
\end{ex}

\begin{ex}
	\label{ex:thomspacescobordism}
	The space $\op{MGL}_n$ is the Thom space of the universal rank $n$ vector bundle on $\op{BGL}_n$.  Writing the former as a colimit of the universal rank $n$ vector bundles over $\op{Gr}_{n,N}$, we conclude from Example~\ref{ex:thomspaces} that $\op{MGL}_n \in O(S^{2n,n})$.  Analogously, $\op{MSL}_n$ is constructed as the Thom space of the universal rank $n$ vector bundles over $B\op{SL}_n$ and these may be written as a colimit of vector bundles over finite-dimensional approximations to $\op{BSL}_n$.  Thus, $\op{MSL}_n \in O(S^{2n,n})$.  Similarly, $\op{MSp}_{2n} \in O(S^{4n,2n})$ since this may be realized as the Thom space of a universal rank $2n$ bundle on $\op{BSp}_{2n}$.
\end{ex}

\begin{ex}[Motivic Eilenberg--Mac Lane spaces]
	\label{ex:motivicEMspacesare2nnncellular}
	Assume $k$ is a field that has characteristic $0$.  Voevodsky showed in \cite{VZeroSlice} that if $n \geq 2$, then $K(\Z(n),2n) \in O(S^{2n,n})$, generalizing the first part of Example~\ref{ex:projectivespaces}; see Paragraph~\ref{par:motivicEMspaces} for further explanations of the notation.  Note: while this weak cellularity estimate holds without the assumption on the characteristic (the spectrum representing motivic cohomology is indeed very effective and thus the discussion of Remark~\ref{rem:veryeffectivespectracellularity} applies), Voevodsky gave a geometric argument establishing this cellularity that is illustrative.
	
	Indeed, \cite[Lemma 6.5]{VZeroSlice} shows that we may write $K(\Z(n),2n)$ as a filtered colimit of spaces $K_{i,n}$ such that $K_{0,n} = \ast$ and fitting into cofiber sequences of the form
	\[
	K_{i,n} \longrightarrow K_{i+1,n} \longrightarrow \Sym^i(S^{2n,n}),
	\]
	and \cite[Theorem 4.1]{VZeroSlice} implies that $\Sym^n(S^{2n,n}) \in O(S^{2n,n})$ (this latter statement uses the assumption that $k$ has characteristic $0$).  We will revisit and refine this observation in Section~\ref{s:symmetricpowers}.
\end{ex}

\begin{rem}
	\label{rem:veryeffectivespectracellularity}
More generally, provided $k$ is perfect, for any $E \in \SH(k)^{\veff}$ we have $E_n \in O(S^{2n,n})$.
Since $E_n = \Omega^\infty \Sigma^{2n,n} E$, this follows from Proposition \ref{prop:deloopingpreservesconnectivity}.
\end{rem}

\begin{proposition}
	\label{prop:cellularityofgroups}
	Suppose $k$ is a field and $n \geq 1$ is an integer.
	\begin{enumerate}[noitemsep,topsep=1pt]
		\item The simply-connected split $k$-group schemes $\op{SL}_{n+1}, \op{Sp}_{2n}, \op{Spin}_{n+2}$ and $\op{G}_2$ lie in $O(S^{3,2})$.
		\item The (Nisnevich) classifying spaces of any of the group schemes appearing in \textup{Point (1)} lies in $O(S^{4,2})$.
	\end{enumerate}
\end{proposition}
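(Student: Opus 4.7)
The plan is to establish Part (1) by induction within each group family, realizing each $\op{G}$ as an iterated extension in motivic fiber sequences $H \to G \to G/H$ with $H$ a smaller simply-connected group of the same type and $G/H$ a motivic sphere or split affine quadric. The main closure property I will invoke is Proposition~\ref{prop:pqconnectivityandfibersequencesI}(2): if $\mathscr{F} \to \mathscr{E} \to \mathscr{B}$ is a fiber sequence with $\mathscr{B}$ connected and $\mathscr{F},\mathscr{B} \in O(S^{3,2})$, then $\mathscr{E} \in O(S^{3,2})$. The common base case $\op{SL}_2 \weq \op{Sp}_2 \weq \op{Spin}_3 \weq \A^2 \setminus 0 \weq S^{3,2}$ is tautologically in $O(S^{3,2})$.

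For $\op{SL}_{n+1}$ I will use the standard action on $\A^{n+1} \setminus 0$, producing a motivic fiber sequence $\op{SL}_n \to \op{SL}_{n+1} \to \A^{n+1} \setminus 0 \weq S^{2n+1,n+1}$; the base lies in $O(S^{3,2})$ and is simply connected for $n \geq 1$, so induction yields $\op{SL}_{n+1} \in O(S^{3,2})$. The identical method applies to $\op{Sp}_{2n}$ via the fiber sequence $\op{Sp}_{2n-2} \to \op{Sp}_{2n} \to \A^{2n}\setminus 0 \weq S^{4n-1,2n}$, once one notes that the stabilizer of a non-zero vector is an $\A^1$-equivalent extension of $\op{Sp}_{2n-2}$ by an affine space. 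For $\op{Spin}_{n+2}$ with $n \geq 2$ I will use the transitive action on the affine quadric of norm-one vectors to produce $\op{Spin}_{n+1} \to \op{Spin}_{n+2} \to Q_{n+1}$, where $Q_{n+1}$ is the $(n+1)$-dimensional split affine quadric; invoking the motivic equivalences $Q_{2m-1} \weq S^{2m-1,m}$ and $Q_{2m} \weq S^{2m,m}$ of Asok-Doran-Fasel, one has $Q_d \in O(S^{3,2})$ for all $d \geq 3$, and combined with the base case $\op{Spin}_3 \weq \op{SL}_2$ this closes the induction. For $\op{G}_2$ I will invoke the classical identification $\op{G}_2/\op{SL}_3 \weq Q_6 \weq S^{6,3}$ to obtain a fiber sequence $\op{SL}_3 \to \op{G}_2 \to S^{6,3}$ with both fiber and base in $O(S^{3,2})$.

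Part (2) then follows at once from Lemma~\ref{lem:Lpq-loop-B}(1). Each $\op{G}$ above is a connected group object in $\Spc(k)$, so that lemma yields $B\L^{3,2}\op{G} \weq \L^{4,2}B\op{G}$; Part (1) gives $\L^{3,2}\op{G} = \ast$, forcing $\L^{4,2}B\op{G} = \ast$, i.e., $B\op{G} \in O(S^{4,2})$. The main obstacle I anticipate is not the cellular bookkeeping but verifying that each homogeneous-space quotient $G/H$ genuinely yields a motivic fiber sequence in $\Spc(k)$: for the $\op{SL}$ and $\op{Sp}$ cases this is immediate from Zariski-local triviality of the special bundle $G \to G/H$, but for the $\op{Spin}$ and $\op{G}_2$ induction one needs to confirm the Nisnevich-local triviality of the relevant principal bundle together with the correct scheme-theoretic identification of the quotient (using that the kernel of $\op{Spin}_{n+2} \to \op{SO}_{n+2}$ lies in the stabilizer).
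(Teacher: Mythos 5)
Your proof is correct and follows essentially the same route as the paper's: realize each group as an iterated extension via the fiber sequences $H \to G \to G/H$ with base a motivic sphere or split quadric (identified via Asok--Doran--Fasel), close the induction with Proposition~\ref{prop:pqconnectivityandfibersequencesI}(2), and deduce Part~(2) by delooping. Your appeal to Lemma~\ref{lem:Lpq-loop-B}(1) for Part~(2) is just a packaged form of the paper's appeal to Lemma~\ref{lem:localizationmappingspaces}(5) together with $\Omega B_{\Nis}\op{G} \weq \op{G}$, and your slightly different indexing of the $\op{SL}$ and $\op{Sp}$ sequences is immaterial since all the bases involved lie in $O(S^{3,2})$.
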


\begin{proof}
	Observe that $\op{SL}_2 \weq \op{Sp}_2 \weq \op{Spin}_3$ and all are equivalent to $S^{3,2}$.  Next, recall that for $n \geq 2$ there are fiber sequences of the form
	\[
	\begin{split}
		\op{Q}_{2n-1} \longrightarrow B_{\Nis}\op{SL}_{n-1} &\longrightarrow B_{\Nis}\op{SL}_n \\
		\op{Q}_{n} \longrightarrow  B_{\Nis}\op{Spin}_{n} &\longrightarrow B_{\Nis}\op{Spin}_{n+1}, \\
		\op{Q}_{4n-1} \longrightarrow B_{\Nis}\op{Sp}_{2n-2} &\longrightarrow B_{\Nis} \op{Sp}_{2n}, \\
		\op{Q}_6 \longrightarrow  B_{\Nis}\op{SL}_3 &\longrightarrow B_{\Nis}\op{G}_2;
	\end{split}
	\]
	the first and the third sequences are described in \cite{AHWII,AHWIII}, the fourth in \cite[Proposition 3.1.1]{AHWOctonion}, while the second is established in \cite{AHWQuadrics}.  
	
	For the first statement, since $\Omega B_{\Nis}\op{G} \weq \op{G}$ we can rotate the above fiber sequences to obtain sequences relating groups and suitable quadrics.  Since $\op{Q}_{2n} \weq S^{2n,n}$ and $\op{Q}_{2n-1} \weq S^{2n-1,n}$ (see by \cite[Theorem 2]{ADF}), the first statement then follows immediately from Proposition~\ref{prop:pqconnectivityandfibersequencesI}(2) and induction.
	
	The second statement can be established similarly.  Begin by observing that $B\op{SL}_2 \weq B_{\Nis}\op{SL}_2$ and the former is equivalent to $\op{HP}^{\infty} \in O(S^{4,2})$ by Example~\ref{ex:projectivespaces}.  For $r \geq 4$, $Q_{r} \in O(S^{4,2})$, so Proposition~\ref{prop:pqconnectivityandfibersequencesI}(1) and induction imply that $B_{\Nis}\op{SL}_n, B_{\Nis}\op{Sp}_{2n} \in O(S^{4,2})$ for all $n \geq 2$ and then that $B_{\Nis}\op{G}_2 \in O(S^{4,2})$.  For the spin groups, one proceeds in the same way after observing that $\op{Spin}_4 \cong \op{SL}_2 \times \op{SL_2}$ so $B_{\Nis}\op{Spin}_4 \weq B_{\Nis}\op{SL}_2 \times B_{\Nis}\op{SL}_2 \in O(S^{4,2})$ via Example~\ref{ex:smashproducts}.  
\end{proof}

\begin{cor}
	\label{cor:geometricclassifyingspacescellularity}
	The geometric classifying spaces $B\op{SL}_n$ and $B\op{Sp}_{2n} \in O(S^{4,2})$ for every $n \geq 1$.
\end{cor}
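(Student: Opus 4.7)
The plan is to reduce the assertion directly to Proposition~\ref{prop:cellularityofgroups}(2), which has already established that the Nisnevich classifying spaces $B_{\Nis}\op{SL}_n$ and $B_{\Nis}\op{Sp}_{2n}$ lie in $O(S^{4,2})$.  The corollary then becomes a matter of identifying the geometric classifying spaces appearing in the statement with their Nisnevich counterparts in $\ho{k}$.

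To this end, I would invoke the classical fact that $\op{SL}_n$ and $\op{Sp}_{2n}$ are special algebraic groups in the sense of Serre: every \'etale $G$-torsor on a smooth $k$-scheme is already Zariski-locally, and hence Nisnevich-locally, trivial.  For such a special group $G$, the canonical comparison map from the Totaro-style geometric classifying space $BG$ (formed as a colimit of quotients $U_i/G$ with $U_i \subset V_i$ an open subset of a representation having free $G$-action and complement of arbitrarily high codimension) to the simplicial Nisnevich classifying space $B_{\Nis}G$ is an $\aone$-equivalence.  This can be extracted from the representability theorem of~\cite{AHW}: both spaces represent the same functor $X \mapsto H^1_{\Nis}(X,G)$ on $\ho{k}$, and the common representing object must therefore be unique up to $\aone$-equivalence.

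Because membership in $O(S^{4,2})$ is an invariant of an object of $\ho{k}$, the corollary follows at once from Proposition~\ref{prop:cellularityofgroups}(2) by transporting cellularity across this $\aone$-equivalence.  I do not anticipate a substantive obstacle here; the technical content is concentrated entirely in the prior proposition (which itself rests on the fiber sequences of \cite{AHWII, AHWIII, AHWOctonion, AHWQuadrics} together with Lemma~\ref{lem:Lpq-loop-B}), while this corollary is a formal consequence of the well-established comparison between geometric and Nisnevich classifying spaces for special groups.
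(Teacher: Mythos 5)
Your proposal is correct and is exactly the intended argument: the paper offers no separate proof because the corollary is meant to follow from Proposition~\ref{prop:cellularityofgroups}(2) once one identifies the geometric classifying spaces with $B_{\Nis}\op{SL}_n$ and $B_{\Nis}\op{Sp}_{2n}$, which holds since both groups are special. The only refinement I would suggest is that the cleanest reference for this comparison is \cite[\S 4.2]{MV} (where the $\aone$-equivalence between the geometric and simplicial classifying spaces of a special group is proved directly), rather than the representability theorem of \cite{AHW}, whose uniqueness assertion on $\ho{k}$ requires a little more care than ``representing the same functor'' suggests.
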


%\aravind{Other spaces from geometric Bott periodicity?}
%The homogeneous spaces $SL_{2n}/Sp_{2n}$, etc.
%\end{ex}

\begin{ex}
	\label{ex:2nnnullspaces}
	The classifying space $B\gm{} = K(\Z(1),2)$ (see Example~\ref{ex:weightone}) is $S^{4,2}$-null.  This observation has two natural generalizations: Voevodsky's motivic Eilenberg--Mac Lane space $K(\Z(n),2n)$ is $S^{2n+2,n+1}$-null.  The first non-trivial stage of the simplicial Postnikov tower of $K(\Z(n),2n)$ is the Eilenberg--Mac Lane space $K(\K^M_n,n)$, which is again $S^{2n+2,n+1}$-null.  All of these facts follow from standard facts about contractions; see, e.g., \cite[Example 2.4]{HWBK}.
\end{ex}

%Consider the canonical map $S^{2n,n} \to K(\Z(n),2n)$ described in Example~\ref{ex:motivicEMspacesare2nnncellular}.  In Example~\ref{ex:2nnnullspaces}, we observed that $K(\Z(n),2n)$ is $S^{2n+2,n+1}$-null.  In particular, there is an induced map 
%\[
%\mathrm{L}^{2n+2,n+1} S^{2n,n} \longrightarrow K(\Z(n),2n).
%\]
%The following result is a restatement of \cite[Theorem 6.1]{VZeroSlice} in our terminology and notation.

%\begin{theorem}
%	\label{thm:voevodskyzeroslice}
%	If $k$ has characteristic $0$, then the map $\mathrm{L}^{2n+2,n+1} S^{2n,n} \to K(\Z(n),2n)$ described above is an equivalence.
%\end{theorem}

\section{Weakly-cellular resolutions of spaces and consequences}
\label{s:cellularWhiteheadtower}
In this section, we amplify some of the results of the previous section.  We begin in Section~\ref{ss:refinedwhitehead} by constructing refinements of the classical Whitehead and Postnikov towers taking weak $S^{p,q}$-cellularity into account: the $k$-invariants in the refined towers can be taken to be $\pone$-infinite loop spaces under suitable hypotheses (see Amplification~\ref{amplification:2-eff-whitehead}).  We then deduce a number of consequences of this refinement including refined cellularity estimates on various kinds of loop spaces (Proposition~\ref{prop:unstableconnectivityofloopspaces}), fibers and cofibers.  We also show that weak and strong cellularity agree under appropriate hypotheses.

\subsection{A weakly-cellular Whitehead tower}
\label{ss:refinedwhitehead}
If $\mathscr{X}$ is a space that is weakly $S^{p,q}$-cellular for some $(p,q)$, then there is no reason for the first non-trivial layer of the classical Postnikov tower of $\mathscr{X}$ to again be weakly $S^{p,q}$-cellular as the following example shows.

\begin{ex}
	\label{ex:postnikovlayersarenotcellular}
	Consider the motivic Eilenberg--MacLane space $K(\Z(n),2n)$, $n \geq 2$.  Note that $K(\Z(n),2n) \in O(S^{2n,n})$ if $k$ has characteristic $0$ by appeal to Example~\ref{ex:motivicEMspacesare2nnncellular}.  The first non-trivial stage of the Postnikov tower and the first non-trivial $k$-invariant yield a sequence of morphisms of the form
	\[
	K(\Z(n),2n) \longrightarrow K(\K^M_n,n) \stackrel{k_n}{\longrightarrow} K(\mathbf{H}^{n-1,n}_{\Z},n+1)
	\]
	where $\mathbf{H}^{n-1,n}_{\Z}$ is the sheaf associated with the presheaf $U \mapsto H^{n-1,n}(U,\Z)$.  Note that $(\mathbf{H}^{n-1,n}_{\Z})_{-i} = \mathbf{H}^{n-1-i,n-i}_{\Z}$.  Since $\mathbf{H}^{0,1}_{\Z} = 0$ by the computation of weight $1$ motivic cohomology, it follows that $K(\mathbf{H}^{n-1,n}_{\Z},n+1)$ is $S^{p+n,n}$-null for any $p \geq 0$ by appeal to Corollary~\ref{cor:truncatednessviahomotopysheaves}.  Observe that $K(\K^M_n,n)$ cannot be weakly $S^{p+n,n}$-cellular as the non-zero $k$-invariant above corresponds to a non-trivial map to an $S^{p+n,n}$-null space.  
\end{ex}

\subsubsection*{Weakly cellular covers of nilpotent motivic spaces}
Example~\ref{ex:postnikovlayersarenotcellular} suggests how cellularity can be ``recovered" by adding cells in higher degrees.  We now analyze this idea in greater detail.  We begin by making the following definition.

\begin{defn}
	\label{defn:cellularcover}
For a motivic space $\mathscr X$ set 
\[
\tau_{\ge(p,q)} \mathscr X := \fib(\mathscr X \to \L^{p,q} \mathscr X).
\]
\end{defn}

We aim to demonstrate that if $\mathscr{X}$ is a nilpotent motivic space, then $\tau_{\ge(p,q)} \mathscr{X}$ is the $S^{p,q}$-cellular cover of $\mathscr{X}$.  In the situation of classical algebraic topology, if $W$ is a space and $P_W$ is the associated nullification functor, then for any pointed connected space $X$, $\fib(X \to P_WX)$ is weakly $W$-cellular \cite[Theorem 1.H.2]{Farjoun}.  The proof of this assertion given by Dror uses fiberwise localization \cite[1.F]{Farjoun}, which is not immediately available in the motivic setting.  As a consequence, we use a more ad hoc approach.  We combine the fact that nilpotent motivic spaces have Postnikov towers that admit principal refinements and the fact that $\L^{p,q}$ interacts well with these fiber sequences via Corollary~\ref{cor:pqtruncationpreservesnilpotence}.  First, we treat the case of Eilenberg--Mac Lane spaces of strictly $\aone$-invariant sheaves, which is straightforward to analyze by passing to the stable setting.

\begin{lem}
	\label{lem:taupq-K}
	If $\mathbf{A}$ is a strictly $\aone$-invariant sheaf, then
	\begin{enumerate}[noitemsep,topsep=1pt]
		\item $\tau_{\ge (p,q)} K(\mathbf A, n) \weq \Omega^\infty_{S^1} \tau_{\ge (p,q)} \Sigma^n H\mathbf A$,
		\item if $p-q \le n$, then $\tau_{\ge (p,q)} K(\mathbf A, n) \weq \Omega^\infty_{S^1} f_q \Sigma^n H\mathbf A$, and
		\item there is an equivalence and weak-cellular estimate of the form:
		\[ 
		\Omega \tau_{\ge (n+q,q)} K(\mathbf A, n) \weq \tau_{\ge (n-1+q,q)} K(\mathbf A, n-1) \in O(S^{n-1+q,q}).
		\]
	\end{enumerate}
\end{lem}

\begin{proof}
	Since $K(\mathbf{A},n) = \Omega^{\infty}_{S^1} \Sigma^n H \mathbf{A}$, the first statement follows by combining Propositions~\ref{prop:pqlocalizationvstruncation} and \ref{prop:deloopingpreservesconnectivity}.  The second statement follows from unwinding the definitions (see Remark~\ref{rem:slicesvstruncation}).  The final equivalence follows from the preceding statements, and the weak-cellular estimate is a consequence of Proposition~\ref{prop:stablecellularityandlooping}.
\end{proof}

\begin{rem}
	At this stage, the space $\tau_{\geq(n+q,q)}K(\mathbf{A},n)$ can be quite strange.  For example, if $\mathbf A$ is a birational sheaf, then $\mathbf A_{-1} = 0$ and so $\tau_{\ge(n+q,q)}K(\mathbf{A},n) = 0$ for $q>0$.
\end{rem}

%\begin{ex} \label{ex:taupq-K}
%Let $\mathscr X = K(\mathbf A, n)$ with $\mathbf{A}$ strictly $\A^1$-invariant.
%Then $\mathscr X = \Omega^\infty_{S^1} \Sigma^n H\mathbf A$ and hence by Proposition \ref{prop:deloopingpreservesconnectivity} we find \[ \tau_{\ge (p,q)} K(\mathbf A, n) \weq \Omega^\infty_{S^1} \tau_{\ge (p,q)} \Sigma^n H\mathbf A. \]
%In particular if $p-q \le n$ then \[ \tau_{\ge (p,q)} K(\mathbf A, n) \weq \Omega^\infty_{S^1} f_q \Sigma^n H\mathbf A. \]
%\end{ex}

%\begin{ex} 
%	\label{ex:loops-eff}
%Note that 

%and the last weak-cellular estimate uses .
%\end{ex}

\begin{lem}
If $\mathscr Y \in O(S^{p,q})$, then $\Map(\mathscr Y, \tau_{\ge(p,q)} \mathscr X) \weq \Map(\mathscr Y, \mathscr X)$.
\end{lem}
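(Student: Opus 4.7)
The plan is very short: the statement should follow almost directly from the definition of $\tau_{\ge(p,q)}\mathscr X$ together with the defining property of weak $S^{p,q}$-cellularity.

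First, I would recall that by definition there is a fiber sequence
\[
\tau_{\ge(p,q)}\mathscr X \longrightarrow \mathscr X \longrightarrow \L^{p,q}\mathscr X
\]
of pointed motivic spaces (where $\L^{p,q}\mathscr X$ is pointed by the image of the basepoint; by Amplification~\ref{amplification:localizationpreservesmonoids} the pointed and unpointed localizations agree, so there is no ambiguity in calling this a pointed $S^{p,q}$-null space). Applying $\Map_*(\mathscr Y, \ph)$ yields a fiber sequence of mapping spaces
\[
\Map_*(\mathscr Y, \tau_{\ge(p,q)}\mathscr X) \longrightarrow \Map_*(\mathscr Y, \mathscr X) \longrightarrow \Map_*(\mathscr Y, \L^{p,q}\mathscr X).
\]

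Next, I would note that since $\mathscr Y \in O(S^{p,q})$, we have $\L^{p,q}\mathscr Y = \ast$, and since $\L^{p,q}\mathscr X$ is $S^{p,q}$-null, any pointed map $\mathscr Y \to \L^{p,q}\mathscr X$ factors (uniquely) through $\L^{p,q}\mathscr Y$. Thus
\[
\Map_*(\mathscr Y, \L^{p,q}\mathscr X) \weq \Map_*(\L^{p,q}\mathscr Y, \L^{p,q}\mathscr X) \weq \Map_*(\ast, \L^{p,q}\mathscr X) \weq \ast.
\]
Combining these two steps, the first map in the fiber sequence is an equivalence, which is the claim. The main (minor) obstacle is just making sure one is consistently working in pointed motivic spaces so that the fiber sequence transports correctly; this is taken care of by Amplification~\ref{amplification:localizationpreservesmonoids}.
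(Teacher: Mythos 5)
Your proof is correct and is essentially the paper's own argument: the paper's proof is the one-line observation that $\Map(\mathscr Y, \L^{p,q}\mathscr X) = \ast$, applied to the defining fiber sequence $\tau_{\ge(p,q)}\mathscr X \to \mathscr X \to \L^{p,q}\mathscr X$. Your additional care about pointedness is a harmless elaboration of the same idea.
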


\begin{proof}
Taking mapping spaces preserves fiber sequences and $\Map(\mathscr Y, \L^{p,q} \mathscr X) = *$ by definition.
\end{proof}

\begin{lem} \label{lem:taupq-inv}
If $\Omega^{q,q}\mathscr X$ is $d$-connective for $d \ge p$, then  $\tau_{\ge(p+q,q)}\mathscr X \weq \tau_{\ge(d+q,q)}\mathscr X$.
\end{lem}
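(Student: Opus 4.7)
The plan is to deduce this essentially formally from Proposition~\ref{prop:pqtruncationpreservesconnectivity-v2}, which under the very same hypotheses already yields $\L^{p+q,q}\mathscr X \weq \L^{d+q,q}\mathscr X$. Since $\tau_{\ge(p+q,q)}\mathscr X$ and $\tau_{\ge(d+q,q)}\mathscr X$ are defined as the fibers of the canonical nullification maps out of $\mathscr X$, the only additional point to verify is that this equivalence of nullifications intertwines those two canonical maps.

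First I would observe, using Corollary~\ref{cor:truncatednessviahomotopysheaves}, that for pointed connected spaces $S^{p+q,q}$-nullity implies $S^{d+q,q}$-nullity whenever $d \ge p$: the condition $\bpi_i(-)_{-q}=0$ for $i\ge p$ is simply stronger than the same condition for $i\ge d$. Consequently $\L^{p+q,q}\mathscr X$ is itself $S^{d+q,q}$-null, so by the universal property of $\L^{d+q,q}$ the canonical map $\mathscr X \to \L^{p+q,q}\mathscr X$ factors uniquely as
\[
\mathscr X \longrightarrow \L^{d+q,q}\mathscr X \longrightarrow \L^{p+q,q}\mathscr X.
\]

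Next I would note that Proposition~\ref{prop:pqtruncationpreservesconnectivity-v2} forces the second arrow to be an equivalence: that proposition produces an equivalence of the two nullifications and, by the uniqueness of maps to $S^{p+q,q}$-null spaces, this equivalence must coincide with the canonical comparison above. Taking fibers along the vertical arrows in the resulting commutative triangle yields the claimed identification $\tau_{\ge(d+q,q)}\mathscr X \weq \tau_{\ge(p+q,q)}\mathscr X$.

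There is really no obstacle here beyond bookkeeping; the key input, Proposition~\ref{prop:pqtruncationpreservesconnectivity-v2}, does all the work. The only thing to be careful about is that $\mathscr X$ should be pointed and connected in order to invoke both Proposition~\ref{prop:pqtruncationpreservesconnectivity-v2} and Corollary~\ref{cor:truncatednessviahomotopysheaves}; this is automatic since $\tau_{\ge(p,q)}(-)$ has only been defined for nilpotent motivic spaces.
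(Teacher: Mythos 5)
Your proposal is correct and matches the paper, whose proof is simply ``Immediate from Proposition~\ref{prop:pqtruncationpreservesconnectivity-v2}''; you have merely spelled out the (routine) verification that the equivalence of nullifications is one of spaces under $\mathscr X$, so that the fibers agree. The factorization $\mathscr X \to \L^{d+q,q}\mathscr X \to \L^{p+q,q}\mathscr X$ via the containment of null classes is exactly the right bookkeeping, and the connectivity/pointedness needed for Corollary~\ref{cor:truncatednessviahomotopysheaves} is indeed supplied by the nilpotence hypothesis built into the definition of $\tau_{\ge(p,q)}$.
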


\begin{proof}
Immediate from Proposition \ref{prop:pqtruncationpreservesconnectivity-v2}.
\end{proof}

We now combine the observations above to inductively build a ur-tower whose layers are weakly cellular--later we will modify this construction slightly.

\begin{construction} 
\label{cons:cellular-postnikov}
Suppose given a nilpotent motivic space $\mathscr X$, $p > 0$, and a principal refinement of $\mathscr X \to \mathscr X_{<p}$ as in Corollary \ref{cor:pqtruncationpreservesnilpotence}.  Applying the functor $\tau_{\ge(p+q,q)}$ to the fiber sequences $\mathscr X_{i+1} \to \mathscr X_i \to K(\mathbf A_i, n_i)$ yields sequences 
\[ 
\tau_{\ge(p+q,q)}\mathscr X_{i+1} \longrightarrow \tau_{\ge(p+q,q)} \mathscr X_i \longrightarrow \tau_{\ge(p+q,q)} K(\mathbf A_i, n_i). 
\] 
We call these a weakly-cellular Postnikov tower of $\tau_{\ge(p+q,q)} \mathscr X$ because it has the following properties: 
\begin{itemize}[noitemsep,topsep=1pt]
	\item $\tau_{\ge(p+q,q)} \mathscr X_0 = *$ (since $\mathscr X_0 = \mathscr X_{<p}$ is $S^{p+q,q}$-null)
	\item $\tau_{\ge(p+q,q)} K(\mathbf A_i, n_i) \weq \tau_{\ge(n_i+q,q)} K(\mathbf A_i, n_i)$ (by Lemma \ref{lem:taupq-inv}, since $n_i \ge p$), and so the connectivity of these spaces tends to infinity with $i$
	\item the displayed sequences above are fiber sequences (since $n_i > p$, so $\L^{p+q,q} \mathscr X_{i+1} \to \L^{p+q,q} \mathscr X_{i} \to \L^{p+q,q} K(\mathbf A_i, n_i)$ is a fiber sequence by Proposition \ref{prop:pqtruncationandfibersequences}).
	\item $\lim_i \tau_{\ge (p+q,q)} \mathscr X_i \weq \tau_{\ge (p+q,q)} \mathscr X$ (since the same holds for $\L^{p+q,q}$, by Corollary \ref{cor:pqtruncationpreservesnilpotence}).
\end{itemize}
\end{construction}

\begin{theorem} \label{thm:fiber-to-trunc-is-cell}
If $k$ is a perfect field and $\mathscr X \in \Spc(k)$ is a nilpotent motivic space, then $\tau_{\ge(p+q,q)} \mathscr X \in O(S^{p+q,q})$.
\end{theorem}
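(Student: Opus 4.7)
The plan is to induct along the refined Postnikov tower $\{\tau_{\geq(p+q,q)}\mathscr X_i\}_{i \ge 0}$ of Construction~\ref{cons:cellular-postnikov}, which satisfies $\tau_{\geq(p+q,q)}\mathscr X_0 = \ast$, $\lim_i \tau_{\geq(p+q,q)}\mathscr X_i \weq \tau_{\geq(p+q,q)}\mathscr X$, and whose consecutive stages fit into fiber sequences
\[
\tau_{\geq(p+q,q)}\mathscr X_{i+1} \longrightarrow \tau_{\geq(p+q,q)}\mathscr X_i \longrightarrow \tau_{\geq(p+q,q)} K(\mathbf A_i, n_i)
\]
with $n_i \ge p+1$. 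I first verify by induction that each $\tau_{\geq(p+q,q)}\mathscr X_i$ lies in $O(S^{p+q,q})$ (the base case being trivial), and then conclude by a convergence argument using Corollary~\ref{cor:limit-of-Spq-equiv}. Throughout I assume $p \ge 1$; the case $p=0$ reduces to essentially trivial observations about $\L^{q,q}$.

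For the inductive step, I extend the above fiber sequence one step to the left, obtaining
\[
\Omega \tau_{\geq(p+q,q)} K(\mathbf A_i, n_i) \longrightarrow \tau_{\geq(p+q,q)}\mathscr X_{i+1} \longrightarrow \tau_{\geq(p+q,q)}\mathscr X_i.
\]
Since $\Omega^{q,q} K(\mathbf A_i, n_i)$ is $n_i$-connective with $n_i \ge p+1 > p$, Lemma~\ref{lem:taupq-inv} identifies $\tau_{\geq(p+q,q)} K(\mathbf A_i, n_i)$ with $\tau_{\geq(n_i+q,q)} K(\mathbf A_i, n_i)$, and Example~\ref{ex:loops-eff} then yields
\[
\Omega \tau_{\geq(p+q,q)} K(\mathbf A_i, n_i) \weq \tau_{\geq(n_i+q-1,q)} K(\mathbf A_i, n_i - 1) \in O(S^{n_i+q-1,q}) \subseteq O(S^{p+q,q}),
\]
the last containment following from $n_i + q - 1 \ge p+q$ and the fact that larger first index gives a smaller cellular class. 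Combined with the inductive hypothesis $\tau_{\geq(p+q,q)}\mathscr X_i \in O(S^{p+q,q})$ and a parallel induction on homotopy sheaves showing each $\tau_{\geq(p+q,q)}\mathscr X_i$ is $(p-1)$-connected (hence connected), Proposition~\ref{prop:pqconnectivityandfibersequencesI}(2) delivers $\tau_{\geq(p+q,q)}\mathscr X_{i+1} \in O(S^{p+q,q})$.

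For the limit step, the family $\tau_{\geq(p+q,q)}\mathscr X_i \to \ast$ is then a cofiltered family of $S^{p+q,q}$-equivalences. Since the $n_i$ grow to infinity, the connectivity of $\tau_{\geq(p+q,q)}\mathscr X \to \tau_{\geq(p+q,q)}\mathscr X_i$ also tends to infinity (the map $\tau_{\geq(p+q,q)}\mathscr X_{i+1} \to \tau_{\geq(p+q,q)}\mathscr X_i$ has fiber $(n_i+q-2)$-connected), and a standard Milnor/$\lim^1$-argument using this high connectivity confirms that $\tau_{\geq(p+q,q)}\mathscr X$ is itself connected. Corollary~\ref{cor:limit-of-Spq-equiv} then asserts that $\tau_{\geq(p+q,q)}\mathscr X \to \ast$ is an $S^{p+q,q}$-equivalence, which is the content of the theorem. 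The main obstacle is precisely this final step, because the cellular class $O(S^{p+q,q})$ is not in general closed under cofiltered limits; Corollary~\ref{cor:limit-of-Spq-equiv} supplies exactly the convergence hypothesis needed to bridge this gap.
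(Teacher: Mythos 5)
Your proof is correct and follows essentially the same route as the paper: induction along the refined Postnikov tower of Construction~\ref{cons:cellular-postnikov}, using the rotated fiber sequences together with Example~\ref{ex:loops-eff} and Lemma~\ref{lem:taupq-inv} to get each $\tau_{\ge(p+q,q)}\mathscr X_i \in O(S^{p+q,q})$, followed by a connectivity-based convergence argument for the limit. The only cosmetic difference is that you invoke Corollary~\ref{cor:limit-of-Spq-equiv} for the limit step where the paper reruns that corollary's argument inline (mapping into Postnikov truncations of $S^{p+q,q}$-null spaces), which is the same thing.
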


\begin{proof}
We use the weakly-cellular Postnikov tower of $\tau_{\ge(p+q,q)} \mathscr X$ from Construction~\ref{cons:cellular-postnikov}.  We first show by induction that each $\tau_{\ge(p+q,q)}\mathscr X_i \in O(S^{p+q,q})$; for $i = 0$, this is immediate.  Consider the rotated fiber sequence: 
\[ 
\Omega  \tau_{\ge(p+q,q)} K(\mathbf A_i, n_i) \longrightarrow \tau_{\ge(p+q,q)}\mathscr X_{i+1} \longrightarrow \tau_{\ge(p+q,q)}\mathscr X_i. 
\] 
Lemma~\ref{lem:taupq-inv} gives an estimate on the weak cellular class of $\Omega  \tau_{\ge(p+q,q)} K(\mathbf A_i, n_i)$.  Inductively, we deduce that $\tau_{\ge(p+q,q)}\mathscr X_{i+1} \in O(S^{p+q,q})$ by appeal to Lemma \ref{lem:localizationmappingspaces}(3)).

It remains to prove that $\tau_{\ge (p+q,q)}\mathscr X \in O(S^{p+q,q})$.
Let $\mathscr Y$ be $S^{p+q,q}$-null.
We have $\Map(\tau_{\ge(p+q,q)}\mathscr X, \mathscr Y) \weq \lim_n \Map(\tau_{\ge(p+q,q)}\mathscr X, \tau_{\le n} \mathscr Y)$ and we seek to prove that this space is contractible.
For any fixed $n$, and $i \gg n$, we have $\Map(\tau_{\ge(p+q,q)}\mathscr X, \tau_{\le n} \mathscr Y) \weq \Map(\tau_{\ge(p+q,q)}\mathscr X_i, \tau_{\le n} \mathscr Y)$, so it suffices to show that the latter space is contractible.
But since $\mathscr Y$ is $S^{p+q,q}$-null so is $\tau_{\le n} \mathscr Y$ by Lemma \ref{lem:truncationandpostnikovlayers}, and so the result follows from the first part.
\end{proof}

We also observe that in certain situations, simply taking weakly-cellular covers of the Postnikov layers yields a tower of nullifications.  

\begin{theorem}[Refined Postnikov tower] \label{thm:refined-Postnikov}
Suppose that $\mathscr X \in O(S^{q,q})$ is a simple motivic space.
We have 
\[ 
\tau_{\ge(q,q)}\mathscr \tau_{\le i-1} \mathscr X \weq \L^{i+q,q} \mathscr X. 
\]
Applying $\tau_{\ge(q,q)}$ to the fiber sequences $\mathscr X_{\le i} \to \mathscr X_{<i} \to K(\bpi_i \mathscr X, i+1)$ yields fiber sequences 
\[ 
\mathscr \L^{i+1+q,q} \mathscr X \longrightarrow \L^{i+q,q} \mathscr X \longrightarrow \tau_{\ge(i+1+q,q)}K(\bpi_i \mathscr X, i+1). 
\]
We have $\L^{q,q} \mathscr X = *$ and $\lim_n \L^{q+n,q} \mathscr X = \mathscr X$.
\end{theorem}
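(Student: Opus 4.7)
The plan is to establish the three assertions in the order in which they are most interdependent: the third claim is essentially immediate from earlier results, the first (identification) claim is proved by induction on $i$, and the second (fiber sequence) claim then drops out by applying $\tau_{\geq(q,q)}$ to the Postnikov tower and invoking the first claim. First, $\L^{q,q}\mathscr X = *$ is the defining condition of $\mathscr X \in O(S^{q,q})$. For $\lim_n \L^{q+n,q}\mathscr X \weq \mathscr X$, Theorem \ref{thm:fiber-to-trunc-is-cell} gives $\tau_{\geq(q+n,q)}\mathscr X \in O(S^{q+n,q})$, which by Lemma \ref{lem:Lpq-map-conn} is $(n-1)$-connected; thus $\mathscr X \to \L^{q+n,q}\mathscr X$ is $(n-1)$-connected, and since $\mathscr X$ is simple its Postnikov tower converges, so $\mathscr X \weq \lim_n \L^{q+n,q}\mathscr X$.

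For the first claim, I induct on $i \geq 0$. The base case is immediate: $\tau_{\leq -1}\mathscr X = *$ since $\mathscr X$ is pointed connected, so $\tau_{\geq(q,q)}\tau_{\leq -1}\mathscr X = *$, matching $\L^{q,q}\mathscr X = *$. For the inductive step, the Postnikov fiber sequence
\[ \tau_{\leq i}\mathscr X \to \tau_{\leq i-1}\mathscr X \to K(\bpi_i \mathscr X, i+1) \]
has base at least $1$-connected for $i \geq 1$, so Proposition \ref{prop:pqtruncationandfibersequences} (applied with $p=0$) shows $\L^{q,q}$ preserves it; hence so does $\tau_{\geq(q,q)} = \fib(\ph \to \L^{q,q}(\ph))$. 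Combining the inductive hypothesis with Lemma \ref{lem:taupq-inv} applied to $K(\bpi_i\mathscr X, i+1)$ (whose $\Omega^{q,q}$ is $(i+1)$-connective) yields the fiber sequence
\[ \tau_{\geq(q,q)}\tau_{\leq i}\mathscr X \to \L^{i+q,q}\mathscr X \to \tau_{\geq(i+1+q,q)} K(\bpi_i\mathscr X, i+1). \]

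It remains to identify $\tau_{\geq(q,q)}\tau_{\leq i}\mathscr X$ with $\L^{i+1+q,q}\mathscr X$. The space $\tau_{\geq(q,q)}\tau_{\leq i}\mathscr X$ is $S^{i+1+q,q}$-null: the space $\tau_{\leq i}\mathscr X$ is $S^{i+1+q,q}$-null by Corollary \ref{cor:truncatednessviahomotopysheaves}, its $\L^{q,q}$ is $S^{q,q}$-null (and hence $S^{i+1+q,q}$-null), and the fiber of a map of $S^{i+1+q,q}$-null spaces with connected base is $S^{i+1+q,q}$-null by Lemma \ref{lem:localizationmappingspaces}(3). The universal property of $\L^{i+1+q,q}$ then produces a comparison map $\L^{i+1+q,q}\mathscr X \to \tau_{\geq(q,q)}\tau_{\leq i}\mathscr X$. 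To show it is an equivalence I use Corollary \ref{cor:pqtruncationpreservesnilpotence} to write $\L^{i+1+q,q}\mathscr X \weq \lim_j \L^{i+1+q,q}\tau_{\leq i+j}\mathscr X$, starting from $\L^{i+1+q,q}\tau_{\leq i}\mathscr X = \tau_{\leq i}\mathscr X$, and match this refined Postnikov presentation stage-by-stage with the tower of $\tau_{\geq(q,q)}\tau_{\leq i+j}\mathscr X$ obtained by iterating the fiber sequence construction above. The main obstacle is precisely this identification step: the direct approach of showing $\fib(\mathscr X \to \tau_{\geq(q,q)}\tau_{\leq i}\mathscr X) \in O(S^{i+1+q,q})$ via Proposition \ref{prop:pqconnectivityandfibersequencesI}(1) is delicate because this fiber is not straightforwardly weakly $S^{i+1+q,q}$-cellular; one must instead argue through the refined Postnikov tower of Construction \ref{cons:cellular-postnikov}, using that the layers of both towers coincide because the $k$-invariants on the $\L^{i+1+q,q}$-side are obtained from the motivic $k$-invariants via exactly the same $\tau_{\geq(q,q)}$-operation appearing on the other side.

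The second claim is then automatic: applying $\tau_{\geq(q,q)}$ to the Postnikov fiber sequences (preserved via Proposition \ref{prop:pqtruncationandfibersequences} as above) and rewriting each term via the first claim and Lemma \ref{lem:taupq-inv} produces the asserted fiber sequence $\L^{i+1+q,q}\mathscr X \to \L^{i+q,q}\mathscr X \to \tau_{\geq(i+1+q,q)} K(\bpi_i\mathscr X, i+1)$.
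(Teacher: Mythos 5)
Your overall architecture (induction on $i$, preservation of the Postnikov fiber sequences by $\L^{q,q}$ via Proposition \ref{prop:pqtruncationandfibersequences}, the nullity of $\tau_{\ge(q,q)}\tau_{\le i}\mathscr X$, and the deduction of the second and third claims) is sound, and you correctly isolate the crux: showing that the comparison map $\L^{i+1+q,q}\mathscr X \to \tau_{\ge(q,q)}\tau_{\le i}\mathscr X$ is an equivalence, i.e.\ that $\mathscr X \to \tau_{\ge(q,q)}\tau_{\le i}\mathscr X$ is an $S^{i+1+q,q}$-equivalence. But the resolution you sketch for this crux does not work as stated. You propose to write $\L^{i+1+q,q}\mathscr X \weq \lim_j \L^{i+1+q,q}\tau_{\le i+j}\mathscr X$ and ``match this stage-by-stage with the tower of $\tau_{\ge(q,q)}\tau_{\le i+j}\mathscr X$.'' These two towers cannot be matched termwise: the first has limit $\L^{i+1+q,q}\mathscr X$ and layers $\L^{i+1+q,q}K(\bpi_{i+j+1}\mathscr X, i+j+2)$, while the second has limit $\tau_{\ge(q,q)}\mathscr X \weq \mathscr X$ and layers $\tau_{\ge(q,q)}K(\bpi_{i+j+1}\mathscr X, i+j+2)$ --- and $\L^{p,q}K$ and $\tau_{\ge(p,q)}K$ are the two \emph{complementary} pieces of $K$, not the same object. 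A termwise identification of these towers would in any case amount to asserting the first claim of the theorem for all larger indices, so the argument is circular in addition to being mismatched.

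The correct way to close the gap (and what the paper does) is a direct tower argument rather than a termwise matching. Consider the tower $\{\tau_{\ge(q,q)}\tau_{\le j}\mathscr X\}_j$ with its fiber sequences whose layers are $\tau_{\ge(j+1+q,q)}K(\bpi_j\mathscr X, j+1)$ (after Lemma \ref{lem:taupq-inv}). The essential input is Theorem \ref{thm:fiber-to-trunc-is-cell} (or Example \ref{ex:taupq-K}): these layers lie in $O(S^{j+1+q,q}) \subset O(S^{i+1+q,q})$ for $j \ge i$. Hence by Proposition \ref{prop:pqconnectivityandfibersequencesI}(1) the transition maps of the tower beyond stage $i$ are $S^{i+1+q,q}$-equivalences, and since the connectivities of the layers tend to infinity, Corollary \ref{cor:limit-of-Spq-equiv} lets you pass to the limit $\lim_j \tau_{\ge(q,q)}\tau_{\le j}\mathscr X \weq \tau_{\ge(q,q)}\mathscr X \weq \mathscr X$ and conclude that $\mathscr X \to \tau_{\ge(q,q)}\tau_{\le i}\mathscr X$ is an $S^{i+1+q,q}$-equivalence to an $S^{i+1+q,q}$-null space, which is exactly the identification you need. (Two small further points: in the nullity argument the fiber of a map of $S^{i+1+q,q}$-null spaces is null by Lemma \ref{lem:localizationmappingspaces}(2) (limits), not part (3); and your appeal to Corollary \ref{cor:pqtruncationpreservesnilpotence} to commute $\L^{i+1+q,q}$ with the Postnikov limit is legitimate but unnecessary once the argument above is in place.)
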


\begin{proof}
We apply Construction \ref{cons:cellular-postnikov} with $n_i =i$, $\mathscr X_i = \mathscr X_{<i}$.
The fiber sequences 
\[ 
\tau_{\ge(j+q,q)}K(\bpi_j \mathscr X, j) \longrightarrow \tau_{\ge(q,q)}\mathscr X_{j+1} \longrightarrow \tau_{\ge(q,q)}\mathscr X_{j} 
\] 
show that for $j \ge i$ the map $\tau_{\ge(q,q)}\mathscr X_{j+1} \to \tau_{\ge(q,q)}\mathscr X_{j}$ is an $S^{i+q,q}$-equivalence (i.e. $\tau_{\ge(j+q,q)}K(\bpi_j \mathscr X, j) \in O(S^{i+q,q})$), whereas for $j<i$ the space $\tau_{\ge(q,q)}\mathscr X_{j+1}$ is $S^{i+q,q}$-null if $\tau_{\ge(q,q)}\mathscr X_{j}$ is (i.e. $\tau_{\ge(j+q,q)}K(\bpi_j \mathscr X, j)$ is $S^{i+q,q}$-null).
Thus $\mathscr X \weq \tau_{\ge(q,q)} \mathscr X \to \tau_{\ge(q,q)}\mathscr X_{i}$ is an $S^{i+q,q}$-equivalence (use Corollary \ref{cor:limit-of-Spq-equiv}) to an $S^{i+q,q}$-null space, as needed for the first statement.

Now we prove the remaining statements.
The fiber sequence $\mathscr X_{\le i} \to \mathscr X_{<i} \to K(\bpi_i \mathscr X, i+1)$ is preserved by $\L^{q,q}$ by Proposition \ref{prop:pqtruncationandfibersequences}, and hence also by $\tau_{\ge(q,q)}$.
We have $\tau_{\ge(q,q)} K(\bpi_i \mathscr X, i+1) \weq \tau_{\ge(i+1+q,q)}K(\bpi_i \mathscr X, i+1)$ by Lemma \ref{lem:taupq-inv}.
Since $\mathscr X \in O(S^{q,q})$ we have $\L^{q,q} \mathscr X = *$.
Finally $\lim_n \L^{q+n,q} \mathscr X = \mathscr X$ by Lemma \ref{lem:Lpq-map-conn}.
\end{proof}
 
\subsubsection*{Refined Whitehead towers}
Recall also from Proposition~\ref{prop:functorialwhiteheadtowers} that a  pointed connected motivic space with nilpotent fundamental sheaf of groups has a functorially defined Whitehead tower.  We now show how to modify this tower in a fashion that preserves weak cellular classes.

\begin{construction} 
	\label{cons:cellular-whitehead}
	Let $\mathscr X \in \Spc(k)_{\ast}$ be a connected motivic space with $\bpi_1\mathscr{X}$ nilpotent and suppose further that $\mathscr X \in O(S^{q,q})$.  Let $n_i, \mathbf A_i$ be the integers and sheaves from Proposition~\ref{prop:functorialwhiteheadtowers}.  We can inductively and uniquely build commutative diagrams of fiber sequences
	\begin{equation*}
	\begin{CD}
	\mathscr X\lra{i+1,q} @>>> \mathscr X\lra{i,q} @>>> \tau_{\ge (n_i + q,q)} K(\mathbf A_i, n_i) \\
	@VVV @VVV @VVV \\
	\mathscr X\lra{i+1} @>>> \mathscr X\lra{i} @>>> K(\mathbf A_i, n_i)
	\end{CD}
	\end{equation*}
	such that the following statements hold:
	\begin{enumerate}[noitemsep,topsep=1pt]
	\item $\mathscr X\lra{0, q} = \mathscr X$
	\item $\mathscr X\lra{i, q} \in O(S^{n_i + q, q})$,
	\item $\Omega^{q,q} \mathscr X\lra{i, q} \weq \Omega^{q,q} \mathscr X\lra{i}$.
	\end{enumerate}
\end{construction}

\begin{proof}
Suppose we have obtained $\mathscr X\lra{i,q}$ and proved (2), (3) for it.
(Note that this is trivial for $i=0$.)
Since $\mathscr X\lra{i,q} \in O(S^{n_i + q, q})$, the composite 
\[ 
\mathscr X\lra{i,q} \longrightarrow \mathscr X\lra{i} \longrightarrow K(\mathbf A_i, n_i) 
\] 
factors uniquely through $\tau_{\ge (n_i + q,q)} K(\mathbf A_i, n_i)$.
We thus uniquely obtain the space $\mathscr X\lra{i+1,q}$.
The fiber sequence 
\[ 
\Omega \tau_{\ge (n_i + q,q)} K(\mathbf A_i, n_i) \longrightarrow \mathscr X\lra{i+1,q} \longrightarrow \mathscr X\lra{i,q} 
\] 
shows (via Lemmas~\ref{lem:taupq-K} and \ref{lem:localizationmappingspaces}) that $\mathscr X\lra{i+1,q} \in O(S^{n_i - 1 + q,q})$.
By construction and induction we have $\Omega^{q,q} \mathscr X\lra{i+1,q} \weq \Omega^{q,q} \mathscr X\lra{i+1}$, which is $n_{i+1}$-connective.
We claim that also $\mathscr X\lra{i+1,q}$ is connected; it will follow via Corollary \ref{cor:pqtruncationpreservesconnectivity} that $\mathscr X\lra{i+1,q} \in O(S^{n_{i+1} + q,q})$, as needed.

Proving that $\mathscr X\lra{i+1,q}$ is connected is only an issue when $n_i = 1$.\NB{this is a bit of a mess}
We must show that $\bpi_1 \mathscr X\lra{i,q} \to \bpi_1 \tau_{\ge (1 + q,q)} K(\mathbf A_i, 1)$ is surjective.
Consider the fiber sequence 
\[ 
F \longrightarrow \Sigma^\infty_{S^1} \mathscr X\lra{i,q} \longrightarrow f_q \Sigma H \mathbf A_i \weq \tau_{\ge (1+q,q)}\Sigma H\mathbf A_i \in \SH^{S^1}(k). \]
Since $\bpi_1 \mathscr X\lra{i,q} \to \bpi_1 \Sigma^\infty_{S^1} \mathscr X\lra{i,q} \weq (\bpi_1 \mathscr X\lra{i,q})^{ab}_{\A^1}$ is surjective \cite[Lemma 4.1(2)]{ABHWhitehead}, it suffices to prove that $F \in \SH^{S^1}(k)_{\ge 1}$.
But also $F \in \SH^{S^1}(k)(q)_{\ge 0}$, and so it is enough to prove that $\bpi_1(\Sigma^\infty_{S^1} \mathscr X\lra{i,q})_{-q} \to \bpi_1(f_q \Sigma H \mathbf A_i)_{-q}$ is surjective \cite[Lemma 6.2(3)]{BachmannYakerson}.
For this it suffices to show that the composite $\bpi_1(\mathscr X\lra{i,q})_{-q} \to \bpi_1(\Sigma^\infty_{S^1} \mathscr X\lra{i,q})_{-q} \to \bpi_1(f_q \Sigma H \mathbf A_i)_{-q}$ is surjective.
Note that $\bpi_1(f_q \Sigma H \mathbf A_i)_{-q} \weq (\mathbf A_i)_{-q}$ and $\bpi_1(\mathscr X\lra{i,q})_{-q} \weq \bpi_1(\mathscr X\lra{i})_{-q}$.
It is thus enough to prove that $\bpi_1(\mathscr X\lra{i}) \to \mathbf A_i$ is surjective, which holds since $\mathscr X\lra{i+1}$ is connected.
\end{proof}

\begin{rem} \label{rem:whitehead-nilpotent}
The construction shows that each $\mathscr X\lra{i,q}$ is a motivic space with nilpotent fundamental sheaf of groups.  Similarly if $\mathscr X$ is a nilpotent motivic space, then an induction argument making repeated appeal to \cite[Corollary 3.3.7]{AFHLocalization} together with the fact that the spaces $\tau_{\ge (n_i + q,q)} K(\mathbf A_i, n_i)$ are nilpotent (being grouplike commutative monoids) implies that each $\mathscr X\lra{i,q}$ is a nilpotent motivic space.
\end{rem}

\begin{amplification} 
	\label{amplification:2-eff-whitehead}
Let $k$ be a perfect field of exponential characteristic $e$.  Assume $\mathscr X \in \Spc(k)_{\ast}$ is a connected motivic space with $\bpi_1\mathscr{X}$ nilpotent and suppose further that $\mathscr X \in O(S^{q,q})$.
Assume further that one of the following holds:
\begin{itemize}[noitemsep,topsep=1pt]
\item $q \geq \epsilon(k)$ (e.g. $q \ge 2$), or
\item $q \ge \epsilon'(k)$ and $\mathbf A_i$ is $e$-periodic for all $i$.
\end{itemize}

If $\mathscr{X}\lra{i,q}$ is the tower from \textup{Construction~\ref{cons:cellular-whitehead}}, then there exist fiber sequences 
\[ 
\mathscr X\lra{i+1,q} \longrightarrow \mathscr X\lra{i,q} \longrightarrow \Omega^\infty \Sigma^{n_i + q,q} \mathscr B_i 
\] 
for some $\mathscr B_i \in \SH(k)^{\eff\heartsuit}$.  The connectivity of $\mathscr X\lra{i,q}$ as well as the sequence $n_i$ tends to $\infty$ with $i$.
\end{amplification}

\begin{proof}
The only thing that remains to be established is that $\tau_{\ge (n_i + q,q)} K(\mathbf A_i, n_i) \weq \Omega^\infty \Sigma^{n_i +q,q} \mathscr B_i$ for some $\mathscr B_i$ as claimed.  This fact follows from Theorem \ref{thm:conservativityofgmstabilization}.
\end{proof}

\begin{rem} \label{rmk:modified-layers}
In the proof of Amplification \ref{amplification:2-eff-whitehead}, we used (and established) the following crucial fact: $\tau_{\ge (n_i + q,q)} K(\mathbf A_i, n_i) \weq \Omega^\infty \Sigma^{n_i +q,q} \mathscr B_i$, that is, the layers of the modified tower are infinite $\mathbb P^1$-loop spaces.
\end{rem}

\todo{Add a question about $q = 1$?}

\subsubsection*{Digression: weak vs. strong cellularity}
\begin{cor} \label{cor:detect-iso-Spq-cellular}
Let $f: \mathscr X \to \mathscr Y$, with $\mathscr X, \mathscr Y \in O(S^{q,q})$ both motivic spaces with $\aone$-nilpotent fundamental sheaf of groups.  The map $f$ is an equivalence if and only if the induced map $\bpi_i(\mathscr X)_{-q} \to \bpi_i(\mathscr Y)_{-q}$ is an isomorphism for every $i \ge 2$,
and also the induced maps 
\[ 
(\bpi_1(\mathscr X)^{(j-1)}_{\A^1}/\bpi_1(\mathscr X)^{(j)}_{\A^1})_{-q} \longrightarrow (\bpi_1(\mathscr Y)^{(j-1)}_{\A^1}/\bpi_1(\mathscr Y)^{(j)}_{\A^1})_{-q} 
\] 
are isomorphisms for every $j \ge 1$.
Here $\bpi_1(\mathscr X)^{(j)}_{\A^1}$ refers to the $\A^1$-lower central series of \textup{Paragraph~\ref{par:lowercentralseries}}.
\end{cor}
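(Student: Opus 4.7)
The forward implication is immediate from exactness of contraction together with functoriality. For the converse I would apply the refined Whitehead tower of Construction~\ref{cons:cellular-whitehead} to both $\mathscr X$ and $\mathscr Y$, producing towers $\mathscr X\lra{\bullet,q}$ and $\mathscr Y\lra{\bullet,q}$ with $\mathscr X\lra{0,q} = \mathscr X$, $\mathscr Y\lra{0,q} = \mathscr Y$, and an induced map $f\lra{\bullet,q}$ by functoriality. Following Proposition~\ref{prop:functorialwhiteheadtowers}, the sheaves $\mathbf A_i(\mathscr X)$ cycle through the $\A^1$-derived-series subquotients of $\bpi_1(\mathscr X)$ at stages with $n_i = 1$ and through the higher homotopy sheaves $\bpi_{n_i}(\mathscr X)$ at stages with $n_i \ge 2$, and similarly for $\mathscr Y$; assuming $q \ge 2$, Amplification~\ref{amplification:2-eff-whitehead} ensures that the connectivities of $\mathscr X\lra{i,q}$ and $\mathscr Y\lra{i,q}$ tend to $\infty$ with $i$.

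I would then reduce to showing that $f$ induces an equivalence on each layer $\tau_{\ge(n_i+q,q)} K(\mathbf A_i,n_i)$. The fiber sequences from Construction~\ref{cons:cellular-whitehead} for $\mathscr X$ and $\mathscr Y$ assemble into fiber sequences
\[ \hofib(f\lra{i+1,q}) \to \hofib(f\lra{i,q}) \to \hofib(\text{layer map}_i). \]
If every layer map is an equivalence, then inductively $\hofib(f\lra{0,q}) \weq \hofib(f\lra{i,q})$ for every $i$; the latter is arbitrarily highly connected, so by hypercompleteness of $\Shv_\Nis(\Sm_k)$ it must be contractible, yielding the desired equivalence $f = f\lra{0,q}$.

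For the equivalence-on-layers step, Example~\ref{ex:taupq-K} identifies the layer map with $\Omega^\infty_{S^1} f_q \Sigma^{n_i} H\phi_i$, where $\phi_i: \mathbf A_i(\mathscr X) \to \mathbf A_i(\mathscr Y)$ is the map of strictly $\A^1$-invariant sheaves induced by $f$. Since $\Omega^\infty_{S^1}$ is conservative on connective $S^1$-spectra and contraction is exact (Paragraph~\ref{par:contraction}), factoring $\phi_i$ through its kernel and cokernel reduces matters to the claim that $f_q H\mathbf M = 0$ in $\SH^{S^1}(k)$ whenever $\mathbf M_{-q} = 0$. This in turn follows by a direct computation on the generators $\Sigma^{a+q,q} X_+$ of $\SH^{S^1}(k)(q)$: since $H\mathbf M$ is concentrated in degree $0$, the group $[\Sigma^{a+q,q} X_+, H\mathbf M]$ vanishes for $a \ne 0$ and equals $\mathbf M_{-q}(X)$ for $a = 0$, which is zero by assumption.

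The main obstacle will be the convergence step: although Amplification~\ref{amplification:2-eff-whitehead} supplies the required connectivity tending to $\infty$, some care is needed to transfer this through $\hofib$ to a highly connected fiber at stage $0$, and to legitimately invoke hypercompleteness of the Nisnevich $\infty$-topos on $\Sm_k$.
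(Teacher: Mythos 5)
Your argument is correct and follows essentially the same route as the paper: both pass to the refined Whitehead towers of Construction~\ref{cons:cellular-whitehead} (aligned so the bases are the derived-series subquotients of $\bpi_1$ followed by the higher homotopy sheaves), reduce the equivalence to the layer maps $\tau_{\ge(n_i+q,q)}K(\mathbf A_i,n_i)\to\tau_{\ge(n_i+q,q)}K(\mathbf B_i,n_i)$, and detect those via the $q$-th contraction — the paper simply cites \cite[Lemma 6.1(1)]{BachmannYakerson} for the criterion you re-derive, and leaves the convergence step implicit where you spell it out. Two cosmetic points: the connectivity of $\mathscr X\lra{i,q}$ tending to $\infty$ already follows from Construction~\ref{cons:cellular-whitehead} and Lemma~\ref{lem:Lpq-map-conn} without the $q\ge 2$ hypothesis of Amplification~\ref{amplification:2-eff-whitehead}, and in your vanishing computation the groups $[\Sigma^{a+q,q}X_+,H\mathbf M]$ for negative simplicial shifts are higher Nisnevich cohomology of $\mathbf M_{-q}$, so they vanish because $\mathbf M_{-q}=0$ rather than for degree reasons.
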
\todo{add question about $\bpi_1$?}
\begin{proof}
Let $\mathscr X\lra{\bullet,q}, \mathscr Y\lra{\bullet,q}$ be functorial refined Whitehead towers for $\mathscr X, \mathscr Y$, with layers $(\mathbf A_\bullet, n_\bullet)$ and $(\mathbf B_\bullet, n_\bullet)$.
We can assume that $n_i = 1$ for $i \le N+1$ (a common bound for the derived lengths of $\bpi_1 \mathscr X$, $\bpi_1 \mathscr Y$) and $N_{N+i} = i$.
Also $\mathbf A_{N+i} = \bpi_i \mathscr X$ for $i > 1$, and similarly for $\mathscr Y$.
Finally $\mathbf A_i \weq \bpi_1(\mathscr X)^{(i-1)}_{\A^1}/\bpi_1(\mathscr X)^{(i)}_{\A^1}$ for $i \le N+1$, and similarly for $\mathscr Y$.
Since $\mathscr X \weq \lim_i \mathscr X\lra{i,q}$ and similarly for $\mathscr Y$, the map $f$ is an equivalence if and only if the induced maps 
\[ 
\tau_{\ge (n_i+q,q)}K(\mathbf A_i, n_i) \longrightarrow \tau_{\ge (n_i+q,q)}K(\mathbf B_i, n_i) 
\] 
are equivalences as well.  Using \cite[Lemma 6.1(1)]{BachmannYakerson}, this just means that $(\mathbf A_i)_{-q} \xrightarrow{\weq} (\mathbf B_i)_{-q}$, which is what we wanted to show.
\end{proof}

\begin{proposition}
Let $\mathscr X \in O(S^{p+q,q})$ with $p \ge 2$.
Then $\mathscr X$ is strongly $S^{p+q,q}$-cellular, that is, $\mathscr X$ lies in the subcategory of $\Spc(k)$ generated under colimits by $S^{p+q,q} \wedge \Spc(k)_*$.
\end{proposition}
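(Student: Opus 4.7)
Since $\mathscr X \in O(S^{p+q,q})$ with $p \ge 2$, Lemma \ref{lem:Lpq-map-conn} gives that $\mathscr X$ is $(p-1)$-connected, hence simply connected and nilpotent. Write $\mathcal C \subset \Spc(k)$ for the strongly $S^{p+q,q}$-cellular subcategory. The plan is to exhibit $\mathscr X \in \mathcal C$ by walking up the refined Postnikov tower of Theorem \ref{thm:refined-Postnikov}, which presents $\mathscr X \weq \lim_n \L^{n+q,q} \mathscr X$ with $\L^{p+q,q}\mathscr X = *$ (since $\mathscr X \in O(S^{p+q,q})$) and principal fiber sequences
\[
\L^{i+1+q,q}\mathscr X \to \L^{i+q,q}\mathscr X \to \tau_{\ge(i+1+q,q)}K(\bpi_i \mathscr X, i+1)
\]
for each $i \ge p$.

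First I would establish two auxiliary lemmas. \textbf{(a)} Each Postnikov layer $\tau_{\ge(m+q,q)}K(\mathbf A, m)$ with $m \ge p+1$ lies in $\mathcal C$: when $q \ge 2$, Amplification \ref{amplification:2-eff-whitehead} identifies it with $\Omega^{\infty}\Sigma^{m+q,q}\mathscr B$ for some $\mathscr B \in \SH(k)^{\eff\heartsuit}$; the stable spectrum is strongly cellular by Proposition \ref{prop:pqlocalizationvstruncation} (stably the non-negative part of the $t$-structure is closed under colimits, so weak and strong cellularity agree), and Proposition \ref{prop:deloopingpreservesconnectivity} pushes this through $\Omega^\infty$. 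The cases $q = 0, 1$ would be handled directly using the classical CW decomposition of Eilenberg--Mac Lane spaces combined with the contraction formalism of Paragraph \ref{par:contraction}. \textbf{(b)} $\mathcal C$ is closed under the principal fibrations appearing in the tower: use the bar-construction presentation $\L^{i+q,q}\mathscr X \weq \L^{i+1+q,q}\mathscr X \sslash \Omega B_i$ to realize the total space as a geometric colimit of iterated products $(\Omega B_i)^n \times \L^{i+1+q,q}\mathscr X$, then push cellularity through the Cartesian products via the split cofiber sequences $A \vee B \to A \times B \to A \wedge B$.

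With (a) and (b) in hand, inductively build $\L^{n+q,q}\mathscr X \in \mathcal C$ for each $n \ge p$, starting from the trivial base case $\L^{p+q,q}\mathscr X = *$ (the empty colimit). Finally, pass to $\mathscr X = \lim_n \L^{n+q,q}\mathscr X$: since the simplicial connectivity of $\mathscr X \to \L^{n+q,q}\mathscr X$ tends to infinity, a Mittag--Leffler argument lets us reorganize the inverse limit as a sequential colimit of the nested cellular presentations from each finite stage, placing $\mathscr X \in \mathcal C$. The main obstacle will be step (b): strong cellularity is not automatically closed under Cartesian products, and the induction direction in the tower is subtle since the bar construction naturally expresses the ``less informative'' stage as a colimit of the ``more informative'' one, opposite to the direction our induction needs to run. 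Resolving this will require exploiting the simple connectedness of each base $B_i$ together with the fact that each $B_i$ arises as $\Omega^\infty$ of a stably cellular spectrum, which should supply enough structure to carry the verification through.
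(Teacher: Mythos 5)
Your proposal takes a genuinely different route from the paper, but it has gaps that I think are fatal rather than merely technical. The paper does not use the refined Postnikov tower at all here: it runs a direct cell-attachment (small object) argument, building an increasing sequence $\mathscr X_1 \to \mathscr X_2 \to \cdots \to \mathscr X$ of strongly cellular spaces by first wedging on cells $X_+ \wedge S^{i+q,q}$ to make $\bpi_i(\Omega^{q,q}\mathscr X_i) \to \bpi_i(\Omega^{q,q}\mathscr X)$ surjective and then coning off cells mapping to the fiber to make it injective, and finally invoking Corollary~\ref{cor:detect-iso-Spq-cellular} (the detection of equivalences between $S^{q,q}$-cellular solvable spaces via contracted homotopy sheaves) to conclude $\mathscr X \weq \colim_i \mathscr X_i$. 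That detection lemma is the key input, and it is entirely absent from your plan.

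The specific failures in your outline are the ones you half-acknowledge but do not resolve, and they are not resolvable along the lines you suggest. First, in step (a) you appeal to Proposition~\ref{prop:deloopingpreservesconnectivity} to ``push strong cellularity through $\Omega^\infty$,'' but that proposition is about \emph{weak} cellularity; $\Omega^\infty$ does not preserve colimits (coproducts become products) and does not transport a stable cell structure to an unstable one — proving that $\Omega^\infty$ of a cellular spectrum is strongly cellular is essentially as hard as the proposition itself. Second, in step (b) the class of strongly cellular spaces is closed under colimits only, not under cofiber extensions, finite products, or fibers — this is exactly why the paper introduces weak cellularity as a separate notion — and the cofiber sequence $A \vee B \to A \times B \to A \wedge B$ exhibits $A \wedge B$ as a colimit of the other two, not $A \times B$, so products cannot be handled this way. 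Third, the tower's fiber sequences express $\L^{i+q,q}\mathscr X$ (the less cellular stage) as a bar-construction colimit built from $\L^{i+1+q,q}\mathscr X$ (the more cellular stage), so your induction runs against the grain of the only available colimit presentations; and the final passage from the stages to $\mathscr X = \lim_n \L^{n+q,q}\mathscr X$ is an inverse limit, with structure maps pointing away from $\mathscr X$, so there is no nested system of cellular presentations to take a sequential colimit of. I would abandon the Postnikov-tower strategy and instead attach cells directly against $\mathscr X$, using a Whitehead-type criterion (detection of equivalences on $\bpi_*(\Omega^{q,q}(\ph))$ for $S^{q,q}$-cellular spaces) to recognize when the cellular approximation has converged.
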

\begin{proof}
We may assume $\mathscr X \in \Spc(k)_*$.
We shall construct $\mathscr X_1 \to \mathscr X_2 \to \dots \to \mathscr X$ such that each $\mathscr X_i$ can be built using colimits of the asserted form, and such that the induced map $\bpi_n(\Omega^{q,q}\mathscr X_i) \to \bpi_n(\Omega^{q,q} \mathscr X)$ is an isomorphism for $n \le i$.
Using Corollary \ref{cor:detect-iso-Spq-cellular} we deduce that $\mathscr X \weq \colim_i \mathscr X_i$, concluding the proof.

It remains to construct the $\mathscr X_i$.
Since $\mathscr X$ is simply connected, we may put $\mathscr X_1 = *$; in fact we may put $\mathscr X_i = *$ for $i < p$.
Now inductively suppose $\mathscr X_{i-1}$ has been constructed; we shall construct $\mathscr X_{i}$.
By construction we may assume $i \ge p$.
First set \[ \mathscr Y = \mathscr X_{i-1} \vee \bigvee_{X_+ \wedge S^{i+q,q} \to \mathscr X} X_+ \wedge S^{i+q,q}, \] where the sum is over $X \in \Sm_k$ with a map as indicated.
Then $\bpi_n(\mathscr Y) \weq \bpi_n(\mathscr X_{i-1})$ for $n < i$ (since $\tau_{<i}$ preserves colimits), and hence $\bpi_n(\Omega^{q,q}\mathscr Y) \weq \bpi_n(\Omega^{q,q}\mathscr X_{i-1})$ for the same $n$.
Moreover by construction, $\bpi_i(\Omega^{q,q} \mathscr Y) \to \bpi_i(\Omega^{q,q} \mathscr X)$ is surjective.
Let $\mathscr F = \fib(\mathscr Y \to \mathscr X)$.
Set \[ \mathscr K = \bigvee_{X_+ \wedge S^{i+q,q} \to \mathscr F} X_+ \wedge S^{i+q,q} \] and form the pushout
\begin{equation*}
\begin{CD}
\mathscr K @>>> \mathscr Y \\
@VVV      @VVV \\
* @>>> \mathscr X_i.
\end{CD}
\end{equation*}
We get \[ \bpi_n(\mathscr X_i) \weq \bpi_n(\mathscr Y) \weq \bpi_n(\mathscr X_{i-1}) \] for $n < i$, and \[ \bpi_i(\mathscr X_i) \weq \bpi_i(\mathscr Y)/\bpi_i(\mathscr K) \] (the cofiber of $f: \mathscr Y \to \mathscr X_i$ is $\Sigma \mathscr K$, and so $\fib(f)$ is $i$-connective with $\bpi_i \fib(f) \weq \bpi_i \Omega \Sigma K$ \cite[Theorem 4.1]{AFComparison}).
It follows by induction (and Paragraph \ref{par:contraction}) that $\bpi_n(\Omega^{q,q}\mathscr X_i) \weq \bpi_n(\Omega^{q,q}\mathscr X)$ for $n<i$, and hence \[ \bpi_i(\Omega^{q,q}\mathscr Y)/\bpi_i(\Omega^{q,q}\mathscr F) \weq \bpi_i(\Omega^{q,q}\mathscr X). \]
By construction, $\bpi_i(\Omega^{q,q} \mathscr K) \to \bpi_i(\Omega^{q,q} \mathscr F)$ is surjective.
Using the relationship between $\Omega^{q,q}$ and contraction (see Paragraph \ref{par:contraction}), we conclude that \[ \bpi_i(\Omega^{q,q} \mathscr X_i) \weq \bpi_i(\mathscr X_i)_{-q} \weq \bpi_i(\mathscr Y)_{-q}/\bpi_i(\mathscr K)_{-q} \weq \bpi_i(\Omega^{q,q}\mathscr Y)/\bpi_i(\Omega^{q,q}\mathscr F) \weq \bpi_i(\Omega^{q,q} \mathscr X). \]
This concludes the proof.
\end{proof}

\subsection{Unstable cellularity of loop spaces and fibers}
Using the weakly cellular Whitehead tower constructed above we can deduce consequences about weak cellularity of loop spaces. 

\begin{proposition}
	\label{prop:unstableconnectivityofloopspaces}
Assume $\mathscr{X} \in O(S^{p,q})$ is a motivic space with $\aone$-nilpotent fundamental sheaf of groups.
\begin{enumerate}[noitemsep,topsep=1pt]
	\item The space $\Omega \mathscr{X}$ lies in $O(S^{p-1,q})$ (provided $p>0$).
	\item The space $\Omega^{1,1}\mathscr{X}$ lies in $O(S^{p-1,q-1})$ (provided $p,q>0$).
	\item The space $\Omega^{2,1}\mathscr{X}$ lies in $O(S^{p-2,q-1})$ (provided $p>1,q>0$).
\end{enumerate}
\end{proposition}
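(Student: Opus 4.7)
The plan is to apply the refined Whitehead tower of Construction~\ref{cons:cellular-whitehead} to $\mathscr{X}$, yielding a tower $\cdots \to \mathscr{X}\langle 2,q\rangle \to \mathscr{X}\langle 1,q\rangle \to \mathscr{X}\langle 0,q\rangle = \mathscr{X}$ together with fiber sequences $\mathscr{X}\langle i+1,q\rangle \to \mathscr{X}\langle i,q\rangle \to L_i$, where Amplification~\ref{amplification:2-eff-whitehead} identifies each layer as $L_i \simeq \Omega^{\infty}\Sigma^{n_i+q,\,q}\mathscr{B}_i$ for some $\mathscr{B}_i \in \SH(k)^{\eff\heartsuit}$.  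Since Lemma~\ref{lem:Lpq-map-conn} forces $\mathscr{X}$ to be $(p-q-1)$-connected, the indices satisfy $n_i \ge p-q$ and tend to infinity.

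For each $(a,b) \in \{(1,0),(1,1),(2,1)\}$ corresponding to the three parts, I apply $\Omega^{a,b}$ to the tower; being a right adjoint it preserves fiber sequences, and the layers become
\[ \Omega^{a,b} L_i \simeq \Omega^{\infty}\Sigma^{n_i+q-a,\,q-b}\mathscr{B}_i. \]
Proposition~\ref{prop:deloopingpreservesconnectivity} sends stably weakly cellular objects to unstably weakly cellular ones, so $\Omega^{a,b} L_i \in O(S^{n_i+q-a,\,q-b}) \subseteq O(S^{p-a,\,q-b})$, the target cellular class; in the ranges at hand each $\Omega^{a,b}L_i$ is furthermore simply connected, which will let us invoke Proposition~\ref{prop:pqconnectivityandfibersequencesI}(2).

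To conclude that $\Omega^{a,b}\mathscr{X} \in O(S^{p-a,\,q-b})$, I argue stage by stage along the tower.  Since $n_i \to \infty$, the spaces $\mathscr{X}\langle i,q\rangle$ become arbitrarily highly connected, and hence so do the $\Omega^{a,b}\mathscr{X}\langle i,q\rangle$; combined with Corollary~\ref{cor:limit-of-Spq-equiv}, this supplies a ``base case at infinity'' at which the space is trivial, hence in $O(S^{p-a,\,q-b})$.  Propagating back through the tower, Proposition~\ref{prop:pqconnectivityandfibersequencesI}(2) applied to each fiber sequence $\Omega^{a,b}\mathscr{X}\langle i+1,q\rangle \to \Omega^{a,b}\mathscr{X}\langle i,q\rangle \to \Omega^{a,b}L_i$ inductively delivers $\Omega^{a,b}\mathscr{X}\langle i,q\rangle \in O(S^{p-a,\,q-b})$, including at $i=0$.

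The main obstacle is the edge regime $p \in \{q, q+1\}$, where $\mathscr{X}$ need not be simply connected and the connectedness hypothesis of Proposition~\ref{prop:pqconnectivityandfibersequencesI}(2) must be justified stage by stage.  When $\mathscr{X}$ is simply connected, part (1) in fact admits a direct one-line proof via Lemma~\ref{lem:Lpq-loop-B}(2):
\[ L^{p-1,q}\Omega\mathscr{X} \simeq \Omega L^{p,q}\mathscr{X} = \Omega(\ast) = \ast, \]
and part (3) follows from parts (1) and (2).  The substantive content therefore lives in part (2), where no analogue of Lemma~\ref{lem:Lpq-loop-B}(2) for $\Omega^{1,1}$ is available unstably, and the refined Whitehead tower argument above is essential.
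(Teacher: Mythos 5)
Your proof follows the paper's argument essentially verbatim: loop the weakly cellular Whitehead tower of Construction~\ref{cons:cellular-whitehead}/Amplification~\ref{amplification:2-eff-whitehead}, show the looped layers land in the target cellular class via Propositions~\ref{prop:stablecellularityandlooping} and~\ref{prop:deloopingpreservesconnectivity}, kill the tower at a finite stage using connectivity, and descend with Proposition~\ref{prop:pqconnectivityandfibersequencesI}. The only discrepancies are cosmetic: the paper's ``base case at infinity'' is that the transition maps are $S^{p-a,q-b}$-equivalences for $i\ge N$ (by Proposition~\ref{prop:pqconnectivityandfibersequencesI}(1)) and $\L^{p-a,q-b}$ preserves connectivity, so $\L^{p-a,q-b}\Omega^{a,b}\mathscr X\lra{N,q}$ is $\infty$-connected and hence trivial --- rather than an appeal to Corollary~\ref{cor:limit-of-Spq-equiv}, which is not quite the right tool --- and for part (1) with $q\le 1$ one must use the $S^1$-spectrum description of the layers (Example~\ref{ex:taupq-K}) since Amplification~\ref{amplification:2-eff-whitehead} requires $q\ge 2$, exactly the caveat the paper records for parts (2) and (3).
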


\begin{proof}
	The functors $\Omega$, $\Omega^{1,1}$ and $\Omega^{2,1}$ preserve fiber sequences.  Consider the weakly cellular Whitehead tower of Construction~\ref{cons:cellular-whitehead}. Looping, we obtain fiber sequences of the form
	\[
	\Omega^{\infty}_{S^1} \Omega^2 \mathscr{B}_i \longrightarrow \Omega \mathscr{X} \langle i+1,q \rangle \longrightarrow \Omega \mathscr{X} \langle i,q \rangle. 
	\]
	By assumption $\mathscr{B}_i \in O(S^{n_i+q,q})$, so the space $\Omega^2\Omega^\infty_{S^1} \mathscr{B}_i$ lies in $O(S^{n_i-2+q,q})$ by appeal to Proposition~\ref{prop:stablecellularityandlooping}(1) and Proposition~\ref{prop:deloopingpreservesconnectivity}.  Therefore, Proposition~\ref{prop:pqconnectivityandfibersequencesI}(1) implies that $\Omega \mathscr{X} \langle i+1,q \rangle \to \Omega \mathscr{X} \langle i,q \rangle$ is an $S^{p-1,q}$-equivalence for $i$ sufficiently large, say $i \ge N$.
	Since $\L^{p-1,q}$ preserves connectivity (Corollary \ref{cor:pqtruncationpreservesconnectivity}), we find that the space $\L^{p-1,q} \Omega \mathscr X \lra{N,q} \weq \L^{p,q} \Omega \mathscr X \lra{N,q}$ is $\infty$-connected, and hence vanishes.
	Next consider the fiber sequences \[ \Omega \mathscr{X} \langle i+1,q \rangle \longrightarrow \Omega \mathscr{X} \langle i,q \rangle \longrightarrow \Omega^{\infty}_{S^1} \Omega \mathscr{B}_i. \]
	By descending induction (starting with $i=N-1$) we see using Proposition \ref{prop:pqconnectivityandfibersequencesI} that $\Omega \mathscr{X} \langle i,q \rangle \in O(S^{p-1,q})$, as needed.

	The other statements are deduced in a completely analogous fashion, appealing to the other parts of Proposition~\ref{prop:stablecellularityandlooping}.  
	(Note in particular that (2) and (3) are trivially true if $q \in \{0,1\}$, so we may assume that $q \ge 2$ and hence use the version of the Whitehead tower with layers infinite $\mathbb{P}^1$-loop spaces, i.e., Amplification \ref{amplification:2-eff-whitehead}.)
\end{proof}

\begin{rem} \label{rem:prove-cellularity-whitehead}
Suppose given fiber sequences 
\[ 
\mathscr X_{i+1} \longrightarrow \mathscr X_i \longrightarrow \mathscr B_i 
\] 
for $i \ge 0$, with all spaces connected, $\mathscr B_i \in O(S^{p,q})$ for all $i$, $\Omega \mathscr B_i \in O(S^{p,q})$ for $i$ sufficiently large, and the connectivity of the $\mathscr X_i$ tending to infinity with $i$.
Then the argument in the proof of Proposition \ref{prop:unstableconnectivityofloopspaces} shows that each $\mathscr{X}_i \in O(S^{p,q})$.
\end{rem}

\begin{theorem}
	\label{thm:cellularityoffibers}
	If $f: \mathscr{X} \to \mathscr{Y}$ is a map of pointed, connected spaces with $\mathscr X \in O(S^{p-1,q}), \mathscr Y \in O(S^{p,q})$ and $p-q \geq 2$, then $\fib(f) \in O(S^{p-1,q})$.
\end{theorem}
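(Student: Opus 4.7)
The plan is to extend the fiber sequence $\fib(f) \to \mathscr X \to \mathscr Y$ one step to the left, obtaining the fiber sequence
\[ \Omega \mathscr Y \longrightarrow \fib(f) \longrightarrow \mathscr X, \]
and then apply Proposition~\ref{prop:pqconnectivityandfibersequencesI}(2), which asserts that if the fiber and base of a fiber sequence (with connected base) are weakly $A$-cellular, then so is the total space.

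First I would verify the connectivity hypotheses forced by $p - q \ge 2$. Since $\mathscr Y \in O(S^{p,q})$, Lemma~\ref{lem:Lpq-map-conn} gives that $\mathscr Y$ is $(p-q-1)$-connected, hence simply connected; in particular $\mathscr Y$ is a nilpotent, and therefore solvable, motivic space. Likewise $\mathscr X \in O(S^{p-1,q})$ is $(p-q-2)$-connected, and since $p-q\ge 2$ this at least gives $\mathscr X$ connected, so the three-term fiber sequence displayed above has connected base.

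Next, since $p \ge q+2 \ge 2 > 0$ and $\mathscr Y$ is solvable, Proposition~\ref{prop:unstableconnectivityofloopspaces}(1) applies to $\mathscr Y$, yielding $\Omega \mathscr Y \in O(S^{p-1,q})$. Together with the hypothesis $\mathscr X \in O(S^{p-1,q})$, we now have a fiber sequence whose fiber $\Omega \mathscr Y$ and base $\mathscr X$ both lie in $O(S^{p-1,q})$, with $\mathscr X$ connected. Proposition~\ref{prop:pqconnectivityandfibersequencesI}(2) then yields $\fib(f) \in O(S^{p-1,q})$, which completes the proof.

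No step looks serious; the only subtlety is making sure $\Omega \mathscr Y$ really lies in $O(S^{p-1,q})$, which is where the $p - q \ge 2$ assumption is used (it guarantees $\mathscr Y$ is simply connected, hence solvable, so Proposition~\ref{prop:unstableconnectivityofloopspaces} applies). Everything else is a formal consequence of the closure properties established for the classes $O(S^{p,q})$ earlier in the section.
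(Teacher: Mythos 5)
Your proof is correct and is essentially the paper's own argument: both rotate to the fiber sequence $\Omega\mathscr{Y} \to \fib(f) \to \mathscr{X}$, apply Proposition~\ref{prop:unstableconnectivityofloopspaces}(1) to get $\Omega\mathscr{Y} \in O(S^{p-1,q})$, and conclude via Proposition~\ref{prop:pqconnectivityandfibersequencesI}. Your extra check that $p-q\ge 2$ forces $\mathscr{Y}$ to be simply connected (hence solvable, so the loop-space cellularity estimate applies) is a hypothesis the paper's proof uses implicitly, so this is a welcome bit of added care rather than a deviation.
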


\begin{proof}
	Consider the rotated fiber sequence $\Omega \mathscr Y \to \fib(f) \to \mathscr X$.
	Since $\Omega \mathscr Y \in O(S^{p-1,q})$ by Proposition \ref{prop:unstableconnectivityofloopspaces}, we deduce that $\L^{p-1,q} \fib(f) \weq \L^{p-1,q} \mathscr X$ (Proposition \ref{prop:pqconnectivityandfibersequencesI}).
	This vanishes by assumption.
\end{proof}

\subsection{Weak cellularity: comparing unstable and stable}

%\begin{cor}
%	\label{cor:pqconnectivityofsmashproducts}
%	Assume $\mathscr{X}$ and $\mathscr{Y}$ are spaces that are $(p,q)$ and $(p',q')$-connected spaces. If $\mathscr{X} \wedge \mathscr{Y}$ is $1$-connected, then $\mathscr{X} \wedge \mathscr{Y}$ is $(p+p'+1,q+q'+1)$-connected. 
%\end{cor}

%\begin{proof}
%	Proposition~\ref{prop:pqequivsof1connectedspacesaredetectedstably} reduces this to the corresponding stable case, which is Lemma~\ref{lem:gmconnectivitystablehomotopysheaves}(4).
%\end{proof}

We can also refine the motivic Blakers--Massey theorem in a fashion that takes weak cellular estimates into account.

\begin{theorem}
	\label{thm:cofiberpqconnectivity}
	Suppose $\mathscr{F} \stackrel{\iota}{\to} \mathscr{E} \stackrel{f}{\to} \mathscr{B}$ is a fiber sequence of pointed connected motivic spaces. 
	Suppose further that $\mathscr{F} \in O(S^{p,q})$ and $\mathscr{B} \in O(S^{p',q'})$ is a motivic space with $\aone$-nilpotent fundamental sheaf of groups with $p'-q' \geq 1$.
	\begin{enumerate}[noitemsep,topsep=1pt]
		\item $\fib(\cof(\iota) \to \mathscr{B}) \in O(S^{p+p',q+q'})$
		\item The canonical map $\Sigma \mathscr{F} \to \cof(f)$ is an $S^{p+p',q+q'}$-equivalence with cofiber in $O(S^{p+p'+1,q+q'})$.
	\end{enumerate}
\end{theorem}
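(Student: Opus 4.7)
The key observation is that the two parts concern the same commutative square
\[
\begin{CD}
\mathscr{F} @>{\iota}>> \mathscr{E} \\
@VVV @VV{f}V \\
\ast @>>> \mathscr{B}
\end{CD}
\]
(commutative since $f \circ \iota$ is null). Computing the total cofiber of this square in two ways via the $3 \times 3$-lemma yields
\[
\cof\bigl(\cof(\iota) \to \mathscr{B}\bigr) \;\weq\; \cof\bigl(\Sigma\mathscr{F} \to \cof(f)\bigr),
\]
identifying the cofiber appearing implicitly in part (2) with the cofiber of the map in part (1).

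The core computation proceeds through a bar-construction model. By Lemma~\ref{lem:loopgroup-basics}, $\mathscr{E} \weq \mathscr{F} \sslash \Omega\mathscr{B}$, i.e., the geometric realization of the simplicial space $B_\bullet$ with $B_n = \mathscr{F} \times (\Omega\mathscr{B})^{\times n}$, and similarly $\mathscr{B} \weq \ast \sslash \Omega\mathscr{B}$. The map $f$ is induced by the levelwise projections $B_n \to (\Omega\mathscr{B})^{\times n}$. Collapsing the $0$-skeleton $B_0 = \mathscr{F}$ produces a simplicial model of $\cof(\iota)$ whose map to $\mathscr{B}$ is given by these same projections. Applying Proposition~\ref{prop:realizationfibrations} to the induced simplicial diagram of fiber sequences (valid because $\mathscr{B}$ is connected, as $p'-q' \geq 1$) identifies $\fib(\cof(\iota) \to \mathscr{B})$ as a geometric realization whose skeletal filtration has $n$-th associated graded piece $\mathscr{F} \wedge (\Omega\mathscr{B})^{\wedge n} \wedge S^n$ for $n \geq 1$, the $S^n$ factor arising from $\Delta^n/\partial\Delta^n$. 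A parallel accounting for the common total cofiber produces layers $\mathscr{F} \wedge (\Omega\mathscr{B})^{\wedge n} \wedge S^{n+1}$ for $n \geq 1$.

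To conclude, Proposition~\ref{prop:unstableconnectivityofloopspaces} gives $\Omega\mathscr{B} \in O(S^{p'-1,q'})$, and iterated application of Theorem~\ref{thm:cellularityofsmashproducts} (together with Example~\ref{ex:smashproducts}) places the $n$-th fiber-filtration layer in $O(S^{p + np', q + nq'})$ and the $n$-th cofiber-filtration layer in $O(S^{p + np' + 1, q + nq'})$. The hypothesis $p' - q' \geq 1$ ensures that for $n \geq 1$ these classes are contained in $O(S^{p+p', q+q'})$ and $O(S^{p+p'+1, q+q'})$ respectively. Closure of weakly cellular classes under colimits and cofiber extensions (Lemma~\ref{lem:leftbousfieldcolimits}) then yields part (1). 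Part (2) follows upon observing that $\cof(\Sigma\mathscr{F} \to \cof(f))$ lies in $O(S^{p+p'+1, q+q'})$, which (again by Lemma~\ref{lem:leftbousfieldcolimits}) implies both that $\Sigma\mathscr{F} \to \cof(f)$ is an $S^{p+p', q+q'}$-equivalence and that its cofiber has the claimed finer cellularity.

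The principal obstacle will be the bar-filtration bookkeeping: in particular, applying Proposition~\ref{prop:realizationfibrations} cleanly when the levels $(\Omega\mathscr{B})^{\times n}$ of the bar model of $\mathscr{B}$ need not themselves be connected. One may need to first reduce to the case $p' - q' \geq 2$ (where $\Omega\mathscr{B}$ is connected) by passage up the refined Whitehead tower of $\mathscr{B}$ from Construction~\ref{cons:cellular-whitehead}, exploiting Amplification~\ref{amplification:2-eff-whitehead} to present the layers as infinite $\mathbb{P}^1$-loop spaces, and then extract the general case by passage to the limit.
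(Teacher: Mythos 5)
There are two genuine gaps. First, your core computation of $\fib(\cof(\iota) \to \mathscr{B})$ via the bar filtration does not go through as stated. Proposition~\ref{prop:realizationfibrations} only converts an \emph{objectwise cartesian} square over an \emph{objectwise connected} base into a cartesian square of realizations; the levelwise squares comparing your collapsed bar model of $\cof(\iota)$ with $(\Omega\mathscr{B})^{\times\bullet}$ are not cartesian (the levelwise fibers are not constant in the fiber direction), and, as you yourself note, $(\Omega\mathscr{B})^{\times n}$ need not be connected when only $p'-q'\geq 1$. More tellingly, the answer you extract is wrong: the fiber of $\cof(\iota)\to\mathscr{B}$ is \emph{exactly} the join $\Sigma\mathscr{F}\wedge\Omega\mathscr{B}$ (Ganea's theorem, \cite[Lemma 3.2(1)]{ABHWhitehead}, which is what the paper invokes), i.e.\ only your $n=1$ term; a filtration with nonvanishing associated graded pieces $\mathscr{F}\wedge(\Omega\mathscr{B})^{\wedge n}\wedge S^n$ for all $n\geq 1$ cannot have this as its colimit. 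Once one has the Ganea identification, part (1) is immediate from Proposition~\ref{prop:unstableconnectivityofloopspaces}(1) and Theorem~\ref{thm:cellularityofsmashproducts}, with no filtration bookkeeping and no detour through the Whitehead tower.

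Second, your concluding step for part (2) — that $\cof(\Sigma\mathscr{F}\to\cof(f))\in O(S^{p+p'+1,q+q'})$ ``implies by Lemma~\ref{lem:leftbousfieldcolimits} that $\Sigma\mathscr{F}\to\cof(f)$ is an $S^{p+p',q+q'}$-equivalence'' — is not a valid implication. Lemma~\ref{lem:leftbousfieldcolimits} gives: cellular \emph{source} implies the second map of a cofiber sequence is an equivalence, and cellular equivalence implies cellular cofiber; it does \emph{not} give the converse that a map with weakly cellular cofiber is a cellular equivalence (e.g.\ $S^2\to\ast$ has cofiber $S^3\in O(S^{3,0})$ but is not an $S^{3,0}$-equivalence). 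This converse is precisely the delicate unstable point that Corollary~\ref{cor:unstablecellularityoffibersvscofibers} and Theorem~\ref{thm:pqequivsof1connectedspacesaredetectedstably} are designed to address. The paper's route is: use Proposition~\ref{prop:pqconnectivityandfibersequencesI} on the fiber sequence $\Sigma\mathscr{F}\wedge\Omega\mathscr{B}\to\cof(\iota)\to\mathscr{B}$ to see that $\cof(\iota)\to\mathscr{B}$ is an $S^{p+p',q+q'}$-equivalence, hence its cofiber $C$ lies in $O(S^{p+p',q+q'})$; then from the cofiber sequence $\Sigma\mathscr{F}\to\cof(f)\to\Sigma C$ conclude that $\Sigma^\infty_{S^1}(\Sigma\mathscr{F}\to\cof(f))$ has fiber $\Sigma^\infty_{S^1}C\in O(S^{p+p',q+q'})$, and finally apply Theorem~\ref{thm:pqequivsof1connectedspacesaredetectedstably}(1), which is legitimate because $\Sigma\mathscr{F}$ and $\cof(f)$ are $1$-connected. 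You need some version of this stable-detection step; the purely ``cofiber-side'' bookkeeping you propose cannot close the argument.
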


\begin{proof}
	For the first statement, recall that by \cite[Lemma 3.4(1)]{ABHWhitehead} we know that $\fib(\cof(\iota) \to \mathscr{B})$ is equivalent to $\Sigma \mathscr{F} \wedge \Omega \mathscr{B}$.  Since $\mathscr{B} \in O(S^{p',q'})$, we conclude that $\Omega \mathscr{B} \in O(S^{p'-1,q})$ by Proposition~\ref{prop:unstableconnectivityofloopspaces}(1).  On the other hand, $\Sigma \mathscr{F} \in O(S^{p+1,q})$ by Theorem~\ref{thm:cellularityofsmashproducts}.  It follows that $\Sigma \mathscr{F} \wedge \Omega \mathscr{B} \in O(S^{p+p',q+q'})$ by another application of Theorem~\ref{thm:cellularityofsmashproducts}.
	
	For the second statement, note that Proposition~\ref{prop:pqconnectivityandfibersequencesI} in conjunction with the fiber sequence
	\[
	\Sigma \mathscr{F} \wedge \Omega \mathscr{B} \longrightarrow \cof(\iota) \longrightarrow \mathscr{B}
	\]
	together with the connectivity assertion of the first point implies that $\cof(\iota) \to \mathscr{B}$ is an $S^{p+p',q+q'}$-equivalence.  Thus, $\cof(\cof(\iota) \to \mathscr{B}) \in O(S^{p+p',q+q'})$ by Lemma~\ref{lem:leftbousfieldcolimits}. But also $\cof(\cof(\iota) \to \mathscr{B})$ is $((p-q)+(p'-q'))$-connected \cite[Proposition 3.3]{ABHWhitehead}, and hence $\cof(\cof(\iota) \to \mathscr{B}) \in O(S^{p+p'+1,q+q'})$ by Corollary \ref{cor:separatingcellularity}.
	
	Writing $f$ as the composite $\mathscr{E} \to \cof(\iota) \to \mathscr{B}$ yields a cofiber sequence \cite[Lemma 3.4(2)]{ABHWhitehead} of the form
	\[
	\Sigma \mathscr{F} \longrightarrow \cof(f) \longrightarrow \cof(\cof(\iota) \to \mathscr{B})
	\]
	Since $\Sigma \mathscr{F}$ is $1$-connected and \cite[Proposition 3.3]{ABHWhitehead} implies that $\cof(f)$ is $1$-connected as well, we conclude that $\Sigma \mathscr{F} \to \cof(f)$ is an $S^{p+p',q+q'}$-equivalence by Theorem~\ref{thm:pqequivsof1connectedspacesaredetectedstably} (having $S^1$-stable fiber $\Sigma^{-1} \cof(\cof(\iota) \to \mathscr{B}) \in O(S^{p+p',q+q'})$), as claimed.   
\end{proof}

\begin{cor}
	\label{cor:unstablecellularityoffibersvscofibers}
	Suppose $f: \mathscr{X} \to \mathscr{Y}$ is a map of nilpotent motivic spaces.
	\begin{enumerate}[noitemsep,topsep=1pt]
		\item If $\fib(f) \in O(S^{p,q})$, then $\cof(f) \in O(S^{p+1,q})$.
		\item If $\cof(f) \in O(S^{p+1,q})$, $\mathscr{Y} \in O(S^{3,1})$, $\fib(f) \in O(S^{2,0})$, then $\fib(f) \in O(S^{p,q})$.   
	\end{enumerate}
	In particular, if $f: \mathscr{X} \to \mathscr{Y}$ is a map of $1$-connected spaces, $\mathscr{Y} \in O(S^{3,1})$ and $\fib(f) \in O(S^{2,0})$, then $\fib(f) \in O(S^{p,q})$ if and only if $\cof(f) \in O(S^{p+1,q})$.
\end{cor}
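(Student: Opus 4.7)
The plan is to derive the biconditional from Parts (1) and (2), each of which is a separate application of Theorem~\ref{thm:cofiberpqconnectivity}(2)---the first direct, the second iterated.

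For Part (1), I apply Theorem~\ref{thm:cofiberpqconnectivity}(2) with $(p', q') = (2, 0)$; this is valid since $\mathscr{Y}$ is simply connected (so $\mathscr{Y} \in O(S^{2,0})$) and $p' - q' = 2 \geq 1$. The theorem produces a cofiber sequence $\Sigma\fib(f) \to \cof(f) \to C$ with $C \in O(S^{p+3,q})$. Noting $\Sigma\fib(f) = S^1 \wedge \fib(f) \in O(S^{p+1,q})$ by Theorem~\ref{thm:cellularityofsmashproducts}, and that pointed connected $S^{p+1,q}$-null spaces are automatically $S^{p+3,q}$-null (via $\Omega^{p+3,q} = \Omega^2 \Omega^{p+1,q}$) so that $O(S^{p+3,q}) \subseteq O(S^{p+1,q})$ for pointed connected spaces, the closure of cellular classes under cofiber extensions (Lemma~\ref{lem:leftbousfieldcolimits}(1)) delivers $\cof(f) \in O(S^{p+1,q})$.

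For Part (2), my plan is a bootstrap. Define $(p_0, q_0) = (2, 0)$ and inductively $(p_{n+1}, q_{n+1}) = (\min(p, p_n+2), \min(q, q_n+1))$; an arithmetic check confirms $p_n \geq q_n$ throughout and $(p_n, q_n) = (p, q)$ for $n \geq \max(\lceil(p-2)/2\rceil, q)$. The inductive claim is $\fib(f) \in O(S^{p_n, q_n})$, with the base case $n=0$ being the hypothesis. For the inductive step, I apply Theorem~\ref{thm:cofiberpqconnectivity}(2) with $(p', q') = (3, 1)$ (using $\mathscr{Y} \in O(S^{3,1})$) to produce a cofiber sequence $\Sigma\fib(f) \to \cof(f) \to C$ with $C \in O(S^{p_n+4, q_n+1})$. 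Applying $\Sigma^\infty_{S^1}$ and rotating yields a stable fiber sequence
\[
\Sigma^{-1}\Sigma^\infty_{S^1} C \longrightarrow \Sigma\bigl(\Sigma^\infty_{S^1}\fib(f)\bigr) \longrightarrow \Sigma^\infty_{S^1}\cof(f).
\]
The $t$-structure description of stable weak cellularity (Lemma~\ref{lem:gmconnectivitystablehomotopysheaves}(1)) shows that stable $O$-classes nest as $O(S^{a',b'}) \subseteq O(S^{a,b})$ whenever $a' \geq a$ and $b' \geq b$; consequently $\Sigma^{-1}\Sigma^\infty_{S^1} C \in O(S^{p_n+3, q_n+1})$ and $\Sigma^\infty_{S^1}\cof(f) \in O(S^{p+1, q})$ both lie in $O(S^{p_{n+1}+1, q_{n+1}})$ stably. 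Closure under extensions (Amplification~\ref{amplification:leftbousfieldcolimitsstablecases}) gives $\Sigma(\Sigma^\infty_{S^1}\fib(f)) \in O(S^{p_{n+1}+1, q_{n+1}})$, i.e., $\Sigma^\infty_{S^1}\fib(f) \in O(S^{p_{n+1}, q_{n+1}})$ stably. Finally, I apply Theorem~\ref{thm:pqequivsof1connectedspacesaredetectedstably}(1) to the map $\fib(f) \to *$ of nilpotent motivic spaces (nilpotence of $\fib(f)$ follows from the nilpotence of $\mathscr{X}, \mathscr{Y}$ together with the simple connectivity of $\mathscr{Y}$), transferring the stable statement to the desired unstable conclusion $\fib(f) \in O(S^{p_{n+1}, q_{n+1}})$.

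The main obstacle is the stable-to-unstable transfer at each iteration, which crucially depends on Theorem~\ref{thm:pqequivsof1connectedspacesaredetectedstably}(1); the bookkeeping of the bootstrap bounds is routine. Once Parts (1) and (2) are established, the biconditional in the final statement is immediate.
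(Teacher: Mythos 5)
Your proof is correct and follows essentially the same route as the paper: Part (1) is the direct application of Theorem~\ref{thm:cofiberpqconnectivity}(2) with $\mathscr{Y} \in O(S^{2,0})$, and Part (2) is the same bootstrap the paper runs, gaining $(+2,+1)$ per iteration from Theorem~\ref{thm:cofiberpqconnectivity}(2) applied with $\mathscr{Y} \in O(S^{3,1})$ and desuspending via Theorem~\ref{thm:pqequivsof1connectedspacesaredetectedstably}(1). Your only cosmetic deviation is doing the extension/nesting bookkeeping $S^1$-stably via the $t$-structure description rather than quoting the unstable $S^{r+3,r'+1}$-equivalence statement directly, which amounts to the same thing.
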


\begin{proof}
	Since $\fib(f) \in O(S^{p,q})$, $\Sigma \fib(f) \in O(S^{p+1,q})$.  In that case, Theorem~\ref{thm:cofiberpqconnectivity}(2) then implies that $\Sigma \fib(f) \to \cof(f)$ is an $S^{p+2,q}$-equivalence, so $\cof(f) \in O(S^{p+1,q})$ as well.
	
	For the second point, suppose $\mathscr{F} \in O(S^{r,r'})$; we know this holds for $r = 2$, $r' = 0$ by assumption.  Since $\mathscr{Y} \in O(S^{3,1})$ is simply connected, by Theorem~\ref{thm:cofiberpqconnectivity}(2) we conclude that $\Sigma \fib(f) \to \cof(f)$ is an $S^{r+3,r'+1}$-equivalence.  Since $\cof(f) \in O(S^{p+1,q})$ by assumption, we conclude that $\Sigma \fib(f) \to \cof(f)$ is more highly connected.  Since $\mathscr{F}$ is simply connected by assumption, Theorem~\ref{thm:pqequivsof1connectedspacesaredetectedstably} implies $\fib(f)$ has higher connectivity as well.  By induction, the result follows.    
\end{proof}

We now refine F. Morel's Freudenthal suspension theorem \cite[Theorem 6.61]{MField} by providing a weak cellular estimate on the fiber of the unit map of the simplicial loops-suspension adjunction.   

\begin{proposition}
	\label{prop:refinedS1Freudenthal}
	If $\mathscr{X} \in O(S^{p,q})$ with $p-q > 1$ (and $q \ge 0$), then $\fib(\mathscr{X} \to \Omega \Sigma \mathscr{X}) \in O(S^{2p-1,2q})$.
\end{proposition}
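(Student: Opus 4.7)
My plan is to estimate the cofiber of $f : \mathscr{X} \to \Omega\Sigma\mathscr{X}$ using the James model and then transfer cellularity from cofiber to fiber via Corollary~\ref{cor:unstablecellularityoffibersvscofibers}(2), after verifying simple-connectivity of the fiber via Morel's classical $S^1$-Freudenthal theorem.

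For the cofiber, Example~\ref{ex:Jamesmodels} presents $\Omega\Sigma\mathscr{X} \simeq \colim_n J_n(\mathscr{X})$ with $J_0 = \ast$, $J_1 = \mathscr{X}$, and cofiber sequences $J_n \to J_{n+1} \to \mathscr{X}^{\wedge(n+1)}$; the unit $f$ is identified with $J_1 \hookrightarrow J_\infty$. Quotienting by $J_1$ gives $\cof(f) \simeq \colim_n J_n/J_1$ together with cofiber sequences $J_n/J_1 \to J_{n+1}/J_1 \to \mathscr{X}^{\wedge(n+1)}$, so the successive layers of $\cof(f)$ are $\mathscr{X}^{\wedge n}$ for $n \geq 2$. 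By Theorem~\ref{thm:cellularityofsmashproducts} each $\mathscr{X}^{\wedge n} \in O(S^{np, nq})$, and since $S^{np, nq} \simeq S^{2p, 2q} \wedge S^{(n-2)p, (n-2)q}$ for $n \geq 2$ we have $O(S^{np, nq}) \subseteq O(S^{2p, 2q})$ (using Example~\ref{ex:basicexample}). Closure of $O(S^{2p, 2q})$ under colimits and cofiber extensions (Lemma~\ref{lem:leftbousfieldcolimits}) therefore forces $\cof(f) \in O(S^{2p, 2q})$.

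Set $\mathscr{F} := \fib(f)$. Morel's classical $S^1$-Freudenthal theorem (cf.\ \cite[Theorem 6.61]{MField}, or the discussion in the introduction following Theorem~\ref{thm:unstableconnectivity}) says that, since $\mathscr{X}$ is $(p-q-1)$-connected, $\mathscr{F}$ is $(2(p-q)-2)$-connected. Because $p - q \geq 2$, the fiber is at least $2$-connected, so in particular $\mathscr{F} \in O(S^{2, 0})$. When $q = 0$, this connectivity already gives the desired statement, since by Lemma~\ref{lem:Lpq-map-conn} the class $O(S^{2p-1, 0})$ coincides with $(2p-2)$-connected pointed spaces. For $q \geq 1$, I would then apply Corollary~\ref{cor:unstablecellularityoffibersvscofibers}(2) to $f$, taking the corollary's ``$(p, q)$'' to be $(2p-1, 2q)$. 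The three required hypotheses are: $\cof(f) \in O(S^{2p, 2q})$, established above; $\Omega\Sigma\mathscr{X} \in O(S^{3, 1})$, which follows from $\Omega\Sigma\mathscr{X} \in O(S^{p, q})$ and the identification $S^{p, q} \simeq S^{3, 1} \wedge S^{p-3, q-1}$ (valid because $p \geq q + 2 \geq 3$ and $q \geq 1$); and $\mathscr{F} \in O(S^{2, 0})$, just shown. The corollary then yields $\mathscr{F} \in O(S^{2p-1, 2q})$, as required. The main point to be careful about is matching numerical indices correctly in the application of the corollary; a secondary concern is that the Whitehead-type input to the corollary (Theorem~\ref{thm:pqequivsof1connectedspacesaredetectedstably}) formally requires $k$ perfect, which for a general field is handled by essentially smooth base change from a perfect subfield via Lemma~\ref{lem:cellularity-ess-sm-bc}.
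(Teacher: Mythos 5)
Your proof is correct, and it follows the same skeleton as the paper's (James model for $\Omega\Sigma\mathscr{X}$, then Corollary~\ref{cor:unstablecellularityoffibersvscofibers}(2) to pass from the cofiber of $s$ back to its fiber), but it obtains the cofiber estimate by a genuinely different and somewhat cleaner route. The paper invokes the Hilton--Milnor splitting $\Sigma\Omega\Sigma\mathscr{X}\weq\bigvee_{i\ge 0}\Sigma\mathscr{X}^{\sma i}$ to get $\Sigma\cof(s)\in O(S^{2p+1,2q})$ and then desuspends using the simple connectivity of $\cof(s)$ together with the weakly cellular Whitehead theorem (Theorem~\ref{thm:pqequivsof1connectedspacesaredetectedstably}); you instead filter $\cof(s)=\colim_n J_n/J_1$ directly, with layers $\mathscr{X}^{\sma n}$ for $n\ge 2$, so that $\cof(s)\in O(S^{2p,2q})$ follows purely from closure of $O(S^{2p,2q})$ under colimits and cofiber extensions, with no suspension--desuspension step and no appeal to the stable detection theorem at this stage. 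Your version also supplies details the paper elides: the simple connectivity of $\fib(s)$ via Morel's $S^1$-Freudenthal theorem (flagged as a \emph{todo} in the source), the verification that $\Omega\Sigma\mathscr{X}\in O(S^{3,1})$ required by the corollary, and the separate treatment of $q=0$ (where that hypothesis is unavailable but pure connectivity suffices). Two small polish points: the containment $O(S^{np,nq})\subseteq O(S^{2p,2q})$ deserves a word more than a citation of Example~\ref{ex:basicexample} --- though for your purposes it is simpler to apply Theorem~\ref{thm:cellularityofsmashproducts} directly to $\mathscr{X}^{\sma n}=\mathscr{X}^{\sma 2}\sma\mathscr{X}^{\sma(n-2)}$, both factors being pointed and connected; and the cofiber sequences $J_n/J_1\to J_{n+1}/J_1\to\mathscr{X}^{\sma(n+1)}$ should be justified by the pasting lemma for cofibers of composites (cf.\ the octahedral-type statement used elsewhere in the paper), which is routine.
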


\begin{proof}
	The space $\Omega \Sigma \mathscr{X} \in O(S^{p,q})$ by appeal to Example~\ref{ex:Jamesmodels}.  Write $s$ for the unit map $\mathscr{X} \to \Omega \Sigma \mathscr{X}$.  Note that $\Sigma \Omega \Sigma \mathscr{X}$ splits as $\bigvee_{i \geq 0} \Sigma \mathscr{X}^{\sma i}$ by the Hilton--Milnor splitting \cite[\S 5]{WickelgrenWilliams} or \cite[Corollary 2.11]{DH}.  Thus, $\Sigma \cof(s) \weq \cof(\Sigma s) \weq \bigvee_{i \geq 2} \Sigma \mathscr{X}^{\sma i}$ and thus lies in $O(S^{2p+1,2q})$ by appeal to Theorem~\ref{thm:cellularityofsmashproducts}.  
	
	By assumption $\mathscr{X} \in O(S^{2,0})$ so Morel's suspension theorem \cite[Theorem 6.61]{MField} implies that $\fib(s) \in O(S^{2,0})$.  In fact, this establishes the result if $q = 0$.  By Corollary~\ref{cor:unstablecellularityoffibersvscofibers}(2) to show that $fib(s) \in O(S^{p,q})$ it suffices to show that $\cof(s) \in O(S^{p+1,q})$.  Therefore, it suffices to show that $\cof(s) \in O(S^{2p,2q})$.  Corollary~\ref{cor:unstablecellularityoffibersvscofibers}(1) shows that $\cof(s)$ is at least $1$-connected as well.   Since $\cof(s)$ is also simply connected, that follows from the weak-cellular estimate of  $\Sigma \cof(s)$ from the preceding paragraph by appeal to the weakly cellular Whitehead theorem ~\ref{thm:pqequivsof1connectedspacesaredetectedstably}.
\end{proof}

%\begin{rem}
%	One may refine \cite[Theorem 3.5]{ABHWhitehead} in a completely analogous form.  More precisely, one builds a suitable ``configuration space" model of $\Omega^i\Sigma^i \mathscr{X}$ for $i \geq 2$, say via sectionwise application of J. Smith's simplicial group models. In that case, there is a stable version of the Snaith splitting and suitable connectivity estimates on the layers of pieces of the splitting.  \aravind{We need this in the metastable paper, so add a forward reference.}
%\end{rem}

\subsubsection*{Weak cellularity and $R$-localization}
Assume $R \subset \Q$ is a subring.  Another consequence of the weakly cellular version of the Whitehead tower is the compatibility of $R$-localization and weak cellularity.  

\begin{proposition}
	\label{prop:Rlocalizationpreservescellularity}
Assume $\mathscr{X} \in \ho{k}_*$ is a nilpotent motivic space lying in $O(S^{p,q})$ for some integers $p \geq q \geq 0$.  If $R \subset \Q$ is a subring, then $\mathscr{X}_{R}$ lies in $O(S^{p,q})$ as well. 
\end{proposition}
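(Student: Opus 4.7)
The plan is to apply $R$-localization to the weakly cellular Whitehead tower of $\mathscr{X}$ from Construction \ref{cons:cellular-whitehead} and verify that the resulting tower for $\mathscr{X}_R$ still satisfies the hypotheses of Remark \ref{rem:prove-cellularity-whitehead}. Since $p \geq q$ we have $\mathscr{X} \in O(S^{p,q}) \subset O(S^{q,q})$, so Construction \ref{cons:cellular-whitehead} produces a tower $\cdots \to \mathscr{X}\lra{i+1,q} \to \mathscr{X}\lra{i,q} \to \cdots$ with $\mathscr{X}\lra{0,q} = \mathscr{X}$, fiber sequences
\[
\mathscr{X}\lra{i+1,q} \longrightarrow \mathscr{X}\lra{i,q} \longrightarrow \tau_{\geq(n_i+q,q)} K(\mathbf{A}_i, n_i),
\]
each $\mathscr{X}\lra{i,q}$ nilpotent (Remark \ref{rem:whitehead-nilpotent}) and in $O(S^{n_i+q,q})$, and connectivity at least $n_i - 1$ with $n_i \to \infty$. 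Since $\mathscr{X}$ is $(p-q-1)$-connected by Lemma \ref{lem:Lpq-map-conn} we may take $n_0 = p-q$; as $n_i$ is non-decreasing this gives $O(S^{n_i+q,q}) \subset O(S^{p,q})$ for every $i \geq 0$.

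Next, apply $(-)_R$ to the tower. The functor preserves fiber sequences of (weakly) nilpotent motivic spaces by \cite[Theorem 4.3.11]{AFHLocalization}, so we obtain fiber sequences
\[
\mathscr{X}\lra{i+1,q}_R \longrightarrow \mathscr{X}\lra{i,q}_R \longrightarrow \bigl(\tau_{\geq(n_i+q,q)} K(\mathbf{A}_i, n_i)\bigr)_R.
\]
By Example \ref{ex:taupq-K} each layer is $\Omega^\infty_{S^1} f_q \Sigma^{n_i} H\mathbf{A}_i$, and both $(\Omega^\infty_{S^1} f_q \Sigma^{n_i} H\mathbf{A}_i)_R$ and $\Omega^\infty_{S^1} f_q \Sigma^{n_i} H(\mathbf{A}_i \otimes R)$ are nilpotent motivic spaces with matching homotopy sheaves $(\bpi_j f_q \Sigma^{n_i} H\mathbf{A}_i) \otimes R$ by Theorem \ref{thm:rlocalizationlocalizeshomotopysheaves}, so the natural comparison map between them is an equivalence by the motivic Whitehead theorem \cite[Theorem 5.4]{ABHWhitehead}. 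Combining Corollary \ref{cor:stablecellularityispreservedbyRlocalization} with Proposition \ref{prop:deloopingpreservesconnectivity}, the $R$-localized layers again lie in $O(S^{n_i+q,q}) \subset O(S^{p,q})$.

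Finally, the $R$-localized tower satisfies the hypotheses of Remark \ref{rem:prove-cellularity-whitehead}: the layers are in $O(S^{p,q})$ by the previous step; their loops lie in $O(S^{n_i+q-1,q}) \subset O(S^{p,q})$ for $i$ large enough that $n_i \geq p-q+1$, via Proposition \ref{prop:unstableconnectivityofloopspaces}(1); and the connectivity of $\mathscr{X}\lra{i,q}_R$ tends to infinity since $R$-localization of nilpotent spaces preserves connectivity (Theorem \ref{thm:rlocalizationlocalizeshomotopysheaves}). The remark therefore yields $\mathscr{X}_R = \mathscr{X}\lra{0,q}_R \in O(S^{p,q})$, as required. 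The main delicate point is the identification of the $R$-localized layers---equivalently, showing that $(-)_R$ commutes with $\Omega^\infty_{S^1}$ on connective $S^1$-spectra---which reduces to the Whitehead theorem together with the homotopy sheaf computation of Theorem \ref{thm:rlocalizationlocalizeshomotopysheaves}.
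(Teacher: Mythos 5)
Your proposal is correct and follows essentially the same route as the paper: apply $R$-localization to the refined Whitehead tower of Construction~\ref{cons:cellular-whitehead}, use nilpotence to preserve the fiber sequences, and conclude via Remark~\ref{rem:prove-cellularity-whitehead} after checking that the $R$-localized layers (infinite loop spaces of spectra in $\SH^{S^1}(k)(q)_{\ge p-q}$, a subcategory stable under $R$-localization) remain in $O(S^{p,q})$. Your extra care in justifying that $(-)_R$ commutes with $\Omega^\infty_{S^1}$ via the Whitehead theorem fills in a step the paper only asserts.
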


\begin{proof}
Consider the refined Whitehead tower of $\mathscr X$ from Construction \ref{cons:cellular-whitehead}, consisting of fiber sequences 
\[ 
\mathscr X\lra{i+1,q} \longrightarrow \mathscr X\lra{i,q} \longrightarrow \mathscr B_i = \tau_{\ge (n_i+q,q)} K(\mathbf A_i, n_i). 
\]
Since all involved spaces are nilpotent \ref{rem:whitehead-nilpotent}, these fiber sequences are preserved by $R$-localization \cite[Theorem 4.3.11]{AFHLocalization}.
Appealing to Remark \ref{rem:prove-cellularity-whitehead}, it remains to prove that $(\mathscr B_i)_R \in O(S^{p,q})$ for all $i$, and $(\Omega \mathscr B_i)_R \in O(S^{p,q})$ for $i$ sufficiently large.
This is true because $\mathscr B_i = \Omega^\infty_{S^1} B_i$ for some $B_i \in \SH^{S^1}(k)(q)_{\ge p-q}$, $\Omega^\infty_{S^1}$ commutes with $R$-localization, and $\SH^{S^1}(k)(q)_{\ge p-q}$ is stable under $R$-localization, being stable under colimits.
\end{proof}

\subsection{Further examples}
\begin{ex}
	\label{ex:poneloopsonponesuspensions}
	One consequence of Proposition~\ref{prop:unstableconnectivityofloopspaces} is the following weak cellular estimate, generalizing Example~\ref{ex:Jamesmodels}: if $\mathscr{X}$ is a pointed, connected space that lies in $O(S^{p,q})$, then $\Omega^{2,1}\Sigma^{2,1}\mathscr{X} \in O(S^{p,q})$.  Since $\mathscr{X}$ is pointed and connected, we conclude that $\Sigma^{2,1}\mathscr{X}$ is in particular $1$-connected and thus nilpotent and furthermore lies in $O(S^{p+2,q+1})$.  In particular, $q+1 > 0$ and therefore, the cellularity estimate follows immediately from Proposition~\ref{prop:unstableconnectivityofloopspaces}(3).  More generally, it follows that $\Omega^{2n,n}\Sigma^{2n,n}\mathscr{X} \in O(S^{p,q})$ under the above hypotheses as well, and also that $Q(\mathscr{X}) \in O(S^{p,q})$. 
\end{ex}

\begin{ex}[Motivic connective covers of $BGL$]
	\label{ex:connectivecoversofBGLI}
	The space $\Z \times BGL$ is the $\pone$-infinite loop space of the spectrum $\mathbf{KGL}$.  For any integer $n \geq 1$, we set 
	\[
	BGL \langle 2n,n \rangle := \Omega^{\infty}\tau_{\geq (2n,n)} \mathbf{KGL}.
	\]  
	In particular, observe that $BGL \langle 2n,n \rangle \in O(S^{2n,n})$ by appeal to Proposition~\ref{prop:deloopingpreservesconnectivity}. 

	Recall from, e.g., \cite[\S 3.5]{AFSpheres} that Suslin described a morphism $S^{2n-1,n} \to GL$ via an explicit matrix; this matrix is the clutching function of a morphism $S^{2n,n} \to BGL$ by \cite[Theorem 4.3.4]{ADF} that stabilizes to the unit map $\1 \to \mathbf{KGL}$ \cite[Theorem 2.3]{SuslinMennicke}.  Observe that the map $S^{2n,n} \to BGL$ factors through $BGL\langle 2n,n\rangle$.  
\end{ex}

\begin{ex}[Motivic connective covers of $BGL$ II]
	\label{ex:connectivecoversofBGLII}
  	Continuing the discussion of Example~\ref{ex:connectivecoversofBGLI}, consider the composite map $S^{2n,n} \to BGL \to K(\Z(n),2n)$, where the second map is given by the $n$-th Chern class.  Such a map is uniquely determined by a morphism $\bpi_{n,0}(S^{2n,n}) = \K^{MW}_n \to \K^M_n$ (see the discussion of \cite[Proof of Theorem 4.2.3]{ADF}) and thus is $(n-1)!$ times a generator by \cite[Theorem 5.2]{SuslinMennicke}. 
	
	The equivalence $BGL_n \weq Gr_n \weq \colim_N Gr_{n,N}$ in conjunction with Example~\ref{ex:bialynickibirula} shows that $BGL_n$ and thus $BGL \in O(S^{2,1})$.  We may identify $BGL = BGL\langle 2,1\rangle$ and $BSL = BGL\langle 4,2\rangle$, the latter using the fact that $BSL$ is the homotopy fiber of $c_1: BGL \to K(\Z(1),2)$.
	
	Now, assume that we work over a field $k$ that has characteristic $0$.
	Since $s_i(\mathbf{KGL}) \weq \Sigma^{2i,i} H\Z$ there is an induced map $c_2: BSL \to K(\Z(2), 4)$ with
	\[
	BGL\langle 6,3 \rangle \weq \fib(c_2: BSL \to K(\Z(2),4)).
	\] 
	One may check that $c_2$ coincides with the usual definition of the second Chern class.
	We view the space $BGL \langle 6,3 \rangle$ as a motivic analog of the space $BU \langle 6 \rangle$.
	%\aravind{Note that the cobar stuff allows us to compute the motivic (co)homology of $BGL \langle 6,3 \rangle$, and the answer appears}
\end{ex}

\section{Equivariant geometry of symmetric powers}
\label{s:symmetricpowers}

\subsection{The motivic Dold--Thom theorem revisited}
\label{ss:motivicdoldthom}
In this section, after recalling some facts about motivic homotopy theory of group scheme actions, we recall the basic theory of symmetric powers.  In particular, the motivic Eilenberg--Mac Lane space $K(\Z(n),2n)$ admits a model in terms of symmetric powers of the motivic spheres $S^{2n,n}$.  Our main goal is to record a form of Voevodsky's motivic Dold--Thom theorem \cite[Theorem 3.7 and 3.11]{VoeMEM} paying attention to the interaction between the geometry of symmetric powers and the bonding maps in the motivic spectrum corresponding to motivic cohomology (see Theorem~\ref{thm:motivicDoldThom}).  

\subsubsection*{Motivic homotopy theory and group scheme actions}
Assume $G$ is a finite group such that $|G|$ is invertible in $k$.  

\begin{entry}[$G$-quasiprojective varieties]
A $G$-scheme $X$ is called $G$-quasiprojective if it admits an ample $G$-equivariant line bundle.  Note that if $G$ is finite, then any quasi-projective variety equipped with a $G$-action is $G$-quasiprojective: if $\mathscr{L}$ is ample, then $\det  (\bigoplus_{g \in G} g^* \mathscr{L})$ is ample and $G$-equivariant.  
\end{entry}

\begin{entry}[$G$-spaces]
	We extend some of the discussion of Paragraph~\ref{par:motivicspacesvariants} to spaces with $G$-action.  Write $\Sch_k^G$ for the category of $G$-quasiprojective schemes and $\Sm^G_k$ for the subcategory of smooth $G$-quasi-projective varieties.  The categories $\Sm^G_k$ and $\Sch^G_k$ can be equipped with the $G$-Nisnevich topology; see, e.g., \cite[\S 3.3]{HoyoisEquiv} for a definition.  If $\mathrm{S}^G_k$ is either of the categories $\Sm^G_k$ or $\Sch^G_k$, we can consider the $\infty$-topos $\mathrm{P}(\mathrm{S}^G_k)$ of presheaves of spaces.  We write $\Shv_{Nis}^G(\Sm_k)$ for the $\infty$-category of $G$-Nisnevich sheaves of spaces on $\Sm_k^G$ and $\Shv^G_\Nis(\Sch_k)$ for the $\infty$-category of $G$-Nisnevich sheaves of spaces on $\Sch^G_k$. 
\end{entry}

\begin{entry}[motivic $G$-spaces]
We write $\Spc^G(k)$ for the full subcategory of $\Shv_\Nis^G(\Sm_k)$ consisting of $\aone$-invariant sheaves of spaces \cite[Definition 3.12]{HoyoisEquiv}.  Abusing notation, we write $\mathrm{L}_{mot}$ for the motivic localization functor in the $G$-equivariant context as well.  Once again, $\Spc^G(k)$ is a semi-topos \cite[Proposition 3.15]{HoyoisEquiv}.
\end{entry}

\begin{entry}[Quotients]
	If $X$ is a $G$-quasiprojective $k$-scheme, then a quotient of $X$ by the action of $G$ exists \cite[Expose V, Proposition 1.8]{SGA1} and is known to be a quasiprojective $k$-scheme.  If $X$ is furthermore assumed normal, then $X/G$ is once again normal by the universal property of categorical quotients.  The functor of taking the quotient is left adjoint to the functor $\Sch_k \to \Sch^G_k$ sending a quasi-projective $k$-scheme $X$ to $X$ equipped with the trivial $G$-action.  As such, the quotient construction preserves colimits.  %Likewise, the functor of adding a disjoint base-point is left adjoint to the forgetful functor from pointed schemes to schemes.  It follows that the functor $X_+ \to (X/G)_+$ is colimit preserving.
\end{entry}

%\begin{entry}[Singular spaces]
%	Restriction along the inclusion $\Sm_k \to \Sch_k$ defines a functor 
%	\[
%	\Shv_{\Nis}(\Sch_k) \longrightarrow \Shv_{\Nis}(\Sm_k)%
%	\]
%	that we may compose with motivic localization.  Likewise, we may define $\mathrm{H}^{qp}(k)$ to be the subcategory of $\Shv(\Sch_k)$ consisting of $\aone$-invariant Nisnevich sheaves of spaces.  The restriction functor just described preserves $\aone$-invariance and thus passes to a functor: 
%	\[
%	\mathrm{H}^{ft}(k) \longrightarrow \ho{k}.
%	\]
%\end{entry}

\begin{entry}[Quotients and $\aone$-equivalences]
	\label{par:quotients}
	By left Kan extending the functor $X \longrightarrow (X/G)$ we obtain $\op{Quot}_{G}: \mathrm{P}(\Sm_k^G) \longrightarrow \mathrm{P}(\Sch_k)$ which induces a functor
	\[
	\op{Quot}_G: \Spc^{G}(k) \longrightarrow \Spc^{ft}_{Nis}(k)
	\]
	(see \cite[Proposition 2.10]{VoeMEM} or \cite[Proposition 45 p. 403]{DeligneVoe}).  
	Noting that $\op{Quot}_G(*)=*$ we see that this also induces a functor on pointed spaces (which, for formal reasons, still preserves colimits).
\end{entry}

%\begin{entry}[Stabilization]
%	Since $Quot_G$ preserves colimits, the functor $\Spc^G_k$ 
%\end{entry}

\subsubsection*{Symmetric powers and motivic Eilenberg--Mac Lane spaces}
\begin{entry}[Symmetric powers]
\label{par:stabilizationI}
Assume $r \geq 0$ is an integer.  If $X$ is a smooth scheme, then we may consider $X^{\times r}$ with the action of $\Sigma_r$ by permutation of the factors.
Passing to sifted cocompletions we obtain $\mathrm{P}_\Sigma(\Sm_k) \to \mathrm{P}_\Sigma(\Sm_k^G)$.
This functor preserves Nisnevich and motivic equivalences \cite[Theorem 2.5]{VoeMEM} and hence induces a functor $\ho{k} \to \Spc^G(k)$.  In particular, if $\mathscr{X}$ is a pointed motivic space, we will write $\mathscr{X}^{\times r}$ for motivic space with $\Sigma_r$-action obtained from the functor just described.  We then set 
\[
\Sym^r \mathscr{X} := \op{Quot}_{S_r}(\mathscr{X}^{\times r}).
\]
If $\mathscr{X}$ is a pointed space, then the inclusion $S_{n-1} \subset S_{n}$ and the inclusion $\mathscr{X}^{\times n-1} \to \mathscr{X}^{n}$ (via the base-point in the last factor) yield morphisms
\[
\Sym^{n-1} \mathscr{X} \longrightarrow \Sym^n \mathscr{X}
\]
We define
\[
\Sym^{\infty} \mathscr{X} := \colim_n \Sym^n \mathscr{X},
\]
which has naturally the structure of a (strictly) commutative monoid.  
\end{entry}

\begin{entry}[Reduced symmetric powers]\label{par:reducedsymmpowers}
Working with pointed spaces and replacing cartesian products by smash products in the previous discussion, we obtain the \emph{reduced symmetric powers}.
In slightly more detail, we have the functor $\Sm_{k+} \to \Sm_{k+}^G$ sending $X_+$ to $(X_+)^{\wedge n} \weq X^{\times n}_+$.
Passing to sifted cocompletions and localizing one obtains $\ho{k}_* \to \Spc^G(k)_*, \mathscr X \mapsto \mathscr X^{\wedge r}$.
Finally \[ \tilde S^r \mathscr X := \op{Quot}_{S_r}(\mathscr{X}^{\wedge r}). \]
We can use the reduced symmetric powers to identify the cofiber of the stabilization map on unreduced symmetric powers: for $\mathscr X \in \Spc(k)_*$, we have a natural cofiber sequence \cite[Lemma 2.21]{VoeMEM} \[ \Sym^{n-1} \mathscr{X} \longrightarrow \Sym^n \mathscr{X} \to \tilde S^n \mathscr X. \]
\end{entry}

\begin{entry}[Symmetric powers and (pre)spectra]
	\label{par:assemblymapsymmetricpowers}
	For any motivic spaces $\mathscr{X}$ and $\mathscr{Y}$, there is an $S_r$-equivariant map $\mathscr{Y} \times \mathscr{X}^r \to (\mathscr{Y} \times \mathscr{X})^r$ induced by the diagonal.  If $\mathscr{Y}$ and $\mathscr{X}$ are additionally pointed, the above maps induce morphisms $\mathscr{Y} \wedge \Sym^r \mathscr{X} \longrightarrow \Sym^r (\mathscr{Y} \wedge \mathscr{X})$.  These morphisms are compatible with the stabilization morphisms $\Sym^n \mathscr{X} \to \Sym^{n+1} \mathscr{X}$ described in Paragraph~\ref{par:stabilizationI} in the evident sense and taking $\mathscr{Y} = S^{2,1}$ we obtain morphisms
	\[
	s_n: \Sigma^{2,1} \Sym^{\infty} \mathscr{X} \longrightarrow \Sym^{\infty}(\Sigma^{2,1}\mathscr{X}).
	\]
	We encapsulate this discussion as asserting that $\Sym^{\infty}$ passes to a functor on $S^{2,1}$-prespectra (see \cite[\S 2.2.1]{Robalo} for discussion of stabilization).  While we do not use this fact, $\Sym^{\infty}$ actually passes to a functor on $\SH(k)$, i.e., it preserves stable equivalences.  This follows by combining two facts: $\Sym^{\infty}$ preserves sifted colimits, and $\SH(k)$ can be obtained as a suitable localization of the category of $S^{2,1}$-prespectra. More precisely, the functor taking a prespectrum $E$ to its $n$-th space admits a left adjoint $f_n$ sending a pointed motivic space $\mathscr{X}$ to a shifted suspension $S^{2,1}$-prespectrum $f_n\mathscr{X}$ with $f_n\mathscr{X}_i = \ast$ for $i < n$ and $\Sigma^{2(n-i),n-i} \mathscr{X}$ for $n \geq i$.  In that case, $\SH(k)$ is obtained from $S^{2,1}$-prespectra by inverting the maps of prespectra $f_{n+1} \Sigma^{2,1} \mathscr{X} \to f_{n}\mathscr{X}$ for $\mathscr{X} \in \Spc(k)_*$.  Granted those two facts, we may appeal to \cite[Lemma 2.10]{bachmann-norms} to deduce the claim.
	%In particular, for any $n$, $\Sym^{n}$ and $\Sym^{\infty}$ pass to functors on $\SH(k)$.\tom{There might be more to say here, and also we don't use this fact...?}
\end{entry}

%\begin{ex}
%	We will be particularly interested in the $\Sym^{\infty} \Sigma^{\infty}_{\pone}X_+$ for $X \in \Sm_k$.  The motivic Dold--Thom theorem will allow us to identify this suspension spectrum in terms of symmetric powers.  The symmetric powers underlying this suspension spectrum are of the form $\Sym^r (\Sigma^{2n,n} X_+)$ and we will analyze the geometry of these spaces.  
%\end{ex}

\begin{entry}[Inverting the characteristic exponent]
	\label{par:invertingcharexp}
For an integer $d$ we write
\[
\Sym^{\infty}[1/d]\mathscr{X} := \colim( \Sym^{\infty}\mathscr{X} \stackrel{\times d}{\longrightarrow} \Sym^{\infty}\mathscr{X} \stackrel{\times d}{\longrightarrow} \cdots)
\]
where $\times d$ is multiplication defined by the monoid structure.  In the sequel, we will take $d$ to be the characteristic exponent of the base-field.  This construction is also compatible with stabilization morphism of Paragraph~\ref{par:stabilizationI} so there are induced morphisms
\[
\Sigma^{2,1} \Sym^{\infty}[1/d] \mathscr{X} \longrightarrow \Sym^{\infty}[1/d] (\Sigma^{2,1}\mathscr{X}).
\]
\end{entry}

\begin{entry}
	\label{par:correspondences}
	For $U$ a normal scheme and $X$ quasi-projective, we write $\Cor_k(U,X)$ (resp. $\Cor_k^{\eff}(U,X)$) for the presheaf of finite correspondences.  If $\mathrm{S}_k$ is an admissible category as in Paragraph~\ref{par:motivicspacesvariants}, then we write $\mathrm{Cor}(\mathrm{S}_k)$ for the category whose objects are those of $\mathrm{S}_k$ but whose morphisms are finite $k$-correspondences.  In the notation of \cite[p. 36]{SuslinVoevodskyRelCy}, the morphism between two objects of $\mathrm{S}_k$ in $\mathrm{Cor}(\mathrm{S}_k)$ is given by $c_{equi}(X \times_k Y/X,0)$.  This is a subgroup of the finite $k$-correspondences from $X$ to $Y$, i.e., the free abelian group generated by closed subschemes $Z \subset X \times_k Y$ such that $Z \to X$ is finite and dominates an irreducible component of $X$.  In general it is a proper subgroup \cite[Example 3.4.7]{SuslinVoevodskyRelCy}, but coincides with all finite correspondences when $X$ is regular \cite[Corollary 3.4.6]{SuslinVoevodskyRelCy}.  The category $\mathrm{Cor}(\mathrm{S}_k)$ is additive with direct sum given by disjoint union and has a symmetric monoidal structure such that the graph functor $\Gamma: \mathrm{S}_k \to \mathrm{Cor}(\mathrm{S}_k)$ is symmetric monoidal.
\end{entry}

%\begin{entry}
%	\label{par:correspondencecategories}
%	If $\mathrm{S}_k$ is admissible, then we set $\mathrm{P}^{tr}(\mathrm{S}_k) := \mathrm{P}_\Sigma(\mathrm{Cor}(\mathrm{S}_k))$.  There is an adjunction
%	\[
%	\xymatrix{
%		\mathrm{P}_\Sigma(\mathrm{S}_k)_* \ar@<.5ex>[r]^{\Z_{tr}} & \ar@<.5ex>[l]^{u^{tr}} \mathrm{P}^{tr}(\mathrm{S}_k)
%	} \\
%	\]
%	obtained by left Kan extension of the assignment $X \mapsto R_{tr}(X)$.  In this case, we write $\Spc^{tr}_{\tau}(\mathrm{S}_k)$ for the full subcategory consisting of $\aone$-invariant and $\tau$-local objects.  \TODO{Here HKO impose hyperdescent, not just descent.}
%\end{entry}

\begin{entry}[Motivic Eilenberg--Mac Lane spaces]
	\label{par:motivicEMspaces}
 	Voevodsky's motivic Eilenberg spaces admit the following explicit model:
	\[
	K(\Z(n),2n) = \Z_{tr}({\mathbb A}^n)/\Z_{tr}({\mathbb A}^n \setminus 0).
	\]
	More generally, for any smooth scheme $X$, we write $\Z_{tr}(\Sigma^{2n,n} X_+)$ for the motivic space $\Z_{tr}(X \times {\mathbb A}^n)/\Z_{tr}(X \times {\mathbb A}^n \setminus 0)$.
\end{entry}

\begin{entry}
Consider the model of $S^{2n,n}$ given by ${\mathbb A}^n/{\mathbb A}^n \setminus 0$, i.e., the Thom space of a trivialized rank $n$ vector bundle over $\Spec k$.  There is a canonical map commutative monoids \cite[\S3.1]{VoeMEM}
\[
\Z_{tr}({\mathbb A}^n)/\Z_{tr}({\mathbb A}^n \setminus 0) \longrightarrow \Sym^{\infty} S^{2n,n};
\]
(going back to \cite[p. 81]{SuslinVoevodskySing}).
%Importantly, the left-hand side has an increasing filtration where successive sub-quotients are of the form
%\[
%Quot_{S_d}((S^{2n,n})^{\wedge d})
%\]
\end{entry}

\begin{ex}
	\label{ex:weightone}
	The case $n = 1$ of the map from the preceding point is particularly simple and we do not have to leave smooth schemes to form the relevant quotient.  In that case, $S^{2,1} \weq \pone$ and we have $\Sym^{\infty} \pone \weq {\mathbb P}^{\infty}$.  There is an $\aone$-equivalence ${\mathbb P}^{\infty} \weq B\gm{}$ \cite[\S 4 Proposition 4.8]{MV}.  On the other hand, the description of weight $1$ motivic cohomology shows that $\Z_{tr}(\aone)/\Z_{tr}(\aone \setminus 0) \weq B\gm{}$ \cite[\S 2]{SuslinVoevodskySing}.  Putting these observations together, we conclude that the map
	\[
	 K(\Z(1),2) \longrightarrow \Sym^\infty S^{2,1}
	\]
	is an equivalence.
\end{ex}

\subsubsection*{The motivic Dold--Thom theorem}
\begin{entry}
Let $E$ be a $\mathbb{P}^1$-prespectrum, that is, a sequence of spaces $E^0, E^1, \dots \in \Spc(k)_*$ together with bonding maps $\Sigma^{2,1} E^i \to E^{i+1}$.
The assembly maps of Paragraph~\ref{par:assemblymapsymmetricpowers} supply bonding maps in a $\mathbb{P}^1$-prespectrum $\Sym^\infty[1/d] E$ with entries $\Sym^\infty[1/d] E_i$.
\end{entry}

\begin{theorem}[motivic Dold-Thom theorem]
	\label{thm:motivicDoldThom}
	Assume $k$ is a perfect field with characteristic exponent $c$ and $n \geq 1$ is an integer.
	\begin{enumerate}[noitemsep,topsep=1pt] 
	\item The map
	\[
	\Sym^{\infty}[1/c] S^{2n,n}|_{\Sm_k} \longleftarrow K(\Z(n),2n)[1/c]
	\]
	is an isomorphism in $\Spc(k)$.
	\item The assembly morphisms $e_n: \Sigma^{2,1} K(\Z(n),2n)[1/c] \to K(\Z(n+1),2n+2)[1/c]$ are compatible with the equivalence of the first point, i.e., for every $n$ the diagram
	\[
	\xymatrix{
	\Sigma^{2,1} \Sym^{\infty}[1/c] S^{2n,n} \ar[d] & \ar[l] \Sigma^{2,1} K(\Z(n),2n)[1/c] \ar[d]^{e_n} \\
	\Sym^{\infty}[1/c] S^{2n+2,n+1} & \ar[l] K(\Z(n+1),2n+2)[1/c].
	}
	\]
	commutes.  
	\item The morphisms above induce an isomorphism of prespectra:
	\[
	\Sym^{\infty}[1/c] \1_k|_{\Sm_k} \longleftarrow H\Z[1/c].
	\]
	\item More generally for $X \in \Sm_k$ we have isomorphisms 
	\[ 
	\Sym^{\infty}[1/c] \Sigma^{2n,n} X_+|_{\Sm_k} \weq \Z_{tr}(\Sigma^{2n,n} X_+)[1/c] \in \Spc(k) 
	\] 
	and isomorphisms of pre-spectra:
	\[
	\Sym^{\infty}[1/c] \Sigma^{\infty} X_+|_{\Sm_k} \longleftarrow H\Z[1/c] \wedge \Sigma^{\infty} X_+.
	\]
	\end{enumerate}
\end{theorem}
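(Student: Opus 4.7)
\medskip

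The plan is to reduce everything to Voevodsky's motivic Dold--Thom theorem (\cite[Theorems 3.7 and 3.11]{VoeMEM}), which asserts that for a pointed quasi-projective $k$-scheme $X$, the canonical map of Nisnevich sheaves
\[
\Sym^{\infty}[1/c] X|_{\Sm_k} \longrightarrow \Z_{tr}(X)[1/c]
\]
is an $\aone$-equivalence, where $\Z_{tr}(X)$ is the free presheaf with transfers. The left side has the structure of a strictly commutative monoid and, after inverting $c$, is already group-complete in the relevant sense, so this is an equivalence of motivic spaces rather than merely of connective group-completions.

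\medskip

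For Point (1), I would apply this to the cofiber sequence $(\aone \setminus 0)_+ \to \aone_+ \to S^{2n,n}$: since $\Sym^{\infty}[1/c](-)|_{\Sm_k}$ commutes with colimits (being built from left adjoints, and sifted-colimit restriction to $\Sm_k$ preserves $\aone$-local colimits by Paragraph~\ref{par:essentiallysmoothbasechange}), we obtain $\Sym^{\infty}[1/c] S^{2n,n}|_{\Sm_k} \weq \Z_{tr}(\aone)[1/c]/\Z_{tr}(\aone \setminus 0)[1/c]$ after motivic localization. The right-hand side is by definition $K(\Z(n),2n)[1/c]$ (Paragraph~\ref{par:motivicEMspaces}), taking into account that the quotient is formed in Nisnevich sheaves of abelian groups. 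Iterating this identification via the identity $S^{2n,n} \weq S^{2,1}{}^{\wedge n}$ gives compatibility with the case $n=1$ already spelled out in Example~\ref{ex:weightone}.

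\medskip

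Point (2) is a naturality statement: the assembly map $s_n \colon \Sigma^{2,1} \Sym^{\infty}[1/c]\mathscr{X} \to \Sym^{\infty}[1/c](\Sigma^{2,1}\mathscr{X})$ from Paragraph~\ref{par:assemblymapsymmetricpowers} is induced by the monoidal structure on the diagonal and the product, while the assembly map $e_n$ on Eilenberg--Mac~Lane spaces is induced by the tensor structure on $\Z_{tr}$. Under the identification of Point (1), both are induced by the same symmetric monoidal multiplication on $(\ph)_+$, so the square commutes by naturality of the motivic Dold--Thom comparison applied to $S^{2,1}\wedge S^{2n,n}$. For Point (3), I would assemble the equivalences of Points (1)--(2) into a levelwise map of $\pone$-prespectra between $\Sym^{\infty}[1/c]\1_k|_{\Sm_k}$ and $H\Z[1/c]$, and then note that the resulting levelwise equivalence gives an equivalence of associated $\pone$-spectra in $\SH(k)$. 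Point (4) follows by smashing with $X_+$ throughout: the map $\mathscr{Y} \wedge \Sym^{r}\mathscr{X} \to \Sym^{r}(\mathscr{Y} \wedge \mathscr{X})$ gives the requisite comparison, and applying Voevodsky's theorem to $\Sigma^{2n,n}X_+$ and assembling identifies the resulting spectrum with $H\Z[1/c]\wedge \Sigma^{\infty}X_+$.

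\medskip

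The main obstacle is the verification that the restriction $(-)|_{\Sm_k}$ is compatible with the necessary colimits despite $\Sym^{n}X$ failing to be smooth; this is precisely what Voevodsky's theorem handles in \cite[Theorems 3.7, 3.11]{VoeMEM} via the study of relative cycles and the $\ell$dh-topology, and is the reason inverting the characteristic exponent $c$ is essential (to avoid pathologies of symmetric powers at primes dividing $c$). A secondary technical point is the strict commutative monoid structure on $\Sym^{\infty}\mathscr{X}$ and the need to match it with the $\Z_{tr}$-structure; this can be handled by appealing to the constructions in \cite{VoeMEM} or by the cdh-sheafified variant discussed in Section~\ref{ss:ret}.
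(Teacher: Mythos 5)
Your proposal follows essentially the same route as the paper: everything reduces to Voevodsky's motivic Dold--Thom theorem, the assembly maps are compared, and the levelwise equivalences are assembled into an equivalence of $\pone$-prespectra. That said, two places where you assert rather than verify are exactly where the paper's proof does its work. First, the commutativity of the square in Point (2) is not a formal naturality statement: the assembly map $e_n$ on the Eilenberg--Mac\,Lane side is defined as the composite $S^{2,1}\wedge K(\Z(n),2n)\to K(\Z(1),2)\wedge K(\Z(n),2n)\to K(\Z(n+1),2n+2)$, i.e.\ the graph map followed by \emph{external product of cycles}, whereas the assembly map on the symmetric-power side is induced by the \emph{diagonal} $\mathscr Y\times\mathscr X^{r}\to(\mathscr Y\times\mathscr X)^{r}$. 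Matching these requires unwinding the Suslin--Voevodsky identification of $\hom(U,\sqcup_d\Sym^d X)[1/c]$ with the free commutative monoid on cycles on $U\times X$ finite and surjective over $U$, and checking by hand that the diagonal corresponds to external product of cycles; saying "both are induced by the same symmetric monoidal multiplication" presupposes the lax monoidality of the Dold--Thom comparison, which is precisely the thing to be checked. Second, in Point (4) the identification of $H\Z[1/c]\wedge\Sigma^{\infty}X_+$ with the prespectrum having entries $\Z_{tr}(\Sigma^{2n,n}X_+)[1/c]$ is not automatic: it rests on the equivalence $\DM(k,\Z[1/c])\weq\Mod_{H\Z[1/c]}(k)$ (R\"ondigs--{\O}stv{\ae}r in characteristic $0$, Hoyois--Kelly--{\O}stv{\ae}r in positive characteristic), which you should cite. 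Finally, a caution on your Point (1): the claim that $\Sym^{\infty}[1/c](-)|_{\Sm_k}$ "commutes with colimits" is misleading as stated, since the relevant quotient $\Sym^\infty(\aone_+)/\Sym^\infty((\aone\setminus 0)_+)$ must be formed in commutative monoids (equivalently, one invokes Voevodsky's result on symmetric powers of quotients $X/U$ directly), not as a pushout of underlying pointed spaces; since you do cite the correct results of \cite{VoeMEM}, this is a wording issue rather than a fatal one.
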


\begin{proof}
%Note that for $n = 1$, the situation is better as $\Sym^{\infty} S^{2,1} \to K(\Z(1),2)$ is always an equivalence, independent of $k$.
The first statement is a consequence of \cite[Theorem 3.7, Propositions 3.11,3.15]{VoeMEM} and the definition of the motivic EM space given above.
Similarly for the first part of the fourth statement.
The third point follows from the second and the discussion of Paragraphs~\ref{par:assemblymapsymmetricpowers} and \ref{par:invertingcharexp}.  
The second half of the fourth point follows similarly, using that $H\Z[1/c] \wedge \Sigma^\infty X_+$ is equivalent to the prespectrum with entries $(\Z_{tr}(X_+)[1/c], \Z_{tr}(\Sigma^{2,1} X_+)[1/c], \dots)$. (This latter claim follows from the fact that $\DM(k, \Z[1/c]) \weq \Mod_{H\Z[1/c]}(k)$, which is proved if $c=1$ in \cite{RondigsOestvaermodules} and for $c>1$ in \cite[Theorem 5.8]{HKO}).

For the second statement, recall that the assembly morphism in the motivic Eilenberg--Mac Lane spectrum is constructed as the composite
\[
S^{2,1} \wedge K(\Z(n),2n) \longrightarrow K(\Z(1),2) \wedge K(\Z(n),2n) \longrightarrow K(\Z(n+1),2n+2);
\]
where, at the level of cycles, the first morphism is induced by the map sending $U \to S^{2,1}$ to its graph viewed as an element of $\Cor_k(U,S^{2,1})$ and the second morphism is induced by external product of cycles: using the models we have, this arises from the commutative diagram: 
\[
\xymatrix{
\Cor_k(U,\aone \setminus 0) \times \Cor_k(U,{\mathbb A}^n \setminus 0) \ar[r]\ar[d] & \Cor_k(U,{\mathbb A}^{n+1} \setminus 0) \ar[d] \\
\Cor_k(U,\aone) \times \Cor_k(U,{\mathbb A}^n) \ar[r] & \Cor_k(U,{\mathbb A}^{n+1})
}
\]
where the top map is the composite of the external product with the inclusion $\aone \setminus 0 \times {\mathbb A}^n \setminus 0 \subset {\mathbb A}^{n+1} \setminus 0$.

Now, the map $\pone \wedge \Sym^d (S^{2n,n}) \longrightarrow \Sym^d(S^{2n+2,n+1})$ is induced by the diagonal map.  On the other hand, the identification of Point (1) ultimately rests on \cite[Theorem 6.8]{SuslinVoevodskySing} (via \cite[Proposition 3.5]{VoeMEM}).  This identification tells us that for any finite type scheme $X$ and any connected normal scheme $U$, the group $\hom(U,\sqcup_{d \geq 0} \Sym^d X)$ coincides after inverting $c$ with the free commutative monoid on integral cycles on $U \times X$ that are finite and surjective over $U$.  The result then follows by comparing definitions: see \cite[\S 3.7 p. 59]{SuslinVoevodskyRelCy} for external product of cycles.  
\end{proof}

\subsection{Cellularity of the cofiber of the assembly map}
\label{ss:cellularityassemblymap}
Suppose $X \in \Sm_k$.  In this section, we establish the main geometric result, which amounts to a weak cellular estimate on the cofiber of the assembly maps in the spectrum $H\Z \wedge \Sigma^{\infty} X_+$.  We begin by giving an ``explicit" description of the cofiber of the assembly map as a colimit of spaces defined using representation theory of the symmetric group.  

After Theorem~\ref{thm:motivicDoldThom}, the discussion of Paragraph~\ref{par:assemblymapsymmetricpowers} identifies the assembly map in $H\Z \wedge \Sigma^{\infty} X_+$ as a colimit of maps of the form
\begin{equation}
	\label{eqn:unstableassemblymap}
\Sigma^{2,1} \Sym^{r}(\Sigma^{2n,n} X_+) \longrightarrow \Sym^{r}(\Sigma^{2n+2,n+1} X_+).
\end{equation}
Taking the cofiber of these maps from $r-1$ to $r$, using \S\ref{par:reducedsymmpowers} we obtain a map
\begin{equation}
	\label{eqn:unstableassemblymap-reduced}
\Sigma^{2,1} \tilde S^{r}(\Sigma^{2n,n} X_+) \longrightarrow \tilde S^{r}(\Sigma^{2n+2,n+1} X_+),
\end{equation}
which we can identify in the following terms. Write $\rho_r$ for the standard representation of the symmetric group $\Sigma_r$ on a based $r$-dimensional $k$-vector space by permutation of the basis vectors.  Assuming $r$ is coprime to the characteristic of $k$, the representation $\rho_r$ decomposes as $\mathbf{1} \oplus \bar{\rho}_r$ and we refer to $\bar{\rho}_r$ as the reduced standard representation.  View $\rho_r^{\oplus n}$ as an $\Sigma_r$-equivariant vector bundle over $\Spec k$.  We write ${\mathbb A}[\bar{\rho}_r]$ for the affine space attached to $\bar{\rho}_r$ (i.e., the spectrum of the symmetric algebra of the dual, viewed as a $\Sigma_r$-scheme) and $p: {\mathbb A}[\bar{\rho}_r] \setminus 0 \to \Spec k$ for the $\Sigma_r$-equivariant structure map.  

\begin{proposition}
	\label{prop:cofiberofassemblyHZsmashX}
	Assume $r! \in k^\times$.
	If $X \in \Sm_k$, then there is a canonical identification
	\[
	\cof(\Sigma^{2,1} \tilde S^r \Sigma^{2n,n} X_+ \to \tilde S^r \Sigma ^{2n+2,n+1} X_+)  \weq \Sigma^{2n+3,n+1} \op{Quot}_{\Sigma_r} (\Th(p^* \bar{\rho}_r^{\oplus n}) \wedge (X^{\times r})_+).
	\]
\end{proposition}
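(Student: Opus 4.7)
The plan is to realize both sides of the assembly map as $\op{Quot}_{\Sigma_r}$ of equivariant Thom spaces of constant vector bundles over $X^{\times r}$, reduce to a cofiber computation in the $\Sigma_r$-equivariant motivic category, and conclude by applying $\op{Quot}_{\Sigma_r}$, which preserves cofibers as a left adjoint.

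Modelling $\Sigma^{2n,n} X_+$ as the Thom space $\Th_X(\mathbf{1}^n)$ of the trivial rank-$n$ bundle and using that external smash of Thom spaces corresponds to external direct sum of bundles, the $r$-fold smash $(\Sigma^{2n,n} X_+)^{\wedge r}$ is $\Sigma_r$-equivariantly equivalent to $X^{\times r}_+ \wedge S^{\rho_r^{\oplus n}}$, where $\rho_r$ is the permutation representation. Setting $V = \mathbf{1} \oplus \rho_r^{\oplus n}$ and $W = \rho_r^{\oplus n+1}$ and unwinding the diagonal construction of Paragraph~\ref{par:assemblymapsymmetricpowers}, the assembly map is $\op{Quot}_{\Sigma_r}$ of $\mathrm{id}_{X^{\times r}_+} \wedge (S^V \to S^W)$, where $V \hookrightarrow W$ is the $\Sigma_r$-equivariant subbundle inclusion given by the diagonal $\mathbf{1} \to \rho_r$ in the first summand and the identity on the remaining $n$ summands. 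Because $\op{Quot}_{\Sigma_r}$ preserves colimits, it suffices to compute the equivariant cofiber of $S^V \to S^W$.

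The standing hypothesis that $r! = |\Sigma_r|$ is invertible in $k$ gives an equivariant splitting $W \weq V \oplus \bar{\rho}_r$, hence a smash decomposition $S^W \weq S^V \wedge S^{\bar{\rho}_r}$ under which $S^V \to S^W$ becomes $\mathrm{id}_{S^V} \wedge \iota$, with $\iota \colon S^0 \to S^{\bar{\rho}_r}$ the pointed map sending the non-basepoint to the class of $0 \in \bar{\rho}_r$. Since smash preserves cofibers, the cofiber reduces to $S^V \wedge \cof(\iota)$. To compute $\cof(\iota)$, I factor $\iota$ as $S^0 \to (\bar{\rho}_r)_+ \to S^{\bar{\rho}_r}$: the cofiber of the first arrow is $\bar{\rho}_r/\{0\}$, which is motivically contractible since $\bar{\rho}_r$ is affine space, while the second arrow sits in the cofiber sequence $(\bar{\rho}_r \setminus 0)_+ \to (\bar{\rho}_r)_+ \to S^{\bar{\rho}_r}$ and so has cofiber $\Sigma(\bar{\rho}_r \setminus 0)_+$. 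The cofiber-of-composition sequence then yields $\cof(\iota) \weq \Sigma(\bar{\rho}_r \setminus 0)_+ = S^1 \wedge ({\mathbb A}[\bar{\rho}_r] \setminus 0)_+$.

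Finally, expanding $S^V \weq S^{2,1} \wedge S^{\rho_r^{\oplus n}} \weq S^{2n+2,n+1} \wedge S^{\bar{\rho}_r^{\oplus n}}$ via $\rho_r \weq \mathbf{1} \oplus \bar{\rho}_r$ and recognising $S^{\bar{\rho}_r^{\oplus n}} \wedge ({\mathbb A}[\bar{\rho}_r] \setminus 0)_+$ as the Thom space $\Th(p^*\bar{\rho}_r^{\oplus n})$ of the trivial bundle with fiber $\bar{\rho}_r^{\oplus n}$ over ${\mathbb A}[\bar{\rho}_r] \setminus 0$, I obtain that the equivariant cofiber is $\Sigma^{2n+3,n+1}\bigl(\Th(p^*\bar{\rho}_r^{\oplus n}) \wedge (X^{\times r})_+\bigr)$; applying $\op{Quot}_{\Sigma_r}$, which commutes with the trivially-acted-upon suspension $\Sigma^{2n+3,n+1}$ and with colimits, gives the claimed identification. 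The subtlety requiring most care is ensuring that the smash decomposition $S^W \weq S^V \wedge S^{\bar{\rho}_r}$, the factorization through $(\bar{\rho}_r)_+$, and the cofiber-of-composition argument are all carried out $\Sigma_r$-equivariantly rather than only after descent to the quotient; this is available because the linear-algebraic splitting is equivariant under the invertibility hypothesis, and the construction of $\iota$ and its factorization through the Thom quotient is manifestly $\Sigma_r$-equivariant throughout.
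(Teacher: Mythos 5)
Your proof is correct, and it reaches the same intermediate reduction as the paper (assembly $=$ $\op{Quot}_{\Sigma_r}$ of the map of equivariant Thom spaces induced by $\mathbf 1\oplus\rho_r^{\oplus n}\hookrightarrow\rho_r^{\oplus n+1}$ via the diagonal $\mathbf 1\to\rho_r$), but the key computational step is done differently. The paper splits off only the trivial summands, reduces to the map $\Th(\bar\rho_r^{\oplus n})\to\Th(\bar\rho_r^{\oplus n+1})$, and identifies its cofiber by applying $\Sigma_r$-equivariant homotopy purity to the closed immersion $\A[\bar\rho_r]\setminus 0\hookrightarrow\A[\bar\rho_r^{\oplus n+1}]\setminus 0$ with open complement $(\A[\bar\rho_r^{\oplus n}]\setminus 0)\times\A[\bar\rho_r]$ and normal bundle $p^*\bar\rho_r^{\oplus n}$. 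You instead split off the full source representation, reducing everything to $\cof(S^0\to S^{\bar\rho_r})$, which you compute by elementary cofiber bookkeeping (equivariant contractibility of $\A[\bar\rho_r]/\{0\}$ plus the defining cofiber sequence of the Thom space) as $\Sigma(\A[\bar\rho_r]\setminus 0)_+$, and then reassemble $\Th(p^*\bar\rho_r^{\oplus n})$ as $S^{\bar\rho_r^{\oplus n}}\wedge(\A[\bar\rho_r]\setminus 0)_+$. This trades the appeal to equivariant purity for a purely formal smash-product manipulation; it is more elementary, while the paper's route is the one that would survive if the relevant summand were not globally split off. Two small points of hygiene: the equivariant splitting $\rho_r\cong\mathbf 1\oplus\bar\rho_r$ needs only $r$ (not $|\Sigma_r|=r!$) invertible in $k$ — and this hypothesis is already implicit in the statement, since $\bar\rho_r$ is only defined under it — and in your final step the suspension coordinates $\Sigma^{2n+3,n+1}$ you pull out of $\op{Quot}_{\Sigma_r}$ are exactly the trivially-acted-upon ones ($S^{\mathbf 1^{n+1}}$ and the simplicial $S^1$), which you correctly note is what makes the commutation legitimate.
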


\begin{proof}
The map of \eqref{eqn:unstableassemblymap-reduced} is the map
\[
\Sigma^{2,1} \op{Quot}_{\Sigma_r} (\Th(\rho_r^{\oplus n}) \wedge (X^{\times r})_+) \longrightarrow \op{Quot}_{\Sigma_r} (\Th(\rho_r^{\oplus n+1}) \wedge (X^{\times r})_+)
\]
induced by the map $\mathbf{1} \oplus \rho_r^{\oplus n} \to \rho_r \oplus \rho_r^{\oplus n}$ that includes $\mathbf{1} \subset \rho_r$ in the first factor of the target.  This map identifies with a map
\[
\Sigma^{2n+2,n+1} \op{Quot}_{\Sigma_r}(\Th(\bar{\rho}_r)^{\oplus n} \wedge (X^{\times r})_+) \longrightarrow \Sigma^{2n+2,n+1} \op{Quot}_{\Sigma_r}(\Th(\bar{\rho}_r)^{\oplus n+1}  \wedge (X^{\times r})_+),
\]
which is itself obtained by suspension from the map
\[
\op{Quot}_{\Sigma_r}(\Th(\bar{\rho}_r)^{\oplus n}  \wedge (X^{\times r})_+) \longrightarrow \op{Quot}_{\Sigma_r}(\Th(\bar{\rho}_r)^{\oplus n+1}  \wedge (X^{\times r})_+)
\]
induced by the inclusion $\bar\rho^{\oplus n} \to \bar\rho^{\oplus n+1}$.
In order to identify the cofiber of this map, we simply observe that $\op{Quot}_{\Sigma_r}$ is a colimit preserving functor and unwind the definitions.  

Indeed, first consider the commutative square
\[
\xymatrix{
{\mathbb A}[\bar{\rho}_r^{\oplus n}] \setminus 0 \ar[r]\ar[d] & {\mathbb A}[\bar{\rho}_r^{\oplus n}] \ar[d] \\
{\mathbb A}[\bar{\rho}_r^{\oplus n+1}] \setminus 0 \ar[r] & {\mathbb A}[\bar{\rho}_r^{\oplus n+1}].
}
\]
The map on horizontal cofibers is a map $\Th(\bar{\rho}_r^{\oplus n}) \to \Th(\bar{\rho}_r^{\oplus n+1})$ that, after smashing with $(X^{\times r})_+$ and applying $\op{Quot}_{\Sigma_r}$ is the map in which we are interested in.  Observe that the left hand vertical map is $\Sigma_r$-equivariantly equivalent to the inclusion 
\[
({\mathbb A}[\bar{\rho}_r^{\oplus n}] \setminus 0) \times {\mathbb A}[\bar{\rho}_r] \longhookrightarrow {\mathbb A}[\bar{\rho}_r^{\oplus n+1}] \setminus 0,
\]
with closed complement $\Sigma_r$-equivariantly isomorphic to ${\mathbb A}[\bar{\rho}_r] \setminus 0$ and normal bundle $p^* \bar{\rho}_r^{\oplus n}$.  Therefore, $\Sigma_r$-equivariant homotopy purity \cite[Theorem 3.23]{HoyoisEquiv} yields an identification of the cofiber of the left hand vertical map with $\Th(p^* \bar{\rho}_r^{\oplus n})$.  

With those observations in mind, computing cofibers vertically, then yields a $\Sigma_r$-equivariant cofiber sequence of the form
\[
\Th(p^* \bar{\rho}_r^{\oplus n}) \longrightarrow \ast \longrightarrow \cof(\Th(\bar{\rho}_r^{\oplus n}) \to \Th(\bar{\rho}_r^{\oplus n+1})).
\]
Shifting the cofiber sequence to the right once, smashing with $(X^{\times r})_+$ and applying $\op{Quot}_{\Sigma_r}$, we therefore obtain an identification
\[
\cof(\op{Quot}_{\Sigma_r}(\Th(\bar{\rho}_r)^{\oplus n} \wedge (X^{\times r})_+) \to \op{Quot}_{\Sigma_r}(\Th(\bar{\rho}_r)^{\oplus n+1}) \wedge (X^{\times r})_+) \weq \Sigma \op{Quot}_{\Sigma_r} (\Th(p^* \bar{\rho}_r^{\oplus n}) \wedge (X^{\times r})_+),
\]
and the result follows.
\end{proof}

\begin{ex}
	For concreteness, let us describe what happens when $r = 2$ and $X = \Spec k$. In that case, $\bar{\rho}_2$ is the $1$-dimensional sign representation of $\Sigma_2$.  The action of $\Sigma_2$ on $\gm{} = {\mathbb A}(\bar{\rho}_2) \setminus 0$ is the sign representation, which is free and has quotient $\gm{}$ via the squaring map.  Thus, the cofiber in the lemma may be identified as:
	\[
	\Sigma^{2n+3,n+1} \op{Quot}_{\Sigma_2}\Th(p^*\bar{\rho}_2^{\oplus n}).
	\]
	Since the action of $\Sigma_2$ on $\gm{}$ is free, the bundle $p^*\bar{\rho}_2^{\oplus n}$ is $\Sigma_2$-equivariantly trivial and we conclude the cofiber in question is
	\[
	{\gm{}}_+ \wedge S^{4n+3,2n+1},
	\]
	which lies in $O(S^{4n+3,2n+1})$.
\end{ex}   

\begin{rem}
Intuitively speaking, since $\op{Quot}_{\Sigma r}$ is a colimit preserving functor, we expect that it preserves weak cellularity classes as well.  We thus expect that the space appearing in Proposition~\ref{prop:cofiberofassemblyHZsmashX} lies in $O(S^{4n+3,2n+1})$ as well.  Unfortunately, the example above is essentially the only case in which the quotients that appear are smooth schemes.  In dealing with singular schemes, we are forced to invoke resolution of singularities.  Voevodsky observed that there is a well-defined $S^1$-stable homotopy type for singular schemes, which we review in the next section.  At the moment, we content ourselves with producing an equivariant version of a weak cellular estimate that we will eventually use.
\end{rem}

We begin by making the following definition. In contrast with Section~\ref{s:weakcellularityandnullity} we write down the cellular class directly instead of as the kernel of a localization since this is enough for our purposes.
It seems likely that the more general theory can also be developed.
\begin{defn}
	We will say that a pointed motivic $G$-space $\mathscr{X}$ lies in $O^G(S^{p,q})$ if it lies in the subcategory generated under colimits and cofiber extensions by objects of the form $\Sigma^{p,q} X_+$ for $X \in \Sm^G_k$.
\end{defn}

% Add a comment about comparison to the ``unstable very $n$-effective" terminology?

Consider the $\Sigma_r$-action on ${\mathbb A}[\bar{\rho}] \setminus 0$.  The stabilizers of the action of $\Sigma_r$ on ${\mathbb A}[\bar{\rho}_r] \setminus 0$ are always {\em proper} subgroups of $\Sigma_r$.  We may stratify ${\mathbb A}[\bar{\rho}] \setminus 0$ or equivalently ${\mathbb A}[\bar{\rho}]$ by stabilizer type; the resulting stratification is frequently called the Luna stratification.

\begin{proposition}[Luna stratification]
	\label{prop:stabilzerstratification}
	Assume $k$ is a perfect field, $X$ is an irreducible smooth affine $G$-scheme and $G$ a finite group whose order is invertible in $k$.  There exists a decomposition of $X$ into locally closed closed subschemes $X(H)$ indexed by conjugacy classes of subgroups $H \subset G$ such that the following properties hold:
	\begin{enumerate}[noitemsep,topsep=1pt]
		\item all points of $X(H)$ have stabilizer conjugate to $H$;
		\item all $X(H)$ are smooth; 
		\item there is a subgroup $H_0$ (the generic stabilizer) and open dense subscheme $X(H_0)$;
		\item the closure of $X(H)$ is a union of $X(H')$ where $H'$ runs through the conjugacy classes of subgroups of $G$ containing the conjugacy class of $H$.
	\end{enumerate}
%\tom{this is actually much stronger than what we need...}
\end{proposition}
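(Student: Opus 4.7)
The plan is to construct the strata directly from fixed-point subschemes and then verify the four properties in turn. For each subgroup $H \subseteq G$ I first form the scheme-theoretic fixed locus $X^H \subseteq X$, defined as the equaliser of the action maps $h \colon X \to X$ for $h \in H$. Because $X$ is smooth over $k$ and $|H|$ is invertible, the averaging operator $|H|^{-1}\sum_{h \in H} h$ produces at each $H$-fixed point $x$ an $H$-equivariant splitting $T_x X = (T_x X)^H \oplus V$ with the first summand identified with $T_x X^H$, and one concludes by the standard Cartan--Serre argument that $X^H$ is a smooth closed subscheme of $X$ of the expected dimension.

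I then define the open subscheme $X^{(H)} := X^H \setminus \bigcup_{H \subsetneq K} X^K \subseteq X^H$ of points whose stabiliser is exactly $H$, and set $X(H) := G \cdot X^{(H)}$, endowed with its reduced subscheme structure. The canonical morphism $G \times^{N_G(H)} X^{(H)} \to X(H)$ is a finite étale cover with Galois group $N_G(H)/H$, so $X(H)$ is smooth and locally closed in $X$, yielding (1) and (2). These strata are pairwise disjoint (they group points by the conjugacy class of their stabiliser) and together cover $X$. Property (3) then follows from upper semi-continuity: since each $X^H$ is closed and $X$ is irreducible, a unique conjugacy class $[H_0]$ occurs generically, and $X(H_0)$ is open and dense in $X$.

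The main obstacle is the closure relation (4), which requires a local structure result at points with larger stabiliser. For this I would invoke the Luna slice theorem in its finite-group form; since $G$ is linearly reductive as $|G|$ is invertible in $k$, it supplies at any $x \in X(H')$ an $N_G(H')$-equivariant étale neighbourhood of $x$ modelled on the slice representation $T_x X / T_x(G \cdot x)$ with its linear $H'$-action. In this local model, the existence of a nearby point with stabiliser conjugate to $H$ reduces to the linear-algebraic question of whether some conjugate of $H$ fixes a non-zero vector in the slice, which holds precisely when $H'$ contains a conjugate of $H$. This yields both inclusions in the asserted description of $\overline{X(H)}$ and completes the proof.
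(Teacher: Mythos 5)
Your route is essentially the paper's: the proof in the text simply observes that $G$ is linearly reductive and \'etale, reduces to $\bar k$ using perfectness of $k$, and cites Luna's \'etale slice theorem (Bardsley--Richardson in positive characteristic) together with Luna III.2--4 for the stratification; you are supplying the details of the construction that the paper delegates to those references, but the key input (the slice theorem for a finite linearly reductive group) is the same.

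Two small inaccuracies in your write-up, neither of which affects the conclusion. First, the map $G \times^{N_G(H)} X^{(H)} \to X(H)$ is not an $N_G(H)/H$-cover but an isomorphism: the fibre over a point $x$ with stabilizer $H' = g_0 H g_0^{-1}$ consists of the pairs $(g, g^{-1}x)$ with $g \in g_0 N_G(H)$, which form a single $N_G(H)$-orbit. (Smoothness and local closedness of $X(H)$ are easier to see directly: $X(H)$ is the disjoint union of the open subschemes $X^{(H')} \subseteq X^{H'}$ over conjugates $H'$ of $H$, and equals the closed set $\bigcup_{H' \sim H} X^{H'}$ minus the closed set $\bigcup_K X^K$, $K$ strictly containing a conjugate of $H$.) Second, your final biconditional --- that a conjugate of $H$ fixes a nonzero vector of the slice at $x \in X(H')$ ``precisely when'' $H'$ contains a conjugate of $H$ --- is false in general: whether the stratum $X(H)$ actually accumulates at $x$ depends on the slice representation, not only on the containment of subgroups. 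Fortunately item (4) as stated asserts only the containment $\overline{X(H)} \subseteq \bigcup_{H' \supseteq_{\mathrm{conj}} H} X(H')$ together with the fact that the closure is a union of strata; the containment is elementary semicontinuity ($\overline{X(H)}$ lies in the closed set $\bigcup_{H'\sim H} X^{H'}$, all of whose points have stabilizer containing a conjugate of $H$), and only the ``union of whole strata'' part needs the local model from the slice theorem.
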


\begin{proof}
	The existence of this stratification is ``well-known" and we content ourselves with providing some references.  Since $|G|$ is invertible in $k$, the group scheme $G$ is a linearly reductive \'etale group scheme.  Since $k$ is perfect, it suffices to prove these properties after base-change to an algebraic closure of $k$.  The existence and properties of the stratification as stated are consequences of Luna's \'etale slice theorem which is established in characteristic $0$ in \cite{Luna}.  In positive characteristic the slice theorem holds for actions of linearly reductive finite groups by \cite{BardsleyRichardson}.  The existence and properties of the stratification above are described in \cite[III.2-4]{Luna}.  
\end{proof}

%\begin{rem} \label{rmk:luna-strats-induced}
%Note that $X(H)$ is the disjoint union of $X(H)^K$, where $K$ runs over subgroups conjugate to $H$.  This implies that $X(H)$ is induced from the $N_G(H)$-scheme $X(H)^H$.
%\end{rem}

\begin{lem}
	\label{lem:splittingbundlebystabilizertype}
	Assume $k$ is a perfect field, $G$ is a finite group whose order is invertible in $k$, and $X$ is a smooth $G$-quasi-projective $k$-scheme.  If $V$ is a $G$-equivariant vector bundle on $X$ such that for every conjugacy class of subgroups $H \subset G$,  $V|_{X(H)^H}$ splits $H$-equivariantly as $\mathbf{1}_{X(H)^H}^{\oplus n} \oplus V'_H$, then $\Th(V) \in O^G(S^{2n,n})$.
\end{lem}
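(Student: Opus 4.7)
The plan is to prove this by induction along the Luna stratification provided by Proposition~\ref{prop:stabilzerstratification}, combined with $G$-equivariant homotopy purity and equivariant induction. Arrange the conjugacy classes of stabilizers in a sequence $H_0, H_1, \ldots, H_m$ (with $H_0$ the generic stabilizer) such that $X_i := \bigcup_{j \le i} X(H_j)$ is $G$-stable and closed in $X$; this is possible by property (4) of the stratification. Setting $U_i := X \smallsetminus X_{i-1}$ we obtain open $G$-inclusions $U_m \subset U_{m-1} \subset \cdots \subset U_0 = X$ with smooth $G$-stable closed complement $X(H_i) \subset U_i$ at the $i$-th stage. By $G$-equivariant homotopy purity (applied to $X(H_i) \hookrightarrow U_i$ and the pulled-back bundle $V|_{U_i}$), we get cofiber sequences
\[
\Th(V|_{U_{i+1}}) \longrightarrow \Th(V|_{U_i}) \longrightarrow \Th\bigl((V \oplus \nu_i)|_{X(H_i)}\bigr),
\]
where $\nu_i$ is the $G$-equivariant normal bundle of $X(H_i)$ in $U_i$. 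Since $O^G(S^{2n,n})$ is closed under cofiber extensions, it suffices to show that each stratumwise Thom space $\Th\bigl((V \oplus \nu_i)|_{X(H_i)}\bigr)$ lies in $O^G(S^{2n,n})$.

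Next I would reduce $G$-equivariance to $N_G(H)$-equivariance using Luna's slice theorem: for $H = H_i$, we have a $G$-equivariant isomorphism $X(H) \cong G \times^{N_G(H)} X(H)^H$, under which the $G$-equivariant bundle $(V \oplus \nu)|_{X(H)}$ corresponds to an $N_G(H)$-equivariant bundle $W$ on $X(H)^H$. The induction functor $Y \mapsto G \times^{N_G(H)} Y$ sends $\Sm_k^{N_G(H)}$ to $\Sm_k^G$, is a left adjoint and hence preserves colimits; in particular it carries $O^{N_G(H)}(S^{2n,n})$ into $O^G(S^{2n,n})$ because it sends generators to generators. Consequently it suffices to verify that $\Th(W) \in O^{N_G(H)}(S^{2n,n})$.

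On $X(H)^H$, the group $H$ acts trivially on the base. Since $X(H)^H$ is exactly the $H$-fixed locus of $X(H)$, the normal bundle $\nu|_{X(H)^H}$ has vanishing $H$-fixed subbundle, while by hypothesis $V|_{X(H)^H}$ contains an $H$-equivariantly trivial rank-$n$ subbundle. The plan is to promote this $H$-equivariant trivial subbundle to an $N_G(H)$-equivariant one, so that we get an $N_G(H)$-equivariant splitting $W \cong \mathbf{1}_{X(H)^H}^{\oplus n} \oplus W'$ with trivial $N_G(H)$-action on $\mathbf{1}^{\oplus n}$. Given such a splitting we compute
\[
\Th(W) \;\weq\; \Th(\mathbf{1}^{\oplus n}) \wedge_{X(H)^H_+} \Th(W') \;\weq\; S^{2n,n} \wedge \Th(W'),
\]
and since $\Th(W')$ is a cofiber of smooth $N_G(H)$-schemes it lies in $O^{N_G(H)}(S^{0,0})$, forcing $\Th(W) \in O^{N_G(H)}(S^{2n,n})$.

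The main obstacle is the promotion in the previous paragraph. I would handle it by exploiting the linear reductivity of $N_G(H)$ (which holds because $|G|$ is invertible in $k$), averaging the chosen $H$-equivariant trivial subbundle over the finite quotient $N_G(H)/H$ that acts on the bundle $V^H|_{X(H)^H}$ and on its rank-$n$ trivial summand. Equivalently, one may argue by $N_G(H)/H$-equivariantly decomposing the $H$-invariant subbundle $V^H|_{X(H)^H}$ (which is automatically $N_G(H)$-equivariant since $N_G(H)$ normalizes $H$), extracting an $N_G(H)/H$-isotypic trivial-rank-$n$ summand from the piece containing the given trivial $H$-subbundle. This step is where the hypothesis of $|G|$ invertible in $k$, coupled with the full strength of the Luna stratification, is really used; once it is in place the cofiber-extension argument described above finishes the proof.
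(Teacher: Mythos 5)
Your overall skeleton---induct along the Luna stratification, use $G$-equivariant homotopy purity to produce cofiber sequences $\Th(V|_{U_{i+1}}) \to \Th(V|_{U_i}) \to \Th((V\oplus\nu_i)|_{X(H_i)})$, and exploit closure of $O^G(S^{2n,n})$ under cofiber extensions---matches the paper's proof. The gap is in your treatment of a single stratum: you need to ``promote'' the $H$-equivariant trivial rank-$n$ subbundle of $V|_{X(H)^H}$ to an $N_G(H)$-equivariant one with \emph{trivial} $N_G(H)$-action, and this step fails. The hypothesis only controls the $H$-action; the residual group $N_G(H)/H$ acts on $V^H|_{X(H)^H}$, and averaging the inclusion $\mathbf{1}^{\oplus n}\hookrightarrow V|_{X(H)^H}$ over $N_G(H)/H$ can be the zero map (this happens whenever the relevant isotypic piece of $V^H$ is a sum of nontrivial characters of $N_G(H)/H$), in which case no $N_G(H)$-trivial rank-$n$ summand exists at all. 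This is not a hypothetical pathology: in the intended application (Theorem~\ref{thm:cellularityofHZXbeforequotient}), $V=p^*\bar{\rho}_r^{\oplus n}$ and $H$ is a partition subgroup $\prod_i\Sigma_{n_i}$; the $H$-fixed part of $\bar{\rho}_r$ is the reduced permutation representation of $N_{\Sigma_r}(H)/H$ on the set of blocks, which has no trivial summand when the blocks have equal size and are permuted transitively. So the promoted splitting you rely on genuinely does not exist in the cases the lemma must cover.

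The paper avoids this entirely: rather than seeking an equivariant splitting on $X(H)$ or $X(H)^H$, it passes to the $G$-Nisnevich covering $X(H)^H\times^H G \to X(H)$. The pullback of $V$ to this induced scheme is induced from the $H$-equivariant bundle $V|_{X(H)^H}$, and induction along $H\subset G$ carries the trivial $H$-bundle $\mathbf{1}^{\oplus n}$ (with trivial $H$-action) to the trivial $G$-bundle $\mathbf{1}^{\oplus n}$ on $X(H)^H\times^H G$. Hence the desired $G$-equivariant splitting holds automatically on the cover, and closure of $O^G(S^{2n,n})$ under colimits (\v{C}ech descent along the $G$-Nisnevich cover) gives $\Th((V\oplus\nu_i)|_{X(H_i)})\in O^G(S^{2n,n})$. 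To repair your argument, replace the averaging/promotion step by this passage to the induced cover; everything else you wrote then goes through.
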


\begin{proof}
	If $W$ is a smooth $G$-scheme and $V$ is a $G$-equivariant vector bundle that splits $G$-equivariantly as $V \weq \mathbf{1}_W \oplus V'$, then $\Th(V) \weq \Sigma^{2n,n} \Th(V')$ and hence $\Th(V) \in O^G(S^{2n,n})$.
	Similarly if $V$ locally on $W$ splits off trivial vector bundles of rank $n$, then $\Th(V) \in O^G(S^{2n,n})$ by descent (recall that $O^G(S^{2n,n})$ is closed under colimits by definition).
	
	Given a $G$-equivariant open immersion $U \subset W$ with smooth closed complement $Z$ and a $G$-equivariant vector bundle $V$ on $W$, observe that the inclusion of $V|_U \hookrightarrow V$ has closed complement $V|_Z$ and the normal bundle of $V|_Z \hookrightarrow V$ is identified $G$-equivariantly with $V|_Z \oplus \nu_{Z/W}$.  In that case, by functoriality of the homotopy purity isomorphisms \cite[Corollary 3.25]{HoyoisEquiv}, there are $G$-equivariant cofiber sequences of the form
	\[
	\Th(V|_U) \longrightarrow \Th(V) \longrightarrow \Th(\nu_{Z/X} \oplus V|_Z).
	\]
	Using these fiber sequences for an induction as well as the fact that $O^G(S^{2n,n})$ is closed under colimits and cofiber extensions by definition we see that it will be enough to do the following: We may supply a stratification of $X$ with smooth strata $Z_i$ such that $V|_{Z_i}$ locally $G$-equivariantly splits off a trivial vector bundle of rank $n$.

	The stratification we use is the Luna stratification.
	The strata $Z_i = X(H)$ are smooth as needed.
	Note that $X(H) \weq X(H)^H \times^{N} G$, where $N \subset G$ is the normalizer of $H$.
	Thus in order for $V|_{X(H)}$ to locally split off a trivial $G$-bundle of rank $n$, it suffices that $V|_{X(H)^H}$ locally splits off a trivial $N$-bundle of rank $n$.\NB{ref?}
	Set $V' = (V|_{X(H)^H})^H$.
	Since $H \subset N$ is normal and $H$ acts trivially on $X(H)^H$, $V'$ is an $N$-vector bundle, and in fact an $N/H$-vector bundle.
        It is also a summand of $V|_{X(H)^H}$ (since $|G| \in k^\times$).
	Now note that $N/H$ acts freely on $X(H)^H$, and so $N/H$-vector bundles on $X(H)^H$ are the same as ordinary vector bundles on $X(H)^H/(N/H)$\NB{ref?}.
	In particular $V'$ is locally free.
	A trivial $H$-vector subbundle of $V|_{X(H)^H}$ has trivial $H$-action ($H$ acting freely on $X(H)^H$) and consequently lies in $V'$.
	By assumption such a subbundle of rank $n$ exists; hence $V'$ has rank $\ge n$.
	Thus $V'$ is the required locally free summand.
\end{proof}

\begin{theorem}
	\label{thm:cellularityofHZXbeforequotient}
	If $k$ has characteristic $0$, and $X$ is a smooth $k$-scheme, then  
	\[
	\Th(p^* \bar{\rho}_r^{\oplus n}) \wedge (X^{\times r})_+ \in O^{\Sigma_r}(S^{2n,n}).
	\]
\end{theorem}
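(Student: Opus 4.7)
The plan is to apply Lemma~\ref{lem:splittingbundlebystabilizertype} to the $\Sigma_r$-equivariant smooth scheme
\[ Y := ({\mathbb A}[\bar\rho_r] \setminus 0) \times X^{\times r} \]
and the $\Sigma_r$-equivariant vector bundle $V$ on $Y$ obtained by pulling back $p^*\bar\rho_r^{\oplus n}$ along the projection $\pi: Y \to {\mathbb A}[\bar\rho_r] \setminus 0$. Since $\Th(V) \weq \Th(p^*\bar\rho_r^{\oplus n}) \wedge (X^{\times r})_+$, this yields the desired conclusion.

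The key observation is that $V$ is in fact pulled back from $\Spec k$, so it is equivariantly isomorphic to the trivial bundle $\bar\rho_r^{\oplus n} \otimes_k \mathcal{O}_Y$ with $\Sigma_r$ acting only through the first tensor factor. Consequently, for any subgroup $H \subset \Sigma_r$, the $H$-invariant part of $V|_{Y(H)^H}$ equals $(\bar\rho_r^H)^{\oplus n} \otimes_k \mathcal{O}_{Y(H)^H}$, so $V|_{Y(H)^H}$ splits $H$-equivariantly as $\mathbf{1}_{Y(H)^H}^{\oplus (n\cdot \dim \bar\rho_r^H)} \oplus V'_H$. It therefore suffices to verify that $\dim_k \bar\rho_r^H \geq 1$ for every subgroup $H \subset \Sigma_r$ whose stratum $Y(H)$ is nonempty.

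To this end, suppose $(v, x_1, \ldots, x_r) \in Y$ has stabilizer $H$. Then $H$ fixes the nonzero vector $v \in \bar\rho_r$, so $\bar\rho_r^H \neq 0$. Equivalently, since $\rho_r = \mathbf{1} \oplus \bar\rho_r$ and $\dim \rho_r^H$ equals the number of $H$-orbits on $\{1,\ldots,r\}$, the condition $\bar\rho_r^H \neq 0$ is equivalent to $H$ acting non-transitively on $\{1,\ldots,r\}$; this is automatic because a transitive subgroup could not fix any nonzero vector of $\bar\rho_r$. Hence the hypothesis of Lemma~\ref{lem:splittingbundlebystabilizertype} is satisfied, and we conclude $\Th(V) \in O^{\Sigma_r}(S^{2n,n})$.

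The only potential obstacle is that Proposition~\ref{prop:stabilzerstratification} is phrased for smooth \emph{affine} $G$-schemes whereas $Y$ is only smooth and $\Sigma_r$-quasi-projective. However, the Luna stratification and the splitting argument in the proof of Lemma~\ref{lem:splittingbundlebystabilizertype} are local in the $G$-Nisnevich topology, so by covering $Y$ by $\Sigma_r$-invariant affine opens (which exist since $Y$ is $\Sigma_r$-quasi-projective over a field of characteristic $0$ with $|\Sigma_r|$ automatically invertible) one reduces to the affine situation already handled. This verification is routine and constitutes the only technicality; the conceptual content is entirely in the stabilizer-type analysis above.
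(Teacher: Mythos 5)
Your proof is correct and takes essentially the same route as the paper: apply the Luna stratification and Lemma~\ref{lem:splittingbundlebystabilizertype}, reducing everything to the claim that $\bar\rho_r^H \neq 0$ for every stabilizer subgroup $H$. The only difference is in how that claim is checked --- the paper identifies the stabilizers as \emph{proper} partition subgroups $\Sigma_{n_1}\times\cdots\times\Sigma_{n_d}$ ($d\ge 2$) and counts trivial summands in $\op{Res}^{\Sigma_r}_H\rho_r \cong \bigoplus_i \rho_{n_i}$, whereas you observe tautologically that the stabilizer of a nonzero $v$ fixes $v$, so $\bar\rho_r^H \ni v \neq 0$; both verifications are valid, and yours is slightly more direct.
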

\begin{proof}
  	It suffices to establish that $\Th(p^* \bar{\rho_r}^{\oplus n}) \in O^{\Sigma_r}(S^{2n,n})$.  Consider the stratification of ${\mathbb A}[\bar{\rho}_r]$ by orbit type.  The assumption that $k$ has characteristic $0$ guarantees that $|\Sigma_r|$ is invertible in $k$ for all $r$.  By appeal to Lemma~\ref{lem:splittingbundlebystabilizertype}, to establish the result, it suffices to show that the restriction of $p^*\bar{\rho}_r$ to the locally closed subscheme of ${\mathbb A}[\bar{\rho}_r] \setminus 0$ described by each stabilizer type equivariantly splits off a trivial bundle of rank $1$.

  	Now if $H$ is a subgroup of $\Sigma_r$ occurring as the stabilizer of a point, then $H$ is a partition subgroup, i.e., it is isomorphic to $\prod_{i=1}^d \Sigma_{n_i}$ where $\sum_i n_i = r$.  The stabilizers of the $\Sigma_r$ action on ${\mathbb A}[\bar{\rho}_r] \setminus 0$ are always {\em proper}.  Indeed, it suffices to observe that the fixed point locus is empty, and this can be checked after base change to an algebraic closure of $k$, in which case it is clear.  

	Thus, suppose $H$ is a proper partition subgroup of $\Sigma_r$.  It suffices to prove that $\op{Res}^{\Sigma_r}_H \bar{\rho}_r$ splits off a trivial bundle. We have $H =  \Sigma_{n_1} \times \cdots \times \Sigma_{n_d}$ and $\op{Res}^{\Sigma_r}_H(\rho_r) \weq \rho_{n_1} \oplus \cdots \oplus \rho_{n_d}$. Since $d \ge 2$ ($H$ being a proper subgroup) we find that $\op{Res}^{\Sigma_r}_H(\rho_r)$ splits off at least two trivial representations, whence $\op{Res}^{\Sigma_r}_H \bar{\rho}_r$ splits off at least one.
\end{proof}

\subsection{Weak cellularity and quotients}
\label{ss:cellularityemchar0}
We denote by 
\[ 
R: \mathrm{P}(\Sch_k^{ft}) \longrightarrow \mathrm{P}(\Sm_k) 
\] 
the restriction functor along the inclusion $\Sm_k \to \Sch_k^{ft}$.
\begin{lem} \label{lem:R-pres-stuff}
The functor $R: \mathrm{P}(\Sch_k^{ft})_* \to \mathrm{P}(\Sm_k)_*$ satisfies $R(\mathscr A \wedge \mathscr B) \weq R(\mathscr A) \wedge R(\mathscr B)$ and, for $X \in \Sm_{k*}$, $R(X) \weq X$.
Moreover $R$ preserves colimits.

The same holds for the variants on $\mathrm{P}_\Sigma, \Shv_{\Nis}, \Spc$.
\end{lem}
\begin{proof}
The first statement is clear since the smash product is defined sectionwise on both categories.
The second statement follows from fully faithfulness of the left adjoint of $R$.
The third statement holds because colimits are computed sectionwise on both categories.

The variants follow because $R$ preserves the relevant equivalences.
\end{proof}

\begin{proposition}
	\label{prop:quotientspreservecellularity}
	If $\mathscr{X} \in \Spc^G(k)_*$ lies in $O^G(S^{p,q})$, then $R\op{Quot}_G\mathscr{X} \in O(S^{p,q})$ as well.
\end{proposition}
\begin{proof}
Since $O(S^{p,q})$ is closed under colimits and cofiber extensions and $R\op{Quot}_G$ preserves colimits, cofiber extensions and $\Sigma^{p,q}$ (by Lemma \ref{lem:R-pres-stuff}), for this it suffices to check that if $\mathscr Y \in \Spc(k)_*$ then $\Sigma^{p,q} \mathscr Y \in O(S^{p,q})$.
This is clear.
\end{proof}

\begin{theorem}
	\label{thm:cofiberofmotivicEMassemblycellularityestimate}
	Assume $k$ has characteristic $0$.  If $E \in \Mod^{\veff}_{H\Z}(k)$, then writing $E^n := \Omega^{\infty}\Sigma^{2n,n}E$, we have
	\[
	\cof(\Sigma^{2,1} E^n \to E^{n+1}) \in O(S^{4n+3,2n+1}).
	\]
\end{theorem}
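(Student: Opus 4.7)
The plan is to reduce to the case where $E = H\Z \wedge \Sigma^{\infty} X_+$ for some $X \in \Sm_k$, and then use the explicit model for $E_n$ via symmetric powers. For the reduction, one observes that the formation of $\mathscr{C}(E) := \cof(\Sigma^{2,1} E_n \to E_{n+1})$ commutes with sifted colimits in $E \in \Mod^{\veff}_{H\Z}(k)$: the functor $E \mapsto E_n = \Omega^{\infty}\Sigma^{2n,n} E$ factors through $\SH^{S^1}(k)_{\geq 0}$ via colimit-preserving functors and $\Omega^{\infty}_{S^1}$, and the latter preserves sifted colimits by Proposition~\ref{prop:tateloopssiftedcolimits}(1); the operations $\Sigma^{2,1}$ and $\cof$ then commute with all colimits. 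Since $O(S^{4n+3,2n+1})$ is closed under colimits (Lemma~\ref{lem:leftbousfieldcolimits}) and $\Mod^{\veff}_{H\Z}(k)$ is generated under sifted colimits by $\{H\Z \wedge \Sigma^{\infty}X_+\}_{X \in \Sm_k}$ (Proposition~\ref{prop:generationundercolimits}), it suffices to treat $E = H\Z \wedge \Sigma^{\infty}X_+$.

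For such $E$, the motivic Dold--Thom theorem (Theorem~\ref{thm:motivicDoldThom}, in which the inversion of the characteristic exponent $c$ is vacuous in characteristic $0$) identifies $E_n \weq \Sym^{\infty}\Sigma^{2n,n} X_+$ in $\Spc(k)$, compatibly with the assembly maps. As $\Sym^{\infty}$ and $\cof$ commute with filtered colimits, we obtain
\[
\mathscr{C}(E) \weq \colim_{r} \cof\bigl(\Sigma^{2,1}\Sym^r \Sigma^{2n,n}X_+ \longrightarrow \Sym^r \Sigma^{2n+2,n+1}X_+\bigr).
\]
Proposition~\ref{prop:cofiberofassemblyHZsmashX} then identifies each term in the colimit with $\Sigma^{2n+3,n+1} \op{Quot}_{\Sigma_r}(\Th(p^*\bar{\rho}_r^{\oplus n}) \wedge (X^{\times r})_+)$.

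By Theorem~\ref{thm:cellularityofHZXbeforequotient}, each $\Sigma_r$-equivariant space $\Th(p^*\bar{\rho}_r^{\oplus n})\wedge (X^{\times r})_+$ lies in $O^{\Sigma_r}(S^{2n,n})$; then Proposition~\ref{prop:quotientspreservecellularitychar0} (whose standing assumption that $k$ has characteristic $0$ is crucial here, and which invokes Voevodsky's cdh/Nisnevich comparison for the passage through singular quotients) yields $\Sigma^{\infty}_{S^1} R\op{Quot}_{\Sigma_r}(\ph) \in O(S^{2n,n})$ in $\SH^{S^1}(k)$. Shifting by $\Sigma^{2n+3,n+1}$ and taking the filtered colimit (both of which preserve $O(S^{4n+3,2n+1})$ by Amplification~\ref{amplification:leftbousfieldcolimitsstablecases}), we deduce $\Sigma^{\infty}_{S^1}\mathscr{C}(E) \in O(S^{4n+3,2n+1})$.

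Finally, we must transfer the result from $\SH^{S^1}(k)$ to $\Spc(k)$. The cofiber $\mathscr{C}(E)$ is simply connected (both $\Sigma^{2,1} E_n$ and $E_{n+1}$ being simply connected infinite $\pone$-loop spaces for the values of $n$ of interest), hence nilpotent, so Theorem~\ref{thm:pqequivsof1connectedspacesaredetectedstably}(1) applied to the map $\mathscr{C}(E) \to \ast$ yields the desired conclusion at the space level. The main obstacle is the sifted-colimit reduction, which requires checking functoriality of the assembly maps and some bookkeeping to ensure that $\Omega^{\infty}\Sigma^{2n,n}$ really does commute with the relevant sifted colimits through all three layers ($\Mod^{\veff}_{H\Z} \to \SH(k) \to \SH^{S^1}(k) \to \Spc(k)_*$); the remainder is a systematic application of the geometric input of Section~\ref{ss:cellularityassemblymap}.
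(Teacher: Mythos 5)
Your proposal is correct and follows essentially the same route as the paper's proof: reduction to $E = H\Z \wedge \Sigma^{\infty}X_+$ via sifted colimits, the identification of the cofiber through Proposition~\ref{prop:cofiberofassemblyHZsmashX}, the equivariant cellularity estimate of Theorem~\ref{thm:cellularityofHZXbeforequotient}, passage through quotients via Proposition~\ref{prop:quotientspreservecellularitychar0}, and descent from $S^1$-spectra to spaces using simple connectivity and Theorem~\ref{thm:pqequivsof1connectedspacesaredetectedstably}(1). You merely make explicit the Dold--Thom colimit over $r$ that the paper leaves implicit.
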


\begin{proof}
	We know that $\Mod^{\veff}_{H\Z}(k)$ is generated under sifted colimits by spectra of the form $H\Z \wedge \Sigma^{\infty} X_+$ (see Proposition \ref{prop:generationundercolimits}).
	The functor $\Omega^\infty$ preserves these colimits.\NB{ref?}
	Since weakly cellular classes are closed under colimits (see Lemma \ref{lem:leftbousfieldcolimits}), it suffices to prove the result for $E = H\Z \wedge \Sigma^{\infty} X_+$. Set $C_r = \cof(\Sigma^{2,1} \Sym^r \Sigma^{2n,n} X_+ \to \Sym^r \Sigma ^{2n+2,n+1} X_+)$. Proposition~\ref{prop:cofiberofassemblyHZsmashX} together with \S\ref{par:reducedsymmpowers} tells us that
	\[
	\cof(C_{r-1} \to C_r)  \weq \Sigma^{2n+3,n+1} \op{Quot}_{\Sigma_r} (\Th(p^* \bar{\rho}_r^{\oplus n}) \wedge (X^{\times r})_+),
	\]
	while Theorem~\ref{thm:cellularityofHZXbeforequotient} tells us that
	\[
	\Sigma^{2n+3,n+1} (\Th(p^* \bar{\rho}_r^{\oplus n}) \wedge (X^{\times r})_+) \in O^{\Sigma_r}(S^{4n+3,2n+1}).
	\]
	In that case, Proposition~\ref{prop:quotientspreservecellularity} tells us that weak cellularity is preserved by taking quotients, so we see that \[ \cof(C_{r-1} \to C_r) \in O(S^{4n+3,2n+1}). \]
	Since also $C_1 = *$ and weakly cellular classes are stable under cofiber extensions, we deduce that $C_r \in O(S^{4n+3,2n+1})$.
	Taking the colimit over $r$ shows that \[ \cof(\Sigma^{2,1} E^n \to E^{n+1}) \in O(S^{4n+3,2n+1}), \] as needed.
\end{proof}

\subsection{Modifications in positive characteristic}
\label{ss:cellularityemcharp}
Dealing with cellularity of quotients in positive characteristic is slightly more subtle. When analyzing symmetric products, we have to deal with non-tame group actions, since the characteristic exponent of the base field will divide the order of $\Sigma_r$ for $r$ sufficiently large.  For this reason, we will work directly in Voevodsky's category of motives appealing to the results of \cite{Kellyldh}.

\subsubsection*{Assembly map}
The standard representation $\rho_r$ of $\Sigma_r$ always contains the trivial subrepresentation $\mathbf{1} \subset \rho_r$, consisting of vectors with equal coordinates.
If $r \ge p$ this no longer splits.
However, it does so if we restrict to a subgroup $G \subset \Sigma_r$ with $|G| \in k^\times$.
We write $\bar\rho_r$ for a complementary summand.
\begin{proposition}
	\label{prop:cofiberofassemblyHZsmashX-charp}
	Let $G \subset \Sigma_r$ be of order coprime to $p$.
	If $X \in \Sm_k$, then there is a canonical identification
	\begin{gather*} 
	\cof(\Sigma^{2,1} \op{Quot}_G (\Sigma^{2n,n} X_+)^{\wedge r} \to \op{Quot}_G (\Sigma^{2n+2,n+1} X_+)^{\wedge r}) 
	\\ \weq \Sigma^{2n+3,n+1} \op{Quot}_{G} (\Th(p^* (\op{Res}^{\Sigma_r}_{G}\bar{\rho}_r)^{\oplus n}) \wedge (X^{\times r})_+).
	\end{gather*}
\end{proposition}
\begin{proof}
The proof is essentially the same as for Proposition \ref{prop:cofiberofassemblyHZsmashX}.
\end{proof}

\subsubsection*{Motives of singular schemes in positive characteristic}
Assume $k$ is a field that has characteristic exponent $p > 0$ and $\ell$ is a prime different from $p$.
Write $\DM^{\ell dh}_{ft}(k, \Z_{(\ell)})$ for the variant of Voevodsky's category of motives built out of finite type schemes and the $\ell dh$-topology.
There is an evident functor 
\[ 
\Mod_{H\Z_{(\ell)}}(k) \weq \DM(k, \Z_{(\ell)}) \longrightarrow \DM_{ft}^{\ell dh}(k, \Z_{(\ell)}) 
\] 
whose right adjoint we denote by $r_{H\Z_{(\ell)}}$.
In fact this is an adjoint equivalence \cite[Theorem 4.0.13]{Kellyldh}.

\begin{theorem}
There is a commutative square of the form
\[
\xymatrix{
\Spc^{ft}_{Nis}(k) \ar[r]^{R}\ar[d]^{\underline{M}} & \ho{k} \ar[d]^{M}\\
\DM_{ft}^{\ell dh}(k, \Z_{(\ell)}) \ar[r]^{r_{H\Z_{(\ell)}}} & \Mod_{H\Z_{(\ell)}}(k).
}
\]
\end{theorem}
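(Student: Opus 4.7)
The plan is to verify the square commutes first on representables of smooth schemes and then extend by $\ell dh$-hyperdescent. Both composites are colimit-preserving functors $\Spc^{ft}_{Nis}(k) \to \Mod_{H\Z_{(\ell)}}(k)$: $\underline{M}$ is the motive functor, a colimit-preserving left adjoint by construction of $\DM^{\ell dh}_{ft}(k, \Z_{(\ell)})$; $r_{H\Z_{(\ell)}}$ is an equivalence by \cite[Theorem 4.0.13]{Kellyldh}; and $M \circ R$ should preserve colimits once one checks that $R$ behaves correctly on the generating objects via $\ell dh$-hypercovers.

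First I would check the square commutes on representables of smooth schemes. For $X \in \Sm_k$, the restriction $R(X) \in \ho{k}$ is the usual representable sheaf, so $M(R(X)) \weq H\Z_{(\ell)} \wedge \Sigma^{\infty} X_+$. On the other hand, $\underline{M}(X) \in \DM^{\ell dh}_{ft}(k, \Z_{(\ell)})$ is the $\ell dh$-motive of $X$, which corresponds under $r_{H\Z_{(\ell)}}$ to $H\Z_{(\ell)} \wedge \Sigma^{\infty} X_+$ by the construction of Kelly's equivalence (which refines the comparison of \cite{RondigsOestvaermodules, HKO} to the $\ell dh$-setting). This furnishes a natural isomorphism of the two composites on the subcategory of smooth representables.

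Second I would extend to all of $\Spc^{ft}_{Nis}(k)$ via $\ell dh$-hyperdescent. Every finite-type $k$-scheme admits an $\ell dh$-hypercover by smooth schemes, by combining de Jong--Gabber alterations with the design of the $\ell dh$-topology \cite[\S 3]{Kellyldh}. The LHS sends such a hypercover to a colimit diagram, since $\underline{M}$ factors through $\ell dh$-sheafification. For the RHS, one restricts the hypercover termwise to $\Sm_k$; the claim is that $M$ applied to this simplicial diagram of smooth schemes computes the required colimit in $\Mod_{H\Z_{(\ell)}}(k)$.

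The main obstacle is this last claim: that $M$ applied to the termwise restriction of an $\ell dh$-hypercover computes the $\ell dh$-motive. This is precisely $\ell dh$-descent for the motivic Eilenberg--Mac Lane spectrum $H\Z_{(\ell)}$ on smooth schemes, which is itself a consequence of \cite[Theorem 4.0.13]{Kellyldh}. A clean organization would invoke the universal property of the $\ell dh$-motivic category: both composites are characterized as the unique colimit-preserving, $\aone$-invariant, $\ell dh$-hyperdescent-compatible functors extending the assignment $X \mapsto H\Z_{(\ell)} \wedge \Sigma^{\infty} X_+$ on $\Sm_k$, and commutativity then follows from agreement on smooth representables.
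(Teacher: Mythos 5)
Your reduction is organized correctly up to the step you yourself flag as the main obstacle, but the justification you give for that step fails, and that step is in fact the entire content of the theorem. Write $F = M \circ R$ and $G = r_{H\Z_{(\ell)}} \circ \underline{M}$, both colimit-preserving. Agreement on smooth representables together with $\ell dh$-hyperdescent for $G$ gives $G(X) \weq |F(U_\bullet)|$ for an $\ell dh$-hypercover $U_\bullet \to X$ by smooth schemes. To conclude $F(X) \weq G(X)$ you still need $F(X) \weq |F(U_\bullet)|$, i.e.\ that the functor $X \mapsto H\Z_{(\ell)} \wedge \Sigma^{\infty}_+ (X|_{\Sm_k})$ on finite-type schemes sends $\ell dh$-hypercovers of \emph{singular} targets to colimit diagrams. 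This is not a consequence of $\ell dh$-descent for $H\Z_{(\ell)}$ on smooth schemes: descent on $\Sm_k$ concerns hypercovers of smooth targets, whereas here $X$ is singular and $X|_{\Sm_k}$ is a non-representable presheaf (dually: you must show that the right Kan extension of $H\Z_{(\ell)}$-motivic cohomology from $\Sm_k$ to $\Sch^{ft}_k$ agrees with its $\ell dh$-sheafification on singular schemes). That comparison is precisely \cite[Theorem 3.1]{HKO}, which is what the paper's proof cites; it is established there by a genuine geometric argument --- Gabber's $\ell'$-alterations, proper (cdh) descent, and the transfers carried by $H\Z_{(\ell)}$-modules, which are needed to split off the degree of the alteration after localizing at $\ell$ --- not by a formal bootstrapping from the smooth case.

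Your closing ``clean organization'' via a universal property has the same circularity: to characterize $M \circ R$ as the unique colimit-preserving, $\aone$-invariant, $\ell dh$-hyperdescent-compatible extension of $X \mapsto H\Z_{(\ell)} \wedge \Sigma^{\infty} X_+$, you must first prove that $M \circ R$ satisfies $\ell dh$-hyperdescent on $\Sch^{ft}_k$ --- a priori it only satisfies Nisnevich descent, since that is all the source category $\Spc^{ft}_{\Nis}(k)$ encodes --- and that is again the theorem. The paper's own proof is simply the citation of \cite[Theorem 3.1]{HKO} together with \cite[Proposition 4.10]{HKO} and \cite[Corollary 4.27]{HKO}; a self-contained argument would have to reproduce the alteration/transfer argument rather than appeal to descent on smooth schemes.
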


\begin{proof}
	This is \cite[Theorem 3.1]{HKO} in view of \cite[Proposition 4.10]{HKO} and \cite[Corollary 4.27]{HKO}.  \todo{Check published references.}
\end{proof}

\begin{comment}
\begin{proposition}
	\label{prop:quotientspreservecellularitycharp}
	Assume $k$ has characteristic exponent $p > 0$ and $\ell$ is a prime different from $p$. If $\mathscr{X} \in \Spc^G(k)$ lies in $O^G(S^{r,s})$, then $H\Z_{(\ell)} \wedge \Sigma^{\infty}R\op{Quot}_G\mathscr{X} \in O(S^{r,s})$ as well.
\end{proposition}

\begin{proof}
	The proof follows the same lines as that of Proposition~\ref{prop:quotientspreservecellularitychar0}.  As before, $O(S^{r,s})$ is stable under colimits and cofiber extensions.  Therefore, by construction of $O^G(S^{r,s})$, without loss of generality, we may assume that $\mathscr{X} = \Sigma^{r,s}Y_+$ for a smooth $G$-scheme $Y$.  In that case, $\op{Quot}_G(\Sigma^{r,s}Y_+) = \Sigma^{r,s}(Y/G)_+$ by definition.  As before the functor $r_{H\Z_{(\ell)}}$ commutes with $\Sigma^{r,s}$ (since its inverse does), and we conclude.
	
%	Now, a theorem of Gabber implies that we may find an $\ell dh$-hypercover $Y_{\bullet} \to Y/G$ such that each $Y_i$ is smooth \cite[Corollary 2.1.15]{Kellyldh}.  In that case, we conclude that
%	\[
%	r_{H\Z_{\ell}} \Sigma^{r,s} (H\Z_{\ell} \wedge \Sigma^{\infty}(Y/G)_+) \cong \Sigma^{r,s} |H\Z_{\ell} \wedge \Sigma^{\infty}(Y_{\bullet})_+|
%	\]
%	by appeal to \cite[Corollary 4.27]{HKO} and the result follows.
\end{proof}
\end{comment}

\subsubsection*{Transferring}
Assume $k$ is a field that has characteristic exponent $p > 0$.  If $G$ is a finite group, then $G$ is linearly reductive if and only if $|G|$ is coprime to $k$.  In Theorem~\ref{thm:cellularityofHZXbeforequotient}, we needed to consider symmetric groups $\Sigma_r$ for arbitrary $r$, in particular $r$ might be $\ge p$ and therefore the subgroups $H \subset \Sigma_r$ we consider need not be linearly reductive.  

We now discuss how to avoid this issue, at the expense of inverting the prime $p$, or localizing at a prime $\ell$ different from $p$ via a standard transfer argument.  In practice, we will use the fact that if $G$ is a finite group and $H$ is an $\ell$-Sylow subgroup, then $G/H$ has order coprime to $\ell$.  If $X \in \Sm_k^G$, then there is an evident map $\op{Res}^G_H(X)/H \to X/G$ that is finite and flat morphism of degree $|G/H|$ and thus coprime to $\ell$.  This map induces a covariant transfer map of motives and pullback followed by transfer is multiplication by $|G/H|$.  The following lemma is a precise version of this observation.

\begin{lem}
	\label{lem:transferargument}
	Assume $k$ is a field having characteristic exponent $p$ and let $\ell$ be a prime different from $p$.  Let $G$ be a finite group and $H \subset G$ a finite subgroup of index coprime to $\ell$ and suppose $X \in \Sm_k^G$.  The object $\underline{M}(X/G)$ is a retract of $\underline{M}(\op{Res}^G_H(X)/H)$ in $\DM^{\ell dh}_{ft}(k, \Z_{(\ell)})$.
\end{lem}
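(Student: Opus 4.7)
The plan is to use a standard transfer argument exhibiting $\underline{M}(X/G)$ as a direct summand of $\underline{M}(\op{Res}^G_H(X)/H)$ via the canonical quotient morphism and its transpose. First I would observe that the inclusion $H \subset G$ induces a morphism $\pi \colon \op{Res}^G_H(X)/H \to X/G$ of quasi-projective $k$-schemes which is finite and surjective of generic degree $[G:H]$; both target and source are normal since $X$ is smooth and the relevant groups are finite, so that $\pi$ is well-behaved for constructing correspondences.

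Next I would produce a transfer going the opposite direction. Since the functor $\underline{M} \colon \Spc^{ft}_{\Nis}(k) \to \DM^{\ell dh}_{ft}(k, \Z_{(\ell)})$ factors through the category of presheaves with transfers on finite-type $k$-schemes, and a finite surjective morphism $\pi \colon Y \to Z$ of normal $k$-schemes determines a transpose finite correspondence $\pi^t \colon Z \to Y$ via its graph (cf. the discussion of finite correspondences in Paragraph~\ref{par:correspondences}), we obtain a map
\[
\pi^t \colon \underline{M}(X/G) \longrightarrow \underline{M}(\op{Res}^G_H(X)/H)
\]
in $\DM^{\ell dh}_{ft}(k, \Z_{(\ell)})$.

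The key computation is that the composite $\underline{M}(\pi) \circ \pi^t$ is multiplication by the generic degree $[G:H]$ on $\underline{M}(X/G)$: this is the standard identity $\pi_* \pi^* = \deg(\pi) \cdot \id$ in the category of finite correspondences, which holds since $X/G$ is normal. Because $[G:H]$ is coprime to $\ell$ by hypothesis, it is invertible in $\Z_{(\ell)}$, so $[G:H]^{-1} \cdot \pi^t$ gives a section of $\underline{M}(\pi)$, realizing $\underline{M}(X/G)$ as a retract.

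The main obstacle is verifying the existence and the degree formula for the transpose correspondence in $\DM^{\ell dh}_{ft}(k,\Z_{(\ell)})$ as opposed to the smooth setting; this should follow from Kelly's construction \cite{Kellyldh} since the $\ell dh$-sheafified category of Voevodsky-style finite correspondences still contains transposes of finite surjective morphisms between normal schemes, but one should double-check that the relevant functoriality and the composition formula $\pi_* \pi^* = \deg(\pi)$ hold after $\ell dh$-sheafification. Everything else is formal.
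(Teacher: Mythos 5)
Your overall strategy --- a wrong-way map whose composite with $\underline{M}(\pi)$ is an invertible scalar --- is the right one, and it is exactly what the paper outsources to \cite[Proposition A.8(1)]{VoeMEM}. But your specific choice of wrong-way map, the transpose ${}^t\Gamma_\pi$ of the graph of $\pi\colon \op{Res}^G_H(X)/H \to X/G$, does not work: the composite $\Gamma_\pi\circ{}^t\Gamma_\pi$ is multiplication by $\deg(\pi)$ (the sum over the components of $X/H$ of their generic degrees over $X/G$), and this number is \emph{not} $[G:H]$ in general and need not be coprime to $\ell$. Concretely, let $G=\Sigma_3$ act on $X = Y\sqcup Y\sqcup Y$ by permuting the three copies of a smooth connected $Y$ (trivial action on each copy), and let $H$ be a $2$-Sylow subgroup, so $[G:H]=3$ is coprime to $\ell=2$. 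Then $X/G\cong Y$ while $X/H\cong Y\sqcup Y$ with both components mapping isomorphically onto $Y$, so $\deg(\pi)=2$ and your composite is multiplication by $2$, which is not invertible in $\Z_{(2)}$. Induced $G$-schemes of exactly this shape (covers of the form $X(H)^H\times^H G$) occur where the lemma is applied, so the disconnected case cannot be ignored. Even for connected $X$ the degree is $[G:HN]$, with $N$ the kernel of the generic action, rather than $[G:H]$ --- harmless there, since $[G:HN]$ divides $[G:H]$, but a warning sign that the transpose graph is not the right cycle.

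The transfer that \cite[Proposition A.8(1)]{VoeMEM} constructs is instead built from the fundamental cycle of $X$ itself pushed forward along $(q_G,q_H)\colon X\to X/G\times X/H$ (suitably normalized). This is a \emph{weighted} sum of the components of ${}^t\Gamma_\pi$, the weight of a component recording how many sheets of $X$ lie above it, and with these weights the composite with $\Gamma_\pi$ is genuinely $[G:H]\cdot\id$: in the example above the cycle is $2[\Delta^{(a)}]+[\Delta^{(b)}]$ and the composite is $2+1=3$. Underlying this is the double coset identity $\sum_{HgG_1}[G_1:g^{-1}Hg\cap G_1]=[G:H]$. If you prefer to keep your unweighted transpose, you can still salvage the statement: decompose $X$ into $G$-orbits of components, note that since $\sum_{HgG_1}[G_1:g^{-1}Hg\cap G_1]=[G:H]$ is prime to $\ell$ at least one summand is prime to $\ell$, and the corresponding single component $W$ of $X/H$ then has $\deg(W\to X/G)$ prime to $\ell$, so $\deg(W\to X/G)^{-1}\cdot{}^t\Gamma_{\pi|_W}$ already splits $\underline{M}(\pi)$. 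Your remaining concerns (existence of the transpose for finite surjective maps of normal schemes in $c_{equi}$, and the degree formula surviving $\ell dh$-sheafification) are genuine but are precisely what Voevodsky's proposition, cited by the paper, supplies.
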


\begin{proof}
	This is a consequence of \cite[Proposition A.8(1)]{VoeMEM}: loosely speaking pullback followed by pushforward is multiplication by $|G/H|$ which is invertible in $\Z_{(\ell)}$.
\end{proof}

\subsubsection*{Conclusion}
We now provide a variant of Theorem~\ref{thm:cellularityofHZXbeforequotient} in positive characteristic.  If $G$ is a finite group, then by definition, the $\ell$-Sylow subgroup $Syl_{\ell}(G) \subset G$ has index coprime to $\ell$ and has order invertible in $k$.  As in Section~\ref{ss:cellularityassemblymap} consider the standard representation $\rho_r$ of $\Sigma_r$.  Abusing notation, we write $p$ for the $Syl_{\ell,r}$-equivariant structure map ${\mathbb A}[\op{Res}^{\Sigma_r}_{Syl_{\ell,r}}(\rho_r)] \setminus 0$.  

\begin{theorem}
	\label{thm:cellularityofHZbeforequotientposchar}
	Assume $k$ has characteristic exponent $p > 0$, and $\ell$ is a prime different from $p$ and $X$ is a smooth $k$-scheme.  If $Syl_{\ell,r} \subset \Sigma_r$ is an $\ell$-Sylow subgroup, then 
	\[
	\Sigma^{2n+3,n+1} (\Th(p^* (\op{Res}^{\Sigma_r}_{Syl_{\ell,r}}\bar{\rho}_r)^{\oplus n}) \wedge (X^{\times r})_+),
	\]
	lies in $O^{Syl_{\ell,r}}(S^{4n+3,2n+1})$.
\end{theorem}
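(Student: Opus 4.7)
The plan is to adapt the proof of Theorem \ref{thm:cellularityofHZXbeforequotient} verbatim, with $Syl_{\ell,r}$ playing the role of $\Sigma_r$. The crucial replacement is the invertibility hypothesis: since $|Syl_{\ell,r}|$ is a power of $\ell$ and $\ell$ differs from the characteristic exponent $p$, we have $|Syl_{\ell,r}|$ invertible in $k$, which means Proposition \ref{prop:stabilzerstratification} (Luna stratification) and the techniques of Lemma \ref{lem:splittingbundlebystabilizertype} are available for $Syl_{\ell,r}$-actions on smooth schemes over $k$.

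First I would reduce to showing that $\Th(p^* (\op{Res}^{\Sigma_r}_{Syl_{\ell,r}}\bar{\rho}_r)^{\oplus n}) \in O^{Syl_{\ell,r}}(S^{2n,n})$; the factor $(X^{\times r})_+$ is a smooth $Syl_{\ell,r}$-scheme and smashing with it stays inside the class $O^{Syl_{\ell,r}}$ by definition (colimits of generators of the form $Y_+$ for $Y \in \Sm_k^{Syl_{\ell,r}}$), while the bi-graded suspension by $S^{2n+3,n+1}$ shifts the cellular index from $(2n,n)$ to $(4n+3,2n+1)$ as claimed.

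Next I would apply Lemma \ref{lem:splittingbundlebystabilizertype} to the $Syl_{\ell,r}$-bundle $p^*(\op{Res}^{\Sigma_r}_{Syl_{\ell,r}}\bar{\rho}_r)^{\oplus n}$ on ${\mathbb A}[\op{Res}^{\Sigma_r}_{Syl_{\ell,r}}\bar{\rho}_r] \setminus 0$. Its stratification by stabilizer type (via Luna) indexes by subgroups $H \subset Syl_{\ell,r}$ that arise as $H = \operatorname{Stab}_{\Sigma_r}(v) \cap Syl_{\ell,r}$ for some nonzero $v \in \bar{\rho}_r$. The stabilizer $\operatorname{Stab}_{\Sigma_r}(v)$ is a proper partition subgroup $\Sigma_{n_1} \times \cdots \times \Sigma_{n_d}$ with $d \geq 2$, and as observed in the proof of Theorem \ref{thm:cellularityofHZXbeforequotient} we have
\[
\op{Res}^{\Sigma_r}_{\Sigma_{n_1} \times \cdots \times \Sigma_{n_d}}(\rho_r) \weq \rho_{n_1} \oplus \cdots \oplus \rho_{n_d},
\]
which splits off $d \geq 2$ trivial summands, so that $\bar{\rho}_r$ restricted to $\Sigma_{n_1} \times \cdots \times \Sigma_{n_d}$, and hence to the subgroup $H$, splits off at least one trivial summand. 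Consequently $(\op{Res}^{\Sigma_r}_{Syl_{\ell,r}} \bar{\rho}_r)^{\oplus n}$ restricted to $H$ splits off $\mathbf{1}_H^{\oplus n}$, which is exactly the hypothesis of Lemma \ref{lem:splittingbundlebystabilizertype}.

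The only mild obstacle is verifying that the Luna stratification and Lemma \ref{lem:splittingbundlebystabilizertype} apply to the smooth quasi-projective (rather than affine) $Syl_{\ell,r}$-scheme ${\mathbb A}[\op{Res}^{\Sigma_r}_{Syl_{\ell,r}}\bar{\rho}_r] \setminus 0$; however this is the same setting in which Theorem \ref{thm:cellularityofHZXbeforequotient} was applied, so the extension is already implicit. Once the stabilizer analysis is in place, Lemma \ref{lem:splittingbundlebystabilizertype} delivers $\Th(p^*(\op{Res}^{\Sigma_r}_{Syl_{\ell,r}} \bar{\rho}_r)^{\oplus n}) \in O^{Syl_{\ell,r}}(S^{2n,n})$ and the theorem follows.
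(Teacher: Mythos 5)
Your proposal is correct and follows essentially the same route as the paper: stratify $\mathbb{A}[\op{Res}^{\Sigma_r}_{Syl_{\ell,r}}\bar\rho_r]\setminus 0$ by stabilizer type (available because $|Syl_{\ell,r}|$ is a power of $\ell\ne p$, hence invertible in $k$), observe that the stabilizers are intersections of $Syl_{\ell,r}$ with proper partition subgroups so that the restricted $\bar\rho_r$ splits off a trivial summand, and conclude by Lemma~\ref{lem:splittingbundlebystabilizertype}. The paper's proof is exactly this, phrased slightly more tersely.
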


\begin{proof}
	Consider the stratification of ${\mathbb A}[\op{Res}^{\Sigma_r}_{Syl_{\ell,r}}(\rho_r)] \setminus 0$ by stabilizer type from Proposition \ref{prop:stabilzerstratification} for the action of $Syl_{\ell,r}$.  In this case, the stabilizers are intersections of the form $Syl_{\ell,r} \cap H$ where $H$ is a \emph{proper} partition subgroup of $\Sigma_r$ (indeed $\mathbf{1} \subset \rho_r$ is the set of $\Sigma_r$-fixed points, and hence $\bar\rho_r^{\Sigma_r} = \{0\}$).  Since the order of $Syl_{\ell,r}$ is invertible in $p$, it follows that the orders of each of the groups $Syl_{\ell,r} \cap H$ are also invertible in $k$, in particular, all these groups are linearly reductive.  
	
	By appeal to Lemma~\ref{lem:splittingbundlebystabilizertype}, it suffices to show that for group of the form $Syl_{\ell,r} \cap H$ the representation $\op{Res}^{\Sigma_r}_{Syl_{\ell,r} \cap H}\bar{\rho_r}$ splits off a trivial summand.  This follows in exactly the same way as at the end of Theorem~\ref{thm:cellularityofHZXbeforequotient}.
\end{proof} 

Finally, we may establish a version of Theorem~\ref{thm:cofiberofmotivicEMassemblycellularityestimate} in positive characteristic.

\begin{theorem}
	\label{thm:cofiberofmotivicEMassemblycellularityestimatecharp}
	Assume $k$ is a field having characteristic exponent $p > 1$. Let $n>0$ and $E \in \Mod^{\veff}_{H\Z[1/p]}(k)$. For $E_n := \Omega^{\infty}\Sigma^{2n,n}E$ we have
	\[
	\cof(\Sigma^{2,1} E_n \to E_{n+1}) \in O(S^{4n+3,2n+1}).
	\]
\end{theorem}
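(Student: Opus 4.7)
The plan is to adapt the characteristic-$0$ argument (Theorem \ref{thm:cofiberofmotivicEMassemblycellularityestimate}), replacing Voevodsky's $\mathrm{Nis}=\mathrm{cdh}$ equivalence (which fails in positive characteristic) with Kelly's $\ell dh$-equivalence together with a Sylow transfer argument. By Proposition \ref{prop:generationundercolimits}, $\Mod^{\veff}_{H\Z[1/p]}(k)$ is generated under sifted colimits by $H\Z[1/p] \wedge \Sigma^\infty X_+$ for $X \in \Sm_k$, and (arguing as in the characteristic-$0$ proof) the functor $E \mapsto \cof(\Sigma^{2,1}E_n \to E_{n+1})$ commutes with such colimits; so it suffices to treat $E = H\Z[1/p] \wedge \Sigma^\infty X_+$. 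In that case, Theorem \ref{thm:motivicDoldThom}(4) yields $E_n \weq \Sym^\infty[1/p]\Sigma^{2n,n}X_+|_{\Sm_k}$, and Proposition \ref{prop:cofiberofassemblyHZsmashX} identifies the $r$-th-stage cofiber as $\Sigma^{2n+3,n+1}\op{Quot}_{\Sigma_r}(\Th(p^*\bar\rho_r^{\oplus n}) \wedge (X^{\times r})_+)$. Write $\mathscr C := \cof(\Sigma^{2,1}E_n \to E_{n+1})$ for the colimit of these over $r$ and over the $\times p$-stabilization.

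Fix a prime $\ell \neq p$ and an $\ell$-Sylow subgroup $Syl_{\ell,r} \subset \Sigma_r$. Theorem \ref{thm:cellularityofHZbeforequotientposchar} ensures $\Sigma^{2n+3,n+1}(\Th(p^*\op{Res}^{\Sigma_r}_{Syl_{\ell,r}}\bar\rho_r^{\oplus n}) \wedge (X^{\times r})_+) \in O^{Syl_{\ell,r}}(S^{4n+3,2n+1})$, and Proposition \ref{prop:quotientspreservecellularitycharp} then yields $H\Z_{(\ell)} \wedge \Sigma^\infty R\op{Quot}_{Syl_{\ell,r}}(\cdots) \in O(S^{4n+3,2n+1})$. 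Lemma \ref{lem:transferargument} expresses $\underline M(\op{Quot}_{\Sigma_r}(Y))$ as a retract of $\underline M(\op{Res}(Y)/Syl_{\ell,r})$ in $\DM^{\ell dh}_{ft}(k,\Z_{(\ell)}) \weq \Mod_{H\Z_{(\ell)}}(k)$; since weak cellular classes are closed under retracts, this gives $H\Z_{(\ell)} \wedge \Sigma^\infty R\op{Quot}_{\Sigma_r}(\Th(p^*\bar\rho_r^{\oplus n}) \wedge (X^{\times r})_+) \in O(S^{4n+3,2n+1})$. Passing to the colimit in $r$ and in the $\times p$-stabilization, $H\Z_{(\ell)} \wedge \Sigma^\infty \mathscr C \in O(S^{4n+3,2n+1})$ for every prime $\ell \neq p$. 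Because $\mathscr C$ is built from $H\Z[1/p]$-infinite loop spaces, its homotopy sheaves are $\Z[1/p]$-local; hence (Proposition \ref{prop:pdivisibility-unstable-implies-stable}) $\Sigma^\infty \mathscr C \wedge H\Z \weq \Sigma^\infty \mathscr C \wedge H\Z[1/p]$. Connectivity and effectivity of a $\Z[1/p]$-local spectrum can be detected after $\Z_{(\ell)}$-localization for every $\ell \neq p$ (using that $f_q$ commutes with filtered colimits, since $\SH(k)^{\eff}(q)$ is compactly generated), so assembling the $\ell$-local statements gives $\Sigma^\infty \mathscr C \wedge H\Z \in O(S^{4n+3,2n+1})$.

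For the passage from stable to unstable cellularity, observe that for each $r \geq 2$ the stage-$r$ cofiber $\mathscr C_r = \Sigma^{2n+3,n+1}\op{Quot}_{\Sigma_r}(\Th(p^*\bar\rho_r^{\oplus n}) \wedge (X^{\times r})_+)$ is the $(2n+3,n+1)$-suspension of a $(2n(r-1)-1)$-connected motivic space (the Thom space of a rank-$n(r-1)$ bundle over a connected smooth base, with connectivity preserved by the $\Sigma_r$-quotient), so $\mathscr C_r$ is $(2nr+2)$-connected. Taking the minimum over $r \geq 2$ (attained at $r=2$), $\mathscr C$ is $(4n+2)$-connected; in particular $\mathscr C$ is simply connected (hence nilpotent) and $\Sigma^\infty \mathscr C \in \SH(k)_{\geq 4n+3}$. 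The three hypotheses of Theorem \ref{thm:pqequivsof1connectedspacesaredetectedstably}(3) applied to $f: * \to \mathscr C$ with $(p,q) = (2n+2,2n+1)$ are now verified: (i) $\Sigma^\infty_{S^1}\mathscr C \in O(S^{2,2})$ by simple connectivity and $2$-effectivity (as $n+1 \geq 2$); (ii) the fiber $\Sigma^{-1}\Sigma^\infty \mathscr C \in O(S^{2n+2,0}) = \SH(k)_{\geq 2n+2}$ from the connectivity just established; (iii) $\Sigma^\infty \mathscr C \wedge H\Z \in O(S^{2n+1,2n+1})$ from the preceding paragraph. The theorem then delivers $\mathscr C \in O(S^{4n+3,2n+1})$, as desired. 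The principal obstacle is the careful bookkeeping of the Sylow transfer and its compatibility with the $\ell$-local-to-$\Z[1/p]$ assembly.
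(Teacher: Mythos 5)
Your overall architecture matches the paper's: reduce to $E = H\Z[1/p]\wedge\Sigma^\infty X_+$, identify the stagewise cofibers via the Dold--Thom theorem and Proposition~\ref{prop:cofiberofassemblyHZsmashX}, run the Sylow transfer through $\ell dh$-motives to get $\Sigma^\infty \mathscr C\wedge H\Z_{(\ell)}\in O(S^{4n+3,2n+1})$ for all $\ell\neq p$, and assemble. The divergence, and the gap, is in the final passage from the $H\Z$-statement back to the unstable one.

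You invoke Theorem~\ref{thm:pqequivsof1connectedspacesaredetectedstably}(3), whose hypothesis (ii) is an \emph{integral} stable connectivity statement about $\Sigma^\infty f$ itself, not about $\Sigma^\infty f\wedge H\Z$. With your choice $f\colon *\to\mathscr C$ and $(p,q)=(2n+2,2n+1)$ this requires $\Sigma^\infty\mathscr C\in\SH(k)_{\ge 2n+3}$. Your verification rests on the claim that $\mathscr C_r$ is the $(2n+3,n+1)$-suspension of a $(2n(r-1)-1)$-connected space. This is wrong on two counts. First, numerically: the Thom space of a rank-$m$ bundle lies in $O(S^{2m,m})$ and is therefore only $(m-1)$-connected (Lemma~\ref{lem:Lpq-map-conn}), not $(2m-1)$-connected; redoing the count for $r=2$ gives at best $(2n+1)$-connectedness of $\mathscr C$ (the example $\mathscr C_2=\gm{}_+\wedge S^{4n+3,2n+1}$ for $X=\Spec k$ shows this is sharp), so $\Sigma^\infty\mathscr C\in\SH(k)_{\ge 2n+2}$ at best --- one short of what your direction of $f$ demands. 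Second, and more seriously, the parenthetical ``connectivity preserved by the $\Sigma_r$-quotient'' is exactly the kind of unstable statement about quotients by non-free actions (which are singular schemes) that is unavailable in positive characteristic; the entire $\ell dh$/transfer apparatus of Section~\ref{ss:cellularityemcharp} exists because one has no direct handle on these quotients outside of $H\Z_{(\ell)}$-modules. Nothing in the paper's toolkit lets you assert unstable connectivity of $\op{Quot}_{\Sigma_r}(\cdots)$ here.

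The paper's route avoids ever needing this: it first reduces, by continuity and essentially smooth base change (Lemma~\ref{lem:cellularity-ess-sm-bc}), to $k$ of finite virtual cohomological dimension --- a step you omit --- and then applies part (4) of Theorem~\ref{thm:pqequivsof1connectedspacesaredetectedstably}, whose connectivity hypothesis is imposed only on $\Sigma^\infty f\wedge H\Z$ (supplied by your transfer argument) together with a $\rho$-inverted condition that is vacuous in positive characteristic. The missing integral connectivity of $\Sigma^\infty\mathscr C$ is then \emph{deduced}, not assumed, via the motivic Hurewicz theorem (Proposition~\ref{prop:hurewicz-improved}). That substitution is the essential content of the positive-characteristic argument, and it is the piece your proposal lacks.
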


\begin{proof}
	As before we may reduce to $E = H\Z[1/p] \wedge \Sigma^{\infty} X_+$ for some $X \in \Sm_k$. By continuity, $X$ is pulled back from a perfect subfield of finite cohomological dimension (the perfect closure of a finitely generated extension of $\mathbb F_p$). By essentially smooth base change (Lemma \ref{lem:cellularity-ess-sm-bc}) we may thus assume that $k$ has finite cohomological dimension. Since $\Omega^\infty E_n \in O(S^{2n,n})$ by Proposition \ref{prop:deloopingpreservesconnectivity}, we see that 
	\[ 
	Y := \cof(\Sigma^{2,1} E_n \to E_{n+1}) \in O(S^{2n+2,n+1}) \subset O(S^{4,2}). 
	\]
	Combining this with Proposition \ref{prop:pdivisibility-unstable-implies-stable} we find that also 
	\[ 
	\Sigma^\infty_{S^1} Y \in O(S^{4,2})[1/p]. 
	\]
	\TODO{We seem not to have introduced the notation for $p$-inverted cellular classes.}
	We may thus apply the weak-cellular motivic Whitehead theorem \ref{thm:pqequivsof1connectedspacesaredetectedstably}(4) to reduce to showing that $\Sigma^\infty Y \wedge H\Z \in O(S^{4n+3,2n+1})$.
	By standard arguments it suffices to prove the same for $\Sigma^\infty Y \wedge H\Z_{(\ell)}$ for all $\ell \ne p$.
	This follows from Lemma~\ref{lem:transferargument} in conjunction with Theorem \ref{thm:cellularityofHZbeforequotientposchar}, Proposition \ref{prop:cofiberofassemblyHZsmashX-charp} and Theorem \ref{thm:motivicDoldThom}.\todo{more details?}
\end{proof}

\section{Weak cellular estimates of the fiber of the unit map}
\label{s:maintheorem}
In this section, we put everything together to deduce the motivic Freudenthal suspension theorem.  For a pointed connected motivic space $\mathscr{X}$, write
\[
s_{\mathscr{X}}: \mathscr{X} \longrightarrow \Omega^{2,1}\Sigma^{2,1}\mathscr{X};
\]
for the stabilization map, i.e., the unit map of the loop-suspension adjunction.  Our goal is to, with suitable assumptions on the weak cellular class of $\mathscr{X}$, provide an estimate for the weak cellular class of $\fib(s_{\mathscr{X}})$.  We proceed by a series of reductions.

\subsection{The unit of stabilization and fiber sequences}
In this section, we analyze how the weak cellular class of the fiber of the stabilization map behaves in fiber sequences.  Proposition~\ref{prop:fibersofstabilization} provides fiber sequences linking the fiber of stabilization in the base, fiber and total space of a fiber sequence.  Then Proposition~\ref{prop:fiberconnectivityinduction} yields the main inductive tool we use to analyze the fiber of stabilization.  

\begin{proposition}[Fibers]
	\label{prop:fibersofstabilization}
Suppose we have a fiber sequence $\mathscr{F} \to \mathscr{E} \to \mathscr{B}$ of pointed, connected motivic spaces.   Set $\mathscr{F'} := \fib(\Sigma^{2,1} \mathscr{E} \to \Sigma^{2,1} \mathscr{B})$.
\begin{enumerate}[noitemsep,topsep=1pt]
	\item There is an induced factorization
	\[
	\mathscr{F} \longrightarrow \Omega^{2,1}\Sigma^{2,1} \mathscr{F} \longrightarrow \Omega^{2,1}\mathscr{F}',
	\]
	where the first morphism is the unit map.  
	\item There is an induced fiber sequence
	\[
	\fib(\mathscr{F} \to \Omega^{2,1}\mathscr{F}') \longrightarrow \fib(s_{\mathscr{E}}) \longrightarrow \fib(s_{\mathscr{B}})
	\]
	\item There is an induced fiber sequence 
	\[
	\fib(s_{\mathscr{F}}) \longrightarrow \fib(\mathscr{F} \to \Omega^{2,1}\mathscr{F}') \longrightarrow \Omega^{2,1}\fib(\Sigma^{2,1}\mathscr{F} \to \mathscr{F}')
	\]
\end{enumerate}
\end{proposition}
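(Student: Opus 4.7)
The plan is to unwind everything from the single structural input that for any composable pair of pointed maps $f\colon \mathscr X \to \mathscr Y$ and $g\colon \mathscr Y \to \mathscr Z$ one has a natural fiber sequence $\fib(f) \to \fib(gf) \to \fib(g)$, as recorded in \cite[Proposition 3.1]{ABHWhitehead}, together with the standard facts that $\Omega^{2,1}$ preserves fiber sequences and $\Sigma^{2,1}$ preserves cofiber sequences and is left adjoint to $\Omega^{2,1}$.

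For (1), the composite $\mathscr F \to \mathscr E \to \mathscr B$ is null, so applying $\Sigma^{2,1}$ gives a null composite $\Sigma^{2,1}\mathscr F \to \Sigma^{2,1}\mathscr E \to \Sigma^{2,1}\mathscr B$. By the universal property of the fiber, $\Sigma^{2,1}\mathscr F \to \Sigma^{2,1}\mathscr E$ lifts uniquely through $\mathscr F' \to \Sigma^{2,1}\mathscr E$; adjointing produces $\mathscr F \to \Omega^{2,1}\mathscr F'$, and naturality of the unit $s$ implies that it factors through $s_{\mathscr F}$ as stated.

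For (2), I would assemble the commutative diagram
\[
\xymatrix{
\mathscr F \ar[r]\ar[d] & \mathscr E \ar[r] \ar[d]^{s_{\mathscr E}} & \mathscr B \ar[d]^{s_{\mathscr B}} \\
\Omega^{2,1} \mathscr F' \ar[r] & \Omega^{2,1} \Sigma^{2,1}\mathscr E \ar[r] & \Omega^{2,1} \Sigma^{2,1}\mathscr B,
}
\]
whose top row is a fiber sequence by hypothesis and whose bottom row is the $\Omega^{2,1}$ of a fiber sequence (by construction of $\mathscr F'$), hence a fiber sequence. Taking vertical fibers column-by-column then yields another fiber sequence; this is a formal consequence of stability of limits under limits (equivalently, pasting of pullback squares). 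The left vertical fiber is $\fib(\mathscr F \to \Omega^{2,1}\mathscr F')$ by definition, giving exactly the desired sequence.

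For (3), I would apply the composite-fiber formula to the factorization from (1):
\[
\mathscr F \xrightarrow{s_{\mathscr F}} \Omega^{2,1}\Sigma^{2,1}\mathscr F \xrightarrow{\Omega^{2,1}(\Sigma^{2,1}\mathscr F \to \mathscr F')} \Omega^{2,1}\mathscr F',
\]
which yields a fiber sequence
\[
\fib(s_{\mathscr F}) \longrightarrow \fib(\mathscr F \to \Omega^{2,1}\mathscr F') \longrightarrow \fib\bigl(\Omega^{2,1}\Sigma^{2,1}\mathscr F \to \Omega^{2,1}\mathscr F'\bigr),
\]
and the right-hand term is identified with $\Omega^{2,1}\fib(\Sigma^{2,1}\mathscr F \to \mathscr F')$ since $\Omega^{2,1}$ commutes with fibers. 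The only care required is to check that the factorization from (1) is the canonical one produced by the adjunction; this is bookkeeping in the $\infty$-categorical setting but nothing deeper. There is no real obstacle in this proposition --- it is entirely formal and sets up the inductive machinery used in the ensuing Proposition~\ref{prop:fiberconnectivityinduction}, where the weak cellular estimates will actually be extracted.
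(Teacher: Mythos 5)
Your proof is correct and follows essentially the same route as the paper: (1) is obtained by applying $\Sigma^{2,1}$ to the null composite and adjointing, (2) by comparing vertical and horizontal fibers in the naturality square for the unit map (your three-column diagram is the same total-fiber computation), and (3) by applying the composite-fiber sequence of \cite[Proposition 3.1]{ABHWhitehead} to the factorization from (1) together with the fact that $\Omega^{2,1}$ preserves fibers. No gaps.
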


\begin{proof}
	Since the composite map $\mathscr{F} \to \mathscr{B}$ is null, so is the map $\Sigma^{2,1}\mathscr{F} \to \Sigma^{2,1}\mathscr{B}$.  It follows that there is a map $\Sigma^{2,1} \mathscr{F} \to \mathscr{F}'$, which by looping yields the map $\Omega^{2,1} \Sigma^{2,1}\mathscr{F} \to \Omega^{2,1}\mathscr{F}'$ and by adjunction yields the map $\mathscr{F} \to \Omega^{2,1}\mathscr{F}'$ factoring as claimed.
	
	Consider the commutative diagram:
	\[
	\xymatrix{
		\mathscr{E} \ar[r]\ar[d] & \mathscr{B} \ar[d] \\
		\Omega^{2,1}\Sigma^{2,1} \mathscr{E} \ar[r] & \Omega^{2,1} \Sigma^{2,1} \mathscr{B},
	}
	\]
	where the vertical maps are unit maps.  The fiber sequence of the second point arises by comparing the vertical and horizontal homotopy fibers in this diagram.
	
	The third fiber sequence arises by taking the fiber of the composite via \cite[Proposition 3.1]{ABHWhitehead} for the factorization in the first point.
\end{proof}

\begin{proposition}
	\label{prop:fiberconnectivityinduction}
	Suppose $\mathscr{F} \stackrel{\iota}{\to} \mathscr{E} \stackrel{f}{\to} \mathscr{B}$ is a fiber sequence of pointed motivic spaces, lying in $O(S^{m,n})$, $m-n \geq 2, n \geq 1$.   Set $\mathscr{F'} := \fib(\Sigma^{2,1} \mathscr{E} \to \Sigma^{2,1} \mathscr{B})$.
	\begin{enumerate}[noitemsep,topsep=1pt]
		\item We have $\fib(\Sigma^{2,1}\mathscr{F} \to \mathscr{F}') \in O(S^{2m+1,2n+1})$.
		\item If $\fib(s_{\mathscr{F}}) \in O(S^{2m-1,2n})$, then $\fib(s_{\mathscr{E}}) \in O(S^{2m-1,2n})$ if and only if $\fib(s_{\mathscr{B}}) \in O(S^{2m-1,2n})$.
		\item If $\fib(s_{\mathscr{B}}) \in O(S^{2m-1,2n})$, then $\fib(\fib(s_{\mathscr{F}}) \to \fib(s_{\mathscr{E}})) \in O(S^{2m-2,2n})$
	\end{enumerate}
\end{proposition}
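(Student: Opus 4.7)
The plan is to establish part (1) first by a motivic Blakers--Massey argument, then derive parts (2) and (3) as formal consequences via the fiber sequences collected in Proposition~\ref{prop:fibersofstabilization}.

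For part (1), the strategy is a Ganea-style analysis of the discrepancy between cofiber and fiber behavior under $\Sigma^{2,1}$-suspension. Applying Theorem~\ref{thm:cofiberpqconnectivity}(1) to the original fiber sequence gives $\fib(\cof(\iota) \to \mathscr B) \in O(S^{2m, 2n})$; equivalently, $\Sigma \mathscr F \wedge \Omega \mathscr B \in O(S^{2m, 2n})$, which one can also see from Theorem~\ref{thm:cellularityofsmashproducts} and Proposition~\ref{prop:unstableconnectivityofloopspaces}(1). The map $\Sigma^{2,1}\mathscr F \to \mathscr F'$ can be compared with $\Sigma^{2,1}$ applied to the Ganea map $\cof(\iota) \to \mathscr B$: the cofiber sequence $\Sigma^{2,1}\mathscr F \to \Sigma^{2,1}\mathscr E \to \Sigma^{2,1}\cof(\iota)$ and the fiber sequence $\mathscr F' \to \Sigma^{2,1}\mathscr E \to \Sigma^{2,1}\mathscr B$ both share $\Sigma^{2,1}\mathscr E$ in the middle, and a nine-lemma argument identifies $\fib(\Sigma^{2,1}\mathscr F \to \mathscr F')$ with $\Omega\,\fib(\Sigma^{2,1}\cof(\iota) \to \Sigma^{2,1}\mathscr B)$ up to higher-cellularity correction. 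Since $\Sigma^{2,1}\fib(\cof(\iota) \to \mathscr B) \in O(S^{2m+2, 2n+1})$, a Freudenthal-style comparison for this auxiliary fiber sequence together with Proposition~\ref{prop:unstableconnectivityofloopspaces}(1) then yields $\fib(\Sigma^{2,1}\mathscr F \to \mathscr F') \in O(S^{2m+1, 2n+1})$.

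Granted (1), deduce (2) as follows. Part (1) combined with Proposition~\ref{prop:unstableconnectivityofloopspaces}(3) gives $\Omega^{2,1}\fib(\Sigma^{2,1}\mathscr F \to \mathscr F') \in O(S^{2m-1, 2n})$. Plugging this together with the hypothesis $\fib(s_{\mathscr F}) \in O(S^{2m-1, 2n})$ into the fiber sequence of Proposition~\ref{prop:fibersofstabilization}(3) and invoking Proposition~\ref{prop:pqconnectivityandfibersequencesI}(2) gives $\fib(\mathscr F \to \Omega^{2,1}\mathscr F') \in O(S^{2m-1, 2n})$. Feeding into the fiber sequence of Proposition~\ref{prop:fibersofstabilization}(2) and applying Proposition~\ref{prop:pqconnectivityandfibersequencesI}(1) (whose connectivity hypothesis on the base is easily checked via the long exact sequence of homotopy sheaves), we find that $\fib(s_{\mathscr E}) \to \fib(s_{\mathscr B})$ is a (universal) $S^{2m-1, 2n}$-equivalence, which gives the desired iff.

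For (3), consider the composite $\fib(s_{\mathscr F}) \to \fib(\mathscr F \to \Omega^{2,1}\mathscr F') \to \fib(s_{\mathscr E})$ and apply the composition-of-fibers principle \cite[Proposition 3.1]{ABHWhitehead}: the fiber of the composite sits between the fibers of the two individual maps, which are $\Omega\Omega^{2,1}\fib(\Sigma^{2,1}\mathscr F \to \mathscr F')$ (obtained by rotating Proposition~\ref{prop:fibersofstabilization}(3)) and $\Omega\fib(s_{\mathscr B})$ (obtained by rotating Proposition~\ref{prop:fibersofstabilization}(2)). By (1) combined with Proposition~\ref{prop:unstableconnectivityofloopspaces} and the hypothesis on $\fib(s_{\mathscr B})$, both layers lie in $O(S^{2m-2, 2n})$, so Proposition~\ref{prop:pqconnectivityandfibersequencesI}(2) yields $\fib(\fib(s_{\mathscr F}) \to \fib(s_{\mathscr E})) \in O(S^{2m-2, 2n})$. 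The principal obstacle is the Blakers--Massey estimate in (1): the nine-lemma identification requires careful accounting of the unstable failure of $\Sigma^{2,1}$ to preserve fiber sequences, and the secondary estimate on $\fib(\Sigma^{2,1}\cof(\iota) \to \Sigma^{2,1}\mathscr B)$ must avoid circularity.
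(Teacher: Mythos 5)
Parts (2) and (3) of your argument are correct and essentially identical to the paper's: both run the fiber sequences of Proposition~\ref{prop:fibersofstabilization}(2)--(3) through Proposition~\ref{prop:pqconnectivityandfibersequencesI}, Proposition~\ref{prop:unstableconnectivityofloopspaces} and Corollary~\ref{cor:cellularityofcompositesoffibers}, once part (1) is in hand. The problem is part (1), and you have correctly located it yourself in your last sentence: what you offer there is not a proof.

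The gap is twofold. First, the ``nine-lemma argument'' identifying $\fib(\Sigma^{2,1}\mathscr F \to \mathscr F')$ with $\Omega\,\fib(\Sigma^{2,1}\cof(\iota) \to \Sigma^{2,1}\mathscr B)$ ``up to higher-cellularity correction'' is not a theorem. These two objects are the cartesian and (looped) cocartesian gap maps of the square with corners $\Sigma^{2,1}\mathscr F$, $\Sigma^{2,1}\mathscr E$, $\ast$, $\Sigma^{2,1}\mathscr B$; unstably they do not agree, and their discrepancy is precisely the Blakers--Massey defect that Proposition~\ref{prop:fiberconnectivityinduction}(1) is supposed to bound. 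Declaring the discrepancy to be a ``higher-cellularity correction'' assumes the conclusion. Second, even granting that identification, your estimate for $\fib(\Sigma^{2,1}\cof(\iota) \to \Sigma^{2,1}\mathscr B)$ requires knowing how $\Sigma^{2,1}$ interacts with the fiber of the Ganea map --- a ``Freudenthal-style comparison'' as you say --- but a $\mathbb P^1$-Freudenthal theorem is exactly what this section of the paper is building toward, so it is not available here without circularity.

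The paper's proof of (1) avoids both issues by never taking fibers of $\Sigma^{2,1}$-suspended maps. It first converts the desired fiber estimate into a cofiber estimate: by Corollary~\ref{cor:unstablecellularityoffibersvscofibers} it suffices to show $\cof(\Sigma^{2,1}\mathscr F \to \mathscr F') \in O(S^{2m+2,2n+1})$, and by the cellular Whitehead theorem (Theorem~\ref{thm:pqequivsof1connectedspacesaredetectedstably}) this may be checked after a further simplicial suspension. One then compares, in a $3\times 3$ diagram of cofiber sequences sharing $\Sigma^{2,1}\cof(f) \weq \cof(\Sigma^{2,1}f)$, the map $\Sigma^{2,1}\Sigma\mathscr F \to \Sigma\mathscr F'$ with the two maps $\Sigma(\ph) \to \cof(\ph)$ coming from the original fiber sequence and from $\mathscr F' \to \Sigma^{2,1}\mathscr E \to \Sigma^{2,1}\mathscr B$; Theorem~\ref{thm:cofiberpqconnectivity}(2), applied to each of these two fiber sequences, controls the outer columns, and Corollary~\ref{cor:separatingcellularity} assembles the estimate. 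Since $\Sigma^{2,1}$ commutes with cofibers on the nose, no suspension theorem is needed. If you want to salvage your approach, you must replace the asserted identification by this (or an equivalent) cofiber-side comparison; as written, part (1), and hence the whole proposition, is unproved.
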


\begin{proof}
	\noindent {{\bf Point 1}.} To deduce the stated weak cellular estimate on  $\fib(\Sigma^{2,1}\mathscr{F} \to \mathscr{F}')$ it suffices by Corollary \ref{cor:unstablecellularityoffibersvscofibers}(2), whose hypotheses are satisfied by the assumption of the statement, to show that $\cof(\Sigma^{2,1} \mathscr{F} \to \mathscr{F}') \in O(S^{2m+2,2n+1})$.  There is a commutative square of the form
	\[
	\xymatrix{
	\Sigma^{3,1} \mathscr{F} \ar[r]\ar[d] & \Sigma \mathscr{F}' \ar[d] \\
	\Sigma^{2,1} \cof(f) \ar[r] & \cof(\Sigma^{2,1}f),
	}
	\]
	where the left vertical map is the suspension of the map $\Sigma \mathscr{F} \to \cof(f)$, the right vertical map is the map $\Sigma \mathscr{F}' \to \cof(\Sigma^{2,1}f)$ and the top horizontal map is the suspension of the map $\Sigma^{2,1}\mathscr{F} \to \mathscr{F}'$ from the statement.  The bottom horizontal map is an equivalence, while the cofiber of the top map is $\Sigma \cof(\Sigma^{2,1} \mathscr{F} \to \mathscr{F}')$.  By the weak-cellular Whitehead theorem \ref{thm:pqequivsof1connectedspacesaredetectedstably}, it is enough to show that $\Sigma^2 \cof(\Sigma^{2,1} \mathscr{F} \to \mathscr{F}') \in O(S^{2m+4,2n+1})$.  
	
	Comparing vertical and horizontal cofibers arising from the above commutative diagram we obtain a cofiber sequence of the form:
	\[
	\cof(\Sigma^{3,1}\mathscr{F} \to \Sigma^{2,1}\cof(f)) \longrightarrow \cof(\Sigma \mathscr{F}' \to \cof(\Sigma^{2,1}f)) \longrightarrow \Sigma^2 \cof(\Sigma^{2,1} \mathscr{F} \to \mathscr{F}').
	\]
	By weak-cellular motivic Blakers--Massey \ref{thm:cofiberpqconnectivity}(2), since $\mathscr{F} \in O(S^{m,n})$ and $\mathscr{B} \in O(S^{m,n})$, we conclude that $\cof(\Sigma \mathscr{F} \to \cof(f)) \in O(S^{2m+1,2n})$ and thus that $\cof(\Sigma^{3,1}\mathscr{F} \to \Sigma^{2,1}\cof(f)) \in O(S^{2m+3,2n+1})$.  Likewise, since $\Sigma^{2,1}\mathscr{B} \in O(S^{m+2,n+1})$ and $\mathscr{F}' \in O(S^{m+2,n+1})$, we conclude that $\cof(\Sigma \mathscr{F}' \to \cof(\Sigma^{2,1}f)) \in O(S^{2m+5,2n+2})$.  In that case, the fiber of $\cof(\Sigma^{3,1}\mathscr{F} \to \Sigma^{2,1}\cof(f)) \to \cof(\Sigma \mathscr{F}' \to \cof(\Sigma^{2,1}f))$ then lies in $O(S^{2m+3,2n+1})$ as well by appeal to Theorem~\ref{thm:cellularityoffibers}.  Yet another application of weak-cellular motivic Blakers--Massey then implies that $\Sigma^2 \cof(\Sigma^{2,1} \mathscr{F} \to \mathscr{F}') \in O(S^{2m+4,2n+1})$ as required.
	
	%By Theorem \ref{thm:pqequivsof1connectedspacesaredetectedstably} it is enough that $\Sigma C := \Sigma^2 \cof(\Sigma^{2,1}\mathscr{F} \to \mathscr{F}') \in O(S^{2m+4,2n+1})$.
	%Consider the commutative square of cofiber sequences
	%\begin{equation*}
	%\begin{CD}
	%	\Sigma^{2,1} \Sigma \mathscr F @>>> \Sigma \mathscr F' @>>> C \\
%		@VVV @VVV @VVV \\
%		\Sigma^{2,1} \cof(f) @>{\weq}>> \cof(\Sigma^{2,1} f) @>>> * \\
%		@VVV @VVV @VVV \\
%		O(S^{2m+3,2n+1}) @>>> O(S^{2m+5,2n+2}) @>>> \Sigma C.
%	\end{CD}
%	\end{equation*}
%	The connectivities in the bottom row are obtained by Theorem \ref{thm:cofiberpqconnectivity}(2) and imply what we need (using Corollary \ref{cor:separatingcellularity}\todo{details?}).
	
	\noindent {\bf Point 2.}  By Proposition~\ref{prop:fibersofstabilization}(3) there is a fiber sequence
	\[
	\fib(s_{\mathscr{F}}) \longrightarrow \fib(\mathscr{F} \to \Omega^{2,1}\mathscr{F}') \longrightarrow \Omega^{2,1} \fib(\Sigma^{2,1}\mathscr{F} \to \mathscr{F}').
	\]
	By the conclusion of the first point in conjunction with Proposition~\ref{prop:unstableconnectivityofloopspaces}, we conclude that the last term lies in $O(S^{2m-1,2n})$.  If $\fib(s_{\mathscr{F}}) \in O(S^{2m-1,2n})$, then Proposition~\ref{prop:pqconnectivityandfibersequencesI} tells us that $\fib(\mathscr{F} \to \Omega^{2,1}\mathscr{F}') \in O(S^{2m-1,2n})$ as well.  
	
	Next, there is a fiber sequence
	\[
	\fib(\mathscr{F} \to \Omega^{2,1}\mathscr{F}') \longrightarrow \fib(s_{\mathscr{E}}) \longrightarrow \fib(s_{\mathscr{B}}).
	\]
	by Proposition~\ref{prop:fibersofstabilization}(2).  In that case, another application of Proposition~\ref{prop:pqconnectivityandfibersequencesI} in conjunction with the estimate of the preceding paragraph implies that $\fib(s_{\mathscr{B}}) \in O(S^{2m-1,2n})$, if and only if $\fib(s_{\mathscr{E}}) \in O(S^{2m-1,2n})$. \newline
	
	\noindent{\bf Point 3.}  Proposition~\ref{prop:fibersofstabilization} shows that the map $\fib(s_{\mathscr{F}}) \to \fib(s_{\mathscr{E}})$ factors as 
	\[
	\fib(s_{\mathscr{F}}) \longrightarrow \fib(\mathscr{F} \to \Omega^{2,1}\mathscr{F}') \longrightarrow \fib(s_{\mathscr{E}}).
	\]
	The proposition also shows that the fiber of the first map is $\Omega^{3,1} \fib(\Sigma^{2,1}\mathscr{F} \to \mathscr{F}')$, while the fiber of the second map is $\Omega \fib(s_{\mathscr{B}})$.  Assuming $\fib(s_{\mathscr{B}}) \in O(S^{2m-1,2n})$, we conclude that $\Omega \fib(s_{\mathscr{B}}) \in O(S^{2m-2,2n})$.  The first point yields $\Omega^{3,1}\fib(\Sigma^{2,1}\mathscr{F} \to \mathscr{F}') \in O(S^{2m-2,2n})$.  In that case Corollary~\ref{cor:cellularityofcompositesoffibers} implies that $\fib(\fib(s_{\mathscr{F}}) \to \fib(s_{\mathscr{E}})) \in O(S^{2m-2,2n})$ as well.
\end{proof}

\subsection{The main result}
\label{ss:freudenthalChowdiagonal}
	In this section, we combine all the tools developed so far to analyze $\pone$-suspension.  If $E$ is any $\pone$-spectrum, then set 
	\[
	\underline{E}^{a,b} := \Omega^{\infty}\Sigma^{a,b} E.
	\]  
	We may now state the main result.  

\begin{theorem}[The $\pone$-stabilization theorem]
	\label{thm:freudenthal}
	Assume $k$ is a perfect field with characteristic exponent $p$, $\mathscr{X}$ is a pointed motivic space and $s_{\mathscr{X}}: \mathscr{X} \to \Omega^{2,1}\Sigma^{2,1}\mathscr{X}$ is the unit of the loop suspension adjunction. Assume $m$ and $n$ are integers with $m \leq 2n$, $m-n \geq 2$, and $n \geq 2$.  If $\mathscr{X} \in O(S^{m,n})$ and $\bpi_*(\mathscr{X})$ is uniquely $p$-divisible, then $\fib(s_{\mathscr{X}}) \in O(S^{2m-1,2n})$. 
\end{theorem}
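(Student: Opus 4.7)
The plan is to follow the three-step reduction sketched in the introduction: from arbitrary spaces to infinite $\pone$-loop spaces, then to the assembly map, and finally to the motivic Eilenberg--Mac Lane spectrum where the key geometric estimate applies. Working throughout in the $\Z[1/p]$-local setting (justified by the $p$-divisibility hypothesis via Propositions \ref{prop:pdivisibility-unstable-implies-stable} and \ref{prop:pdivisibility}), I proceed as follows.

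First, since $m-n\ge 2$ and $n\ge 2$, $\mathscr X$ is simply connected and hence nilpotent, so Amplification \ref{amplification:2-eff-whitehead} supplies a refined Whitehead tower $\mathscr X\lra{i,n}$ with $\mathscr X\lra{0,n}=\mathscr X$, fiber sequences
\[ \mathscr X\lra{i+1,n}\longrightarrow \mathscr X\lra{i,n}\longrightarrow K_i:=\Omega^\infty\Sigma^{n_i+n,n}\mathscr B_i \]
for some $\mathscr B_i\in\SH(k)^{\eff\heartsuit}$, with each $\mathscr X\lra{i,n}\in O(S^{n_i+n,n})\subseteq O(S^{m,n})$ and $n_i\to\infty$. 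For $i$ sufficiently large, $\mathscr X\lra{i,n}\in O(S^{2m,2n})$, and Example \ref{ex:poneloopsonponesuspensions} then gives $\Omega^{2,1}\Sigma^{2,1}\mathscr X\lra{i,n}\in O(S^{2m,2n})$ as well, so $\fib(s_{\mathscr X\lra{i,n}})\in O(S^{2m-1,2n})$ for $i\gg 0$. A descending induction via Proposition \ref{prop:fiberconnectivityinduction}(2) then reduces the theorem to proving $\fib(s_{K_i})\in O(S^{2m-1,2n})$ for each layer $K_i=\Omega^\infty E$.

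Second, for $\mathscr X=\Omega^\infty E$ the unit $s_{\mathscr X}$ is a section of the map $\Omega^{2,1}(a_E):\Omega^{2,1}\Sigma^{2,1}\Omega^\infty E\to \Omega^{2,1}\Omega^\infty\Sigma^{2,1}E\weq\Omega^\infty E$ induced by the canonical assembly map $a_E:\Sigma^{2,1}\Omega^\infty E\to\Omega^\infty\Sigma^{2,1}E$. A straightforward long-exact-sequence argument yields $\fib(s_{\mathscr X})\weq \Omega^{2,1}\fib(a_E)$, so by Proposition \ref{prop:unstableconnectivityofloopspaces}(3) it suffices to show that $\fib(a_E)\in O(S^{2m+1,2n+1})$; under the connectivity hypotheses that hold in our range, Corollary \ref{cor:unstablecellularityoffibersvscofibers} further reduces this to the cellular estimate $\cof(a_E)\in O(S^{2m+2,2n+1})$.

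Third, for $E\in\Mod_{H\Z[1/p]}(k)^{\veff}$ this cellular estimate is exactly the conclusion of Theorem \ref{thm:cofiberofmotivicEMassemblycellularityestimatecharp}. To handle a general effective $p$-local $\pone$-spectrum $E$, I would use the slice filtration: each slice $s_qE$ is a module over $s_0(\1_k)\weq H\Z[1/p]$ (Paragraph \ref{par:slices}), so Theorem \ref{thm:cofiberofmotivicEMassemblycellularityestimatecharp} gives the estimate slice-by-slice, and a Levine-style convergence argument (as in \cite{BEO}) assembles these into an estimate for $E$ itself over fields of finite virtual $2$-\'etale cohomological dimension. Over fields of infinite cohomological dimension, I would additionally invert $\rho$, using the real-\'etale/real realization apparatus (Proposition \ref{prop:unstable-rholoc}, Theorem \ref{thm:rationalminuspartviaret}, Definition \ref{def:rsigma}) to split the argument into a $\rho$-torsion part governed by Levine's argument and a $\rho$-inverted part that reduces to topology. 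The main obstacle will be this last step---ensuring that the slice-wise estimates assemble into the required estimate on $E$ and splicing the slice-tower convergence with the real-\'etale splitting into a single uniform argument.
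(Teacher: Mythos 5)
Your architecture matches the paper's: reduce to infinite $\pone$-loop spaces via the refined Whitehead/Postnikov tower and Proposition~\ref{prop:fiberconnectivityinduction}(2), identify $\fib(s_{\Omega^\infty E})\weq\Omega^{3,1}\fib(a_E)$ and invoke the symmetric-power estimate on $\cof(a_E)$, then handle general very effective $E$ by slices. (One structural remark: the paper first reduces to the Chow diagonal $m=2n$ via Lemma~\ref{lem:reductionto2nn}, since Lemma~\ref{lem:reductiontothestablecase} and the assembly-map estimates are calibrated to spectra on the diagonal; your tower layers $\Omega^\infty\Sigma^{n_i+n,n}\mathscr B_i$ sit above it, so you would need to justify the layer-wise estimate there as well.)

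The genuine gap is the step you flag yourself: passing from the slices to $E$. A ``Levine-style convergence argument'' does not assemble the slice-wise estimates, because $\lim_N f_NE$ need not vanish --- over formally real fields the effective covers $f_N\Sigma^\infty X_+$ retain Witt-theoretic (equivalently $\rho$-periodic) contributions for all $N$. Also note that for general very effective $E$ the cofiber of $a_E$ is \emph{not} accessible via symmetric powers; Theorem~\ref{thm:cofiberofmotivicEMassemblycellularityestimatecharp} applies only to $H\Z[1/p]$-modules, which is why the slices (and nothing more) can be treated that way. What the paper actually does for $f_NE$ is: (i) reduce to finite virtual cohomological dimension \emph{first}, by continuity and essentially smooth base change (the real-\'etale fracture is then needed precisely in that case, not only for infinite cohomological dimension); (ii) show $(f_NE)^+_\Q=0$ for $N$ large (Lemma~\ref{lem:effectivecoverspluspart}, via Atiyah duality and vanishing of negative-weight motivic cohomology), so $(f_NE)_\Q$ is pure minus part and hence real-\'etale local; (iii) observe via Corollary~\ref{cor:separatingcellularity} and the surjectivity statement in Proposition~\ref{prop:fiberconnectivityinduction}(3) that the desired membership in $O(S^{4n-1,2n})$ reduces to the vanishing of finitely many homotopy sheaves of $\fib(s_{\mathscr X_i})$ on sections over each finitely generated field; and (iv) establish that vanishing by comparing with $\L_\ret$, where the connectivity of the comparison map is controlled by the $2$-primary $\rho$-power-torsion estimate of \cite[Lemma 5.23]{BEO} (Lemma~\ref{lem:torsioncaseconnectivityestimate}) and the real-\'etale localization satisfies the suspension theorem because $\rho$-periodic simply connected spaces reduce to the simplicial Freudenthal theorem (Lemma~\ref{lem:therholocalcase}). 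Without supplying this fracture argument your proof does not close.
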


Before delving into the proof, we outline the strategy.  We quickly reduce to the case $m = 2n$ and by appeal to the weakly cellular Whitehead tower, and the inductive machinery described in the previous section, we reduce the analysis of the weak cellular class of the fiber of the unit map $s_{\mathscr{X}}$ to the case of $\pone$-infinite loop spaces; essentially all the effort is focused on treating this latter case.  

Intuitively, we aim to analyze the fiber of the stabilization map for $\pone$-infinite loop spaces by ``fracturing" with respect to the motivic Hopf map $\eta$ (recall the conventions of Paragraph~\ref{par:milnorwittktheory}).  In particular, we would like to separately analyze $\eta$-complete and $\eta$-local $\pone$-infinite loop spaces.  In practice, the geometric analysis of the cofiber of the assembly map for Eilenberg--Mac Lane spaces will give us control over the $\eta$-complete situation.  

There is an equality of the form $1 - \eta \rho = 1 + \eta[-1] = \langle -1 \rangle$ and therefore, $1 + \varepsilon = \eta \rho$ in $\pi_{0,0}(\1_k)$.  Building on this observation, the $\eta$-local regime can be accessed, if we additionally invert $2$, by studying the $\rho$-localization, which in turn can be analyzed by means of real-\'etale topological techniques.  To build a suitable fracture diagram, we need to study $\eta$ and $2$-torsion, which we do by appeal to a modification of Levine's slice convergence argument.  There are a number of ways to perform this kind of ``fracture" in practice: here we analyze the torsion and rational parts separately, but other configurations are also possible.  

\begin{proof}
	We proceed by a string of reductions.  \newline\newline
	\noindent {\bf Step 1.} (Reduction to the Chow diagonal) We may assume that $m = 2n$.  Indeed, if $\mathscr{X} \in O(S^{m,n})$, then $\Sigma^{2n-m}\mathscr{X} \in O(S^{2n,n})$.  Since $\mathscr{X}$ is $1$-connected by assumption, so is $\Sigma^{2n-m}\mathscr{X}$.  If $\bpi_*(\mathscr{X})$ is uniquely $p$-divisible, then $\Sigma^{\infty}_{S^1}\mathscr{X}$ is as well, and thus so is $\Sigma^{\infty}_{S^1} \Sigma^{2n-m}\mathscr{X}$.  Therefore, Proposition~\ref{prop:pdivisibility} implies that $\bpi_*(\Sigma^{2n-m}\mathscr{X})$ is uniquely $p$-divisible as well.  In that case, assuming $\fib(s_{\Sigma^{2n-m}\mathscr{X}}) \in O(S^{4n-1,2n})$, appealing inductively to Lemma~\ref{lem:reductionto2nn}, we conclude that $\fib(s_{\mathscr{X}}) \in O(S^{2m-1,2n})$ as required. \newline
	
	\noindent {\bf Step 2.} (Reduction to $\pone$-infinite loop spaces) To establish the result for spaces, it suffices to establish the result for $\pone$-infinite loop spaces; this reduction is the content of Lemma~\ref{lem:reductiontothestablecase} (using Remark \ref{rmk:modified-layers}).  In more detail, under the hypotheses, to show that $\fib(s_{\mathscr{X}}) \in O(S^{4n-1,2n})$, it suffices to establish that for $E$ is $\SH(k)^{\veff}[\frac{1}{p}]$ we have $\fib(s_{\underline{E}^{2n,n}}) \in O(S^{4n-1,2n})$.  Moreover, Proposition~\ref{prop:fiberconnectivityinduction}(2) implies that the collection of $E \in \SH(k)^{\veff}[1/p]$ satisfying this estimate on the weak-cellular class of the fiber of stabilization is stable under extensions and cofibers; it is also stable under filtered colimits.  Thus, it even suffices to analyze $\fib(s_{\underline{E}^{2n,n}})$ with $E = \Sigma^{\infty}_+ X[1/p]$ and $X \in \Sm_k$ (see Proposition \ref{prop:generationundercolimits}).  \newline
	
	\noindent {\bf Step 3.} (Reduction to the case of finite vcd) We may assume that $k$ has finite vcd.  Indeed, since $X \in \Sm_k$ has finite-presentation, it is necessarily pulled back from the perfect closure of some finitely generated extension of the prime field.  Since cellularity estimates are stable under essentially smooth base-change (Lemma \ref{lem:cellularity-ess-sm-bc}), we conclude.  \newline
	
	\noindent {\bf Step 4.} (Reduction to slices and effective covers) It suffices to establish the weak cellular estimate on the fiber of the stabilization map for the effective cover $f_{N}E$ for $N$ sufficiently large and $s_iE$ for arbitrary $i$; see Step 5 for the latter, and proceed to Step 7 for the former.  Indeed, there are fiber sequences of the form
	\[
	f_{i+1}E \longrightarrow f_{i}E \longrightarrow s_iE,
	\]  
	and thus corresponding fiber sequences after applying $\Omega^{\infty}$.  In that case, the assumption that $\fib(s_{\Omega^{\infty}f_{N}E}) \in O(S^{4n-1,2n})$ for $N$ large and $\fib(s_{\Omega^{\infty}s_iE})$ for all $i \geq 0$ implies that $\fib(s_{\Omega^{\infty}f_iE}) \in O(S^{4n-1,2n})$ by repeated appeal to Proposition~\ref{prop:fiberconnectivityinduction}(2).  In that case, the required weakly cellular estimate follows by descending induction.\newline
	
	\noindent {\bf Step 5.} (Reducing from slices to very effective motives) If $E \in \SH(k)^{\veff}$, then $s_i E$ remains very effective and admits the structure of an $H\Z[1/p]$-module (see Paragraph~\ref{par:slices} for more details and references).\newline
	
	\noindent {\bf Step 6.} (Treating effective motives) The weakly cellular estimate on the fiber of the stabilization map holds for $E \in \Mod^{\veff}_{H\Z[1/p]}(k)$.  Indeed, in that case, the relevant weak cellular estimate is contained in Lemma~\ref{lem:caseofveryeffectivemotives}. \newline

	\noindent {\bf Step 7.} (Treating the rational plus part of effective covers) To establish the result for $f_NE$ as above, we proceed in two steps. First, we treat the rationalization $f_NE_\Q$; this breaks up into $\pm$ pieces under the decomposition of $\SH(k)_{\Q}$ into eigenspaces of $\varepsilon$ (see the conventions of Paragraph~\ref{par:varepsilonconventions}).  The $+$-part is described in terms of Voevodsky motives as detailed in Theorem~\ref{thm:rationalizedpluspart}.  Lemma~\ref{lem:effectivecoverspluspart} implies (recall that we have already reduced to $E = \Sigma^\infty_+X$, $X \in \Sm_k$) that $(f_NE_\Q)^+ = 0$, so we conclude that $f_NE_{\Q} \weq (f_NE_{\Q})^{-}$ (for $N \gg 0$). \newline

	\noindent {\bf Step 8.} (Reducing the rational minus part of effective covers to the real-\'etale case) The $-$ part of the decomposition of $\SH(k)_{\Q}$ was identified in terms of the derived category of real-\'etale sheaves of $\Q$-vector spaces (Theorem~\ref{thm:rationalminuspartviaret}).  In particular, we conclude $f_N E_\Q \weq (f_NE_\Q)^{-}$ has real-\'etale descent. \newline

	\noindent{\bf Step 9.} (Separating out the fields.) Let us set $\mathscr X_i = \Omega^\infty \Sigma^{2n,n} f_i E$. We want to prove that $\fib(s_{\mathscr X_i}) \in O(S^{4n-1,2n})$ for some $i$.  Taking $i>2n$ ensures\footnote{In fact, $i \ge n$ suffices.} that $\fib(s_{\mathscr X_i}) \in O(S^{2n+1,2n})$ and hence by Corollary \ref{cor:separatingcellularity} it will be enough to show that $\bpi_r(\fib(s_{\mathscr X_i})) = 0$ for $r \le 2n-2$. Since $s_i(E)$ has already been dealt with (step 5), we have $\fib(s_{\Omega^\infty \Sigma^{2n,n} s_i E}) \in O(S^{4n-1,2n})$. Now Proposition~\ref{prop:fiberconnectivityinduction}(3) shows that $\bpi_r(\fib(s_{\mathscr X_{i+1}})) \to \bpi_r(\fib(s_{\mathscr X_i}))$ is surjective for $r \le 2n-2$.
	Since homotopy sheaves are unramified, it hence suffices to prove: for every finitely generated extension $K/k$ there exists $i \gg 0$ with $\bpi_r(\fib(s_{\mathscr X_i}))(K) = 0$ for $r \le 2n-2$. \newline

	\noindent {\bf Step 10.} Consider the commutative diagram
\begin{equation*}
	\begin{CD}
	\mathscr X_i @>>> L_\ret \mathscr X_i \\
		@VVV @VVV \\
	\Omega^{2,1} \Sigma^{2,1} \mathscr X_i @>>> \Omega^{2,1} L_\ret \Sigma^{2,1} \mathscr X_i.
	\end{CD}
\end{equation*}
	Since $(\mathscr X_i)_\Q$ satisfies real étale descent (step 8), the same is true for $(\Sigma^{2,1} \mathscr X_i)_\Q$ (use that being a real étale sheaf is equivalent to being $\rho$-periodic by Proposition \ref{prop:unstable-rholoc}), and so Lemma \ref{lem:torsioncaseconnectivityestimate} shows that on sections over $K$, the horizontal maps are arbitrarily connective (for $i \gg 0$). It follows that \[ \fib(s_{\mathscr X_i}) \to \fib(L_\ret \mathscr X_i \to \Omega^{2,1} L_\ret \Sigma^{2,1} \mathscr X_i) \] is arbitrarily connective on sections over $K$ (for $i \gg 0$).
	We conclude since the right hand space lies in $O(S^{4n-1,2n})$ by Lemma \ref{lem:therholocalcase}.
\end{proof}

\begin{lem}
	\label{lem:reductionto2nn}
	Suppose $\mathscr{X} \in O(S^{m,n})$ with $m < 2n$ and $m-n \geq 2$ (and $n \ge 0$).  If $\fib(s_{\Sigma \mathscr{X}}) \in O(S^{2m+1,2n})$, then $\fib(s_{\mathscr{X}}) \in O(S^{2m-1,2n})$.
\end{lem}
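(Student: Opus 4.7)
The strategy is to relate the $\mathbb{P}^1$-stabilization map of $\mathscr X$ to that of $\Sigma\mathscr X$ through the $S^1$-suspension unit $\phi\colon \mathscr X \to \Omega\Sigma\mathscr X$. Concretely, naturality of $\phi$ applied to $s_{\mathscr X}$ (equivalently, the observation that both composites represent the unit of the composite adjunction $\Sigma\Sigma^{2,1} \dashv \Omega\Omega^{2,1}$) produces a commutative square
\[
\begin{CD}
\mathscr X @>s_\mathscr X>> \Omega^{2,1}\Sigma^{2,1}\mathscr X \\
@V\phi_\mathscr X VV @VV\Omega^{2,1}\phi_{\Sigma^{2,1}\mathscr X}V \\
\Omega\Sigma\mathscr X @>\Omega s_{\Sigma\mathscr X}>> \Omega^{2,1}\Omega\Sigma\Sigma^{2,1}\mathscr X.
\end{CD}
\]
Writing $F := \fib(s_{\mathscr X})$ for the object of interest and $G := \fib(\Omega s_{\Sigma\mathscr X}) \weq \Omega\fib(s_{\Sigma\mathscr X})$ for the fiber of the bottom row, the hypothesis together with Proposition~\ref{prop:unstableconnectivityofloopspaces}(1) gives $G \in O(S^{2m,2n})$.

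I would next control the two vertical homotopy fibers via the refined $S^1$-Freudenthal theorem. Proposition~\ref{prop:refinedS1Freudenthal} applied to $\mathscr X \in O(S^{m,n})$ (with $m-n\ge 2$) gives $\fib(\phi_\mathscr X) \in O(S^{2m-1,2n})$. Applied instead to $\Sigma^{2,1}\mathscr X \in O(S^{m+2,n+1})$ it gives $\fib(\phi_{\Sigma^{2,1}\mathscr X}) \in O(S^{2m+3,2n+2})$, and then Proposition~\ref{prop:unstableconnectivityofloopspaces}(3) yields $\Omega^{2,1}\fib(\phi_{\Sigma^{2,1}\mathscr X}) \in O(S^{2m+1,2n+1})$. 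Using the characterization of $S^{p,q}$-nullity from Corollary~\ref{cor:truncatednessviahomotopysheaves}, one verifies the inclusion $O(S^{p',q'}) \subset O(S^{p,q})$ whenever $q' \ge q$ and $p'-q' \ge p-q$; in particular the right-hand vertical fiber also lies in $O(S^{2m,2n})$.

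The total-fiber identity for the comparison square identifies $\fib(F \to G)$ with $\fib(\fib(\phi_\mathscr X) \to \Omega^{2,1}\fib(\phi_{\Sigma^{2,1}\mathscr X}))$. With source in $O(S^{2m-1,2n})$, target in $O(S^{2m,2n})$ and gap $p-q = 2(m-n) \ge 4 \ge 2$, Theorem~\ref{thm:cellularityoffibers} gives $\fib(F \to G) \in O(S^{2m-1,2n})$. Since $G \in O(S^{2m,2n}) \subset O(S^{2m-1,2n})$ and $G$ is connected (as the loops of a $2(m-n)$-connected space), Proposition~\ref{prop:pqconnectivityandfibersequencesI}(2) applied to the fiber sequence $\fib(F \to G) \to F \to G$ concludes $F \in O(S^{2m-1,2n})$, as required.

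The main technical point is the diagram chase: setting up the comparison square, verifying its commutativity, and carefully tracking the various inclusions between $O(S^{p,q})$ classes. Once this is in place, each subsequent step is a direct application of a tool already developed in the previous sections — the refined $S^1$-Freudenthal theorem, the $\Omega$ and $\Omega^{2,1}$ cellularity estimates, Theorem~\ref{thm:cellularityoffibers}, and the fiber-sequence closure property.
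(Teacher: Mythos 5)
Your proposal is correct and is essentially the paper's argument: the paper uses the same commutative square (drawn with the $S^1$-suspension units as the horizontal maps and $s_{\mathscr X}$, $\Omega s_{\Sigma\mathscr X}$ as the vertical ones), the same two applications of Proposition~\ref{prop:refinedS1Freudenthal} to $\mathscr X$ and $\Sigma^{2,1}\mathscr X$, and the same loop-space and fiber-sequence cellularity estimates. The only difference is bookkeeping — the paper extracts $\fib(s_{\mathscr X})$ via the two composite-fiber sequences through $\fib(\mathscr X \to \Omega^{3,1}\Sigma^{3,1}\mathscr X)$ rather than via the total fiber of the square — which is immaterial.
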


\begin{proof}
	Consider the diagram
	\[
	\xymatrix{
		\mathscr{X} \ar[d]^{s_{\mathscr{X}}}\ar[r] &  \Omega \Sigma \mathscr{X} \ar[d]^{\Omega s_{\Sigma \mathscr{X}}}\\
		\Omega^{2,1}\Sigma^{2,1}\mathscr{X} \ar[r] & \Omega^{3,1}\Sigma^{3,1} \mathscr{X},
	}
	\]
	where the bottom horizontal map is $\Omega^{2,1}$ applied to the map $\Sigma^{2,1}\mathscr{X} \to \Omega \Sigma^{3,1}\mathscr{X}$.  Both composites are equivalent to the evident unit map, so the diagram commutes. 
	
	Since $\mathscr{X} \in O(S^{m,n})$, we conclude that $\Sigma^{2,1}\mathscr{X} \in O(S^{m+2,n+1})$.  Proposition~\ref{prop:refinedS1Freudenthal} tells us the fiber of top horizontal map lies in $O(S^{2m-1,2n})$ and that 
	\[
	\fib(\Sigma^{2,1}\mathscr{X} \longrightarrow \Omega \Sigma^{3,1} \mathscr{X}) \in O(S^{2m+3,2n+2}).
	\]
	Then, Proposition~\ref{prop:unstableconnectivityofloopspaces} tells us that $\Omega^{2,1}\fib(\Sigma^{2,1}\mathscr{X} \to \Omega \Sigma \mathscr{X}) \in O(S^{2m+1,2n+1})$.  On the other hand, the hypothesis is that $\fib(s_{\Sigma \mathscr{X}}) \in O(S^{2m+1,2n})$ and another application of Proposition~\ref{prop:unstableconnectivityofloopspaces} tells us that $\Omega \fib(s_{\Sigma \mathscr{X}}) \in O(S^{2m,2n})$.
	
	The fiber sequence attached to a composite of morphisms \cite[Proposition 3.1]{ABHWhitehead} yields:
	\[
	\fib(\mathscr{X} \to \Omega \Sigma \mathscr{X}) \longrightarrow \fib(\mathscr{X} \to \Omega^{3,1}\Sigma^{3,1}\mathscr{X}) \longrightarrow \Omega \fib(s_{\Sigma \mathscr{X}}).
	\]
	The cellularity estimates above in conjunction with Proposition~\ref{prop:pqconnectivityandfibersequencesI} tell us that $\fib(\mathscr{X} \to \Omega^{3,1}\Sigma^{3,1}\mathscr{X}) \in O(S^{2m-1,2n})$ as well.
	
	Another application of \cite[Proposition 3.1]{ABHWhitehead} yields a fiber sequence of the form
	\[
	\fib(s_{\mathscr{X}}) \longrightarrow \fib(\mathscr{X} \to \Omega^{3,1}\Sigma^{3,1}\mathscr{X}) \longrightarrow \Omega^{2,1}\fib(\Sigma^{2,1}\mathscr{X} \to \Omega \Sigma^{3,1}\mathscr{X}),
	\]
	which rotates to a fiber sequence of the form
	\[
	\Omega^{3,1}\fib(\Sigma^{2,1}\mathscr{X} \to \Omega \Sigma^{3,1}\mathscr{X}) \longrightarrow \fib(s_{\mathscr{X}}) \longrightarrow \fib(\mathscr{X} \to \Omega^{3,1}\Sigma^{3,1}\mathscr{X})
	\]
	We have $\Omega^{3,1}\fib(\Sigma^{2,1}\mathscr{X} \to \Omega \Sigma^{3,1}\mathscr{X}) \in O(S^{2m,2n+1})$ by appeal to Proposition~\ref{prop:unstableconnectivityofloopspaces}.  Another application of Proposition~\ref{prop:pqconnectivityandfibersequencesI} implies that $\fib(s_{\mathscr{X}}) \in O(S^{2m-1,2n})$ as required.
\end{proof}

\begin{lem}
	\label{lem:reductiontothestablecase}
	Suppose $\mathscr{X} \in O(S^{2n,n})$, $n \geq 2$.   The space $\fib(s_{\mathscr{X}}) \in O(S^{4n-1,2n})$ as soon as for $i \ge 0$ the spaces $\fib(s_{\tau_{\ge (n,n)} K(\bpi_i \mathscr X,i)}) \in O(S^{4n-1,2n})$.
	%if and only if letting $\mathscr{B}_i$ be the spaces appearing in the weakly cellular Whitehead tower of $\mathscr{X}$ (see \textup{Proposition~\ref{prop:refinedwhiteheadtower})} there exists an integer $N$ such that 
	%\begin{enumerate}[noitemsep,topsep=1pt]
	%\item the spaces $\fib(s_{\Omega^{\infty}\mathscr{B}_i}) \in O(S^{4n-1,2n})$ for $i \leq N$ and,
	%\item $\fib(s_{\Omega \Omega^{\infty}\mathscr{B}_i})$ lie in $O(S^{4n-1,2n})$ for $i > N$.
	%\end{enumerate}
\end{lem}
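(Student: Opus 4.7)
Since $\mathscr{X} \in O(S^{2n, n})$ with $n \geq 2$, Lemma~\ref{lem:Lpq-map-conn} shows $\mathscr{X}$ is $(n-1)$-connected, hence simply connected and nilpotent. The plan is to analyse $\mathscr{X}$ through the refined Whitehead tower of Construction~\ref{cons:cellular-whitehead} at weight $q = n$: its layers will agree, up to the identification of Lemma~\ref{lem:taupq-inv}, with the spaces appearing in the hypothesis of the lemma, and Proposition~\ref{prop:fiberconnectivityinduction}(2) will then propagate the conclusion down the tower.

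Concretely, Construction~\ref{cons:cellular-whitehead} applied with $q = n$ produces fiber sequences
\[ \mathscr{X}\lra{i+1, n} \longrightarrow \mathscr{X}\lra{i, n} \longrightarrow \mathscr{L}_i \]
with $\mathscr{X}\lra{0, n} = \mathscr{X}$, each $\mathscr{X}\lra{i, n} \in O(S^{n_i + n, n})$, and $n_i \to \infty$. Since $\mathscr{X}$ is simply connected the tower agrees with the classical Whitehead tower, so $\mathscr{L}_i = \tau_{\geq(n_i + n, n)} K(\bpi_{n_i}\mathscr{X}, n_i)$. Because $\Omega^{n,n} K(\bpi_{n_i}\mathscr{X}, n_i)$ is $n_i$-connective, Lemma~\ref{lem:taupq-inv} gives $\mathscr{L}_i \weq \tau_{\geq(n, n)} K(\bpi_{n_i}\mathscr{X}, n_i)$, and the hypothesis of the lemma (applied with $i = n_i$) yields $\fib(s_{\mathscr{L}_i}) \in O(S^{4n-1, 2n})$. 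Since $n_i \geq n$ throughout, every space appearing in the above fiber sequences lies in $O(S^{2n, n})$; as $n \geq 2$, Proposition~\ref{prop:fiberconnectivityinduction}(2) applies with $(m, q) = (2n, n)$, and asserts that whenever $\fib(s_{\mathscr{X}\lra{i+1, n}}) \in O(S^{4n-1, 2n})$, then $\fib(s_{\mathscr{X}\lra{i, n}}) \in O(S^{4n-1, 2n})$ if and only if $\fib(s_{\mathscr{L}_i}) \in O(S^{4n-1, 2n})$; the latter we have.

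The principal remaining issue is to initiate the descending induction: we must establish some ``base case'' $\fib(s_{\mathscr{X}\lra{I, n}}) \in O(S^{4n-1, 2n})$ for $I$ sufficiently large. When $\mathscr{X}$ has bounded homotopy, the tower terminates after finitely many nontrivial stages and the base case is automatic. In general, the plan is first to prove the lemma for each Postnikov truncation $\tau_{\leq N}\mathscr{X}$ (whose refined Whitehead tower terminates at a finite stage, and to which the hypothesis transfers verbatim since $\tau_{\leq N}\mathscr{X}$ agrees with $\mathscr{X}$ on $\bpi_i$ for $i \leq N$ and has vanishing homotopy in higher degrees), and then to pass to the limit $\mathscr{X} \weq \lim_N \tau_{\leq N}\mathscr{X}$ using Corollary~\ref{cor:limit-of-Spq-equiv}, the relevant connectivity hypothesis being that the connectivities of $\mathscr X \to \tau_{\leq N}\mathscr X$ tend to infinity. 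The most delicate verification is that the Postnikov truncations themselves remain in $O(S^{2n, n})$, which I expect to establish by combining the refined Postnikov tower of Theorem~\ref{thm:refined-Postnikov} with Corollary~\ref{cor:pqtruncationpreservesconnectivity}.
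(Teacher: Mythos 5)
Your architecture --- a descending induction along the refined Whitehead tower, with the base case supplied by reducing to Postnikov truncations and passing to the limit --- founders on exactly the step you flag as delicate: the Postnikov truncations $\tau_{\le N}\mathscr X$ do \emph{not} remain in $O(S^{2n,n})$, and no amount of further work will make them. This is precisely the phenomenon recorded in Example~\ref{ex:postnikovlayersarenotcellular}: $K(\Z(n),2n)$ lies in $O(S^{2n,n})$ (in characteristic $0$), yet its truncation $\tau_{\le n}K(\Z(n),2n) \weq K(\K^M_n,n)$ admits a nontrivial map (the first $k$-invariant) to an $S^{2n,n}$-null space and hence is not weakly $S^{2n,n}$-cellular. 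Consequently the lemma cannot be fed the spaces $\tau_{\le N}\mathscr X$, Proposition~\ref{prop:fiberconnectivityinduction}(2) is unavailable for their Whitehead towers, and the limit comparison has nothing to compare against. Theorem~\ref{thm:refined-Postnikov} in fact quantifies the failure: the cellular part of $\tau_{\le i-1}\mathscr X$ is $\L^{i+q,q}\mathscr X$, which differs from $\tau_{\le i-1}\mathscr X$ by the generally nontrivial null piece $\L^{q,q}\tau_{\le i-1}\mathscr X$. Your descending induction therefore has no base case, which is a fatal rather than cosmetic defect.

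The paper runs the induction in the opposite direction precisely to avoid needing a base case ``at the top''. It uses the refined Postnikov tower of Theorem~\ref{thm:refined-Postnikov}, i.e.\ the fiber sequences $\tau_{\ge q,q}K(\bpi_i\mathscr X, i) \to \L^{2n+i+1,n}\mathscr X \to \L^{2n+i,n}\mathscr X$ (same layers as yours, up to the reindexing of Lemma~\ref{lem:taupq-inv}, so the hypothesis is consumed identically). The base case is now $\L^{2n,n}\mathscr X = *$, which holds by definition of $\mathscr X \in O(S^{2n,n})$; each $\L^{2n+i,n}\mathscr X$ does lie in $O(S^{2n,n})$ (it is in $O(S^{n,n})$ by Theorem~\ref{thm:fiber-to-trunc-is-cell} and $(n-1)$-connected by Lemma~\ref{lem:Lpq-map-conn}, so Corollary~\ref{cor:separatingcellularity} applies), and ascending induction via Proposition~\ref{prop:fiberconnectivityinduction}(2) gives $\fib(s_{\L^{2n+i,n}\mathscr X}) \in O(S^{4n-1,2n})$ for every $i$. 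One then recovers $\mathscr X$: the fiber of $\mathscr X \to \L^{2n+i,n}\mathscr X$ and its suspension-level analogue are $i$-connective, so $\fib(s_{\mathscr X}) \to \fib(s_{\L^{2n+i,n}\mathscr X})$ is $(i-1)$-connective, and Corollary~\ref{cor:limit-of-Spq-equiv} concludes. In short: the correct substitute for your $\tau_{\le N}\mathscr X$ is the nullification $\L^{2n+N,n}\mathscr X$, and once that substitution is made the natural induction is ascending, not descending.
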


\begin{proof}
	Consider the Postnikov tower 
	\[ 
	\tau_{\ge (n,n)} K(\bpi_i \mathscr X,i) \longrightarrow \L^{2n+i+1,n} \mathscr X \longrightarrow \L^{2n+i,n} \mathscr X 
	\] 
	of Theorem \ref{thm:refined-Postnikov}.
	By induction using  Proposition~\ref{prop:fiberconnectivityinduction}(2) we find that $\fib(s_{\L^{2n+i,n} \mathscr X}) \in O(S^{4n-1,2n})$ for every $i$.
	Consider the fiber sequence $\mathscr F_i \to \mathscr X \to \L^{2n+i,n} \mathscr X$.
	In the language of Proposition \ref{prop:fibersofstabilization}, both $\mathscr F_i$ and $\mathscr F_{i}'$ are $i$-connective (use Lemma \ref{lem:Lpq-map-conn}), and hence $\fib(s_{\mathscr X}) \to \fib(s_{\L^{2n+i,n} \mathscr X})$ has $(i-2)$-connective fiber.
	The result now follows via Corollary \ref{cor:limit-of-Spq-equiv}.
\end{proof}

\begin{lem}
	\label{lem:caseofveryeffectivemotives}
	If $E \in \Mod^{\veff}(H\Z)[1/p]$ and $n \geq 0$, then 
	\[
	\fib(s_{\underline{E}^{2n,n}}) \in O(S^{4n-1,2n}).
	\]
	(If $p>1$, assume $n \ge 1$.)
\end{lem}

\begin{proof}
    Observe that there is a factorization of the identity map
	\[
	\underline{E}^{2n,n} \stackrel{s_{\underline{E}^{2n,n}}}{\longrightarrow} \Omega^{2,1}\Sigma^{2,1} \underline{E}^{2n,n} \stackrel{\Omega^{2,1}a_n}\longrightarrow \Omega^{2,1} \underline{E}^{2n+2,n+1} \weq \underline{E}^{2n,n} 
	\]
	where the first map is the unit map and the map $a_n$ is the assembly map for the spectrum.  By appeal to \cite[Proposition 3.1]{ABHWhitehead} we thus obtain a fiber sequence of the form
	\[
	\fib(s_{\underline{E}^{2n,n}}) \longrightarrow \ast \longrightarrow \Omega^{2,1}\fib(a_n),	
	\]
	i.e., an equivalence $\Omega^{3,1}\fib(a_n) \weq \fib(s_{\underline{E}^{2n,n}})$. 
	
	If $k$ has characteristic $0$, then Theorem~\ref{thm:cofiberofmotivicEMassemblycellularityestimate} yields a cellularity estimate for the cofiber of $a_n$. If $k$ has positive characteristic, Theorem~\ref{thm:cofiberofmotivicEMassemblycellularityestimatecharp} achieves the same. In either case, Corollary~\ref{cor:unstablecellularityoffibersvscofibers}(2) allows us to turn this into a weak cellular estimate for $\fib(a_n)$; taken together, we conclude that $\fib(a_n) \in O(S^{4n+2,2n+1})$.  Proposition~\ref{prop:unstableconnectivityofloopspaces} then allows us to give a cellularity estimate for the loop space $\Omega^{3,1}(-)$: writing $\Omega^{3,1}\fib(a_n) = \Omega \Omega^{2,1}\fib(a_n)$, we conclude that $\Omega^{3,1}\fib(a_n) \in O(S^{4n-1,2n})$ as required.
\end{proof}

\begin{lem}
	\label{lem:effectivecoverspluspart}
	If $X \in \Sm_k$, then $(f_N \Sigma^{\infty} X_+)_{\Q}^+ = 0$ for $N$ sufficiently large depending on $X$.  
\end{lem}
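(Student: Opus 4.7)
The plan is to use the equivalence $\SH(k)_{\Q}^{+} \weq \DM(k,\Q)$ of Theorem~\ref{thm:rationalizedpluspart} to translate the question into a bound on the slice filtration of a motive of a smooth variety. Under this equivalence, $(\Sigma^{\infty} X_+)^{+}_{\Q}$ corresponds to $M(X)_{\Q}$, and the slice filtration on $\SH(k)$ is intertwined with the effective-cover filtration on $\DM(k,\Q)$. The statement thus reduces to showing that $f_N M(X)_{\Q} = 0$ in $\DM(k,\Q)$ for $N$ sufficiently large depending on $X$.

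The key input is the smooth projective case, which follows from duality. If $Y$ is smooth projective of dimension $d$, then $M(Y)^{\vee} \weq M(Y)(-d)[-2d]$ in $\DM_{gm}(k,\Q)$, so for every integer $m$,
\[
\Map_{\DM}(\Q(N)[m], M(Y)_{\Q}) \weq \Map_{\DM}(M(Y)_{\Q}(-d)[-2d], \Q(-N)[-m]) = H^{2d-m,\, d-N}(Y,\Q).
\]
Since motivic cohomology with negative weight vanishes on smooth schemes, this group is zero for all $m$ as soon as $N > d$, and so $f_{d+1} M(Y)_{\Q} = 0$.

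It remains to reduce the general smooth case to the smooth projective case. In characteristic zero one uses resolution of singularities: choose a smooth projective compactification $\bar{X} \supset X$ with simple normal crossings boundary $D = \bigcup_i D_i$. The Gysin/localization distinguished triangles then exhibit $M(X)_{\Q}$ as a finite iterated extension of Tate twists and shifts of motives of smooth projective varieties of dimension at most $d = \dim X$, and each of these has trivial $N$-th slice cover for $N > d$, so $f_N M(X)_{\Q}$ vanishes for some $N$ depending only on $d$. In positive characteristic one instead appeals to Gabber's refined alterations: since we work rationally, any nonzero integer is invertible, and a suitable alteration $Y \to X$ realises $M(X)_{\Q}$ as a retract of $M(Y)_{\Q}$ for some smooth projective $Y$ of dimension $d$, from which the bound again propagates. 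The main obstacle is essentially bookkeeping; all the serious input (the equivalence with $\DM(k,\Q)$, duality, and reduction to smooth projective varieties) is classical.
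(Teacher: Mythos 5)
Your proposal follows essentially the same route as the paper: reduce to the smooth projective case via duality (the paper does this through Proposition~\ref{prop:dualizablesgenerate}, i.e.\ Riou's result that strongly dualizable objects generate, which rests on resolution of singularities resp.\ Gabber's alterations exactly as in your last paragraph), then use Atiyah duality and vanishing of motivic cohomology in negative weights to kill all maps from the generators of the $N$-effective subcategory once $N > \dim Y$. One step as written is too weak, though easily repaired: to conclude $f_N M(Y)_{\Q} = 0$ you must check that $M(Y)_{\Q}$ receives no nonzero maps from \emph{all} generators of the $N$-effective localizing subcategory, namely $M(U)(N)[m]$ for every $U \in \Sm_k$ and every $m$, not just from the Tate objects $\Q(N)[m]$. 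Testing only against Tate twists of the unit is genuinely insufficient — for instance the reduced motive of a positive-genus curve, twisted by $N$, is a nonzero $N$-effective motive receiving no nonzero maps from any $\Q(N)[m]$. The fix is immediate and identical in spirit: duality gives
\[
\Map_{\DM}\bigl(M(U)(N)[m],\, M(Y)_{\Q}\bigr) \weq H^{2d-m,\, d-N}(U \times Y, \Q),
\]
which still vanishes for $N > d$ by negative-weight vanishing; this is exactly the computation in the paper's proof.
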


\begin{proof}
	This argument is a modification of one due to Levine \cite[Proposition 6.9(2)]{LevineConvergence}.  Since we are working with rational coefficients, and strongly dualizable objects generate $\SH(k)[\frac{1}{p}]$ (see Proposition~\ref{prop:dualizablesgenerate}), we know that $\Sigma^{\infty} X_+$ can be built out finitely many $\Sigma^{\infty}Y_+$ with $Y$ smooth and projective.  We will show that we may take $N$ to be the maximum of the dimensions of $Y$ and we reduce to demonstrating the vanishing above for $X$ smooth and projective.
	
	Let $d = \dim X$.  For any $E \in \SH(k)$ the map $f_NE \to E$ is universal among maps from spectra in $\Sigma^{2n,n}\SH(k)^{\eff}$ to $E$.  Since $\Sigma^{2n,n}\SH(k)^{\eff}$ is the localizing subcategory generated by $\Sigma^{q,q} \Sigma^{\infty} U_+$ for $q \geq n$, to show $f_NE = 0$ for $N > d$, it suffices to show that $\hom_{\SH(k)_{\Q}}(\Sigma^{*+N,N}\Sigma^{\infty}U_+,E) = 0$ for all $N > d$. 
	
	By the equivalence of $\SH(k)_{\Q}^+$ with $\Mod_{H\Q}(k)$ (see Theorem~\ref{thm:rationalizedpluspart}), in order to demonstrate $f_NE^+$ vanishes we need only establish the vanishing of
	\[
	\hom_{\Mod_{H\Q}(k)}(\mathbf{M}_{\Q}(U)[*+N](N),\mathbf{M}_{\Q}(X)).
	\]
	Since $X$ is smooth and projective of dimension $d$, Atiyah duality yields identifications of the form:    
	\[
	\begin{split}
	\hom_{\Mod_{H\Q}(k)}(\mathbf{M}_{\Q}(U)[*+N](N),\mathbf{M}_{\Q}(X)) &\weq
	\hom_{\Mod_{H\Q}(k)}(\mathbf{M}_{\Q}(U \times X),\Q(d-N)[2d-N-*]) \\
	& \weq H^{2d-N-*,d-N}(U \times X,\Q)
	\end{split}
	\] 
	For any smooth $k$-scheme, motivic cohomology vanishes in negative weight (this follows either from the identification of motivic cohomology with higher Chow groups \cite[Corollary 2]{Vmotchow}, or from the definition of motivic complexes in conjunction with the cancellation theorem \cite[Corollary 4.10]{VoeCancellation}), so the preceding motivic cohomology group vanishes when $N > d$ as required.  
\end{proof}

\begin{lem}
	\label{lem:therholocalcase}
    If $\mathscr{X} \in O(S^{2n,n})$, $n \geq 2$, then $\fib(\Lret \mathscr{X} \to \Omega^{2,1} \Lret \Sigma^{2,1}\mathscr{X}) \in O(S^{4n-1,2n})$.
\end{lem}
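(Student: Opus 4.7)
The plan is to exploit $\rho$-periodicity of $\Lret$-local simply connected spaces (Proposition~\ref{prop:unstable-rholoc}) to reduce the $\pone$-stabilization map on $\Lret\mathscr X$ to a simplicial stabilization map, and then apply a Hilton--Milnor/James splitting argument in the spirit of Proposition~\ref{prop:refinedS1Freudenthal}.

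First I would reduce $\Omega^{2,1}\Lret\Sigma^{2,1}\mathscr X$ to a simplicial loop space.  Since $\mathscr X \in O(S^{2n,n})$ is $(n-1)$-connected by Lemma~\ref{lem:Lpq-map-conn}, and $n \geq 2$, both $\Lret\mathscr X$ and $\Lret\Sigma^{2,1}\mathscr X$ are simply connected by Proposition~\ref{prop:unstableretloc-basics}.  Proposition~\ref{prop:unstable-rholoc} then applies: it gives $\Omega^{1,1}\Lret\Sigma^{2,1}\mathscr X \weq \Lret\Sigma^{2,1}\mathscr X$, and the $\rho$-equivalence $\mathscr Z \weq \mathscr Z \wedge \Gm$ on $\Lret$-local simply connected spaces, combined with the fact that $\Lret$ preserves finite products (and hence smash products of pointed objects), yields $\Lret\Sigma^{2,1}\mathscr X \weq \Lret\Sigma\mathscr X \wedge \Gm \weq \Lret\Sigma\mathscr X$.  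Hence $\Omega^{2,1}\Lret\Sigma^{2,1}\mathscr X \weq \Omega\Lret\Sigma\mathscr X$ and, setting $\mathscr Y := \Lret\mathscr X$, the map in question is identified with the unit $\mathscr Y \to \Omega\Lret\Sigma\mathscr Y$ of the simplicial suspension--loops adjunction on $\Lret$-local simply connected spaces.

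Next, I would apply the Hilton--Milnor splitting to $\mathscr X$, exactly as in the proof of Proposition~\ref{prop:refinedS1Freudenthal}: this yields $\Sigma\cof(\mathscr X \to \Omega\Sigma\mathscr X) \weq \bigvee_{i \geq 2}\Sigma\mathscr X^{\sma i}$, where each $\mathscr X^{\sma i} \in O(S^{2ni,ni})$ by Theorem~\ref{thm:cellularityofsmashproducts}.  Applying $\Lret$ (which commutes with $\Sigma$ and with the relevant fiber sequences on simply connected spaces by the $\Lret$-analog of Theorem~\ref{thm:lmotandpullbacks}, provable by the arguments of Proposition~\ref{prop:unstableretloc-basics}) and using $\Lret\Omega\Sigma\mathscr X \weq \Omega\Lret\Sigma\mathscr X$ for the simply connected target, I would obtain
\[
\Sigma\cof(\mathscr Y \to \Omega\Lret\Sigma\mathscr Y) \weq \Lret\bigl(\textstyle\bigvee_{i \geq 2}\Sigma\mathscr X^{\sma i}\bigr) \in O(S^{4n+1,2n}),
\]
where the last membership uses that $\Lret$ preserves the cellular class (see below) and that $O(S^{2ni+1,ni}) \subset O(S^{4n+1,2n})$ for $i \geq 2$.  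Combining with Corollary~\ref{cor:unstablecellularityoffibersvscofibers}, which translates cellularity of the cofiber to cellularity of the fiber on simply connected spaces, gives $\fib(\mathscr Y \to \Omega\Lret\Sigma\mathscr Y) \in O(S^{4n-1,2n})$ as desired.

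The principal technical obstacle is to verify that $\Lret$ preserves the weakly cellular class $O(S^{p,q})$ in the ranges we need.  I would attack this via the explicit construction of $\Lret$ as a transfinite composition of $\Lmot$ and real-\'etale sheafification appearing in the proof of Proposition~\ref{prop:unstable-rholoc}.  The functor $\Lmot$ visibly preserves $O(S^{p,q})$, and the $\rho$-inversion colimit $\mathscr Z \to \mathscr Z \wedge \Gm \to \mathscr Z \wedge \Gm^{\sma 2} \to \cdots$ preserves $O(S^{p,q})$ because each term lies in $O(S^{p+k,q+k}) \subset O(S^{p,q})$ by Theorem~\ref{thm:cellularityofsmashproducts} (the inclusion following from $S^{p+k,q+k} \weq S^{p,q} \wedge S^{k,k}$ being weakly $S^{p,q}$-cellular), and $O(S^{p,q})$ is closed under colimits by Lemma~\ref{lem:leftbousfieldcolimits}.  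The real-\'etale sheafification step is a colimit-preserving left adjoint that sends generators to themselves up to extra stabilizer structure, which should preserve cellularity.
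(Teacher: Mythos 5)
Your opening reduction---using Proposition~\ref{prop:unstable-rholoc} to trade $\Omega^{2,1}\Lret\Sigma^{2,1}\mathscr{X}$ for a simplicial loop space and identify the map with a simplicial stabilization map on $\Lret\mathscr{X}$---is exactly the paper's first step. But the rest of your argument hinges on the claim that $\Lret$ preserves $O(S^{p,q})$, and the justification you offer for it does not hold up: the real-\'etale sheafification stage of the transfinite composition is a localization in a direction unrelated to $S^{p,q}$-nullification (there is no ``stabilizer structure'' in play here), and a left adjoint inverting one class of maps has no a priori reason to preserve the kernel of an unrelated nullification. ``Should preserve cellularity'' is precisely the point that requires proof, and your small-object-argument route does not supply one.

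The idea you are missing is that $\rho$-periodicity makes Tate weight free, which both repairs this step and collapses your Hilton--Milnor detour. Since $\Lret\mathscr{X}$ is simply connected, Proposition~\ref{prop:unstable-rholoc} gives $\Lret\mathscr{X}\weq\Lret\mathscr{X}\wedge\Gm^{\wedge q}$, hence $\Lret\mathscr{X}\in O(S^{q,q})$ for \emph{every} $q$ by monoidality of $\L^{q,q}$ (Corollary~\ref{cor:stablelocalizationsymmetricmonoidal}); combined with preservation of connectivity under $\Lret$ (Proposition~\ref{prop:unstableretloc-basics}), so that $\Lret\mathscr{X}\in O(S^{n,0})$, Corollary~\ref{cor:separatingcellularity} yields $\Lret\mathscr{X}\in O(S^{n+q,q})$ for all $q$. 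This is the cellularity statement you need for simply connected inputs, proved without ever analyzing the sheafification functor. Moreover, $\Sigma\Lret\mathscr{X}$ is itself simply connected and $\rho$-periodic, hence already $\ret$-local by Proposition~\ref{prop:unstable-rholoc}, so the map in question is literally the unit $\Lret\mathscr{X}\to\Omega\Sigma\Lret\mathscr{X}$ of the simplicial loops--suspension adjunction; Proposition~\ref{prop:refinedS1Freudenthal} then applies directly (with $p-q=n\ge 2$ and $q$ as large as you like) and gives the stated bound, indeed a stronger one. That one-line application is the paper's proof; there is no need to rerun the James/Hilton--Milnor argument with $\Lret$ interleaved.
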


\begin{proof}
	Since $\mathscr{X} \in O(S^{2n,n}), n \geq 2$, it follows that $\mathscr{X}$ is $1$-connected.  In that case, Proposition~\ref{prop:unstableretloc-basics} implies that $\Lret \mathscr{X}$ is $1$-connected as well.  Moreover, Proposition~\ref{prop:unstable-rholoc} implies that being a real-\'etale sheaf is equivalent to being $\rho$-periodic.  In that case, it follows that $\Lret\Sigma^{2,1}\mathscr{X} \weq \Sigma^{2,1} \Lret \mathscr{X} \weq \Sigma \Lret\mathscr{X}$.  Likewise, $\Omega^{2,1} \Sigma \Lret \weq \Omega \Sigma \Lret\mathscr{X}$ (use again Proposition~\ref{prop:unstable-rholoc}). Moreover both spaces lie in $O(S^{q,q})$ for any $q$, being $\rho$-periodic. The result follows by appeal to Proposition~\ref{prop:refinedS1Freudenthal}.
\end{proof}

\begin{lem}
\label{lem:torsioncaseconnectivityestimate}
	 Assume $k$ is a field of finite virtual cohomological dimension and $d$ and $b$ are positive integers.  Suppose $\mathscr{X} \in O(S^{2,0})$.  For any finitely generated field extension $K/k$ of transcendence degree $\leq d$ and any integer $r$, there exists an integer $N$ (depending on $r$, $d$, $b$ and $K$, but not $\mathscr X$) such that if $\mathscr{X} \in O(S^{N,N})$ and $\mathscr X_\Q$ satisfies real étale descent, then 
	 \[
	 \bpi_{a}(\mathscr{X})_{-b}(K) \longrightarrow \bpi_{a}(\Lret \mathscr{X})_{-b}(K)
	 \]
	 is an isomorphism for $a \leq r$.
\end{lem}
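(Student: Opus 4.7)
The plan is to run a Levine--style convergence argument that rests on the bounded real \'etale cohomological dimension of $K$: since $k$ has finite vcd and $K/k$ is finitely generated of transcendence degree at most $d$, there is a finite bound $c = c(k, K, d)$ on the real \'etale cohomological dimension of $K$ with coefficients in strictly $\A^1$-invariant sheaves.  The hypothesis that $\mathscr X_\Q$ has real \'etale descent enters only indirectly, through the rational minus-part of the spectral-sequence input.

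First I would use the refined Postnikov tower of Theorem~\ref{thm:refined-Postnikov} applied with $q = N$ to write $\mathscr X \weq \lim_i \L^{i+N,N} \mathscr X$, with principal layers
\[
\mathscr L_i := \tau_{\ge(i+1+N,N)} K(\bpi_i \mathscr X, i+1).
\]
The fiber of $\mathscr X \to \L^{i+N,N} \mathscr X$ lies in $O(S^{i+N,N})$ by Theorem~\ref{thm:fiber-to-trunc-is-cell} and is $(i+N-1)$-connected by Lemma~\ref{lem:Lpq-map-conn}, so it contributes nothing to $\bpi_a(-)_{-b}(K)$ in range $a \le r$ as soon as $i+N > r+b+1$.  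After establishing the analogous vanishing after applying $L_\ret$ (using Proposition~\ref{prop:unstableretloc-basics} to track the connectivity of $L_\ret$ on highly cellular inputs), the problem reduces to handling the finitely many layers $\mathscr L_0, \ldots, \mathscr L_{i_0}$ individually.

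Each layer is identified in Example~\ref{ex:taupq-K} with $\Omega^\infty_{S^1} f_N \Sigma^{i+1} H\mathbf A$, where $\mathbf A = \bpi_i \mathscr X$.  Using Proposition~\ref{prop:deloopingpreservesconnectivity} to commute $\Omega^\infty_{S^1}$ past $L_\ret$ in the connective range, the analysis then passes to the $S^1$-spectrum $E := f_N \Sigma^{i+1} H\mathbf A$, where a real \'etale descent spectral sequence of the form
\[
E_2^{s,t} = H^s_\ret(K, \bpi_t(E)_{-b}) \Rightarrow \bpi_{t-s}(L_\ret E)_{-b}(K)
\]
converges thanks to the bound $c$.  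Effectivity of $f_N$ together with exactness of contraction forces the only contributing homotopy sheaf of $E$ to sit in degree $i+1$ and to compute (a piece of) $\mathbf A_{N-b}$; taking $N$ much larger than $r + b + c + i$ then pushes both $\bpi_a(\mathscr L_i)_{-b}(K)$ and $\bpi_a(L_\ret \mathscr L_i)_{-b}(K)$ to vanish in total degree $a \le r$, yielding the desired isomorphism trivially.

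The main obstacle I anticipate is not in the spectral sequence itself but in justifying how $L_\ret$ interacts with the refined Postnikov tower and with cofiltered limits in the unstable setting: one needs Proposition~\ref{prop:unstableretloc-basics} together with a suitable convergence statement to match the stable descent behavior layer by layer.  For the rational minus-part of each layer, the proof will rely on Proposition~\ref{prop:unstable-rholoc} (identifying $\ret$-locality with $\rho$-periodicity) and on Theorem~\ref{thm:rationalminuspartviaret}; the assumption that $\mathscr X_\Q$ is already $\ret$-local is precisely what makes the rationally $\rho$-torsion contributions manageable, which is in turn what keeps the bound $N$ finite.
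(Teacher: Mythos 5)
Your reduction via the refined Postnikov tower to the layers $\mathscr L_i = \Omega^\infty_{S^1} f_N \Sigma^{i+1} H\mathbf A_i$ matches the paper's strategy, but the final step contains a genuine error: it is \emph{not} true that making $N$ large forces $\bpi_a(\mathscr L_i)_{-b}(K)$ (or its real \'etale counterpart) to vanish for $a \le r$. High effectivity alone does not kill sections over a field: for example $\bpi_0(\Sigma^\infty_{S^1}\gm{\wedge N})_{-b} = \K^{MW}_{N-b}$ is nonzero on every field no matter how large $N$ is, because of the $\rho$-periodic (Witt-theoretic) part. Relatedly, $f_N\Sigma^{i+1}H\mathbf A$ is not concentrated in a single homotopy degree, so "the only contributing homotopy sheaf sits in degree $i+1$" is also false. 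The lemma is not proved by showing both sides are zero; it is proved by showing the \emph{map} is an isomorphism, and the mechanism for that is entirely different from vanishing.

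What is missing is the two-step fracture that the paper uses. First, one must split off the rational part: since $\mathscr X_\Q$ satisfies real \'etale descent, its homotopy sheaves are already real \'etale sheaves (Proposition~\ref{prop:unstableretloc-basics}), so the comparison map is automatically an isomorphism there, and the problem reduces to $\mathscr X_{tors} = \fib(\mathscr X \to \mathscr X_\Q)$ — whose Postnikov layers come from \emph{torsion}, highly effective spectra. Second, for a torsion $N$-effective spectrum $E$ one identifies $a_\ret\bpi_a(E)$ with $\bpi_a(E[\rho^{-1}])$ via rigidity (checking on real closed stalks, which have $\mathrm{vcd}_2=0$), and then the quantitative input is \cite[Lemma 5.23]{BEO} applied to $E/\rho$: for $K$ of bounded virtual cohomological dimension and $N$ large, the homotopy groups of $(E/\rho)(K)$ vanish in the relevant range, so $E(K)\to E[\rho^{-1}](K)$ is an isomorphism there. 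Your proposed real \'etale descent spectral sequence only computes the naive sheafification and, without the torsion reduction and the $E/\rho$ vanishing, cannot produce the isomorphism; as written, the argument would already fail for $\mathscr X$ with a nontrivial rational minus part or for the $\eta$-periodic contributions to the layers.
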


\begin{proof}
We claim that there exists a function $N(v)$ such that for any field $K$ with $vcd_2(K) \le v$ and $N>N(v)$ the maps 
\begin{equation}\label{eq:ret-stable-trick-goal}
\bpi_a(\mathscr X)_{-b}(K) \longrightarrow (a_\ret \bpi_{a}(\mathscr X))_{-b}(K)
\end{equation}
are isomorphisms, and also that  $a_\ret \bpi_a(\mathscr X)$ is strictly $\A^1$-invariant.
Assuming this, from Proposition \ref{prop:unstableretloc-basics} we deduce that $\bpi_a \L_\ret \mathscr X \weq a_\ret \bpi_{a}(\mathscr X)$, and hence the first part of the claim concludes the proof.

For a nilpotent space $\mathscr Y$, put $\mathscr Y_{tors} = \fib(\mathscr Y \to \mathscr Y_\Q)$.
Recall that the inclusion of real étale sheaves into Nisnevich (or even Zariski) sheaves is exact \cite[Proposition 19.2.1]{Scheiderer}, and that the homotopy sheaves of $\mathscr X_\Q$ are real étale sheaves (Proposition \ref{prop:unstableretloc-basics}).
Combining these yields a long exact sequence of Nisnevich sheaves 
\[ 
\dots \longrightarrow a_\ret \bpi_i \mathscr X_{tors} \longrightarrow a_\ret \bpi_i \mathscr X \longrightarrow \bpi_i \mathscr X_\Q \longrightarrow \dots. 
\]
To show that the middle term is strictly $\A^1$-invariant, it suffices to establish this for the outer ones (increasing $N$ by one).
Assuming this, the sequence being exact in strictly $\A^1$-invariant sheaves, it remains so after applying $(\ph)_{-b}(K)$.
This way we reduce to proving the claim for $\mathscr X_{tors}$.

Consider the Postnikov tower 
\[ 
\tau_{\ge (q,q)} K(\bpi_i \mathscr X,i) \longrightarrow \L^{n+i+1,n} \mathscr X \longrightarrow \L^{n+i,n}\mathscr X 
\] 
of Theorem \ref{thm:refined-Postnikov}, which induces a tower of the form:
\[ 
(\tau_{\ge q,q} K(\bpi_i \mathscr X,i))_{tors} \longrightarrow (\L^{n+i+1,n} \mathscr X)_{tors} \longrightarrow (\L^{n+i,n}\mathscr X)_{tors}. 
\]
By induction, it will suffice to prove the claim for \[ (\tau_{\ge (q,q)} K(\bpi_i \mathscr X,i))_{tors} \weq (\Omega^\infty  f_q \Sigma^i H\bpi_i \mathscr X)_{tors} \weq \Omega^\infty (f_q \Sigma^i H\bpi_i\mathscr X)_{tors}. \]  We may thus assume that $\mathscr X = \Omega^\infty E$, where $E \in \SH(k)^{\eff}(N)_{\ge 0}$ is torsion.

Note that by appeal to \cite[Lemma 5.23]{BEO} there is a function $N(v)$ such that the homotopy groups of $(E/\rho)(K)$ vanish for $a < r$ and fixed $b$, for any torsion $E \in \Sigma^{N,N} \SH(k)^{\eff}_{\ge 0}$.  In particular 
\begin{equation} \label{eq:ret-stability-trick}
\bpi_{a}(\Omega^\infty E)_{-b}(K) \weq \bpi_{a}(\Omega^\infty E[\rho^{-1}])_{-b}(K), 
\end{equation}
for such $a$ and $b$.

Let $X$ be an essentially smooth, real local scheme (i.e. stalk for the real étale topology) with closed point $x$.  Since $X$ is henselian local and contains a field of characteristic zero, by rigidity \cite[Theorem 3.4.4]{EHIK} we have \[ \bpi_a(\Omega^\infty E)_{-b}(X) \weq \bpi_a(\Omega^\infty E)_{-b}(x), \] and similarly with $E[\rho^{-1}]$ in place of $E$.  However $x$ is real closed, so has $vcd_2(x) = 0$ (see Paragraph~\ref{par:realetaletopology}).  In particular by \eqref{eq:ret-stability-trick} we learn that \[ \bpi_{a}(\Omega^\infty E)_{-b}(x) \weq \bpi_{a}(\Omega^\infty E[\rho^{-1}])_{-b}(x). \]
Since this holds for all $X$ as above, we deduce that \[ a_\ret \bpi_{a}(\Omega^\infty E)_{-b} \weq a_\ret \bpi_{a}(\Omega^\infty E[\rho^{-1}])_{-b}. \]
Using that $\bpi_a(\Omega^\infty E[\rho^{-1}])_{-b}$ is a real étale sheaf we see that \[ a_\ret \bpi_{a}(\Omega^\infty E)_{-b} \weq \bpi_{a}(\Omega^\infty E[\rho^{-1}])_{-b}, \] which is strictly $\A^1$-invariant.
Combining once more with \eqref{eq:ret-stability-trick}, claim \eqref{eq:ret-stable-trick-goal} is proved.
This concludes the proof.
\end{proof}

\subsection{Complements}
The statement of Theorem~\ref{thm:freudenthal} had a restriction $m \leq 2n$ on the weak cellular class of the space under consideration.  We now remove this assumption.

\label{ss:complements}
\begin{theorem}
	\label{thm:beyondthediagonal}
	Assume $k$ is a perfect field with characteristic exponent $p$.  If $\mathscr{X} \in O(S^{m,n})$, $m-n \geq 2$, $n \ge 2$, and $\bpi_*(\mathscr{X})$ is uniquely $p$-divisible, then 
	\[
	\fib(s_{\mathscr{X}}) \in O(S^{a,2n}),
	\]  
	where $a = \min(2m-1,m+2n-1)$ (which agree when $m = 2n$).
\end{theorem}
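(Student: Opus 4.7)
The substantive new content is the case $m > 2n$, where $\min(2m-1, m+2n-1) = m+2n-1$. Even though simplicial suspension preserves weak $S^{2n,n}$-cellularity (since $O(S^{2n,n})$ is closed under colimits by Lemma~\ref{lem:leftbousfieldcolimits}), so that $O(S^{m,n}) \subseteq O(S^{2n,n})$ for $m \geq 2n$, a direct application of Theorem~\ref{thm:freudenthal} via this containment only yields $\fib(s_{\mathscr X}) \in O(S^{4n-1, 2n})$. The desired estimate $O(S^{m+2n-1, 2n}) \subsetneq O(S^{4n-1, 2n})$ is strictly sharper when $m > 2n$, reflecting the additional connectivity $m-n-1 \geq n$ that $\mathscr X$ enjoys; it requires a genuine refinement of the argument rather than a formal deduction from the diagonal case.

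My plan is to rerun the proof of Theorem~\ref{thm:freudenthal} keeping the shift $m$ general. The weight-$n$ refined Whitehead tower (Amplification~\ref{amplification:2-eff-whitehead}) and Proposition~\ref{prop:fiberconnectivityinduction}(2) reduce the assertion to proving $\fib(s_{\underline{E}^{r,n}}) \in O(S^{r+2n-1, 2n})$ for every $E \in \SH(k)^{\veff}[1/p]$ and $r \geq m$. For such $\mathscr X = \underline{E}^{r,n}$, the identity-factorization argument of Lemma~\ref{lem:caseofveryeffectivemotives} identifies $\fib(s_{\mathscr X})$ with $\Omega^{3,1}\fib(a_r)$, where $a_r \colon \Sigma^{2,1}\underline{E}^{r,n} \to \underline{E}^{r+2,n+1}$ is the assembly map. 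The key geometric input (Theorem~\ref{thm:cofiberofmotivicEMassemblycellularityestimate} in characteristic zero and Theorem~\ref{thm:cofiberofmotivicEMassemblycellularityestimatecharp} in positive characteristic) generalizes to arbitrary $r \geq 2n$: following the proof of Proposition~\ref{prop:cofiberofassemblyHZsmashX}, the cofiber of $\Sigma^{2,1}\Sym^j(\Sigma^{r,n}X_+) \to \Sym^j(\Sigma^{r+2,n+1}X_+)$ is the Thom space of the same equivariant bundle $p^*\bar{\rho}_j^{\oplus n}$ on ${\mathbb A}[\bar{\rho}_j]\setminus 0$ smashed with $(X^{\times j})_+$, only suspended by $\Sigma^{r+1,n+1}$ rather than $\Sigma^{2n+3,n+1}$. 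The Luna-stratification argument of Theorem~\ref{thm:cellularityofHZXbeforequotient} splits off a trivial summand of rank $n$ from $p^*\bar{\rho}_j^{\oplus n}$ on each stratum, a statement independent of the overall $S^1$-shift, and thus yields $\cof(a_r) \in O(S^{r+2n+3, 2n+1})$. Via Corollary~\ref{cor:unstablecellularityoffibersvscofibers} and Proposition~\ref{prop:unstableconnectivityofloopspaces}, this gives the target estimate in the $H\Z[1/p]$-module case.

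The fracture argument of Steps 7--10 in the proof of Theorem~\ref{thm:freudenthal}---separating $\SH(k)^{\veff}[1/p]$ into the rational-plus part (which vanishes on effective covers by Lemma~\ref{lem:effectivecoverspluspart}), the rational-minus part (which has real-\'etale descent by Theorem~\ref{thm:rationalminuspartviaret} and is treated via Lemma~\ref{lem:therholocalcase}), and the torsion piece (handled by Lemma~\ref{lem:torsioncaseconnectivityestimate} via the slice-convergence estimate of \cite[Lemma~5.23]{BEO})---carries through without essential change, since each of these reductions is sensitive to the weight $n$ but not to the $S^1$-shift $r$. The hard part will be the bookkeeping: verifying that every cellularity index picks up exactly the expected shift $r-2n$ uniformly across the Postnikov-tower reduction, the effective-cover filtration, and the fracture into plus/minus/torsion pieces, and that the combination via Proposition~\ref{prop:pqconnectivityandfibersequencesI} preserves the sharp $O(S^{r+2n-1,2n})$ estimate rather than degrading it to the weaker $O(S^{4n-1,2n})$ already available from Theorem~\ref{thm:freudenthal} directly.
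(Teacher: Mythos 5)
Your plan diverges from the paper's argument, and the step where it diverges contains a genuine gap. The paper does \emph{not} rerun the proof of Theorem~\ref{thm:freudenthal} with a general simplicial shift. Instead it handles $m>2n$ by a purely formal delooping argument: writing $\mathscr X \weq B\mathscr G$ with $\mathscr G = \Omega\mathscr X$ as a bar construction, using that $\Omega^{2,1}\Sigma^{2,1}$ preserves sifted colimits of connected spaces (Proposition~\ref{prop:tateloopssiftedcolimits}) together with Proposition~\ref{prop:realizationfibrations}, one finds $\fib(s_{B\mathscr G}) \weq \colim_{\Delta^{\opcat}} \fib(s_{B_\bullet\mathscr G})$; since each term lies in $O(S^{a,b})$ and is $(a-b)$-connective with contractible zeroth term, the colimit lies in $O(S^{a+1,b})$. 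Applying this $m-2n$ times to $\Omega^{m-2n}\mathscr X \in O(S^{2n,n})$, whose stabilization fiber is controlled by Theorem~\ref{thm:freudenthal}, gives $O(S^{4n-1+(m-2n),2n}) = O(S^{m+2n-1,2n})$. None of the geometric, Dold--Thom, or fracture machinery needs to be revisited.

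The gap in your route is the asserted generalization of the geometric input. You claim that $\cof\bigl(\Sigma^{2,1}\Sym^j(\Sigma^{r,n}X_+) \to \Sym^j(\Sigma^{r+2,n+1}X_+)\bigr)$ is ``the Thom space of the same equivariant bundle $p^*\bar\rho_j^{\oplus n}$ \dots only suspended by $\Sigma^{r+1,n+1}$.'' For $r>2n$ this is false: $\Sigma^{r,n}X_+ = S^{r-2n}\wedge \Sigma^{2n,n}X_+$, so $(\Sigma^{r,n}X_+)^{\wedge j} \weq (S^{r-2n})^{\wedge j}\wedge \Th(\rho_j^{\oplus n}|_{X^{\times j}})$ where $\Sigma_j$ permutes the $j$ simplicial smash factors. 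The factor $(S^{r-2n})^{\wedge j}$ is a simplicial \emph{permutation-representation} sphere, and it does not pull out of $\op{Quot}_{\Sigma_j}$ as a trivial $\Sigma^{(r-2n)j}$-suspension; the Nakaoka/Arone--Dwyer-style stratification acquires genuinely new contributions from the simplicial direction, and the scheme-theoretic description underlying Proposition~\ref{prop:cofiberofassemblyHZsmashX} (and the Dold--Thom identification of Theorem~\ref{thm:motivicDoldThom}, which is stated for symmetric powers of the scheme-level spheres $\Sigma^{2n,n}X_+$) no longer applies verbatim. Applying the existing Theorem~\ref{thm:cofiberofmotivicEMassemblycellularityestimate} to $\Sigma^{r-2n}E$ only returns $\cof(a_r)\in O(S^{4n+3,2n+1})$, which loses exactly the improvement you need; and one cannot recover the sharper class from connectivity alone, since the cofiber of the assembly map is only $(r-n)$-connected rather than $(r+1)$-connected. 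So as written, the central estimate $\cof(a_r)\in O(S^{r+2n+3,2n+1})$ is unjustified, and the delooping trick is the missing idea that makes the off-diagonal case a formal consequence of the diagonal one.
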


\begin{proof}
	When $m \leq 2n$, this is precisely Theorem~\ref{thm:freudenthal}.  We thus treat the case where $m > 2n$.  We describe an inductive machine to reduce to the case $m = 2n$.  By assumption $\mathscr{X}$ is $1$-connected, and write $\mathscr{G} := \Omega \mathscr{X}$ .  In that case, recall that $\mathscr{X} \weq B\mathscr{G}$ (see Paragraph~\ref{par:kanloopgroup}), i.e., $\mathscr{X}$ can be written as a colimit over $\Delta^{\opcat}$ with terms isomorphic to finite products $\mathscr{G}$.  If $\mathscr{G} \in O(S^{m,n})$ then the same thing is true for any finite product $\mathscr{G}^{\times r}$ by appeal to Lemma~\ref{lemm:LA-products}.  
	
	Assume $\fib(s_{\mathscr{G}^{\times n}}) \in O(S^{a,b})$.  The functor $\Omega^{2,1}\Sigma^{2,1}(-)$ preserves sifted colimits by appeal to Proposition~\ref{prop:tateloopssiftedcolimits}(3).  Since sifted colimits are stable under pullbacks by Proposition~\ref{prop:realizationfibrations} we conclude that there is a fiber sequence of the form
 	\[
 		\colim_{\Delta^{\opcat}} \fib(B_{\bullet}\mathscr{G} \longrightarrow \Omega^{2,1}\Sigma^{2,1} B_{\bullet}\mathscr{G}) \longrightarrow \mathscr{X} \longrightarrow \Omega^{2,1}\Sigma^{2,1}\mathscr{X}.
 	\]
 	We claim that $\colim_{\Delta^{\opcat}} \fib(B_{\bullet}\mathscr{G} \longrightarrow \Omega^{2,1}\Sigma^{2,1} B_{\bullet}\mathscr{G}) \in O(S^{a+1,b})$.
	It is clear that the colimit lies in $O(S^{a,b})$, since each term does.
	It hence remains to prove that the colimit is $(a-b)$-connected (Corollary \ref{cor:separatingcellularity}).
	This holds since each term in the geometric realization is $(a-b)$-connective and the zeroth term is contractible.\footnote{Since geometric realizations of connected spaces commute with loops, this reduces to the claim that if $X_\bullet$ is a simplicial set with $X_0 = *$ then $\pi_0 |X_\bullet| = *$, which is clear.}
 	
 	If $m > 2n$, it follows that $\Omega^{m-2n}\mathscr{X} \in O(S^{2n,n})$.  In that case, Theorem~\ref{thm:freudenthal} implies that $\fib(s_{\Omega^{m-2n}\mathscr{X}}) \in O(S^{4n-1,2n})$.  Therefore by $m-2n$-fold application of the comparison result above, we conclude that 
 	\[
 	\fib(s_{\mathscr{X}}) \in O(S^{4n-1+(m-2n),2n}),
 	\]
 	which is exactly what we needed to show.
\end{proof}

\begin{rem}
	\label{rem:freudenthalhypotheses}
	Let us make some remarks on the hypotheses of the main theorem.  Assume $\mathscr{X} \in O(S^{m,n})$.  Without some hypothesis on $n$, Theorem~\ref{thm:freudenthal} is false.  Indeed, consider the map $S^n \to S^{n+1,1}$ for any integer $n \geq 2$.  In that case, \cite[Corollary 6.43]{MField} implies that the map $\bpi_n(S^n) \to \bpi_n(S^{n+1,1})$ is the canonical map $\Z \to \mathbf{GW}$.  In particular, this map fails to be an isomorphism.  The assumption that $n \geq 2$ arises in exactly one place: via the implicit appeal to Theorem~\ref{thm:conservativityofgmstabilization}.  In view of Remark~\ref{rem:morelimprovement}, Theorem~\ref{thm:freudenthal} in fact holds whenever $n \geq 1$, conditional on F. Morel's announced results.
\end{rem}

%\begin{question}
%	Is the conclusion of \textup{Theorem~\ref{thm:beyondthediagonal}} true for $\mathscr{X} \in O(S^{m,n})$ with $m \geq 2, n \geq 1$?
%\end{question}
	
\begin{theorem}
	Let $\mathscr{X} \in O(S^{p,q})$ with $p \ge q \ge 2$, $p-q \ge 2$.  In that case, for any integer $i \geq 1$
	\[
	\fib(\mathscr{X} \to \Omega^{2i,i}\Sigma^{2i,i}\mathscr{X}) \in O(S^{a,2q}),
	\]
	where $a = \min(2p-1,p+2q-1)$, and likewise, 
	\[
	\fib(\mathscr{X} \to Q\mathscr{X}) \in O(S^{a,2q})
	\]
	as well.
\end{theorem}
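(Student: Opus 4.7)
The plan is to prove both assertions by induction on $i$, with base case $i=1$ being Theorem~\ref{thm:beyondthediagonal}. Set $T_j\mathscr{X} := \Omega^{2j,j}\Sigma^{2j,j}\mathscr{X}$ and factor the unit $\mathscr{X} \to T_i\mathscr{X}$ as a tower
\[
\mathscr{X} = T_0\mathscr{X} \to T_1\mathscr{X} \to \cdots \to T_i\mathscr{X},
\]
where the transition $T_j\mathscr{X} \to T_{j+1}\mathscr{X}$ is $\Omega^{2j,j}$ applied to the unit $s_{\Sigma^{2j,j}\mathscr{X}}: \Sigma^{2j,j}\mathscr{X} \to \Omega^{2,1}\Sigma^{2,1}\Sigma^{2j,j}\mathscr{X}$. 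Its fiber is therefore $\Omega^{2j,j}\fib(s_{\Sigma^{2j,j}\mathscr{X}})$.

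The first task is to show each of these fibers lies in $O(S^{a,2q})$. Since $\Sigma^{2j,j}\mathscr{X} \in O(S^{p+2j,q+j})$, remains $1$-connected, and inherits unique $p$-divisibility of its homotopy sheaves from $\mathscr{X}$ (via Propositions~\ref{prop:pdivisibility-unstable-implies-stable} and~\ref{prop:pdivisibility}, together with the fact that $\Sigma^{2j,j}$ preserves $[1/p]$-local $S^1$-spectra), Theorem~\ref{thm:beyondthediagonal} gives $\fib(s_{\Sigma^{2j,j}\mathscr{X}}) \in O(S^{a_j,\, 2q+2j})$ with $a_j = \min(2p+4j-1,\, p+2q+4j-1)$. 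Iterating Proposition~\ref{prop:unstableconnectivityofloopspaces}(3) yields $\Omega^{2j,j}\fib(s_{\Sigma^{2j,j}\mathscr{X}}) \in O(S^{a_j-2j,\, 2q+j})$; setting $p' = a_j - 2j$ and $q' = 2q + j$, the inequalities $p' \ge a$, $q' \ge 2q$, and $p'-q' \ge a-2q$ (which holds in each branch of the minima defining $a_j$ and $a$) show that $S^{p',q'}$ is an iterated $S^1$- and $\gm{}$-smash of $S^{a,2q}$; hence $O(S^{p',q'}) \subseteq O(S^{a,2q})$ by Example~\ref{ex:basicexample} and closure of weakly cellular classes under colimits.

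The induction now runs via the fiber sequence
\[
\fib(\mathscr{X} \to T_j\mathscr{X}) \longrightarrow \fib(\mathscr{X} \to T_{j+1}\mathscr{X}) \longrightarrow \fib(T_j\mathscr{X} \to T_{j+1}\mathscr{X})
\]
obtained from \cite[Proposition 3.1]{ABHWhitehead}. The third term lies in $O(S^{a,2q})$ by the previous paragraph and is at least $2$-connected (since $a - 2q - 1 = \min(2p-2q-2,\,p-2) \ge 2$ under the standing hypotheses $p \ge q+2 \ge 4$), so Proposition~\ref{prop:pqconnectivityandfibersequencesI}(2) completes the inductive step.

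For the $Q\mathscr{X}$-assertion, $Q\mathscr{X} = \colim_i T_i\mathscr{X}$ in $\ho{k}$. Since the $\aone$-invariance condition is preserved under filtered colimits of Nisnevich sheaves (because $U \times \aone$ is compact in $\Shv_\Nis(\Sm_k)$), and since filtered colimits are left-exact in the Nisnevich $\infty$-topos, one has $\fib(\mathscr{X} \to Q\mathscr{X}) \weq \colim_i \fib(\mathscr{X} \to T_i\mathscr{X})$; each term lies in $O(S^{a,2q})$ by the first part, hence so does the colimit by Lemma~\ref{lem:leftbousfieldcolimits}. The main potential obstacle is verifying that unique $p$-divisibility propagates to the $\Sigma^{2j,j}$-suspensions and to the fibers of the stabilization maps so that Theorem~\ref{thm:beyondthediagonal} applies at each stage; this is handled by combining Propositions~\ref{prop:pdivisibility-unstable-implies-stable} and~\ref{prop:pdivisibility} as indicated above.
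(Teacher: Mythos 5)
Your proposal is correct and follows essentially the same route as the paper's own proof: factor the unit $\mathscr{X}\to\Omega^{2i,i}\Sigma^{2i,i}\mathscr{X}$ through the intermediate stages, identify the fiber of each transition as $\Omega^{2j,j}\fib(s_{\Sigma^{2j,j}\mathscr{X}})$, apply Theorem~\ref{thm:beyondthediagonal} to $\Sigma^{2j,j}\mathscr{X}$ together with the looping estimates of Proposition~\ref{prop:unstableconnectivityofloopspaces}, and conclude by the composite-fiber sequence of \cite[Proposition 3.1]{ABHWhitehead}. Your index bookkeeping ($a_j-2j-a=2j\ge j$) and the colimit argument for $Q\mathscr{X}$ are both correct and in fact slightly more detailed than the paper's write-up.
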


\begin{proof}
	The case $i = 1$ is Theorem~\ref{thm:beyondthediagonal}, so assume $i > 0$ and consider the map $s_{\Sigma^{2i-2,i-1}\mathscr{X}}$.  Since $\mathscr{X} \in O(S^{p,q})$, it follows that $\Sigma^{2i-2,i-1}\mathscr{X} \in O(S^{p+2i-2,q+i-1})$.  In that case, Theorem~\ref{thm:beyondthediagonal} implies that $\fib(s_{\Sigma^{2i-2,i-1}\mathscr{X}}) \in O(S^{a'(i),2(q+i-1)})$ where $a'(i) = min(2(p+2i-2)-1,p+2i-2+2(q+i-1))$.  Applying $\Omega^{2i-2,i-1}$, we see that 
	\[
	\Omega^{2i-2,i-1}\fib(s_{\Sigma^{2i-2,i-1}\mathscr{X}}) \in O(S^{a(i),b})
	\]
	where $a(i) = min(2p + 2i-2-1,p+2i-2+2q + i-1)$.  The result follows by appeal to \cite[Proposition 3.1]{ABHWhitehead}.  The second statement follows from the first.
\end{proof}

\section{Applications}
\label{s:applications}
In this section, we deduce various concrete consequences of Theorem~\ref{thm:freudenthal}.   In particular, we establish Theorem~\ref{thmintro:murthy} from the introduction.

\subsection{Murthy's corank 1 splitting conjecture and complements}
The strategy of argument is laid out in \cite{AFpi3a3minus0} and it is observed in \cite{AFOBW} that a suitable version of the Freudenthal suspension theorem would suffice to establish Murthy's conjecture; see \cite[\S 4.3-4.4]{AFICM} for further details of this approach.  The discussion of \cite{AFICM} reduces Murthy's conjecture to the triviality of a certain sheaf \cite[Conjecture 4.3.2]{AFICM}; we describe in the next section how to establish this vanishing statement.  However, to establish Murthy's conjecture we actually need significantly less, as we now describe.

\begin{theorem}[Murthy's conjecture in char. 0]
	\label{thm:murthy}
	Assume $k$ is an algebraically closed field having characteristic $0$ and $X$ is a smooth affine $d+1$-fold, $d \geq 1$.  If $E$ is a rank $d$ vector bundle on $X$, then $E$ splits off a trivial rank $1$ summand if and only if $0 = c_{d}(E) \in CH^d(X)$.
\end{theorem}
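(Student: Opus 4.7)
The case $d=1$ is immediate (rank one bundles with trivial first Chern class are trivial on affines via $\op{Pic} = CH^1$), and the cases $d = 2, 3$ are established in \cite{AFThreefolds} and \cite{AFpi3a3minus0}.  Thus I focus on $d \geq 4$.  By the affine representability theorem \cite[Theorem 1]{AHW}, $E$ is classified by a morphism $\varphi_E \colon X \to B\op{GL}_d$ in $\ho{k}$, and $E$ splits off a trivial rank $1$ summand if and only if $\varphi_E$ admits a lift along the canonical map $B\op{GL}_{d-1} \to B\op{GL}_d$.  Since $\op{GL}_d / \op{GL}_{d-1} \simeq {\mathbb A}^d \setminus 0 \simeq S^{2d-1,d}$, this map fits into a fiber sequence ${\mathbb A}^d \setminus 0 \to B\op{GL}_{d-1} \to B\op{GL}_d$ whose fiber is simply connected, so that a Moore--Postnikov analysis applies and furnishes obstructions to lifting $\varphi_E$ in $H^{i+1}(X, \bpi_i({\mathbb A}^d \setminus 0))$ for $i \geq d-1$.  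Since $X$ is smooth affine of dimension $d+1$, Gersten-type arguments force cohomology in degrees $\geq d+2$ to vanish, so that only the primary obstruction in $H^d(X, \bpi_{d-1}({\mathbb A}^d \setminus 0))$ and the secondary obstruction in $H^{d+1}(X, \bpi_d({\mathbb A}^d \setminus 0))$ can be nonzero.

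Next, I would identify the primary obstruction with the Euler class.  Morel's stability computation (\cite{MField}) gives $\bpi_{d-1}({\mathbb A}^d \setminus 0) \simeq \mathbf{K}^{MW}_d$, and the primary obstruction is the Euler class $e(E) \in H^d(X, \mathbf{K}^{MW}_d)$.  Since $k$ is algebraically closed of characteristic zero we have $W(k) = {\mathbb Z}/2$ and $I(k) = 0$; combined with the hypothesis $c_d(E) = 0$, the Asok--Fasel--Sridharan Euler-class vanishing theorem (which compares $e(E)$ with $c_d(E)$ via the canonical map $\mathbf{K}^{MW}_d \to \mathbf{K}^M_d$ and controls the kernel using the fundamental ideal) forces $e(E) = 0$.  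This reduces us to showing that the secondary obstruction in $H^{d+1}(X, \bpi_d({\mathbb A}^d \setminus 0))$ vanishes.

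The core new input is Theorem~\ref{thmintro:freudenthal} applied to $\mathscr{X} = {\mathbb A}^d \setminus 0 \in O(S^{2d-1,d})$.  For $d \geq 4$ the hypotheses $p = 2d-1 \geq 4$ and $q = d \geq 2$ hold and one gets $\fib(s_{\mathscr{X}}) \in O(S^{a,2d})$ with $a = \min(2(2d-1)-1,\, (2d-1)+2d-1) = 4d-3$.  In particular this fiber is $(2d-4)$-connected, and since $d \leq 2d-4$ for $d \geq 4$ the induced map $\bpi_d({\mathbb A}^d \setminus 0) \to \bpi_d(\Omega^{2,1}\Sigma^{2,1}({\mathbb A}^d \setminus 0))$ is an isomorphism.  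Iterating and invoking Theorem~\ref{thm:beyondthediagonal}, the sheaf $\bpi_d({\mathbb A}^d \setminus 0)$ is identified with the ${\mathbb P}^1$-stable homotopy sheaf $\bpi_d(Q({\mathbb A}^d \setminus 0))$, which is known over characteristic-zero fields from the R\"ondigs--Spitzweck--{\O}stv{\ae}r computation \cite{RSO}.

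Finally, I would compute $H^{d+1}(X, \bpi_d({\mathbb A}^d \setminus 0))$ via the Gersten resolution: on a smooth affine $(d+1)$-fold it reduces, essentially, to the cokernel of residue maps into $\bpi_d({\mathbb A}^d \setminus 0)_{-d-1}$ evaluated at residue fields of closed points.  Since $k$ is algebraically closed, these residue fields are $k$ itself, and the stable identification from the previous step together with the \cite{RSO} computation shows that $\bpi_d({\mathbb A}^d \setminus 0)_{-d-1}(k)$ vanishes in the relevant range.  Thus both obstructions vanish and the required lift exists, establishing $E \simeq E' \oplus \mathcal{O}_X$.  The main obstacle is the last step: one must carefully match the bidegrees after passing from the unstable Freudenthal estimate to the \cite{RSO} stable computation, and track the $\eta$- and $2$-torsion carefully enough to conclude vanishing, rather than only some weaker divisibility, of the secondary obstruction over the algebraically closed base.
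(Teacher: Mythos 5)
Your proposal follows the paper's overall architecture — representability via \cite[Theorem 1]{AHW}, the fiber sequence $S^{2d-1,d} \to B\op{GL}_{d-1} \to B\op{GL}_d$, Moore--Postnikov obstruction theory with the primary obstruction identified with $c_d(E)$ over an algebraically closed field, and the Freudenthal theorem as the input controlling the secondary obstruction. The divergence, and the gap, is in the final step. You claim that the Gersten complex computing $H^{d+1}(X,\bpi_d({\mathbb A}^d\setminus 0)(\det E))$ has vanishing terms in degree $d+1$ because the contractions $\bpi_d({\mathbb A}^d\setminus 0)_{-d-1}(\kappa_x)$ vanish over the algebraically closed residue fields $\kappa_x = k$. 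This is not true. The stable computation (in the form of Theorem~\ref{thm:pid}) exhibits $\bpi_{2d+1,d+1}(S^{2d-1,d})$ as an extension of $\mathbf{GW}^{-1}_{0}$ by $\K^M_1/24$; the Milnor $K$-theory part does vanish over $k$ (divisibility), but the Grothendieck--Witt quotient does not vanish sectionwise over algebraically closed fields. So the last term of the Gersten complex is not zero, and vanishing of $H^{d+1}$ cannot be read off termwise.

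The paper's proof repairs exactly this point by splitting the coefficient sheaf. One first uses the Suslin-matrix epimorphisms $\bpi_{d+j,j}(S^{2d-1,d}) \twoheadrightarrow \mathbf{GW}^{d-j}_{d+1-j}$ ($j \ge d-3$) to map the secondary obstruction into $H^{d+1}(X,\mathbf{GW}^d_{d+1}(\det E))$, which vanishes by a genuine cohomological vanishing theorem for Grothendieck--Witt sheaves on smooth affine $(d+1)$-folds over algebraically closed fields (\cite[Theorem 3.7.1]{AFpi3a3minus0}) — not by sectionwise vanishing. Only for the kernel $\mathbf{A}(d) = \ker(\bpi_{d,0}(S^{2d-1,d}) \to \mathbf{GW}^d_{d+1})$ does one argue termwise in the Gersten complex: the surjectivity furnished by Theorem~\ref{thm:freudenthal} identifies $\mathbf{A}(d)_{-d-1}$ as a quotient of $(\mathbf{F}_5)_{-3}$, built from $\mathbf{I}^2$ and $\K^M_1/24$, both of which do vanish over algebraically closed fields. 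Note also that this route needs only the surjectivity of stabilization on homotopy sheaves plus the known $d=3$ computation of $\mathbf{A}(3) = \mathbf{F}_5$ from \cite{AFpi3a3minus0}, rather than the full \cite{RSO} identification you invoke; your heavier input is legitimate but does not by itself close the gap above, since the problematic $\mathbf{GW}$-contribution is present in the stable answer as well.
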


\todo{Can we do this away from char $2$ and $3$?}

\begin{proof}
	By \cite[Theorem 1]{AHW}, the vector bundle $E$ corresponds to an element of $[X,BGL_d]$.  Moreover, by \cite[Theorem 2.2.4]{AHWII}, there is a fiber sequence
	\[
	S^{2d-1,d} \longrightarrow BGL_{d-1} \longrightarrow BGL_d,
	\]
	where we have used the identification $GL_d/GL_{d-1} \weq \op{Q}_{2d-1} \weq S^{2d-1,d}$.  	We proceed by analyzing the Moore--Postnikov factorization of the stabilization map $BGL_{d-1} \to BGL_d$ (see \cite[Theorem 6.1.1]{AFpi3a3minus0} for a convenient summary).  
	
	Recall \cite[Proposition 6.3.1]{AFpi3a3minus0}: since $X$ has dimension $d$, a lift of an element of $[X,BGL_d]_{\aone}$ along the map $BGL_{d-1} \to BGL_d$ exists if and only if a primary and secondary obstruction vanish.  Moreover, the primary obstruction is the twisted Euler class 
	\[
	e(E) \in H^d(X,\bpi_{d-1,0}(S^{2d-1,d})(\det E)),
	\] 
	but under the hypothesis that $k$ is algebraically closed the primary obstruction vanishes if and only if $c_{d}(E)$ vanishes.  By hypothesis, it suffices to show that the secondary obstruction vanishes, and by \cite[Corollary 6.3.3]{AFpi3a3minus0} this obstruction lies in 
	\[
	o_{2}(E) \in H^{d+1}(X,\bpi_{d,0}(S^{2d-1,d})(\det E)).
	\]    
	We will deduce this vanishing from Theorem~\ref{thm:freudenthal} via analysis of the Gersten complex of the homotopy sheaf $\bpi_{d,0}(S^{2d-1,d})$.
	
	We now recall some facts about the sheaf $\bpi_{d,0}(S^{2d-1,d})$.  First, an explicit construction via ``Suslin matrices" yields maps: 
	\[
	S^{2n-1,n} \longrightarrow \Omega^{-2n+1,-n}(\Z \times B_{\et}\op{O}); 
	\]
	these are the maps $\Psi_n$ from \cite[Definition 3.3.5]{AFKODegree} in conjunction with \cite[Theorem 2.2.2]{AFKODegree} where we write $\Z \times B_{\et}\op{O}$ for $OGr$; we make sense of the negative loopings of $\Z \times B_{\et }\op{O}$ by geometric Bott periodicity via the work of \cite{SchlichtingTripathi}.  At the level of homotopy sheaves, the induced maps
	\[
	\bpi_{d+j,j}(S^{2d-1,d}) \longrightarrow \mathbf{GW}^{d-j}_{d+1-j}
	\] 
	are epimorphisms for $j \geq d-3$ \cite[Theorem 5]{AFKODegree}, though are not known to be so for smaller $j$.  
	
	The cohomology of strictly $\aone$-invariant sheaves may be computed by appeal to Gersten resolutions.  In more detail, if $\mathbf{A}$ is a strictly $\aone$-invariant sheaf on $\mathscr{X}$ and $L$ is a line bundle on $X$, then $H^{d+1}(X,\mathbf{A}(L))$ can be computed by means of a Gersten resolution and is the cokernel of a map
	\[
	\bigoplus_{x \in X^{(d)}} \mathbf{A}_{-d}(\kappa_x;L_x) \longrightarrow \bigoplus_{x \in X^{(d+1)}} \mathbf{A}_{-d-1}(\kappa_x;L_x).
	\]
	The fact that the morphism $\bpi_{d+j,j}(S^{2d-1,d}) \to \mathbf{GW}^{d-j}_{d+1-j}$ described in the preceding paragraph is an epimorphism for $j \geq d-3$ implies that the induced map on cohomology
	\[
	H^{d+1}(X,\bpi_{d,0}(S^{2d-1,d})(\det E)) \longrightarrow H^{d+1}(X,\mathbf{GW}^d_{d+1}(\det E))
	\]
	is itself an epimorphism.  Under the assumptions on $k$, the latter group vanishes by appeal to \cite[Theorem 3.7.1]{AFpi3a3minus0}.  In other words, the image of the secondary obstruction $o_2(E)$ in the target is zero.  Set
	\[
	\mathbf{A}(d) := \ker(\bpi_{d,0}(S^{2d-1,d}) \longrightarrow \mathbf{GW}^d_{d+1}).
	\]
	Even though the morphism $\bpi_{d,0}(S^{2d-1,d}) \to \mathbf{GW}^d_{d+1}$ may fail to be surjective, if we write $\mathbf{B}(d)$ for its image (which is again strictly $\aone$-invariant), then the discussion above shows that $\mathbf{B}(d)$ and $\mathbf{GW}^d_{d+1}$ both have the same cohomology (twisted by $\det E$) in degree $d+1$.  Therefore, to establish the result it suffices to show that $H^{d+1}(X,\mathbf{A}(d)(\det E))$ vanishes.  For this vanishing, we appeal to Theorem~\ref{thm:freudenthal}.
	
	Indeed, Theorem~\ref{thm:freudenthal} tells us that the map $S^{2d-1,d} \to \Omega^{2,1}S^{2d+1,d+1}$ has fiber lying in $O(S^{4d-3,2d})$.  In particular, if $d \geq 3$ and $j \geq 0$, then there are epimorphisms
	\[
	\bpi_{d+j,j}(S^{2d-1,d}) \longrightarrow \bpi_{d+j,j}(\Omega^{2,1}S^{2d+1,d+1}).
	\]
	On the other hand, by \cite[Theorem 4.4.1]{AFpi3a3minus0} we may identify $\mathbf{A}(3)$ with the sheaf $\mathbf{F}_5$ which fits into an exact sequence of the form:
	\[
	\mathbf{I}^6 \longrightarrow \mathbf{F}_5 \longrightarrow \K^M_5/24 \longrightarrow 0.
	\]
	The contractions of these sheaves have been computed (see, e.g., \cite[Example 2.3.2]{AFpi3a3minus0}).  Indeed, we deduce that $\mathbf{A}(3)_{-4}$ is an extension of $\K^M_1/24$ by $\mathbf{I}^2$.  More generally, the discussion above implies that $\mathbf{A}(d)_{-d-1}$ is a quotient of $(\mathbf{F}_5)_{-4}$.  In particular, for $x \in X^{(d+1)}$, since $\kappa_{x}$ is algebraically closed, we see that $\mathbf{I}^2(\kappa_x) = \K^M_1/24(\kappa_x) = 0$.  The required vanishing of
	\[
	H^{d+1}(X,\mathbf{A}(d)(\det \xi)),
	\]
	then follows from the description of the Gersten resolution above and the fact that $\mathbf{A}(d)_{-d-1}(\kappa_x;(\det \xi)_x)$ is isomorphic as an abstract group to $\mathbf{A}(d)_{-d-1}(\kappa_x)$ for any $x \in X^{(d+1)}$.
		%Moreover,  and by \cite[Theorem 5.2.9]{AWW} there is a map $\K^M_{d+3-j}/24 \longrightarrow \bpi_{d+j,j}(S^{2d-1,d})$.
\end{proof}

\begin{rem}
The vanishing of $H^{d+1}(X,\bpi_{d,0}(S^{2d-1,d})(\det E))$ established in the theorem above also implies Suslin's cancellation conjecture in corank $1$ \cite[Conjecutre 4.5.1]{AFICM} over algebraically closed fields having characteristic $0$ by appeal to \cite[Theorem 1.2]{DuEnumerating}.  However, in \cite{FaselCancellation}, J. Fasel establishes Suslin's cancellation conjecture in corank $1$ for smooth affine $d$-folds over an algebraically closed field $k$ in which $(d-1)!$ is invertible without establishing the above vanishing.  
\end{rem}

\subsection{Computations}
\label{ss:computations}
We now describe computations of unstable homotopy sheaves of motivic spheres.  Recall from the proof of Theorem~\ref{thm:murthy} that the unit map $\1 \to \mathbf{KO}$ desuspends to morphisms 
\[
S^{2n-i,n} \longrightarrow \Omega^{-2n+i,-n}(\Z \times B_{\et}\op{O}).
\]
We make sense of the deloopings of $\Z \times B_{\et}\op{O}$ by means of geometric Bott periodicity \cite[Theorems 1 and 3]{SchlichtingTripathi}; we write $B_{\et}\op{O}$ for $OGr$.  It was observed in \cite[Theorem 5]{AFKODegree} that the maps with source $S^{2n-1,n}$ are given by explicit ``Suslin matrices", and  in \cite[Theorem 4.3.6]{ADF} that those with source $S^{2n,n}$ are realized by explicit vector bundles.  We complement this here by observing that there are induced morphisms:
\[
S^{2n-i,n} \longrightarrow \tau_{\geq 2n-i,n}\Omega^{-2n+i,-n}(\Z \times B_{\et}\op{O});
\]
the spaces on the right can be thought of as unstable analogs of the spaces arising in the very effective cover of $\mathbf{KO}$.

On the other hand, consider the motivic Hopf map $\nu: S^{7,4} \to S^{4,2}$, obtained by performing the Hopf construction on the multiplication map $\op{SL}_2 \times \op{SL_2} \to \op{SL_2}$.  This map is also part of the motivic $J$-homomorphism.  The motivic Hopf map induces morphisms $\nu_d: S^{2d+2,d+2} \to S^{2d-1,d}$.  The induced map on homotopy sheaves factors through a map 
\[
(\nu_d)_*: \K^M_{d+2-j}/24 \longrightarrow \bpi_{d+j,j}^{\aone}(S^{2d-1,d});
\]
this is \cite[Theorem 3.18(2)]{AFWSuslin}, which builds on \cite[Theorem 5.2.5]{AWW}.  With those observations in mind, the following result establishes \cite[Conjecture 4.3.2]{AFICM}, which is a stronger form of \cite[Conjecture 5.2.9]{AWW}.  

\begin{theorem}
	\label{thm:pid}
	Assume $k$ is a field having characteristic $0$.  For any integer $d \geq 4$ there is a short exact sequence of the form
	\[
	0 \longrightarrow \K^M_{d+2-j}/24 \stackrel{(\nu_d)_*}{\longrightarrow} \bpi_{d+j,j}^{\aone}(S^{2d-1,d}) \longrightarrow \mathbf{GW}^{d-j}_{d+1-j},
	\]
	which is an isomorphism if $j \geq d-3$.
\end{theorem}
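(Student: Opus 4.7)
The strategy is to deduce the theorem from the $\pone$-stable computation of the first motivic Milnor--Witt stem $\bpi_{1+n, n}(\1_k)$ of \cite{RSO} via the Freudenthal suspension theorem, then identify the resulting stable maps with the explicit unstable ones constructed from $\nu$ and the Suslin matrices.

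First, I would apply Theorem~\ref{thm:beyondthediagonal} to $\mathscr{X} = S^{2d-1, d}$ (where $m = 2d-1 \geq 7$ and $n = d \geq 4$, so the hypotheses hold) and iterate via the last theorem of Section~\ref{ss:complements} to conclude that the fiber of the $\pone$-stabilization map $\mathscr{X} \to Q\mathscr{X}$ lies in $O(S^{4d-3, 2d})$. Unwinding this via the refined Whitehead tower of Amplification~\ref{amplification:2-eff-whitehead}, whose layers are of the form $\Omega^\infty \Sigma^{n_i + 2d, 2d}\mathscr{B}_i$ with $n_i \geq 2d-3$ and $\mathscr{B}_i \in \SH(k)^{\eff\heartsuit}$, a layer-by-layer analysis of $\bpi_{d+j, j}$ shows that the stabilization map induces an isomorphism on $\bpi_{d+j, j}$ in the claimed range. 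Combined with the identification $\Sigma^\infty S^{2d-1, d} = \Sigma^{2d-1, d}\1_k$, this yields
\[
\bpi_{d+j, j}(S^{2d-1, d}) \;\weq\; \bpi_{1+n,\, n}(\1_k), \qquad n := j - d.
\]

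Second, I would invoke the main result of \cite{RSO}: over a field of characteristic $0$, the first stable motivic Milnor--Witt stem fits into an exact sequence of homotopy modules
\[
0 \longrightarrow \K^M_{2-n}/24 \longrightarrow \bpi_{1+n, n}(\1_k) \longrightarrow \mathbf{GW}^{-n}_{1-n},
\]
whose left map is the desuspension of $\nu$-multiplication on the stable sphere, whose right map is induced by the unit $\1_k \to \mathbf{KO}$, and whose right map is surjective precisely in the range corresponding (after $n = j - d$) to $j \geq d-3$. Substituting $n = j - d$ recovers exactly the sheaves $\K^M_{d+2-j}/24$ and $\mathbf{GW}^{d-j}_{d+1-j}$ of the theorem statement. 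Third, I would check that the abstract stable maps produced this way agree with the claimed unstable ones: the map $(\nu_d)_* : \K^M_{d+2-j}/24 \to \bpi_{d+j, j}(S^{2d-1, d})$ from \cite[Theorem 3.18(2)]{AFWSuslin} is by construction the desuspension of stable $\nu$-multiplication, so compatibility under stabilization is immediate; similarly, the unstable Suslin-matrix maps $S^{2d-1, d} \to \Omega^{-2d+1, d}(\Z \times B_\et \op{O})$ of \cite[Theorem 5]{AFKODegree} realize the unit $\1_k \to \mathbf{KO}$ after $\pone$-stabilization and so recover the stable right map.

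The principal obstacle lies in step one: the bare $(2d-4)$-connectivity of the stabilization fiber only identifies $\bpi_{d+j, j}$ for $j \leq d-4$, whereas the theorem requires the full range $0 \leq j \leq d$ (and in particular $j = d-3$, where the sequence becomes short exact). Closing this gap requires exploiting the finer effective cellularity $O(S^{4d-3, 2d})$ via Amplification~\ref{amplification:2-eff-whitehead}: the weight-$2d$ lower bound on the $\Sigma^{n_i+2d, 2d}$ in each layer, combined with Lemma~\ref{lem:gmconnectivitystablehomotopysheaves} applied to the effective-heart layer $\Omega^\infty \Sigma^{n_i+2d, 2d}\mathscr{B}_i \in O(S^{n_i+2d, 2d})$, constrains the bidegrees in which each layer of the Whitehead tower can contribute to $\bpi_{d+j, j}$ and the adjacent $\bpi_{d+j+1, j}$ of the fiber, and a careful count of these contributions should yield the required vanishing for every $j$ in the claimed range.
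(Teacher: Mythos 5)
Your overall strategy is exactly the paper's: Theorem~\ref{thm:freudenthal} (applied to $S^{2d-1,d}$, which has $m=2d-1\le 2n=2d$, so the Complements section is not even needed) identifies the unstable sheaves with the stable first Milnor--Witt stem, and \cite[Theorem 5.5]{RSO} supplies that stem; the paper's proof is literally this one-line combination. The only substantive issue is that the ``principal obstacle'' you describe in your last paragraph does not exist: it rests on a misreading of the bigraded indexing. The sheaf $\bpi_{d+j,j}(\mathscr X)$ is represented by $S^{d+j,j}\wedge U_+ = S^{d}\wedge\gm^{\sma j}\wedge U_+$, so it is $\bpi_d(\Omega^{j,j}\mathscr X)\weq\bpi_d(\mathscr X)_{-j}$ --- it lives in \emph{simplicial} degree $d$, not $d+j$, with $j$ only recording contractions. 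Since $\Omega^{j,j}$ preserves fiber sequences and contraction kills nothing that was already zero, the long exact sequence for $\fib(s_{\mathscr X})\to\mathscr X\to\Omega^{2,1}\Sigma^{2,1}\mathscr X$ shows that the stabilization map is an isomorphism on $\bpi_{d+j,j}$ for \emph{every} $j$ as soon as $\bpi_{d}(\fib(s_{\mathscr X}))=\bpi_{d-1}(\fib(s_{\mathscr X}))=0$; the $(2d-4)$-connectivity coming from $\fib(s_{\mathscr X})\in O(S^{4d-3,2d})$ gives this precisely when $d\le 2d-4$, i.e.\ $d\ge 4$, which is the hypothesis of the theorem. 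So no layer-by-layer weight count via Amplification~\ref{amplification:2-eff-whitehead} is required, and with that paragraph deleted your argument is complete and agrees with the paper's.
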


\begin{proof}
	Combine Theorem~\ref{thm:freudenthal} with \cite[Theorem 5.5]{RSO}.
\end{proof}

We include here a number of further unstable computations of homotopy sheaves.

%\pi_{d,0}(S^{2d-1,d}) is stable for $d \geq 4$ by simplicial suspension we know 
% We can simplicially desuspend from $S^{2d-1,d}$ to $S^{4,d}$ and maybe also use the simplicial EHP sequence? 

\begin{theorem}
	If $k$ is a perfect field, then the following statements hold.
	\begin{enumerate}[noitemsep,topsep=1pt]	
	 \item For any $d \geq 0$ there are isomorphisms
	 \[
	 \bpi_{4+r,r}(S^{3+d,d}) \isomto \bpi_{d+r,r}(S^{2d-1,d}).
	 \]	
	 \item For any $d \geq 1$ there is a short exact sequence
	 \[
	 0 \longrightarrow \coker(\bpi_{6+r,r}(S^{4+d,d}) \longrightarrow \K^{MW}_{2d-r}/h^{\frac{(1 + (-1)^{d+1})}{2}}) \longrightarrow  \bpi_{3+r,r}(S^{2+d,d}) \longrightarrow \bpi_{4+r,r}(S^{3+d,d}) \longrightarrow 0.
	 \]
	 \item If $k$ has characteristic $0$, then for any $d \geq 2$, there is a short exact sequence of the form
	 \[
	 0 \longrightarrow \K^M_{d+2-r}/24 \longrightarrow \bpi_{4+r,r}(S^{3+d,d}) \longrightarrow \mathbf{GW}^{d-r}_{d+1-r},
	 \] 
	 where the right vertical map is surjective for $r \geq d-3$.
 	\end{enumerate}
\end{theorem}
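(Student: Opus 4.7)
The plan is to prove the three statements in sequence, using Morel's simplicial $S^1$-Freudenthal theorem, a motivic James--Hopf fiber sequence, and Theorem~\ref{thm:pid} as the main inputs.

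For Part (1), I would observe that for $d \geq 4$ there is a canonical identification $S^{2d-1,d} \weq \Sigma^{d-4} S^{3+d,d}$ of motivic spheres, since both sides are equivalent to $S^{d-1} \wedge \gm{d}$ after obvious rewriting. The map in question is then the adjunct of the $(d-4)$-fold unit of the simplicial suspension--loop adjunction. By Morel's computation of the first non-vanishing homotopy sheaf of $S^{3+d,d}$ in conjunction with Theorem~\ref{thm:unstableconnectivity}, the space $S^{3+d,d}$ is $(d+2)$-connected. Iterated application of Morel's $S^1$-Freudenthal theorem shows that the $(d-4)$-fold stabilization map is highly connected in the relevant range, inducing an isomorphism on $\bpi_{4+r,r}$ (note that the connectivity bound is preserved under contraction, giving the iso for the required $r$). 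For $d < 4$ the roles of source and target swap and one argues symmetrically using $S^{3+d,d} \weq \Sigma^{4-d} S^{2d-1,d}$.

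For Part (2), the plan is to invoke a motivic James--Hopf fiber sequence: for a pointed connected motivic space $\mathscr{X}$ with sufficient cellularity, the James filtration of $\Omega \Sigma \mathscr{X}$ (recalled in Example~\ref{ex:Jamesmodels}) gives a fiber sequence
\[ \mathscr{X} \longrightarrow \Omega \Sigma \mathscr{X} \longrightarrow \Omega \Sigma(\mathscr{X}^{\wedge 2}) \]
valid in a range of degrees. Applied to $\mathscr{X} = S^{2+d,d}$, this yields a long exact sequence
\[ \bpi_{5+r,r}(S^{5+2d,2d}) \to \bpi_{3+r,r}(S^{2+d,d}) \to \bpi_{4+r,r}(S^{3+d,d}) \to \bpi_{4+r,r}(S^{5+2d,2d}). \]
Using Theorem~\ref{thm:freudenthal} to $\pone$-stabilize, together with Morel's computation of $\bpi_0(\1_k)$ as graded Milnor--Witt K-theory, one identifies $\bpi_{5+r,r}(S^{5+2d,2d})$ with $\K^{MW}_{2d-r}/h^{\epsilon}$, where $\epsilon = (1+(-1)^{d+1})/2$ records the effect of the self-Whitehead product $[\iota,\iota]$: classically this product equals $\pm 2 \iota$ for even-dimensional spheres and vanishes for odd, and in the motivic setting the analogous element is the hyperbolic form $h = 1 + \langle -1 \rangle$ (cf.\ the conventions of the paper). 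The rightmost term $\bpi_{4+r,r}(S^{5+2d,2d})$ is expressed as the image of $\bpi_{6+r,r}(S^{4+d,d})$ under an iterated Hopf invariant obtained by extending the James--Hopf LES one further step (applying the sequence to $\Sigma\mathscr{X}$ in place of $\mathscr{X}$). Tracking these identifications produces the claimed short exact sequence.

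For Part (3), the strategy is to combine Part (1) with Theorem~\ref{thm:pid}. Part (1) identifies $\bpi_{4+r,r}(S^{3+d,d}) \isomto \bpi_{d+r,r}(S^{2d-1,d})$, and substituting this into Theorem~\ref{thm:pid} with $j = r$ yields the claimed SES together with the asserted surjectivity range for $d \geq 4$. The remaining cases $d = 2, 3$ follow from the explicit computations of $\bpi_d(\A^d \setminus 0) \weq \bpi_d(S^{2d-1,d})$ referenced in the introduction, pulled back through the iso of Part (1).

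The main obstacle will be justifying the precise form of the motivic James--Hopf fiber sequence in Part (2), in particular the parity-dependent $h$-twist. In the classical case the self-Whitehead product $[\iota_n,\iota_n]$ is detected (up to sign) by $2$ on even-dimensional spheres and vanishes on odd; the motivic replacement is the hyperbolic form $h$. Making this identification rigorous requires a careful analysis of the motivic EHP sequence and of multiplication by $h$ on $\K^{MW}_*$, and ensuring that all identifications remain valid after the $\pone$-stabilization provided by Theorem~\ref{thm:freudenthal}.
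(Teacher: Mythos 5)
Your overall route---iterated simplicial suspension for (1), the James--Hopf/EHP sequence for (2), and Part (1) combined with Theorem~\ref{thm:pid} for (3)---is the same as the paper's, which deduces (1) and (2) from the simplicial EHP sequence of \cite{AWW}; your Part (3) is essentially correct as stated. But there are genuine gaps in (1) and (2). In Part (1) you assert that $S^{3+d,d}$ is $(d+2)$-connected; it is only $2$-connected, since $S^{p,q}=S^{p-q}\wedge\gm{\wedge q}$ is $(p-q-1)$-connected and the Tate circles contribute nothing to simplicial connectivity. With the correct connectivity your iterated Freudenthal argument still happens to work for $d\ge 4$ (at step $j$ the source $S^{3+d+j,d}$ is $(2+j)$-connected and the needed degree $4+j$ lies in the range $\le 4+2j$), but for $d\le 3$ plain Freudenthal fails: for $d=3$ the map $\bpi_{3+r,r}(S^{5,3})\to\bpi_{4+r,r}(S^{6,3})$ sits exactly in the degree where Freudenthal yields only an epimorphism, and ``arguing symmetrically'' does not produce injectivity. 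One genuinely needs the EHP refinement \cite[Theorem 3.2.1]{AWW}, which controls the fiber of $S^{a,b}\to\Omega\Sigma S^{a,b}$ by $\Omega S^{2a+1,2b}$; the doubled Tate weight makes that sphere far more simplicially connected than the naive estimate, and this is what kills the relevant kernel.

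Part (2) is where the substance is missing. The term $\bpi_{5+r,r}(S^{5+2d,2d})=(\bpi_5(S^{5+2d,2d}))_{-r}\weq\K^{MW}_{2d-r}$ by Morel's unstable computation---no appeal to Theorem~\ref{thm:freudenthal} and no quotient by $h^{\epsilon}$ occurs at this stage---while $\bpi_{4+r,r}(S^{5+2d,2d})=0$ outright because $S^{5+2d,2d}$ is $4$-connected, so surjectivity of the suspension map is automatic and the group $\bpi_{6+r,r}(S^{4+d,d})$ does not enter on the right as you claim. It enters on the left: the kernel of suspension is $\op{im}(P\colon\K^{MW}_{2d-r}\to\bpi_{3+r,r}(S^{2+d,d}))\weq\K^{MW}_{2d-r}/\op{im}(H)$, and rewriting $\op{im}(H)$ in terms of $\bpi_{6+r,r}(S^{4+d,d})$ and the quotient by $h^{\epsilon}$ requires the computation of the $d_1$-differential in the EHP spectral sequence---zero for $d$ even, multiplication by $h$ for $d$ odd. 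That computation \emph{is} the content of Part (2); you correctly flag it as ``the main obstacle'' but leave it open, whereas the paper closes it by citing \cite[Theorem 4.4.1]{AWW}. Without that input the parity-dependent $h$-twist is not established, so the proof of (2) is incomplete.
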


\begin{proof}
	The first point is an immediate consequence of the simplicial EHP exact sequence \cite[Theorem 3.2.1]{AWW}.  The second point follows from the simplicial EHP sequence in conjunction with the computation of the $d_1$-differential \cite[Theorem 4.4.1]{AWW}: when $d$ is even, the differential is $0$, while if $d$ is odd, it is multiplication by $h$.  The final point follows from the first point in conjunction with Theorem~\ref{thm:pid}.
\end{proof}
	
\begin{rem}
	In a range, depending on $r$, the cokernel in Point (2) of the previous statement coincides with $\K^{MW}_{2d-r}/h^{\frac{(1 + (-1)^{d+1})}{2}}$ because the source groups are zero.  Lacking applications, we do not make this precise.
\end{rem}

\begin{footnotesize}
\bibliographystyle{alpha}
\bibliography{freudenthalref}
\end{footnotesize}
\Addresses
\end{document}